\theoremstyle{plain}
\newtheorem{lem}{Lemma}
\newtheorem{lemma}[lem]{Lemma}
\newtheorem{thm}[lem]{Theorem}
\newtheorem{theorem}[lem]{Theorem}
\newtheorem{prop}[lem]{Proposition}
\newtheorem{proposition}[lem]{Proposition}
\newtheorem{cor}[lem]{Corollary}
\newtheorem{corollary}[lem]{Corollary}
\newtheorem{fact}[lem]{Fact}
\newtheorem{question}[lem]{Question}
\theoremstyle{definition}
\newtheorem{defn}[lem]{Definition}
\newtheorem{example}[lem]{Example}
\newtheorem{remark}[lem]{Remark}
\numberwithin{equation}{section}
\numberwithin{lem}{section}
\newcommand{\mathfont}{\mathbf}
\newcommand{\Z}{\mathfont Z}
\newcommand{\ZZ}{\mathfont Z}
\newcommand{\Zhat}{\widehat{\ZZ}}
\newcommand{\QQ}{\mathfont Q}
\newcommand{\FF}{\mathfont F}
\newcommand{\Fbar}{\overline{\FF}}
\newcommand{\Fpbar}{\Fbar_p}
\newcommand{\fC}{\mathfrak{C}}
\newcommand{\fg}{\mathfrak{g}}
\newcommand{\fl}{\mathfrak{l}}
\newcommand{\fm}{\mathfrak{m}}
\newcommand{\ft}{\mathfrak{t}}
\newcommand{\bF}{\mathfont{F}}
\newcommand{\bG}{\mathfont{G}}
\newcommand{\bR}{\mathfont{R}}
\newcommand{\bZ}{\mathfont{Z}}
\newcommand{\cC}{\mathcal{C}}
\newcommand{\cN}{\mathcal{N}}
\newcommand{\cO}{\mathcal{O}}
\DeclareFontFamily{OT1}{rsfs}{}
\DeclareFontShape{OT1}{rsfs}{n}{it}{<-> rsfs10}{}
\DeclareMathAlphabet{\mathscr}{OT1}{rsfs}{n}{it}
\newcommand{\sD}{{\mathscr D}}
\newcommand{\sG}{{\mathscr G}}
\newcommand{\sH}{{\mathscr H}}
\newcommand{\sO}{{\mathscr O}}
\newcommand{\vp}{\varphi}
\newcommand{\into}{\hookrightarrow}
\DeclareMathOperator{\Hom}{Hom}
\DeclareMathOperator{\Ker}{ker}
\DeclareMathOperator{\coker} {coker}
\newcommand \tensor[1] {\otimes_{#1}}
\DeclareMathOperator{\Aut}{Aut}
\DeclareMathOperator{\rank}{rank}
\newcommand{\kbar}{\bar{k}}
\DeclareMathOperator{\Ext}{Ext}
\DeclareMathOperator{\Sym}{Sym}
\DeclareMathOperator{\chara} {char}
\newcommand{\ttm}[4]{\begin{pmatrix}
#1 & #2 \\
#3 & #4
\end{pmatrix}}
\newcommand{\Zp}{\mathfont{Z}_p}
\newcommand{\Zpbar}{\overline{\mathfont{Z}}_p}
\newcommand{\rhobar}{{\overline{\rho}}}
\newcommand{\taubar}{{\overline{\tau}}}
\newcommand{\omegabar}{{\overline{\omega}}}
\newcommand{\gma}[1]{\Gamma_{#1}}
\DeclareMathOperator{\Spec}{Spec}
\DeclareMathOperator{\Spf}{Spf}
\DeclareMathOperator{\Lie}{Lie}
\DeclareMathOperator{\GL}{GL}
\DeclareMathOperator{\SL}{SL}
\DeclareMathOperator{\PGL}{PGL}
\DeclareMathOperator{\SO}{SO}
\DeclareMathOperator{\Sp}{Sp}
\DeclareMathOperator{\Out}{Out}
\newcommand{\ab}{\textrm{ab}}
\newcommand{\Gm}{\mathfont{G}_m}
\DeclareMathOperator{\ad}{ad}
\DeclareMathOperator{\Ad}{Ad}
\newcommand{\cent}[2]{C_{#1}(#2)}
\newcommand{\nlz}[2]{N_{#1}(#2)}
\newcommand{\Ld}{\Lambda}
\newcommand{\Lbar}{\overline{\Lambda}}
\newcommand{\Dt}{\Delta}
\DeclareMathOperator{\CLN}{\widehat{\cC}}
\DeclareMathOperator{\mr}{m.r.}
\newcommand{\g}{\mathfrak{g}}
\newcommand{\h}{\mathfrak{h}}
\renewcommand{\mr}{\operatorname{m.r.}}
\newcommand{\spn}{\operatorname{span}}
\title{Lifting $G$-Valued Galois Representations when $\ell \neq p$}
\author{Jeremy Booher}
\address{School of Mathematics and Statistics, 
         University of Canterbury, Private Bag 4800,
         Christchurch 8140, New Zealand}
\email{jeremy.booher@canterbury.ac.nz, jeremybooher@ufl.edu}
\author{Sean Cotner}
\address{Department of Mathematics,
Stanford University, 450 Jane Stanford Way, Building 380,
Stanford, CA 94305, United States}
\email{scotner@stanford.edu}
\author{Shiang Tang}
\address{Department of Mathematics,
Purdue University, 150 N University St,
West Lafayette, IN 47907, United States}
\email{tang573@purdue.edu}
\begin{document} 

\begin{abstract}
In this paper we study the universal lifting spaces of local Galois representations valued in arbitrary reductive group schemes when $\ell \neq p$. In particular, under certain technical conditions applicable to any root datum we construct a canonical smooth component in such spaces, generalizing the minimally ramified deformation condition previously studied for classical groups. Our methods involve extending the notion of isotypic decomposition for a $\GL_n$-valued representation to general reductive group schemes. To deal with certain scheme-theoretic issues coming from this notion, we are led to a detailed study of certain families of disconnected reductive groups, which we call \textit{weakly reductive} group schemes. Our work can be used to produce geometric lifts for global Galois representations, and we illustrate this for $\mathrm{G}_2$-valued representations.

\end{abstract}

\maketitle

\section{Introduction}

\subsection{Galois Deformations}  \label{ss:galoisdef}

Fix a local field $F$ with residue characteristic $\ell$.  Fix a prime $p \neq \ell$ and a reductive group scheme $G$ (with connected fibers) over the ring of integers $\cO$ in a $p$-adic field.  Let $k$ be the residue field of $\cO$ and $\Gamma_F$ the absolute Galois group of $F$.

Given a $G$-valued representation of $\Gamma_F$ over $k$, i.e. a continuous homomorphism $\rhobar: \Gamma_F \to G(k)$,
Tillouine\cite{tilouine96} introduced a Galois lifting ring $R_{\rhobar}^\square$ (building on work of Mazur treating the $\GL_n$ case \cite{mazur87,mazur95}).  These lifting rings and the associated formal schemes are central in many modern developments in number theory, and it is important to:
\begin{itemize}
    \item understand whether $\rhobar$ lifts to characteristic zero; and
    
    \item understand the geometry of $\Spf(R_{\rhobar}^{\square})$, especially to find formally smooth components and understand how they intersect.
\end{itemize}
(There are similar questions in the complementary situation when $\ell = p$, but the analysis is quite different and is connected with $p$-adic Hodge theory.)  Work on these questions originally focused on the case that $G = \GL_n$ (especially $\GL_2$), but recent developments in the Langlands program have made it increasingly important to understand general reductive groups $G$.  
For example, progress on these questions can be used to:
\begin{itemize}
    \item produce ``nice'' lifts of global mod-$p$ representations to characteristic zero \cite{ram02,hr08,patrikis16,booher19jnt,tang19,fkp21,fkp22};
    \item  investigate $\ell \neq p$ versions of the Breuil-M\'{e}zard conjecture \cite{shotton18,shotton20,shotton23};
    \item  establish potential automorphy theorems and automorphy lifting theorems over global function fields for general $G$ \cite{BHKT,boeckle-cyclic}. 
    \end{itemize}

\begin{theorem} \label{thm:intromain} (see Theorems \ref{thm:liftingfixed} and \ref{thm:main theorem})
Let $\rhobar : \Gamma_F \to G(k)$ be a continuous homomorphism.  
Suppose $p$ is large enough for the root datum of $G$ (see Remark \ref{rmk:bound for G}). 
Then there exists a continuous homomorphism $\rho: \Gamma_F \to G(\cO)$ lifting $\rhobar$ such that $\cent{G}{\rho(I_F)}$ is $\cO$-smooth. 
Moreover, there is a canonical $\cO$-formally smooth irreducible component $\Spf R_{\rhobar}^{\mr, \square}$ of $\Spf R_{\rhobar}^{\square}$. %
\end{theorem}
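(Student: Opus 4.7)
The plan is to construct a ``minimally ramified'' deformation condition for $G$-valued representations, generalizing the classical construction for $\GL_n$ and classical groups, and to produce both $\rho$ and $R_{\rhobar}^{\mr,\square}$ from it. Since $p \neq \ell$, the wild inertia $P_F$ has pro-$\ell$ image while any lift lands in a pro-$p$ thickening, so a Schur--Zassenhaus / Teichm\"uller argument gives $\rhobar|_{P_F}$ an essentially unique lift. This reduces the problem to producing lifts of the tame generator $\tau$ and of a Frobenius $\sigma$ inside $H \coloneqq \cent{G}{\rhobar(P_F)}$, subject to the braid relation $\sigma\tau\sigma^{-1} = \tau^q$.

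The heart of the argument is a generalization of the $\GL_n$ isotypic decomposition to $G$. Write the Jordan decomposition $\taubar = \taubar_s\taubar_u$ in $H$. I would lift $\taubar_s$ to a finite prime-to-$p$ order element $\tau_s$ of $H$ of the same type, then lift $\taubar_u$ to a unipotent element $\tau_u \in \cent{H}{\tau_s}$ in the same unipotent orbit as $\taubar_u$. The large-$p$ hypothesis is exactly what makes $\exp$/$\log$, Bala--Carter theory, and the standard structure theory of centralizers of semisimple elements available in this setting. The scheme-theoretic catch is that $H$, and often $\cent{H}{\taubar_s}$, are disconnected and not reductive; to make the unipotent orbit vary in a flat, smooth family over $\cO$ one must use the weakly reductive group scheme framework of the paper, which provides the integral analogues of the classical structural results needed to parameterize such lifts smoothly.

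With $\tau = \tau_s\tau_u$ in hand, the Frobenius is lifted by observing that the scheme of $g\in G$ with $g\tau g^{-1} = \tau^q$ is a torsor for $\cent{G}{\rho(I_F)}$; this centralizer is $\cO$-smooth by construction, so the torsor is trivial over a complete local base and a lift of $\rhobar(\sigma)$ exists. This simultaneously produces the desired $\rho$ and the claimed smoothness of $\cent{G}{\rho(I_F)}$.

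For the second assertion, define $R_{\rhobar}^{\mr,\square}$ as the quotient of $R_{\rhobar}^{\square}$ cutting out lifts whose restriction to $I_F$ is of the ``same type'' as $\rhobar|_{I_F}$ under the above decomposition; this condition depends only on intrinsic data attached to $\rhobar$, hence is canonical. Formal smoothness and irreducibility of $\Spf R_{\rhobar}^{\mr,\square}$ follow by presenting it as a formally smooth image of an explicit smooth, geometrically irreducible parameter space of triples $(\tau_s,\tau_u,\sigma)$ satisfying the braid relation, with smoothness of the presentation map coming from the torsor argument above. The main obstacle throughout is the scheme-theoretic one of making disconnected, non-reductive centralizers over $\cO$ behave like their connected reductive counterparts in characteristic zero; this is exactly where the weakly reductive group scheme formalism replaces the ad hoc arguments available for $\GL_n$ and classical groups.
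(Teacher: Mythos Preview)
Your overall shape—lift on a prime-to-$p$ piece of inertia, then handle a unipotent element, then lift Frobenius via smoothness of a centralizer—matches the paper, but one key reduction is not correct as stated, and the paper's central construction is genuinely different from the Jordan-decomposition picture you describe.

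The gap is the claim that the tame generator and Frobenius can be lifted ``inside $H = C_G(\rhobar(P_F))$''. The image $\rhobar(\tau)$ only \emph{normalizes} $\rhobar(P_F)$; it does not centralize it, so the Jordan decomposition $\taubar = \taubar_s\taubar_u$ ``in $H$'' is not available. The paper's remedy is its main structural innovation. First, it works not with $P_F$ but with $\Lambda_F = \ker(I_F \twoheadrightarrow \Z_p)$, so that $I_F/\Lambda_F \cong \Z_p$ and no semisimple/unipotent splitting of the generator is needed at all. Second, it introduces the \emph{decomposition type} $(\fC,\Delta)$ with $\fC = C_G(\rhobar(\Lambda_F))$ and $\Delta = C_G(\fC)$, the double centralizer. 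One lifts $\rhobar|_{\Lambda_F}$ to $\tau\colon \Lambda_F \to \Delta(\cO)$ and then \emph{extends} $\tau$ to a unique $\nu$-tame homomorphism $\tau\colon I_F \to \Delta(\cO)$ by a nontrivial Clifford-theory argument (Proposition~\ref{prop:extension}); only after this does one obtain the factorization $\rhobar|_{I_F} = \taubar\cdot\omegabar$ with $\omegabar\colon I_F/\Lambda_F \to \fC^0(k)$ and $\omegabar(\sigma)$ unipotent. The roles of your $\taubar_s$ and $\taubar_u$ are played by $\tau(\sigma)$ and $\omegabar(\sigma)$, but they commute because $\Delta$ and $\fC$ are mutual centralizers, not because of a Jordan decomposition, and producing $\tau$ on all of $I_F$ (rather than just on $\Lambda_F$) is exactly the work that absorbs the normalizing-but-not-centralizing behavior your outline skips.

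For the deformation condition the paper does not build a parameter space of triples. It fixes the single homomorphism $\theta = \tau\cdot\omega\colon I_F \to G(\cO)$ (Corollary~\ref{cor:unique theta}) and defines $\Lift^{\mr}_{\rhobar}(R)$ to be those lifts whose restriction to $I_F$ is $\widehat G(R)$-conjugate to $\theta_R$ (Definition~\ref{def:minimally ramified}). Formal smoothness (Theorem~\ref{thm:liftingcondition}) then reduces to the single fact that $C_G(\theta(I_F)) = C_{\fC}(u)$ is $\cO$-smooth, which is the pure-unipotent centralizer theorem. Your torsor heuristic for Frobenius is morally right, but the actual construction (Lemma~\ref{lemma:nlifting}) must simultaneously arrange $\tau^{\phi} = n\tau n^{-1}$ on all of $I_F$ and $u^q = nun^{-1}$; this uses the $\nu$-tame uniqueness and the integral conjugacy of pure unipotents, not just smoothness of one centralizer.
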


We call %
$R_{\rhobar}^{\mr,\square}$ the \emph{minimally ramified lifting ring} and the associated Galois representations \emph{minimally ramified}. %
Previous work about the smoothness of (components of) $R_{\rhobar}^{\square}$ has been for classical groups \cite{cht08,Booher19} or about the generic fiber \cite{bp19,bg19} or has had strong dependencies between $\ell$ and $p$ \cite[\S 5]{DHKM}.

The existence of lifts of $\rhobar$ without the smooth centralizer condition also follows from $\bZ_p$-flatness of $R_{\rhobar}^{\square}$, a consequence of results about moduli of Langlands parameters in \cite{DHKM}, \cite{Fargues-Scholze}, and \cite{Zhu} which are established using completely different techniques.  In \cite{DHKM}, one ingredient in the proof of $\bZ_p$-flatness is to find a finite extension $\cO'$ of $\cO$ such that $\rhobar$ lifts to $\rho: \Gamma_F \to G(\cO')$. The lifts produced in \cite{DHKM} are different than those produced using our method: they have finite image and we do not expect the centralizers of the inertia to be $\cO$-smooth.

Our initial motivation for this project was to produce characteristic zero lifts of global $\rhobar : \Gamma_K \to G(k)$ that are geometric in the sense of the Fontaine-Mazur conjecture when $K$ is a number field, using variations on a local-to-global lifting result going back to Ramakrishna \cite{ram02}.  As a sample application of Theorem~\ref{thm:intromain}, we give a lifting result for the exceptional group of type $G_2$, making use of \cite[Theorem A]{fkp21} (a generalization of the local-to-global lifting result) and \cite[Theorem C]{lin_g2} (giving local lifts at $p$).  For a representation $\rhobar : \Gamma \to G(k)$, let $\rhobar(\fg)$ denote the Lie algebra of $G_k$ with $\Gamma$ acting via the composition of $\rhobar$ and the adjoint action.

\begin{corollary} \label{corollary:lifting}
Let $G$ be the exceptional split group $G_2$ over $\ZZ$. Let $p$ be a sufficiently large prime and let $\rhobar \colon \Gamma_{\QQ} \to G(\Fpbar)$ be a continuous representation. Assume that 
\begin{itemize}
    \item $\rhobar$ is odd, i.e. $\dim \mathrm{H}^0(\gma{\bR}, \rhobar(\fg))=\dim \mathrm{Flag}_{G}$.
    \item $\rhobar|_{\gma{\QQ(\zeta_p)}}$ is absolutely irreducible, i.e. its image in $G(\kbar)$ is not contained in any proper parabolic subgroup of $G$. 
\end{itemize}
Then $\rhobar$ lifts to a $\rho \colon \gma{\QQ} \to G(\Zpbar)$ which is geometric in the sense of the Fontaine--Mazur conjecture.
\end{corollary}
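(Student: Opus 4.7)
The plan is to invoke the local-to-global lifting theorem \cite[Theorem~A]{fkp21}, which takes as input a formally smooth local lifting condition at each ramified finite place and at $p$, an archimedean parity condition, and an image hypothesis on $\rhobar$ strong enough to allow the Ramakrishna-style construction of auxiliary primes that kill the global obstructions. I would verify each ingredient separately and then apply the theorem.

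To assemble the local conditions, let $S$ be the finite set of finite places of $\bQ$ at which $\rhobar$ ramifies, together with $p$. For $v \in S \setminus \{p\}$, I take the minimally ramified lifting ring $R_{\rhobar|_{\gma{v}}}^{\mr,\square}$ produced by Theorem~\ref{thm:intromain}, which is $\cO$-formally smooth because $p$ is large relative to the root datum of $G_2$. At $v = p$, I take the formally smooth local lifting ring of regular crystalline lifts supplied by \cite[Theorem~C]{lin_g2}. At $v = \infty$, the oddness hypothesis $\dim \mathrm{H}^0(\gma{\bR},\rhobar(\fg)) = \dim \mathrm{Flag}_G$ is exactly the archimedean input required by \cite{fkp21} for the Euler-characteristic bookkeeping to balance. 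For the image condition, the assumption that $\rhobar|_{\gma{\bQ(\zeta_p)}}$ is absolutely $G$-irreducible, combined with $p$ sufficiently large, is the form in which the big-image axioms of \cite{fkp21} typically enter for an exceptional group: it suffices to rule out invariants and coinvariants in $\rhobar(\fg)$ under $\gma{\bQ(\zeta_p)}$ and to produce sufficiently many Ramakrishna primes, i.e.\ places $v$ at which $\rhobar(\Frob_v)$ is a suitably regular semisimple element of $G(\Fpbar)$.

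Granted these four ingredients, \cite[Theorem~A]{fkp21} directly produces a lift $\rho : \gma{\bQ} \to G(\Zpbar)$ ramified only at places in $S$, matching the minimally ramified condition at each $v \in S \setminus \{p\}$ and the chosen crystalline condition at $p$; this gives geometricity in the sense of the Fontaine--Mazur conjecture. The main obstacle is the image step: one must show that absolute $G$-irreducibility of $\rhobar|_{\gma{\bQ(\zeta_p)}}$ really does imply the precise (and somewhat technical) big-image axioms of \cite{fkp21} for the group $G_2$ when $p$ is large enough. For $G_2$ this should be accessible through the classification of subgroups of $G_2(\kbar)$ that are not contained in any proper parabolic, together with a direct cohomological analysis of the $14$-dimensional adjoint representation $\rhobar(\fg)$; the irreducibility hypothesis is precisely what forces the image to be large enough in this classification to supply the needed Ramakrishna primes once $p$ exceeds an explicit bound depending only on $G_2$.
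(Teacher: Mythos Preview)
Your overall strategy matches the paper's: both invoke \cite[Theorem~A]{fkp21} and feed it local lifts away from $p$ coming from Theorem~\ref{thm:intromain} and a crystalline lift at $p$ coming from \cite{lin_g2}. However, you have inverted the difficulty.

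The oddness and absolute irreducibility hypotheses in the corollary are \emph{literally} the first two hypotheses of \cite[Theorem~A]{fkp21}; there is nothing to verify on the image side beyond what is assumed. The paper's proof accordingly says only ``it is enough to check the third bulleted point'' of that theorem, namely the existence of suitable local lifts at each place of $S$. So your final paragraph, where you flag the big-image axioms as ``the main obstacle'' and propose a subgroup classification in $G_2$, is unnecessary.

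Conversely, you are too quick at $p$. You write that \cite[Theorem~C]{lin_g2} supplies a formally smooth lifting ring of \emph{regular} crystalline lifts, but regularity of the Hodge--Tate cocharacter is not automatic from that theorem, and it is exactly what \cite[Theorem~A]{fkp21} requires. The paper spends most of its proof on this point: if $\rhobar_p$ is irreducible one appeals to \cite[Theorem~2]{lin2020}, while if $\rhobar_p$ factors through a maximal parabolic $P$ with Levi $L \cong \GL_2$, one first produces a crystalline lift of the associated $L$-valued representation with regular Hodge--Tate cocharacter via \cite[\S7.2.1--7.2.2]{lin_g2}, and then uses the second half of \cite[Theorem~C]{lin_g2} to place the $P$-valued lift on the same component. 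Your proposal needs to incorporate this case analysis to close the gap.
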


The proof is given in Appendix~\ref{appendix-corollary}.  
All that is needed from Theorem~\ref{thm:intromain} is the existence of local lifts at places away from $p$, which follows as above from \cite{DHKM,Fargues-Scholze,Zhu}.

\begin{remark}
We emphasize that $\cO$-smoothness of the centralizer of the inertia in Theorem \ref{thm:intromain} is crucial for establishing the existence of a formally smooth component of the universal lifting ring; see Theorem \ref{thm:liftingcondition}.
Besides producing minimally ramified lifts, our method can also produce lifts with other inertial types and sometimes establish smoothness of the component of the deformation ring containing the lift.  We do not systematically explore this as this is not the focus of this work, but we do build flexibility into our results in Section~\ref{sec:liftingsmr} and give a simple illustration in Example~\ref{ex:inertialtype}.  This is of interest when studying generalizations of the $\ell \neq p$ version of the Breuil-M\'{e}zard conjecture and the irreducible components of the moduli of Langlands parameters as in \cite{shotton18,shotton20,shotton23}.  
\end{remark}

\begin{remark}\label{remark:intro-moduli}
The Galois lifting space for $\rhobar$ is a formal completion of the moduli spaces of \cite{DHKM}, \cite[\S VIII]{Fargues-Scholze}, and \cite{Zhu} at $\rhobar$.  To make this precise, one can apply \cite[Lemma 2.4.10]{Zhu} to the various moduli spaces of Galois representations and Weil-Deligne representations studied and compared in \cite[\S 3.1]{Zhu}.  Then the main results in these papers show that the Galois lifting spaces are flat local complete intersections, and \cite[\S 5]{DHKM} provides some results on generic smoothness.

In this language, Theorem~\ref{thm:intromain} shows in particular that the underlying reduced subscheme of the mod-$p$ fiber of the moduli space $\sH_W$ of Weil-Deligne representations is smooth away from the intersections of components for large $p$, and even at a point of intersection there is some smooth component passing through this point. Indeed, Theorem~\ref{thm:intromain} asserts that every point lies in a formally smooth irreducible component of $\Spf R_{\rhobar}^{\square}$, whose mod-$p$ fiber is therefore a smooth localization of the underlying reduced subscheme of an irreducible component of the mod-$p$ fiber of $\sH_W$. As our bound on $p$ is independent of the size of the residue field of $F$, it applies in situations where the mod-$p$ fibers of the above moduli space are not reduced (see \cite[Proposition 5.26]{DHKM} and note that the banality of $p$ depends on the size of the residue field of $F$ by \cite[Lemma 5.28]{DHKM}). See Remark~\ref{remark:dhkm-interpretation} for a slight reinterpretation of our main result in terms of this moduli space.
\end{remark}

\begin{remark} \label{remark:automorphic} 
In \cite[Section 4.3]{boeckle-cyclic}, an automorphy lifting theorem for $G$-valued Galois representations over global function fields is established, assuming that the mod-$p$ residual automorphic Galois representation has suitably large image and that the local deformation problems are \emph{balanced} in the sense of \cite[Definition 3.4]{boeckle-cyclic}.
Section 5 of \emph{loc.}\ \emph{cit.}\ shows that the \emph{unrestricted} local condition at a place of ramification is balanced (and hence formally smooth) if $p$ is larger than an ineffective constant depending on the automorphic representation using global arguments. In contrast, Theorem \ref{thm:intromain} produces a natural balanced local deformation condition in the general case with an effective lower bound on $p$ depending only on the root datum of $G$ (see Remark \ref{rmk:bound for G} for this lower bound). 
\end{remark}

\begin{remark}
    The restrictions on $p$ in Theorem~\ref{thm:intromain} are effective but not optimal: see Remark~\ref{rmk:bound for G}.  We expect a similar result should hold as long as $p$ is a pretty good prime for $G$ and $p > 3$.
\end{remark}

\subsection{Weakly Reductive Group Schemes} \label{ss:weakly-reductive}

To prove Theorem~\ref{thm:intromain}, we will directly adapt the argument of \cite{cht08} (which dealt with $G = \GL_n$) to a general $G$. For clarity, we will outline (a reinterpretation of) the argument from \cite{cht08} which constructs a canonical lift of $\rhobar: \Gamma_F \to \GL(V \otimes_{\cO} k)$ up to conjugacy, where $V$ is a finite free $\cO$-module of rank $n$. Let $\Lambda_F$ be the maximal prime-to-$p$ closed subgroup of the inertia subgroup $I_F$ of $\Gamma_F$.
\begin{enumerate}
    \item Lift $\rhobar|_{\Lambda_F}$ to a representation $\rho_0: \Lambda_F \to \GL(V)$. Let $V = \bigoplus_i V_i \otimes_{\cO} W_i$ be the isotypic decomposition of $V$.
    \item Show that if $p > n$, then there is a unique extension of the $\Lambda_F$-representation $\bigoplus_i V_i$ to a representation $\tau = (\tau_i): I_F \to \prod_i \GL(V_i)$ such that $\det(\tau_i(\sigma))$ is of finite prime-to-$p$ order for all $\sigma \in I_F$ and all $i$.
    \item Choosing an identification $I_F/\Lambda_F \cong \bZ_p$ and a splitting $I_F/\Lambda_F \to I_F$, show that there is a unipotent element $u_0 \in \prod_i \GL(W_i \otimes_{\cO} k)$ such that $\rhobar(n) = u_0^n$ for all $n \in \bZ_p$. Show that $u_0$ lifts uniquely up to conjugacy to a section $u \in \prod_i \GL(W_i)$ with the same Jordan block decomposition on both fibers, and define $\rho_1: I_F = \Lambda_F \rtimes \bZ_p \to \GL(V)$ by
    \[
    \rho_1(\lambda n) = \tau(\lambda n) u^n.
    \]
    \item Using the uniqueness assertions of (2) and (3), finally extend $\rho_1$ to $\rho: \Gamma_F \to \GL(V)$.
\end{enumerate}
To adapt this argument for general $G$ in place of $\GL_n$, one first needs to interpret the objects appearing. For instance, in steps 2 and 3, we need analogues of $\prod_i \GL(V_i)$ and $\prod_i \GL(W_i)$. The main observation is that when $G = \GL(V)$, the centralizer $\fC = C_G(\Lambda_F)$ is equal to $\prod_i \GL(W_i)$, while the double centralizer $\Delta = C_G(C_G(\Lambda_F))$ is equal to $\prod_i \GL(V_i)$.   
From this perspective, it natural to study $\fC = C_G(\Lambda_F)$ and $\Delta = C_G(C_G(\Lambda_F))$ for general $G$, which we formalize using the notion of a \emph{decomposition type}.  This provides a structure for extending a prime-to-$p$ inertial type to a $G$-valued representation of $\Gamma_F$.
 It is also necessary to understand the abelianization morphism $\Delta \to \Delta^{\rm{ab}}$ (the analog of the determinant) and the center $Z(\Delta)$ (the analogue of the group of scalar matrices).  We must, therefore, understand representability and smoothness properties of the $\cO$-group schemes $\fC$, $\Delta$, $\Delta^{\rm{ab}}$, and $Z(\Delta)$.

\begin{remark}
In step 3, it is also necessary to find a suitable meaning of ``the same Jordan block decomposition on both fibers" for a unipotent section of a general $G$, and to show a suitable conjugacy result for these. This has been handled in \cite{cotner}; see also \cite{Hardesty} for similar results.
\end{remark}

 Unlike the case $G = \GL_n$, it is not evident that $\Delta$ is smooth (or even representable), and it is usually \textit{not} true that $\fC$ and $\Delta$ are reductive group schemes; they often have disconnected fibers. This causes serious difficulties when working integrally, and the theory developed in \cite{sga3} is not sufficient to handle this situation. These difficulties are well-known to experts, and we describe some pathologies in Examples~\ref{example:non-smooth-aut}, \ref{example:p-is-bad}, and \ref{example:p-is-still-bad}.
 
 In order to handle the families of disconnected reductive groups that are relevant to us, we introduce the notion of \textit{weak reductivity}.  Recall \cite[Proposition 3.1.3]{conrad} that if $S$ is a scheme and $G$ is a smooth affine $S$-group scheme with (possibly disconnected) reductive fibers, then the relative identity component $G^0$ is a reductive group scheme, and $G/G^0$ is an \'{e}tale separated $S$-group scheme of finite presentation.

\begin{defn}
Over a scheme $S$, a weakly reductive group scheme is a smooth affine $S$-group scheme $G$ such that $G^0$ is reductive and the component group $G/G^0$ is finite \'{e}tale over $S$ with order invertible on $S$.
\end{defn}

We emphasize the condition that $G/G^0$ is of order invertible on $S$; if this assumption is omitted, such a smooth affine $G$ is often called \textit{geometrically reductive} or \textit{generalized reductive}. However, as we show in Examples~\ref{example:p-is-bad}, \ref{example:p-is-still-bad}, and \ref{example:non-smooth-aut}, general geometrically reductive smooth affine group schemes are more pathological than their reductive counterparts.

\begin{theorem}\label{theorem:intro-wr} (Corollary~\ref{corollary:centralizer-of-weakly-reductive}, Corollary~\ref{cor:double-centralizer}, Proposition~\ref{prop:center}, Proposition~\ref{prop:abelianization})
    Let $S$ be a scheme and let $G$ be a weakly reductive $S$-group scheme.
\begin{enumerate}
    \item If $\Lambda$ is a finite \'etale group scheme acting on $G$ whose order is invertible on $S$, then the fixed point scheme $\fC = C_G(\Lambda)$ is weakly reductive. If $\chara k(s)$ is pretty good\footnote{See Definition~\ref{def:prettygoodprimes}.} for $G_s$ for all $s \in S$, then the centralizer $\Delta = C_G(\fC)$ is weakly reductive.
    \item If $H$ is a simple reductive group scheme acting on $G$ and $(\dim G/\rank H)!$ is invertible on $S$, then the fixed point scheme $C_G(H)$ is weakly reductive.
    \item The center $Z(G)$ is a group scheme of multiplicative type, and it is smooth if $Z(G^0)$ is smooth.
    \item The derived group $\sD(G)$ (in the sense of fppf group sheaves) is represented by a weakly reductive $S$-group scheme, and the abelianization $G^{\mathrm{ab}} = G/\sD(G)$ of $G$ is a smooth group scheme of multiplicative type.
\end{enumerate}
\end{theorem}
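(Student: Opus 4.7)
The plan is to handle each of the four assertions by leveraging the short exact sequence $1 \to G^0 \to G \to G/G^0 \to 1$, in which $G^0$ is reductive over $S$ and $G/G^0$ is finite étale of order invertible on $S$. The invertibility of $|G/G^0|$ is what permits all averaging/transfer arguments throughout; many of the assertions reduce to classical results for $G^0$ (as in \cite{sga3} and \cite{conrad}) combined with careful tracking of the component group, and only one step genuinely requires the pretty good prime hypothesis.

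For (1), smoothness of $\fC = C_G(\Lambda)$ follows from the standard fixed-point smoothness theorem for actions of finite étale group schemes of invertible order on smooth affine $S$-schemes. Its identity component $\fC^0$ centralizes $\Lambda$ inside $G^0$, so by a Steinberg-type theorem identifying the neutral component of the centralizer of such a subgroup scheme in a reductive group scheme as reductive, $\fC^0$ is reductive. The component group $\fC/\fC^0$ is then finite étale, and its fiberwise order divides an explicit combinatorial quantity attached to the root datum of $G^0$ together with $|\Lambda|$ and $|G/G^0|$, each of which is invertible on $S$. For the double centralizer $\Delta = C_G(\fC)$, since $\fC$ is not étale we cannot appeal to averaging, and the central task is $S$-smoothness. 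The plan is to compute $\Lie \Delta = \g^{\fC}$ and use the pretty good prime hypothesis to verify that this Lie-algebra centralizer has fiberwise constant dimension matching $\dim \Delta_s$, from which smoothness follows by the infinitesimal criterion. With smoothness in hand, $\Delta^0$ centralizes the reductive $\fC^0$ and is again reductive, and the component group is bounded fiberwise as before.

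For (2), the simple reductive $H$ acts on $G$, and the numerical hypothesis that $(\dim G/\rank H)!$ is invertible on $S$ ensures via a Jantzen-type linearity theorem that any representation of $H$ on a module of dimension at most $\dim G$ is completely reducible; this makes the $H$-action on $\Lie G$ well-behaved, the fixed locus smooth, and reduces the remaining analysis to that of (1). For (3), $Z(G^0)$ is already of multiplicative type by \cite{conrad}, and $Z(G)$ coincides with the scheme-theoretic fixed points in $Z(G^0)$ of the finite étale group $G/G^0$ acting by conjugation; since $|G/G^0|$ is invertible on $S$, this fixed-point scheme inherits being of multiplicative type and, when $Z(G^0)$ is smooth, inherits smoothness. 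For (4), the derived subgroup $\sD(G^0)$ is a closed reductive subgroup scheme of $G^0$; one constructs $\sD(G)$ as the smallest closed normal smooth subsheaf of $G$ containing $\sD(G^0)$ together with all commutators involving lifts of $G/G^0$, with representability checked fppf-locally on $S$. The quotient $G^{\ab} = G/\sD(G)$ is then smooth commutative with identity component equal to the torus $G^0/\sD(G^0)$, and a fiberwise check (using invertibility of the component-group order) upgrades this to multiplicative type globally.

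The step I expect to be the main obstacle is the smoothness of the double centralizer in part (1). Unlike centralizers of finite étale groups, $\Delta = C_G(\fC)$ resists averaging, and the pathologies mentioned in Examples~\ref{example:non-smooth-aut}, \ref{example:p-is-bad}, and \ref{example:p-is-still-bad} show that smoothness genuinely fails outside pretty good characteristics. The crux will be verifying that $\g^{\fC}$ has the expected dimension on each geometric fiber, which I would reduce, via a maximal torus of $\fC^0$ and invariant-theoretic input, to a root-combinatorial statement valid precisely for pretty good primes; managing the disconnectedness of $\fC$ in this reduction is the most delicate piece.
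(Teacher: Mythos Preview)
Your plan diverges from the paper at two load-bearing points, and in each case the gap is genuine rather than cosmetic.

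\textbf{Smoothness and reductivity of $\Delta = C_G(\fC)$.} Your proposed route is to show that $\dim \fg_s^{\fC_s}$ is constant in $s$ and equals $\dim \Delta_s$, and then invoke the infinitesimal criterion. This does not establish $S$-smoothness: fiberwise smoothness together with constant fiber dimension of a closed subgroup scheme does \emph{not} force flatness over $S$ (there is no miracle-flatness statement of this form for group subschemes). The paper takes a completely different route. It proves $\mathrm{H}^1(\fC_s,\fg_s)=0$ for all $s$ (Theorem~\ref{theorem:fargues-scholze-vanishing}, using the Fargues--Scholze good-filtration result and the pretty good hypothesis), which by deformation theory makes the orbit map $G \to \underline{\Hom}_{S\textrm{-}\rm{gp}}(\fC,G)$ smooth; $\Delta$ is a fiber of this map, hence $S$-smooth. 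Reductivity of $\Delta^0$ is then obtained not from ``centralizer of reductive is reductive'' (which is false in positive characteristic; see the remarks after Corollary~\ref{corollary:centralizer-of-weakly-reductive}) but from Matsushima's criterion: the paper shows $G/\Delta$ embeds in $\underline{\Hom}_{S\textrm{-}\rm{gp}}(\fC,G)$, and the new structural Theorem~\ref{theorem:hom-scheme} (that this Hom-scheme is a disjoint union of $S$-affine pieces) forces $G/\Delta$ to be affine. Your outline never touches the Hom-scheme, which is the technical heart of parts (1) and (2).

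\textbf{The center.} Your identification $Z(G) = Z(G^0)^{G/G^0}$ is incorrect: $Z(G)$ need not lie in $G^0$ (take $G = T \times \underline{\bZ/n\bZ}$ with trivial action). The paper instead constructs $\Aut_{G/S}$ as a smooth $S$-scheme (Theorem~\ref{theorem:aut-scheme}, Lemma~\ref{lemma:aut-scheme-smooth}), proves the conjugation map $G \to \Aut_{G/S}$ is flat, and obtains $Z(G)$ as its kernel; multiplicative type then follows from the short exact sequence with $C_{G^0}(G)$ and a quotient of $G/G^0$. The smoothness of $\Aut_{G/S}$ is exactly where weak reductivity (as opposed to mere geometric reductivity) enters, cf.\ Example~\ref{example:non-smooth-aut}.

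Your sketches for (2) and (4) are in the right spirit but underspecified; in particular (4) requires the power-map Lemma~\ref{lemma:power-homomorphism} to control the central extension once $G^0$ is reduced to a torus.
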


The main new input in the proof of Theorem~\ref{theorem:intro-wr} is an analysis of schemes of homomorphisms between weakly reductive group schemes. If $S$ is a scheme and $G$ and $H$ are $S$-group schemes, we let $\underline{\Hom}_{S\textrm{-}\rm{gp}}(H, G)$ denote the functor which sends an $S$-scheme $S'$ to the set of $S'$-homomorphisms $H_{S'} \to G_{S'}$. In \cite[Exp.\ XXIV, Corollaire 7.2.3]{sga3}, it is proved that if $H$ is a reductive $S$-group scheme and $G$ is a smooth affine $S$-group scheme, then $\underline{\Hom}_{S\textrm{-}\rm{gp}}(H, G)$ is representable by a separated $S$-scheme locally of finite presentation. Usually $\underline{\Hom}_{S\textrm{-}\rm{gp}}(H, G)$ is usually not quasi-compact or flat over $S$ (see Example~\ref{example:sga3-hom-scheme}). However, we will show in Theorem~\ref{theorem:hom-scheme} that $\underline{\Hom}_{S\textrm{-}\rm{gp}}(H, G)$ is always a disjoint union of finitely presented $S$-\textit{affine} $S$-schemes. Proving this involves revisiting the proof of representability of $\underline{\Hom}_{S\textrm{-}\rm{gp}}(H, G)$ in \cite{sga3}, using ind-quasi-affine descent and affineness results for schemes of tori in \cite{Faisceaux-amples}. Over a field, this affineness result was proved in \cite[Theorem 6.3]{Brion}, and in general it strengthens affineness results from \cite[Theorem 3.1.4]{Romagny}. 

Let us give a sense of some key steps in the proof of Theorem~\ref{theorem:intro-wr}, starting with part (2).
\begin{enumerate}
    \item Use known cohomology vanishing results (Theorem~\ref{theorem:jantzen-semisimple}) to see that $C_G(H)$ is smooth affine, and use classical arguments over a field to show that $C_G(H)/C_G(H)^0$ is of order invertible on $S$.
    \item Reduce to the case $S = \Spec A$ for a DVR $A$, and use Matsushima's theorem (Theorem~\ref{theorem:alper}) to reduce to showing that the quotient $G/C_G(H)$ is affine.
    \item Show that the natural monomorphism $i: G/C_G(H) \to \underline{\Hom}_{S\textrm{-}\rm{gp}}(H, G)$, given as the orbit map through the inclusion $H \to G$, is a closed embedding, and conclude using the above geometric property of $\underline{\Hom}_{S\textrm{-}\rm{gp}}(H, G)$.
\end{enumerate}
The proof of Theorem~\ref{theorem:intro-wr}(1) is similar, but when $H = C_G(\Lambda)$ we cannot show that $i$ is a closed embedding, so our argument is slightly longer.

To prove Theorem~\ref{theorem:intro-wr}(3), we first show that the automorphism functor $\underline{\Aut}_{G/S}$ is representable by a smooth clopen subscheme $\Aut_{G/S}$ of $\underline{\Hom}_{S\textrm{-}\rm{gp}}(G, G)$. Since $Z(G)$ is the kernel of the natural $S$-homomorphism $\vp: G \to \Aut_{G/S}$, this reduces us to understanding $\Aut_{G/S}$ and $\vp$. We prove Theorem~\ref{theorem:intro-wr}(4) through a somewhat complicated reduction to the separate cases of reductive group schemes and finite etale group schemes, both of which are understood.

\begin{remark}
Appendix~\ref{section:appendix-b} provides a curious consequence of Theorem~\ref{theorem:intro-wr} to the sizes of component groups of centralizers over fields.  It illustrates the power of working with group schemes over rings, even when interested in questions over fields.
\end{remark}

\subsection{Outline of the Paper}
Sections~\ref{sec:weaklyreductive} and \ref{sec:centralizers} develop the theory of weakly reductive group schemes and establish a variety of results about centralizers.  Section~\ref{sec:decomp types} introduces the notion of a decomposition type, and Sections~\ref{sec:clifford} and \ref{sec:liftingsmr} use this and the results about weakly reductive group schemes to construct lifts and the minimally ramified deformation condition.  Depending on the reader's interests, Sections~\ref{sec:decomp types}-\ref{sec:liftingsmr} can be read first relying  on the properties of weakly reductive group schemes summarized in Theorem~\ref{theorem:intro-wr}.

\subsection{Notation and Terminology}
Given a group scheme $H$ defined over a ring $R$ and an $R$-algebra $A$, we write $H_A$ for the base change of $H$ to $A$, and write $H(A)$ for the $A$-points of $H$.

If $S$ is a scheme and $H$ is an $S$-group scheme acting on another $S$-group scheme $G$, then we denote by $C_G(H)$ the functor of fixed points for the action of $H$ on $G$. If $H$ is an $S$-subgroup scheme of $G$ then we denote by $N_G(H)$ the functor of sections of $G$ normalizing $H$. Note that if $H$ is an $S$-subgroup scheme of $G$ then $C_G(H)$ is the centralizer of $H$ in $G$. For representability results, see \cite[Exp.\ XII, Proposition 9.2]{sga3} and \cite[Proposition 2.1.6]{conrad}; when these functors are representable we will use the same notation to denote their representing objects.

We follow the convention in \cite{sga3}
and require that reductive group schemes have connected fibers.  In Section~\ref{sec:weaklyreductive} we introduce the notion of \textit{weakly reductive} group schemes which allows disconnected fibers under some hypotheses.  
However, when working over a field we do allow reductive groups to be disconnected, following general practice. We will require all groups of multiplicative type to be finitely presented, unlike the definition in \cite[Exp.\ IX]{sga3}.

For a local field $F$, we use $\Gamma_F$ to denote the absolute Galois group of $F$, $I_F$ to denote the inertia subgroup of $\Gamma_F$, and $\Lambda_F \subset I_F$ to be the kernel of a homomorphism $I_F \to \Z_p$ as in Section~\ref{ss:galois}.

We also recall the definition of a good and pretty good primes for a root datum $(X,\Phi,Y,\Phi^\vee)$. 

\begin{defn}\label{def:goodprimes for G} 
A prime $p$ is \emph{good} if for every closed subsystem $\Sigma \subset \Phi$, $\Z\Phi/\Z\Sigma$ is $p$-torsion free. 
\end{defn}

\begin{defn} \label{def:prettygoodprimes}
We say that $p$ is \emph{pretty good} if the groups $X/\Z \Phi'$ and $Y/\Z {\Phi'}^\vee$ have no $p$-torsion for all subsets $\Phi' \subset \Phi$.
\end{defn}

A prime is good (resp. pretty good) for a weakly reductive group $G$ if it is good (resp. pretty good) for the root datum associated to $G^0$.   By convention, we also say that $0$ is good (and pretty good).

\begin{remark} \label{rmk:pretty good}
By \cite[Lemma 2.2]{cotner}, a prime $p$ is pretty good for a connected reductive group $G$ over a field of characteristic $p$ if and only if all of the following conditions hold:
\begin{enumerate}
    \item $p$ is good for $G$,
    \item $p$ does not divide the order of $\pi_1(\sD(G))$,
    \item $Z(G)$ is smooth.
\end{enumerate}
\end{remark}

\subsection{Acknowledgments}  Booher was partially supported by the Marsden Fund Council administered by the Royal Society of New Zealand.  We thank
Patrick Allen,
Brian Conrad,
Pol van Hoften,
Mikko Korhonen,
Daniel Le,
Martin Liebeck, 
Ben Martin,
Gil Moss,
Stefan Patrikis,
Jeroen Schillewaert,
Jay Taylor, and
Felipe Voloch
for helpful conversations.  We thank the anonymous referees for their work.

\section{Weakly reductive group schemes} \label{sec:weaklyreductive}

In this section we study weakly reductive group schemes. Weak reductivity is slightly more stringent than the condition of \emph{geometric reductivity}, as introduced in \cite[Definition 9.1.1]{Alper-adequate}. We will not recall the definition in general, but we quote the following theorem, which will be used several times in the sequel, especially in Section~\ref{sec:centralizers}.

\begin{theorem}\label{theorem:alper}\cite[Theorems 9.4.1, 9.7.6]{Alper-adequate}
Let $S$ be a scheme, and let $H \subset G$ be flat, finitely presented, and separated $S$-group schemes, with $H$ closed in $G$.
\begin{enumerate}
    \item If $G$ is smooth and affine, then it is geometrically reductive if and only if $G^0$ is reductive and $G/G^0$ is finite.
    \item If $G$ is affine and geometrically reductive, then $H$ is geometrically reductive if and only if $G/H$ is affine.
\end{enumerate}
\end{theorem}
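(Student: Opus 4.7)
The plan is to reduce each part to better-known results: part (1) to the classical characterization of geometrically reductive algebraic groups over a field combined with standard structure theory for smooth affine group schemes, and part (2) to a relative form of Matsushima's theorem.

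For part (1), the ``if'' direction is an extension argument. The reductive group scheme $G^0$ is geometrically reductive by the relative form of Haboush's theorem (Seshadri--Waterhouse), and the finite $S$-group scheme $G/G^0$ is automatically geometrically reductive (a finite flat affine $S$-group scheme has a cohomologically affine classifying stack). Since geometric reductivity is preserved under extensions, which one checks directly via the cohomological description of geometric reductivity on $BG$, $G$ itself is geometrically reductive. For the ``only if'' direction, the question is local on $S$, and after passing to geometric fibers one invokes the classical statement over a field that a smooth affine group is geometrically reductive if and only if its identity component is reductive and its component group is finite. One then checks that these fiberwise conditions spread out: smoothness of $G$ gives smoothness of $G^0$, reductivity is an open condition on the base by \cite[Exp.\ XIX]{sga3}, and fiberwise finiteness of the component group combined with flatness promotes to finiteness of $G/G^0$ over $S$.

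For part (2), this is the relative Matsushima--Richardson theorem. The ``only if'' direction uses that when $H$ is geometrically reductive, the $H$-invariants functor is exact on the category of $H$-equivariant quasi-coherent sheaves, so the affine GIT quotient $\Spec \Gamma(G,\cO_G)^H$ represents $G/H$ and is affine whenever $G$ is. The harder ``if'' direction starts with an $H$-equivariant surjection $V \twoheadrightarrow L$ of $H$-modules and induces it up to a $G$-equivariant map on global sections of the associated vector bundles on $G/H$; affineness of $G/H$ guarantees that this induction is exact and preserves surjectivity, after which geometric reductivity of $G$ produces a $G$-invariant in some symmetric power, which restricts back to a nonzero $H$-invariant, exhibiting geometric reductivity of $H$.

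The main obstacle is the ``if'' direction of part (2): the passage from affineness of $G/H$ to geometric reductivity of $H$. Extending Matsushima's field-theoretic proof to an arbitrary base $S$ requires controlling the induction functor along $H \hookrightarrow G$ in families and checking that invariants and adequate-moduli-space formation commute with the flat base changes that arise. Alper's formalism of adequate moduli spaces in \cite{Alper-adequate} packages precisely this machinery, and I would defer to that reference for the detailed cohomological infrastructure rather than reconstruct it here.
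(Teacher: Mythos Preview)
The paper does not give its own proof of this statement: it is quoted verbatim as \cite[Theorems 9.4.1, 9.7.6]{Alper-adequate} and used as a black box throughout. So there is no ``paper's proof'' to compare against; your proposal is a sketch of the arguments in Alper's paper rather than an alternative to anything the authors wrote.

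That said, your outline is broadly faithful to what Alper does. A couple of points where your sketch is a bit loose: in the ``only if'' direction of (1), you reduce to geometric fibers, but geometric reductivity in Alper's sense is a relative notion (adequacy of $BG \to S$), so one needs that it is stable under base change before checking fiberwise---this is in \cite{Alper-adequate} but is not free. Likewise, the step ``fiberwise finiteness of the component group combined with flatness promotes to finiteness of $G/G^0$ over $S$'' needs the fact that $G/G^0$ is already \'etale and separated (which follows from smoothness of $G$ and \cite[Proposition 3.1.3]{conrad}), after which fiberwise finiteness gives properness and hence finiteness. Your sketch of (2) is the standard Matsushima--Richardson strategy, and you correctly identify that the induction/invariants machinery over a general base is the real content, which is exactly what Alper's adequate-moduli-space formalism supplies. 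Since you end by deferring to \cite{Alper-adequate} anyway, your proposal and the paper's treatment ultimately coincide: cite Alper.
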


In light of Theorem~\ref{theorem:alper}, a weakly reductive group scheme is just a geometrically reductive smooth affine group scheme with tame component group in the sense of \cite{AOV}. We will extend some fundamental constructions for reductive group schemes to weakly reductive group schemes. These extensions do not generally work for more general geometrically reductive smooth affine group schemes (see Examples~\ref{example:non-smooth-aut}, \ref{example:p-is-bad}, and \ref{example:p-is-still-bad}). We note that, by \cite[Theorem 9.9]{Alper-Hall-Rydh}, all smooth affine linearly reductive group schemes are weakly reductive; we omit the definition of linear reductivity (which may be found in \cite[Definition 2.1]{Alper-Hall-Rydh} because it is of a technical nature orthogonal to the goals of this paper. %

We work throughout with arbitrary base schemes, but many proofs begin by reducing to simpler cases. For the most part, we do not spell out these reductions in detail, and we refer the reader to \cite[IV\textsubscript{3}, Sections 8, 9, 11]{EGA} for the techniques involved in such reduction steps.

\subsection{Schemes of homomorphism}\label{section:hom-scheme}

Let $S$ be a scheme. If $G$ and $H$ are $S$-group schemes, then we define the set-valued functor $\underline{\Hom}_{S\textrm{-}\rm{gp}}(H, G)$ on $S$-schemes by
\[
\underline{\Hom}_{S\textrm{-}\rm{gp}}(H, G)(S') \coloneqq \Hom_{S'\textrm{-}\rm{gp}}(H_{S'}, G_{S'}).
\]
The goal of this section is to study this functor. The first aim is the following theorem.

\begin{theorem}\label{theorem:hom-scheme}
Let $S$ be a scheme, let $G$ be a smooth affine $S$-group scheme, and let $H$ be a geometrically reductive smooth affine $S$-group scheme. The functor $\underline{\Hom}_{S\textrm{-}\rm{gp}}(H, G)$ is representable by an ind-quasi-affine $S$-scheme locally of finite presentation. Moreover, suppose $S$ is normal, quasi-compact, and quasi-separated, and $H^0$ admits a maximal $S$-torus. Then $\underline{\Hom}_{S\textrm{-}\rm{gp}}(H, G)$ is representable by a disjoint union of finitely presented $S$-affine $S$-schemes.
\end{theorem}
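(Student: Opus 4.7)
My approach is to bootstrap from SGA3 XXIV.7.2.3 (which gives $\underline{\Hom}_{S\textrm{-}\rm{gp}}(H^0, G)$ as a separated $S$-scheme locally of finite presentation when $H^0$ is reductive) in two stages: first reduce the geometrically reductive case to the connected reductive one via the restriction morphism $r \colon \underline{\Hom}_{S\textrm{-}\rm{gp}}(H, G) \to \underline{\Hom}_{S\textrm{-}\rm{gp}}(H^0, G)$, then analyze the latter by restricting further to a maximal torus of $H^0$ and exploiting the affineness theory of tori-Hom schemes from \cite{Faisceaux-amples}.

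For the reduction to the connected case, I would show that $r$ is representable by quasi-affine morphisms of finite presentation. Étale-locally on $S$, the finite étale quotient $H/H^0$ becomes a constant finite group $\Gamma$, and extending a given $\psi \colon H^0_T \to G_T$ to a homomorphism $H_T \to G_T$ amounts to choosing, for each $\gamma \in \Gamma$, a lift in $G$ that intertwines $\psi$ with the conjugate of $\psi$ by any lift of $\gamma$ to $H$, subject to the cocycle relations inherited from $1 \to H^0 \to H \to \Gamma \to 1$. Each such choice lies in a transporter scheme, which is a closed subscheme of a $C_G(\psi(H^0))$-torsor, hence quasi-affine of finite presentation because $C_G(\psi(H^0))$ is a smooth closed subgroup scheme of $G$. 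The fiber of $r$ over $\psi$ is cut out of a finite product of such torsors by the closed conditions encoding the group law, so it is quasi-affine and finitely presented; étale descent assembles these fibers into an ind-quasi-affine morphism $r$.

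For the analysis of $\underline{\Hom}(H^0, G)$ and the affineness upgrade, I work étale-locally on $S$ to fix a maximal torus $T \subset H^0$, and consider the further restriction $s \colon \underline{\Hom}_{S\textrm{-}\rm{gp}}(H^0, G) \to \underline{\Hom}_{S\textrm{-}\rm{gp}}(T, G)$. Given $\psi_0 \colon T \to G$, an extension to $H^0$ is determined by compatible root-group homomorphisms $U_\alpha \to G$ (one-parameter subgroups $\Ga \to G$ transforming correctly under $T$) satisfying the Chevalley commutator relations---a closed condition inside a finite product of copies of $G$. Hence $s$ is relatively affine and finitely presented. The decisive input is then that $\underline{\Hom}_{S\textrm{-}\rm{gp}}(T, G)$ is itself ind-quasi-affine and locally of finite presentation over $S$ in general, and a disjoint union of $S$-affine finitely presented schemes when $S$ is normal and $T$ is defined globally: the ind-quasi-affine form follows from \cite{Faisceaux-amples}, and the affine form over a normal base by combining this with \cite[Theorem 3.1.4]{Romagny} and \cite[Theorem 6.3]{Brion}. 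Composing with the quasi-affineness of $r$ then yields both assertions of the theorem.

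The main obstacle is the geometric analysis of $\underline{\Hom}(T, G)$ itself: SGA3 gives only separated representability, so establishing ind-quasi-affineness (and affineness over a normal base) requires genuinely different input, namely the affineness theory for schemes of subtori from Faisceaux-amples coupled with a careful use of a pinning of $H^0$ to present homomorphisms out of $H^0$ in terms of $T$ and the root groups. A secondary delicacy is the descent step---passing from étale-local data back to $S$ while preserving quasi-affineness---which requires the ind-quasi-affine descent framework flagged in the introduction, and which is where the normality of $S$ enters essentially (to control sections across codimension one and to globalize the Brion-type affineness results).
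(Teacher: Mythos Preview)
Your approach is essentially the paper's: both reduce $H \to H^0 \to T$ and rely on \cite{Faisceaux-amples} for the torus case. Two points where the paper is cleaner. First, for the step $H \to H^0$, the paper simply exhibits a closed embedding $\underline{\Hom}(H,G) \hookrightarrow \underline{\Hom}(H^0,G) \times G^n$ (after \'etale-localizing to get coset representatives $h_1,\dots,h_n$), cut out by the obvious cocycle conditions; your transporter-torsor description is the same thing fiberwise, but your claim that $C_G(\psi(H^0))$ is smooth is unjustified (and a major theme of the paper is that such centralizers need \emph{not} be smooth in small characteristic) --- fortunately you do not need it, since each transporter is closed in the affine $G$, hence affine outright. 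Second, for the step $H^0 \to T$ you reconstruct by hand (via root groups and Chevalley relations) exactly the statement the paper cites as \cite[Exp.\ XXIV, Corollaire 7.1.9]{sga3}: restriction to a maximal torus is affine and finitely presented. Finally, your diagnosis of where normality enters is slightly off: ind-quasi-affine descent works over any base; normality is needed to invoke \cite{Faisceaux-amples} directly for $\underline{\Hom}(T,G)$, and the paper recovers the general ind-quasi-affine statement by passing to the normalization of a finite-type $\bZ$-scheme and pushing back down via a finite-cover criterion.
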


As a general rule, we like to use underlines to refer to functors, and omit the underline when referring to a representing object. However, in this case the notation without the underline has an independent meaning, so it would be confusing to omit it.

It is not clear a priori that $\underline{\Hom}_{S\textrm{-}\rm{gp}}(H, G)$ is even representable; for this, we begin with the following fundamental result of Demazure.

\begin{lemma}\label{lemma:demazure-rep}
Suppose that $H$ is a reductive $S$-group scheme and that $G$ is smooth and quasi-projective over $S$ with affine fibers. Then $\underline{\Hom}_{S\textrm{-}\rm{gp}}(H, G)$ is representable by a separated $S$-scheme locally of finite presentation.
\end{lemma}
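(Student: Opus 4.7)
The plan is to reduce the statement to Demazure's classical representability theorem in SGA 3, Expos\'e XXIV, Corollaire 7.2.3, which handles the case in which $G$ is $S$-affine of finite presentation. The only genuinely new content beyond that citation is the mild extension to $G$ smooth and quasi-projective over $S$ with affine fibers.

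For the affine case, I would briefly sketch the SGA 3 argument since it is the engine of the proof. Representability, separatedness, and local finite presentation are all local on $S$ for the fpqc topology, so one may pass to a cover over which $H$ splits and admits a Chevalley pinning $(T, M, R, \Phi, (X_\alpha))$. In that setting, any $S'$-group homomorphism $\phi : H_{S'} \to G_{S'}$ is determined by its restrictions to $T_{S'}$ and to each root subgroup $U_\alpha \cong \Ga$ over $S'$, subject to the Steinberg commutation and Weyl-braid relations. Representability of $\underline{\Hom}_{S\textrm{-}\rm{gp}}(T, G)$ is classical, via fpqc-local diagonalizability of $T$ and Cartier duality, while the Steinberg relations cut out a closed subfunctor of an explicit product of smooth $S$-schemes indexed by the root system. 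Finiteness of the root system yields local finite presentation, and closedness of the defining relations yields separatedness.

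To extend to the case where $G$ is only quasi-projective with affine fibers, I would argue Zariski-locally on the target. Any $S'$-group homomorphism $\phi : H_{S'} \to G_{S'}$ sends the identity section to the identity section, and because the fibers of $H_{S'}$ are geometrically connected, the scheme-theoretic image of $\phi$ lies fiberwise in a single connected component of $G_{S'}$. The quasi-projectivity of $G$ together with the affineness of its fibers lets us cover a neighborhood of the identity section by $S$-affine opens $V_i \subset G$; after shrinking $S'$ one can confine $\phi$ to a single $V_i$, reducing to the already-established affine case. The local representing objects then glue to a global separated $S$-scheme locally of finite presentation.

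The principal obstacle is controlling the factorization through the $V_i$ functorially and verifying that the condition for such a factorization is itself cut out by Zariski-open refinements of $S'$. This requires some care with constructibility and Noetherian approximation, but presents no essential difficulty once the affine case is in hand; the real mathematical substance lives entirely in the classical SGA 3 argument.
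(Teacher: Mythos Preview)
The paper's proof is a one-line citation: the result \emph{is} \cite[Exp.\ XXIV, Corollaire 7.2.3]{sga3}, which is already stated for $G$ smooth, quasi-projective over $S$ with affine fibers. Your premise that the SGA~3 corollary only treats $S$-affine $G$ is mistaken, so the ``mild extension'' you propose is unnecessary.

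Beyond being unnecessary, your extension argument does not work as written. You propose to cover a neighborhood of the identity section of $G$ by $S$-affine opens $V_i$ and then ``confine $\phi$ to a single $V_i$, reducing to the already-established affine case.'' But the $V_i$ are open subschemes of $G$, not subgroup schemes, so there is no functor $\underline{\Hom}_{S\textrm{-}\rm{gp}}(H, V_i)$ to reduce to. Even setting that aside, the image of an $S'$-homomorphism $\phi: H_{S'} \to G_{S'}$ is a closed subgroup scheme that can be as large as $G_{S'}^0$ itself (take $\phi$ surjective onto the identity component); there is no reason it should fit inside a single affine open neighborhood of the identity, and ``shrinking $S'$'' does nothing to help since the obstruction is already present on fibers. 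The quasi-projective and affine-fibers hypotheses enter the SGA~3 argument at a different point, namely in the representability of $\underline{\Hom}_{S\textrm{-}\rm{gp}}(T, G)$ for a torus $T$, after which one uses that the restriction map $\underline{\Hom}_{S\textrm{-}\rm{gp}}(H, G) \to \underline{\Hom}_{S\textrm{-}\rm{gp}}(T, G)$ is affine and finitely presented.
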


\begin{proof}
This is \cite[Exp.\ XXIV, Corollaire 7.2.3]{sga3}.
\end{proof}

\begin{example}\cite[Exp.\ XXIV, 7.4]{sga3}\label{example:sga3-hom-scheme}
    The scheme $\underline{\Hom}_{\bZ\textrm{-}\rm{gp}}(\SL_{2, \bZ}, \SL_{2, \bZ})$ is the disjoint union of the following $\bZ$-schemes:
    \begin{enumerate}
        \item a scheme isomorphic to $\Spec \bZ$ (corresponding to the trivial homomorphism $\SL_{2, \bZ} \to \SL_{2, \bZ}$),
        \item a scheme isomorphic to $\PGL_{2, \bZ}$ (corresponding to conjugates of the identity homomorphism $\SL_{2, \bZ} \to \SL_{2, \bZ}$),
        \item for each prime number $p$ and each positive integer $n$, a scheme isomorphic to $\PGL_{2, \bF_p}$ (corresponding to conjugates of the $p^n$-Frobenius morphism $\SL_{2, \bF_p} \to \SL_{2, \bF_p}$).
    \end{enumerate}
    In particular, $\underline{\Hom}_{\bZ\textrm{-}\rm{gp}}(\SL_{2, \bZ}, \SL_{2, \bZ})$ is neither flat nor quasi-compact.
\end{example}

To prove Theorem~\ref{theorem:hom-scheme}, we will use \'{e}tale descent to pass further to the case that $H/H^0$ is constant and then realize $\underline{\Hom}_{S\textrm{-}\rm{gp}}(H, G)$ as a closed subscheme of $\underline{\Hom}_{S\textrm{-}\rm{gp}}(H^0, G) \times G^n$ for some $n$. However, in order to make the descent argument one needs to know effectivity of \'{e}tale descent for closed subschemes of $\underline{\Hom}_{S\textrm{-}\rm{gp}}(H^0, G) \times G^n$. Since $\underline{\Hom}_{S\textrm{-}\rm{gp}}(H^0, G)$ is usually not quasi-compact over $S$, this descent argument is not trivial. Thus before showing representability we will show that $\underline{\Hom}_{S\textrm{-}\rm{gp}}(H^0, G)$ is \textit{ind-quasi-affine} over $S$ \cite[\href{https://stacks.math.columbia.edu/tag/0AP6}{Tag 0AP6}]{stacks-project} in order to apply effectivity of fpqc descent for ind-quasi-affine morphisms \cite[\href{https://stacks.math.columbia.edu/tag/0APK}{Tag 0APK}]{stacks-project}. We begin with a more detailed study of $\underline{\Hom}_{S\textrm{-}\rm{gp}}(H, G)$ in the case that $H$ is reductive.

\begin{lemma}\label{lemma:ind-quasi-affine-criterion}
    Let $X$ be a locally noetherian scheme, and let $\pi: Y \to X$ be a finite surjective morphism, where $Y$ is a disjoint union of affine schemes. Then $X$ is ind-quasi-affine. If $\pi$ is open (e.g., flat), then $X$ is a disjoint union of affine schemes.
\end{lemma}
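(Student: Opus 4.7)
The plan is to reduce to Chevalley's theorem---a (locally) noetherian scheme admitting a finite surjection from an affine scheme is itself affine---exploiting the fact that the affine components of $Y$ are clopen, so that quasi-compact subsets of $Y$ can only meet finitely many of them.

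For the ind-quasi-affineness claim, I would take an arbitrary quasi-compact open $U \subseteq X$ and show that $U$ is quasi-affine. Writing $Y = \bigsqcup_{i \in \mathcal{I}} Y_i$ with each $Y_i$ affine and clopen, the finiteness of $\pi$ together with quasi-compactness of $U$ makes $\pi^{-1}(U)$ a quasi-compact open of $Y$, hence contained in $Y_I \coloneqq \bigsqcup_{i \in I} Y_i$ for some finite $I \subseteq \mathcal{I}$. The scheme $Y_I$ is affine (finite disjoint union of affines) and clopen in $Y$, so $Y_I \to X$ remains finite. Let $Z_I \hookrightarrow X$ be the scheme-theoretic image of $Y_I \to X$; then $Z_I$ has underlying space $\pi(Y_I)$, is closed in the locally noetherian $X$ and quasi-compact (as the image of a quasi-compact scheme), hence noetherian. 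Applying Chevalley's theorem to the finite surjection $Y_I \to Z_I$ from the affine $Y_I$ forces $Z_I$ to be affine. Since $\pi^{-1}(U) \subseteq Y_I$ implies $U \subseteq \pi(Y_I) = |Z_I|$, the open $U$ is a quasi-compact open of the affine scheme $Z_I$, hence quasi-affine.

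For the disjoint union claim when $\pi$ is open, openness of $\pi$ combined with its closedness (from finiteness) makes each $\pi(Y_i)$ clopen in $X$, and the previous paragraph applied with $I = \{i\}$ shows $\pi(Y_i)$ is in fact affine, so $X$ is covered by clopen affine subschemes. Since $X$ is locally noetherian it is locally connected (spectra of noetherian rings have finitely many connected components), so every connected component of $X$ is clopen. Any such component $C$ is contained in some $\pi(Y_i)$, because the restrictions $\pi(Y_i) \cap C$ form a clopen cover of the connected set $C$ and at least one such restriction is non-empty; then $C$ is clopen in the affine scheme $\pi(Y_i)$ and is therefore itself affine. Writing $X$ as the scheme-theoretic coproduct of its clopen connected components then exhibits it as a disjoint union of affines. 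I do not anticipate any serious obstacle here; the main points to verify carefully are that the scheme-theoretic image $Z_I$ really is noetherian so that Chevalley's theorem applies, and that local noetherianness of $X$ yields local connectedness (hence clopen connected components) in the second part.
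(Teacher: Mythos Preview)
Your overall strategy matches the paper's, but there is a genuine gap in the first part. You assert that ``the open $U$ is a quasi-compact open of the affine scheme $Z_I$, hence quasi-affine,'' but $U$ carries the scheme structure of an open subscheme of $X$, whereas $Z_I$ is a \emph{closed} subscheme of $X$; the open subscheme of $Z_I$ with underlying set $|U|$ is $Z_I \times_X U$, which is only a closed subscheme of $U$, not $U$ itself. Concretely, take $X = \Spec k[\epsilon]/(\epsilon^2)$, $Y = \Spec k$, and $\pi$ the reduction map: then for $U = X$ your $Z_I$ is $\Spec k$, and $U$ is certainly not an open subscheme of $Z_I$. The issue is that a finite surjective $\pi$ need not be scheme-theoretically dominant, so $\mathcal{O}_X \to \pi_*\mathcal{O}_{Y_I}$ can have a kernel on $U$.

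The fix is precisely the extra step the paper takes: replace $U$ by its schematic closure $\overline{U}$ in $X$. Then $|\overline{U}| \subseteq |Z_I|$, so $\overline{U}_{\mathrm{red}}$ is a closed subscheme of the affine $(Z_I)_{\mathrm{red}}$ and hence affine; applying Chevalley once more to the nilpotent thickening $\overline{U}_{\mathrm{red}} \hookrightarrow \overline{U}$ (noting $\overline{U}$ is noetherian, being closed in the locally noetherian $X$ with quasi-compact support) gives that $\overline{U}$ is affine, and now $U$ really is an open subscheme of the affine $\overline{U}$. The same scheme-structure issue lurks in your second paragraph when you invoke ``the previous paragraph'' to conclude $\pi(Y_i)$ is affine, but there it is harmless: since $\pi(Y_i)$ is clopen in $X$, it is noetherian with its open-subscheme structure and receives a finite surjection from the affine $Y_i$, so Chevalley applies directly. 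Your connected-component argument at the end is correct and in fact spells out a step the paper leaves implicit.
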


\begin{proof}
    Let $U \subset X$ be a quasi-compact open subscheme; to show that $X$ is ind-quasi-affine, we must show that $U$ is quasi-affine. Note that $\pi^{-1}(U) \subset Y$ is quasi-compact, so it is contained in an affine clopen subscheme $V \subset Y$ by assumption. By Chevalley's theorem that affineness can be checked after passing to a finite cover \cite[II, Th\'eor\`eme 6.7.1]{EGA}, the closed subset $\pi(V) \subset X$ is affine (when considered with its reduced subscheme structure). The schematic closure $\overline{U}$ of $U$ in $X$ is a closed subset of $\pi(V)$, so $\overline{U}_{\mathrm{red}}$ is affine. Thus by Chevalley's theorem again, $\overline{U}$ is affine. Since $U$ is open in $\overline{U}$, it follows that $U$ is quasi-affine.\smallskip

    If $\pi$ is open, then the closed subset $\pi(V) \subset X$ is also open, so it is affine when considered with the structure of an open subscheme of $X$.
\end{proof}

\begin{lemma}\label{lemma:torus-source}
In the setting of Theorem~\ref{theorem:hom-scheme}, suppose that $H$ is a torus. Then $\underline{\Hom}_{S\textrm{-}\rm{gp}}(H, G)$ is representable by a smooth ind-quasi-affine $S$-scheme. If $S$ is normal, quasi-compact, and quasi-separated, then $\underline{\Hom}_{S\textrm{-}\rm{gp}}(H, G)$ is representable by a disjoint union of smooth affine $S$-schemes.
\end{lemma}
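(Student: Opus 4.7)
Representability by a separated $S$-scheme locally of finite presentation is immediate from Lemma~\ref{lemma:demazure-rep}, since $G$ is smooth affine (hence smooth quasi-projective with affine fibers) and $H$ is reductive. For smoothness I would verify the infinitesimal lifting criterion. Given a square-zero closed immersion $\Spec A \hookrightarrow \Spec A'$ over $S$ with ideal $I$ and a homomorphism $f \colon H_A \to G_A$, the torus $H_A$ admits a unique lift to $H_{A'}$ by rigidity of groups of multiplicative type, and the obstruction to lifting $f$ to $H_{A'} \to G_{A'}$ lies in $H^2(H_A, \Lie(G_A) \otimes_A I)$, where $H_A$ acts through $\Ad \circ f$; this vanishes since $H$ is of multiplicative type, hence linearly reductive.

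\textbf{Reduction to $\GL(V)$ and decomposition.} For the geometric claims, I would reduce to the case $G = \GL(V)$ by choosing, Zariski-locally on $S$, a closed immersion $G \hookrightarrow \GL(V)$ for some finite locally free $\cO_S$-module $V$. Then $\underline{\Hom}_{S\textrm{-}\rm{gp}}(H, G)$ is a closed subscheme of $\underline{\Hom}_{S\textrm{-}\rm{gp}}(H, \GL(V))$. A homomorphism $H_{S'} \to \GL(V)_{S'}$ is the same data as an $X(H_{S'})$-grading of $V_{S'}$, so after passing to a finite \'{e}tale cover $\pi \colon S'' \to S$ splitting $H$, the scheme $\underline{\Hom}_{S''\textrm{-}\rm{gp}}(H_{S''}, \GL(V_{S''}))$ decomposes as the disjoint union, over finitely supported dimension functions $d \colon X(H_{S''}) \to \ZZ_{\geq 0}$ with $\sum_\chi d(\chi) = \rank V$, of the homogeneous spaces $\GL(V_{S''})/L_d$, where $L_d = \prod_\chi \GL_{d(\chi)}$ is the Levi stabilizer of the corresponding grading. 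Each quotient is smooth and affine by Matsushima (Theorem~\ref{theorem:alper}(2)) since $L_d$ is reductive.

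\textbf{Descent and main obstacle.} Applying Lemma~\ref{lemma:ind-quasi-affine-criterion} to the finite \'{e}tale (hence flat and open) surjection obtained by base change along $\pi$ shows that, locally on $S$, the scheme $\underline{\Hom}_{S\textrm{-}\rm{gp}}(H, \GL(V))$ is a disjoint union of smooth affine $S$-schemes, whence the ind-quasi-affine conclusion follows for the closed subscheme $\underline{\Hom}_{S\textrm{-}\rm{gp}}(H, G)$. Under normality, quasi-compactness, and quasi-separatedness of $S$, one can after a standard approximation argument arrange for $\pi$ to be globally finite \'{e}tale surjective (using that tori split over finite \'{e}tale covers of normal connected bases), and the deck transformation action on the discrete index set of dimension functions has well-defined finite orbits on each connected component of $S$, yielding the stronger global decomposition into smooth affine $S$-schemes. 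The main technical hurdle is precisely this descent step: while Lemma~\ref{lemma:ind-quasi-affine-criterion} directly handles the ind-quasi-affine conclusion, controlling the Galois action on the index set of gradings to obtain a genuine disjoint union of affines over $S$ (as opposed to merely \'{e}tale-locally on $S$) requires the normality hypothesis.
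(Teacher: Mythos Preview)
Your approach is genuinely different from the paper's and largely correct. The paper simply cites \cite[Exp.\ XI, Corollaire 4.2]{sga3} for smoothness and \cite[Th\'eor\`eme IX 2.6]{Faisceaux-amples} for the entire normal locally noetherian case, then deduces ind-quasi-affineness in general by spreading out to finite type over $\bZ$, passing to the \emph{normalization} $S' \to S$ (finite since $S$ is excellent), and applying Lemma~\ref{lemma:ind-quasi-affine-criterion}. Your route via an explicit closed embedding into $\underline{\Hom}(H, \GL(V))$ and the description of the latter as $\coprod_d \GL(V)/L_d$ is more elementary, avoiding the appeal to Raynaud; and your use of a finite \'etale cover splitting $H$ (rather than a normalization of the base) works because tori over any connected scheme are isotrivial, the \'etale fundamental group being profinite. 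The ind-quasi-affine conclusion goes through as you describe.

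There is, however, a gap in your normal case. The closed immersion $G \hookrightarrow \GL(V)$ exists only Zariski-locally on $S$, so the index set of dimension functions $d$ on which your Galois group acts is not globally defined; your descent argument tacitly assumes a global $V$. Since ``disjoint union of $S$-affine schemes'' is not Zariski-local on $S$, you cannot simply patch. One fix: after reducing to $S$ normal noetherian, observe that $\underline{\Hom}_S(H,G)$ is smooth over the normal $S$, hence itself normal, so its connected components are irreducible. For any affine open $U \subset S$ and any connected component $Y$, the restriction $Y|_U$ is irreducible and clopen in $\underline{\Hom}_U(H,G) = \coprod_\alpha Z_\alpha$ (your local decomposition), hence lands in a single affine $Z_\alpha$ and is closed there; thus $Y \to S$ is affine since affineness is Zariski-local on the target. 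This recovers the global statement without Raynaud, but it is an extra step your proposal does not supply.
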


\begin{proof}
First, smoothness of $\underline{\Hom}_{S\textrm{-}\rm{gp}}(H, G)$ is proved in \cite[Exp.\ XI, Corollaire 4.2]{sga3}. If $S$ is normal and locally noetherian, then the result follows from \cite[Th\'eor\`eme IX 2.6]{Faisceaux-amples}. For the remainder, we therefore assume that $S$ is quasi-compact and quasi-separated. By spreading out (using \cite[Theorem C.9]{Thomason-Trobaugh}), we may assume that $S$ is of finite type over $\Spec \bZ$. In particular, the normalization $S' \to S$ is finite. Now $\underline{\Hom}_{S'\textrm{-}\rm{gp}}(H_{S'}, G_{S'})$ is a disjoint union of affine $S$-schemes and the morphism $\underline{\Hom}_{S'\textrm{-}\rm{gp}}(H_{S'}, G_{S'}) \to \underline{\Hom}_{S\textrm{-}\rm{gp}}(H, G)$ is finite and surjective, so ind-quasi-affineness of $\underline{\Hom}_{S\textrm{-}\rm{gp}}(H, G)$ follows from Lemma~\ref{lemma:ind-quasi-affine-criterion}.
\end{proof}

\begin{lemma}\label{lemma:hom-scheme-reductive}
In the setting of Theorem~\ref{theorem:hom-scheme}, assume that $H$ is reductive. Then $\underline{\Hom}_{S\textrm{-}\rm{gp}}(H, G)$ is representable by an ind-quasi-affine $S$-scheme. If $S$ is normal, quasi-compact, and quasi-separated, and $H$ admits a maximal $S$-torus, then $\underline{\Hom}_{S\textrm{-}\rm{gp}}(H, G)$ is representable by a disjoint union of finitely presented affine $S$-schemes.
\end{lemma}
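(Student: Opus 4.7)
Representability as a separated $S$-scheme locally of finite presentation is immediate from Lemma~\ref{lemma:demazure-rep}; the task is to establish the geometric structure. My plan is to use the restriction morphism
\[
r \colon \underline{\Hom}_{S\textrm{-}\rm{gp}}(H, G) \to \underline{\Hom}_{S\textrm{-}\rm{gp}}(T, G)
\]
to a maximal torus $T \subset H$ (available \'{e}tale-locally on $S$), show that $r$ is affine, and conclude using Lemma~\ref{lemma:torus-source} which already controls the target.

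I would begin with descent reductions. Ind-quasi-affineness descends through fpqc covers by the effectivity result for ind-quasi-affine morphisms quoted in the introduction to the section; since $H$ splits \'{e}tale-locally on $S$, this reduces the first assertion to the case where $H$ is split with a defined split maximal torus $T$ and root subgroups $U_\alpha \cong \Ga$. For the second assertion, any further finite \'{e}tale cover used to split $T$ preserves normality and qcqs, and descent of the disjoint-union-of-affines property through such a finite \'{e}tale cover is ensured by Lemma~\ref{lemma:ind-quasi-affine-criterion}. Finally, the first assertion reduces to the second by spreading out to $S$ of finite type over $\bZ$, passing to the finite normalization, and descending via Lemma~\ref{lemma:ind-quasi-affine-criterion}.

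For the core step, affineness of $r$, one uses that a homomorphism $\phi \colon H \to G$ is determined by $\phi_0 := \phi|_T$ and the restrictions $\phi|_{U_\alpha}$, each of which must be a $T$-equivariant group scheme homomorphism $\Ga \to G$ with $T$ acting on the source via $\alpha$ and on $G$ by conjugation through $\phi_0$. For a nonzero root $\alpha$, $T$-equivariance rules out Frobenius twists on $\Ga$ (whose weight $p\alpha$ differs from $\alpha$) and forces $\phi|_{U_\alpha}$ to land in the $\alpha$-weight closed subscheme of $G$ relative to $\phi_0$, cutting out a closed subscheme of a finitely presented affine scheme. The Chevalley--Steinberg commutation relations among the $U_\alpha$ then cut out a further closed subscheme of the product, and relatively over $\underline{\Hom}_{S\textrm{-}\rm{gp}}(T, G)$ this exhibits $r$ as an affine morphism. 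Combining with Lemma~\ref{lemma:torus-source} then yields both halves of the statement.

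The main obstacle I anticipate is the scheme-theoretic bookkeeping in the affineness analysis of $r$: one needs to analyze uniformly the weight-space decomposition of $G$ relative to a varying section $\phi_0$, and verify that the Frobenius pathologies in $\underline{\Hom}_{S\textrm{-}\rm{gp}}(\Ga, G)$ (visible already in Example~\ref{example:sga3-hom-scheme}) are cleanly killed by the $T$-equivariance constraint whenever $\alpha$ is nonzero, all in a sufficiently functorial and relative way so as to give an \emph{affine} morphism rather than merely a morphism with affine fibers.
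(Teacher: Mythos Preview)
Your approach matches the paper's: descent and normalization reductions, then affineness of the restriction map $r$ to a maximal torus, then Lemma~\ref{lemma:torus-source}. The only difference is that the paper cites \cite[Exp.\ XXIV, Corollaire 7.1.9]{sga3} for the affineness and finite presentation of $r$ rather than reproving it; your sketch of that step is essentially the content of that SGA3 result, so there is no need to redo it.
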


\begin{proof}
To show ind-quasi-affineness, we may work locally on $S$ and spread out to assume that $S$ is affine, noetherian, and excellent. (For the second claim, we may use \cite[Theorem C.9]{Thomason-Trobaugh} to make the same reduction.) Using Lemma~\ref{lemma:ind-quasi-affine-criterion}, we may also pass from $S$ to its normalization to assume that $S$ is normal. Passing to a further \'{e}tale cover, we may and do assume that $H$ admits a maximal $S$-torus. By \cite[Exp.\ XXIV, Corollaire 7.1.9]{sga3}, the natural restriction map
\[
\underline{\Hom}_{S\textrm{-}\rm{gp}}(H, G) \to \underline{\Hom}_{S\textrm{-}\rm{gp}}(T, G)
\]
is finitely presented and affine, so the result follows from Lemma~\ref{lemma:torus-source}. (Note that a smooth affine $S$-scheme is automatically finitely presented.)
\end{proof}

\begin{proof}[Proof of Theorem~\ref{theorem:hom-scheme}]
Recall that $H$ is now assumed to be a geometrically reductive smooth affine $S$-group scheme. In particular, $H^0$ is a reductive group scheme and $H/H^0$ is finite by Theorem~\ref{theorem:alper} (and similarly for $G$). There are two issues: first, we need to show that $\underline{\Hom}_{S\textrm{-}\rm{gp}}(H, G)$ is representable, and then we need to show that if $S$ is normal, quasi-compact, and quasi-separated, and $H$ admits a maximal $S$-torus, then $\underline{\Hom}_{S\textrm{-}\rm{gp}}(H, G)$ is a disjoint union of finitely presented $S$-affine $S$-schemes (at which point ind-quasi-affineness in general follows from Lemma~\ref{lemma:ind-quasi-affine-criterion}). For both points, by working locally and spreading out we may assume that $S$ is noetherian and connected. \smallskip

To prove representability, first assume that $H/H^0$ is constant and that the natural map $H(S) \to (H/H^0)(S)$ is surjective. Let $h_1, \dots, h_n \in H(S)$ be a system of representatives for $(H/H^0)(S)$. We may and do assume $h_1 = 1$. There is then a natural morphism of functors
\[
\beta: \underline{\Hom}_{S\textrm{-}\rm{gp}}(H, G) \to \underline{\Hom}_{S\textrm{-}\rm{gp}}(H^0, G) \times G^n,
\]
given by $f \mapsto (f|_{H^0}, f(h_1), \dots, f(h_n))$. We claim that $\beta$ is a closed embedding. To this end, we need to understand when a tuple $(f_0, g_1, \dots, g_n)$ in $\underline{\Hom}_{S\textrm{-}\rm{gp}}(H^0, G)(S') \times G(S')^n$ lies in the image of $\beta$. \smallskip

For indices $i, j$, let $h_i h_j = h_{\delta(i, j)} h_{i,j}$, where $1 \leq \delta(i, j) \leq n$ and $h_{i,j} \in H^0(S)$. In any case, there is a unique morphism of $S$-schemes $f: H \to G$ with $f|_{H^0} = f_0$ and $f(h_i) = g_i$ for all $i$: for this, note that for any $S$-scheme $S'$ and any $h \in H(S')$, there is a unique open decomposition $S' = \bigsqcup_{i=1}^n S'_i$ such that $h|_{S'_i} = h_i h'_i$ for some $h'_i \in H^0(S'_i)$. 
Thus $f$ is defined uniquely by requiring $f(h_i h') = g_i f_0(h')$ for every $S$-scheme $S'$ and every $h' \in H^0(S')$. Now $(f_0, g_1, \dots, g_n)$ lies in the image of $\beta$ if and only if the above-defined $f$ is a homomorphism. \smallskip

Unraveling, we find that $f$ is a homomorphism if and only if the following conditions are satisfied.
\begin{enumerate}
    \item $g_1 = 1$,
    \item $g_i^{-1} f_0(h) g_i = f_0(h_i^{-1} h h_i)$ for all $i$,
    \item $g_i g_j = g_{\delta(i, j)} f_0(h_{i,j})$.
\end{enumerate}
So indeed $\beta$ is a finitely presented closed embedding, whence $\underline{\Hom}_{S\textrm{-}\rm{gp}}(H, G)$ is representable, and in fact it is ind-quasi-affine over $S$ by Lemma~\ref{lemma:hom-scheme-reductive}. Moreover, if $S$ is normal then this shows that $\underline{\Hom}_{S\textrm{-}\rm{gp}}(H, G)$ is a disjoint union of finitely presented $S$-affine $S$-schemes.\smallskip

Now pass to the general case, i.e., no longer assume that $H/H^0$ is constant and that $H(S) \to (H/H^0)(S)$ is surjective. In any case, there is a finite \'{e}tale cover $S' \to S$ such that $H_{S'}/H_{S'}^0$ is constant and $H(S') \to (H/H^0)(S')$ is surjective (e.g., take $S'$ to be a Galois closure of the finite \'etale $H/H^0$), and so $\underline{\Hom}_{S'\textrm{-}\rm{gp}}(H_{S'}, G_{S'})$ is ind-quasi-affine over $S'$ by the above. Thus by effectivity of fpqc descent for ind-quasi-affine morphisms \cite[\href{https://stacks.math.columbia.edu/tag/0APK}{Tag 0APK}]{stacks-project}, we see that $\underline{\Hom}_{S\textrm{-}\rm{gp}}(H, G)$ is representable and locally of finite presentation. Now that we have representability, we may assume that $S$ is normal, quasi-compact, and quasi-separated. As we have already seen, $\underline{\Hom}_{S'\textrm{-}\rm{gp}}(H_{S'}, G_{S'})$ is representable by a disjoint union of finitely presented $S'$-affine $S'$-schemes, so because the morphism $\underline{\Hom}_{S'\textrm{-}\rm{gp}}(H_{S'}, G_{S'}) \to \underline{\Hom}_{S\textrm{-}\rm{gp}}(H, G)$ is finite \'etale, the result follows from Lemma~\ref{lemma:ind-quasi-affine-criterion}.
\end{proof}

\begin{example}\label{example:hom-scheme-pathologies}
The schemes in Theorem~\ref{theorem:hom-scheme} are usually not quasi-compact or flat, and they can fail to have smooth fibers. We saw examples of the first two phenomena in Example~\ref{example:sga3-hom-scheme}. For an example in which smoothness fails, let $k$ be an algebraically closed field of characteristic $p > 0$, and consider the component $C$ of $\underline{\Hom}_{k\textrm{-}\rm{gp}}(\SL_2, \GL_{p+1})$ containing the representation $\Sym^p V$, where $V$ is the standard representation of $\SL_2$. This representation is not semisimple: in the notation of \cite[II, Chapter 2]{Jantzen}, it has Jordan--H\"older factors $L(p)$ and $L(p-2)$. In fact, one can check that $C$ consists of three orbits for the $\GL_{p+1}$-action: the orbit of $\Sym^p V$, the orbit of $L(p) \oplus L(p-2)$, and the orbit of $(\Sym^p V)^*$. The first and last of these orbits are smooth and open, and their closures intersect in the second orbit.

Even worse, components of $\underline{\Hom}_{k\textrm{-}\rm{gp}}(\SL_n, \GL_N)$ can have infinitely many orbits for large $n$ and $N$; one can deduce this using \cite[Theorem 5.2]{Scott-Xi}. Consequently, it can be difficult to predict the dimensions of components of $\underline{\Hom}_{k\textrm{-}\rm{gp}}(H, G)$. 
\end{example}

\begin{question}
\begin{enumerate}
\item  If $k$ is a field of characteristic $p > 0$ and $H$ and $G$ are reductive, can $\underline{\Hom}_{k\textrm{-}\rm{gp}}(H, G)$ be non-reduced? This cannot occur if $p > \dim G/\rank H$, essentially by Theorem~\ref{theorem:jantzen-semisimple}.

\item More generally, if $H$ and $G$ are reductive group schemes over $\bZ$, can $\underline{\Hom}_{\bZ\textrm{-}\rm{gp}}(H, G)$ be non-reduced?

\item If $A$ is a DVR and $H$ and $G$ are reductive, can $\underline{\Hom}_{A\textrm{-}\rm{gp}}(H, G)$ have a non-flat component with an integral point? Again, this cannot happen if $p$ is ``large".
\end{enumerate}
\end{question}

The geometry of $\Hom$-schemes can be related to the theory of complete reducibility as in \cite{BMR05}. Recall that if $k$ is a field, $G$ is a reductive $k$-group, and $H \subset G$ is a closed $k$-subgroup scheme, then $H$ is $G$-completely reducible (or $G$-cr) if, for any R-parabolic $\overline{k}$-subgroup $P \subset G_{\overline{k}}$ such that $H_{\overline{k}} \subset P$, there exists an R-Levi $L \subset P$ such that $H_{\overline{k}} \subset L$. (The notions of R-parabolic and R-Levi subgroup are defined in \cite[Section 6]{BMR05}; they coincide with the usual notions of parabolic and Levi if $G$ is connected.)

By \cite[Proposition 2.16, Theorem 3.1, Section 6]{BMR05}, if $H$ is topologically generated by $x_1, \dots, x_n \in H(k)$ (in the sense that the subgroup of $H(k)$ generated by $x_1, \dots, x_n$ is Zariski-dense in $H$), then $H$ is $G$-cr if and only if the $G$-orbit of the $n$-tuple $(x_1, \dots, x_n)$ is closed in $G^n$. Moreover, \cite[Property 4]{Serre03} shows that if $H$ is smooth and $G$-cr then $H^0$ is reductive. With these two facts in mind, the following lemma relates $G$-complete reducibility to orbits in $\Hom$-schemes.

\begin{lemma}\label{lemma:bmr}
Let $k$ be a field and let $G$ and $H$ be (possibly disconnected) reductive $k$-groups. If $H$ is topologically generated by $x_1, \dots, x_n \in H(k)$, then the $k$-morphism $\iota: \underline{\Hom}_{k\textrm{-}\rm{gp}}(H, G) \to G^n$ sending $f$ to $(f(x_1), \dots, f(x_n))$ is monic and satisfies the valuative criterion of properness. In particular, $\iota$ is a closed embedding when restricted to any connected component of $\underline{\Hom}_{k\textrm{-}\rm{gp}}(H, G)$.
\end{lemma}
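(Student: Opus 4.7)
I would establish monicity of $\iota$ and the valuative criterion of properness directly, then deduce the closed embedding statement. The ``in particular'' claim follows immediately: by Theorem~\ref{theorem:hom-scheme}, each connected component $C$ of $\underline{\Hom}_{k\text{-gp}}(H, G)$ is an affine $k$-scheme of finite type (hence quasi-compact and separated), so the restriction $\iota|_C$ is a quasi-compact finite-type monomorphism satisfying the valuative criterion of properness, hence is proper; a proper monomorphism is a closed embedding.

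\textbf{Monicity.} Given $S'$-homomorphisms $f_1, f_2: H_{S'} \to G_{S'}$ that agree on every $x_i$, their equalizer is a closed $S'$-subgroup scheme of $H_{S'}$ (since $G$ is separated) containing each $x_i|_{S'}$. Over the base field, the smallest closed subgroup scheme of $H$ containing $x_1, \ldots, x_n$ is automatically reduced (as $H$ is), so it equals the reduced Zariski closure of $\langle x_1, \ldots, x_n\rangle$ in $|H|$, which is $H$ by the density hypothesis. Base-changing gives $f_1 = f_2$.

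\textbf{Valuative criterion.} Let $A$ be a DVR with fraction field $K$, residue field $\kappa$, and uniformizer $\pi$; as $A$ is a $k$-algebra, $k$ embeds into $A$ and hence into $\kappa$. Given $f_\eta: H_K \to G_K$ and $g_i \in G(A)$ with $f_\eta(x_i|_K) = g_i|_K$, uniqueness follows from monicity. For existence, it suffices to show that the induced map $f_\eta^*: O(G_K) \to O(H_K)$ sends $O(G_A)$ into $O(H_A)$: this produces an $A$-homomorphism $f_A: H_A \to G_A$ extending $f_\eta$, and $f_A(x_i) = g_i$ then follows from separatedness of $G_A$. The compatibility assumption and the homomorphism property of $f_\eta$ give, for each $b \in O(G_A)$ and each $w$ in the subgroup $\langle x_1, \ldots, x_n\rangle \subset H(k) \subset H(A)$, that $w^*(f_\eta^*(b)) = g_w^*(b) \in A$, where $g_w \in G(A)$ denotes the corresponding word in the $g_i$.

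The problem thus reduces to the following evaluation lemma: \emph{any $\phi \in O(H) \otimes_k K$ with $w^*(\phi) \in A$ for all $w \in \langle x_1, \ldots, x_n\rangle$ must lie in $O(H) \otimes_k A$.} I would write $\phi = \psi/\pi^n$ with $\psi \in O(H) \otimes_k A$ and $n$ minimal, then argue $n = 0$. If $n \geq 1$, the reduction $\overline{\psi} \in O(H) \otimes_k (A/\pi^n)$ vanishes on $\langle x_1, \ldots, x_n\rangle$; I claim $\overline{\psi} = 0$, forcing $\psi \in \pi^n (O(H) \otimes_k A)$ and contradicting minimality. For the base case $n = 1$, $\overline{\psi} \in O(H_\kappa)$ vanishes at each $w|_\kappa$, and the set $\{w|_\kappa\}$ is Zariski-dense in $H_\kappa$: its scheme-theoretic closure in $H_\kappa$ has image under the faithfully flat map $H_\kappa \to H$ containing the dense set $\langle x_1, \ldots, x_n\rangle$, and a dimension count in irreducible (equivalently, connected) components of the smooth group scheme $H_\kappa$ forces the closure to equal $H_\kappa$; reducedness of $H_\kappa$ then yields $\overline{\psi} = 0$. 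For $n \geq 2$, the base case applied to $\overline{\psi} \bmod \pi$ gives $\overline{\psi} = \pi \overline{\psi'}$ for some $\overline{\psi'} \in O(H) \otimes_k (A/\pi^{n-1})$; the vanishing of $\overline{\psi}$ translates, via the isomorphism $\pi: A/\pi^{n-1} \xrightarrow{\sim} \pi A/\pi^n$, to vanishing of $\overline{\psi'}$ on $\langle x_1, \ldots, x_n\rangle$, and induction yields $\overline{\psi'} = 0$, whence $\overline{\psi} = 0$. Applying the lemma to $\phi = f_\eta^*(b)$ for each $b \in O(G_A)$ produces the desired $f_A$.
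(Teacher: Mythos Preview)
Your proof is correct, but your approach to the valuative criterion is genuinely different from the paper's. The paper argues geometrically: it takes the schematic closure $\Gamma \subset H_A \times_A G_A$ of the graph of $f_\eta$ and shows that the first projection $\pi_1: \Gamma \to H_A$ is an isomorphism. The key steps are that $\Gamma$ is $A$-flat (being a schematic closure from the generic fiber), that $\pi_{1,s}$ is surjective (since $(x_i, g_i) \in \Gamma(A)$ and the $x_i|_s$ generate a dense subgroup of $H_s$), that $\pi_{1,s}$ is then flat onto the smooth $H_s$, and finally that fibral flatness plus triviality of the generic kernel force $\pi_1$ to be an isomorphism via the fibral isomorphism criterion. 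By contrast, you work purely on coordinate rings, proving an ``evaluation lemma'' to the effect that a function on $H_K$ whose values at a Zariski-dense subgroup lie in $A$ must itself lie in $O(H_A)$; this is more elementary and avoids any flatness machinery, at the cost of being specific to the affine setting (which is fine here).

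Both arguments ultimately rest on the claim that if $\langle x_1,\dots,x_n\rangle$ is dense in $H$ then its image remains dense in $H_\kappa$; the paper asserts this without comment, and your justification via ``faithfully flat map and dimension count'' is correct in spirit but a bit roundabout. The cleanest verification is to write any $\phi \in O(H_\kappa) = O(H)\otimes_k \kappa$ as $\sum a_\alpha \otimes c_\alpha$ with the $c_\alpha$ a $k$-basis of $\kappa$: vanishing of $\phi$ at each $w|_\kappa$ gives $\sum a_\alpha(w)c_\alpha = 0$, whence each $a_\alpha$ vanishes on the dense set and so $a_\alpha = 0$ since $H$ is reduced. This also streamlines your base case.
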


\begin{proof}
    It is clear that $\iota$ is monic, and the final claim follows from the others by \cite[IV\textsubscript{3}, Proposition 8.11.5]{EGA} and Theorem~\ref{theorem:hom-scheme}, which shows that every connected component of $\underline{\Hom}_{k\textrm{-}\rm{gp}}(H, G)$ is of finite type over $k$. We now verify that $\iota$ satisfies the valuative criterion of properness. Let $A$ be a $k$-algebra which is a DVR with fraction field $K$, and let $(g_1, \dots, g_n) \in G(A)^n$ be such that there exists a $K$-homomorphism $f_1: H_K \to G_K$ satisfying $f_1(x_i) = g_i$ for all $i$. Let now $\Gamma \subset H \times_{\Spec A} G$ be the schematic closure of the graph of $f_1$, so that $\Gamma$ is a flat closed $A$-subgroup scheme of $H \times_{\Spec A} G$ whose projection map $\pi_1$ to $H$ is an isomorphism on generic fibers over $A$. Moreover, since $(x_i, g_i) \in H(A) \times G(A)$ for all $i$, it follows that $(x_i, g_i) \in \Gamma(A)$ and thus $\pi_{1,s}: \Gamma_s \to H_s$ is surjective.  Since $H_s$ is smooth, we see that $\pi_{1, s}$ is flat, and by fibral flatness it follows that $\pi_1$ is flat. Since $(\ker \pi_1)_K = \{1\}$, it follows from flatness that $\ker \pi_1 = \{1\}$. Thus $\pi_{1, s}$ is a closed embedding, and since $\pi_{1, s}$ is surjective and $H_s$ is smooth, it follows that $\pi_{1, s}$ is an isomorphism. By the fibral isomorphism criterion, $\pi_1$ is therefore an isomorphism and it is the graph of an $A$-homomorphism $f: H \to G$ whose generic fiber is $f_1$. This verifies the valuative criterion.
\end{proof}

\begin{lemma}\label{lemma:g-cr}
    Let $k$ be a field and let $G$ and $H$ be reductive $k$-groups. If $f: H \to G$ is a $k$-homomorphism, then $f(H) \subset G$ is $G$-cr if and only if the $G$-orbit through $f$ in $\underline{\Hom}_{k\textrm{-}\rm{gp}}(H, G)$ is closed.
\end{lemma}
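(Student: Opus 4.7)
The plan is to combine Lemma~\ref{lemma:bmr} with the characterization of $G$-complete reducibility via closed orbits in $G^n$ given by \cite[Theorem 3.1]{BMR05} (summarized in the discussion preceding Lemma~\ref{lemma:bmr}).

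First I would reduce to the case $k = \bar k$. The property of being $G$-cr is defined geometrically, and closedness of the $G$-orbit of $f$ descends along $\bar k/k$ because Theorem~\ref{theorem:hom-scheme} presents $\underline{\Hom}_{k\textrm{-}\rm{gp}}(H, G)$ as a disjoint union of affine finite-type $k$-schemes; the orbit is constructible and meets only finitely many connected components, so closedness can be tested inside a finite-type clopen subscheme, where fpqc descent for closed subschemes applies.

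Now assume $k = \bar k$. I would fix topological generators $x_1, \dots, x_n \in H(k)$ of $H$, which exist for any algebraic group over an algebraically closed field. Then $f(x_1), \dots, f(x_n)$ topologically generate $f(H) \subset G$. Consider the evaluation morphism
\[
\iota: \underline{\Hom}_{k\textrm{-}\rm{gp}}(H, G) \to G^n, \qquad \vp \mapsto (\vp(x_1), \dots, \vp(x_n)),
\]
which is $G$-equivariant for the conjugation actions on both sides. By Lemma~\ref{lemma:bmr}, $\iota$ is a closed embedding when restricted to any connected component of the source. Since each $G^0$-orbit in $\underline{\Hom}_{k\textrm{-}\rm{gp}}(H, G)$ is irreducible, the constructible set $G \cdot f$ meets only finitely many components; let $U$ denote their union. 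Then $U$ is clopen and $\iota|_U$ is a closed embedding into $G^n$, so $G \cdot f$ is closed in $\underline{\Hom}_{k\textrm{-}\rm{gp}}(H, G)$ if and only if $\iota(G \cdot f) = G \cdot (f(x_1), \dots, f(x_n))$ is closed in $G^n$.

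Finally, by \cite[Theorem 3.1]{BMR05} together with \cite[Proposition 2.16]{BMR05} (as recalled before Lemma~\ref{lemma:bmr}), the orbit $G \cdot (f(x_1), \dots, f(x_n))$ is closed in $G^n$ if and only if $f(H)$ is $G$-cr, completing the argument. The main subtlety I anticipate is the descent step from $\bar k$ to $k$: because $\underline{\Hom}_{k\textrm{-}\rm{gp}}(H, G)$ is not quasi-compact over $k$, one must carefully confine the argument to the finite-type clopen subscheme $U$ containing the orbit in order to apply faithfully flat descent. The remainder of the argument is a formal consequence of Lemma~\ref{lemma:bmr} and the BMR orbit criterion.
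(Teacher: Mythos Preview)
Your approach is essentially identical to the paper's: reduce to a situation with topological generators, apply Lemma~\ref{lemma:bmr} to transport the orbit-closedness question to $G^n$, and invoke \cite[Proposition 2.16, Theorem 3.1]{BMR05}. The paper's proof is slightly slicker in one direction (it observes that $G\cdot f$ is literally $\iota^{-1}$ of the orbit in $G^n$, so closedness pulls back), but the content is the same.

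There is, however, a genuine error in your reduction step. You assert that topological generators $x_1,\dots,x_n\in H(k)$ exist for any algebraic group over an algebraically closed field. This is false in positive characteristic: take $H=\Gm$ over $\overline{\bF_p}$. Every element of $\Gm(\overline{\bF_p})=\overline{\bF_p}^\times$ has finite order, so any finitely generated subgroup is finite and hence not Zariski-dense. The paper avoids this by passing to a \emph{possibly transcendental} extension of $k$ (over which one can always arrange finitely many topological generators, e.g.\ by taking $k$ uncountable). Your descent argument then works equally well for such an extension, so the fix is simply to replace ``$\bar k$'' with a sufficiently large extension; but as written, the proof fails whenever $\chara k>0$ and $H$ has positive dimension.
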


\begin{proof}
    We may and do pass to a (possibly transcendental) field extension of $k$ to assume that there exist $x_1, \dots, x_n \in H(k)$ which topologically generate $H$. If $f(H)$ is $G$-cr, then by \cite[Proposition 2.16, Theorem 3.1]{BMR05} the $G$-orbit of $(f(x_1), \dots, f(x_n))$ in $G^n$ is closed. Thus the $G$-orbit of $f$ is closed in $\underline{\Hom}_{k\textrm{-}\rm{gp}}(H, G)$ since this orbit is simply the preimage of the $G$-orbit of $(f(x_1), \dots, f(x_n))$ under the map $\iota$ of Lemma~\ref{lemma:bmr}. Conversely, if the $G$-orbit of $f$ is closed, then $\iota(G\cdot f)$ is closed in $G^n$ by Lemma~\ref{lemma:bmr}, and we conclude with the fact that $\iota(G \cdot f)$ is simply the $G$-orbit of $(f(x_1), \dots, f(x_n))$.
\end{proof}

\begin{remark}
    Using \cite[Proposition 3.2, Theorem 10.3]{martin03} and Lemma~\ref{lemma:g-cr}, one can show that every component of $\underline{\Hom}_{k\textrm{-}\rm{gp}}(H, G)$ contains only finitely many closed $G$-orbits. Consequently, \cite[Th\'eor\`eme 4.4]{Serre-cr} shows that if $G$ is simple and $p > 1 + \rank G$, then $\underline{\Hom}_{k\textrm{-}\rm{gp}}(H, G)_{\mathrm{red}}$ is a disjoint union of $G$-orbits. For classical groups $G$, $\underline{\Hom}_{k\textrm{-}\rm{gp}}(H, G)$ is reduced under these conditions, but we do not know what happens if $G$ is exceptional.
\end{remark}

\subsection{Automorphism schemes}

Next, if $G$ is an $S$-group scheme we define the set-valued functor $\underline{\Aut}_{G/S}$ by
\[
\underline{\Aut}_{G/S}(S') = \{f \in \Hom_{S'\textrm{-}\rm{gp}}(G_{S'}, G_{S'}): f\text{ is an isomorphism}\}.
\]
The following two lemmas are the crucial inputs needed to analyze this functor.

\begin{lemma}\label{lemma:iso-kernel}
Let $G$ be a finitely presented $S$-group scheme. If $f: G \to G$ is an $S$-homomorphism, then $f$ is an isomorphism if and only if $\ker f_s = \{1\}$ for all $s \in S$.
\end{lemma}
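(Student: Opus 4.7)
The forward implication is immediate: if $f$ is an isomorphism then $\ker f = \{1\}$ as an $S$-group scheme, and in particular $\ker f_s = \{1\}$ for every $s \in S$.

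For the converse, my plan is to pass to fibers first, show fiberwise that $f_s$ is an isomorphism, and then promote this to the global statement. By standard spreading-out arguments (EGA IV$_3$, \S 8--11), using that $G$ is finitely presented, it suffices to treat the case $S = \Spec A$ with $A$ Noetherian local.

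For the fiberwise case over a field $k = k(s)$, the hypothesis $\ker f_s = \{1\}$ says that $f_s$ is a monomorphism of finite type $k$-group schemes, and hence a closed immersion by the standard result \cite[Exp.~VI$_B$, Proposition 1.4.2]{sga3}. To see that such a closed immersion endomorphism of a finite type $k$-group scheme is automatically an isomorphism, I would work on an affine open $\Spec B \subset G_s$: the pullback $f_s^*$ restricts to a surjective $k$-algebra endomorphism of a finitely generated (hence Noetherian) $k$-algebra, which must be injective since the ascending chain $\ker(f_s^*) \subset \ker((f_s^*)^2) \subset \cdots$ stabilizes. Homogeneity under the translation action of $G_s$ on itself then upgrades this local statement to a global isomorphism, handling any non-affine parts.

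The main obstacle will be the passage from fibers to a global isomorphism in the absence of a flatness hypothesis on $G$, which prevents a direct appeal to the fibral isomorphism criterion (EGA IV$_4$, 17.9.5). My plan is to address this by a Nakayama-style argument, applied in sequence to the cokernel and then the kernel of $f^*\colon \mathcal{O}_G \to \mathcal{O}_G$ over the local base. Finite presentation of $G$ combined with the identity section provides enough finiteness to make the relevant modules finitely generated over $A$ after localization, and vanishing on the closed fiber (granted by the previous step) then forces vanishing over all of $A$. The group structure is crucial here, as it lets one reduce finiteness questions to a neighborhood of the identity section where Nakayama's lemma applies cleanly.
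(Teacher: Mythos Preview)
Your fiberwise step is essentially the Ax--Grothendieck theorem over a field, and that part is fine in spirit (though your reduction to a single affine open $\Spec B$ with $f_s^{-1}(\Spec B) = \Spec B$ is not justified; one really needs the general statement \cite[IV\textsubscript{4}, Proposition 17.9.6]{EGA}).

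The real gap is in the passage from fibers to the global statement. Your proposed Nakayama argument does not work: the stalk $\mathcal{O}_{G,e}$ (or any localization of the coordinate ring of an affine open of $G$) is essentially never finitely generated as an $A$-module unless $G$ is finite over $S$, so Nakayama's lemma gives you nothing. The identity section gives an augmentation, not a module-finiteness statement. There is no way to make ``finite presentation of $G$ combined with the identity section'' produce $A$-finite modules to which vanishing on the closed fiber can be propagated.

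The paper sidesteps this entirely with a short trick. Rather than applying the fibral isomorphism criterion to $f: G \to G$ (where, as you note, flatness of $G$ is missing), apply it to the identity section $e: S \to \ker f$: here $S$ is trivially $S$-flat, $\ker f$ is finitely presented over $S$ (as a fiber product of finitely presented $S$-schemes), and $e_s$ is an isomorphism onto $\ker f_s = \{1\}$ by hypothesis. Hence $\ker f = \{1\}$ globally, so $f$ is a monomorphism. Now invoke the relative Ax--Grothendieck theorem \cite[IV\textsubscript{4}, Proposition 17.9.6]{EGA}, which says that any $S$-monomorphism from a finitely presented $S$-scheme to itself is an isomorphism. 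This works over an arbitrary base with no flatness hypothesis on $G$ and no Nakayama-type input.
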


\begin{proof}
If $f$ is an isomorphism, then certainly $\ker f = \{1\}$. If $\ker f_s = \{1\}$ for all $s \in S$, then $\ker f = \{1\}$: indeed, the identity section $e: S \to \ker f$ is a morphism of $S$-schemes which is an isomorphism on fibers over $S$, so because $S$ is $S$-flat it follows from the fibral isomorphism criterion \cite[IV\textsubscript{4}, Corollaire 17.9.5]{EGA} that $e$ is an isomorphism, i.e., $\ker f = \{1\}$. Thus $f$ is monic, and it follows from the Ax--Grothendieck theorem \cite[IV\textsubscript{4}, Proposition 17.9.6]{EGA} that $f$ is an isomorphism.
\end{proof}

\begin{lemma}\label{lemma:dvr-morphism}
Let $A$ be a DVR, and let $G$ and $H$ be geometrically reductive smooth affine $A$-group schemes. If $f: G \to H$ is an $A$-homomorphism, then the following are equivalent.
\begin{enumerate}
    \item $f_s$ is an isomorphism,
    \item $f_\eta$ is an isomorphism,
    \item $f$ is an isomorphism.
\end{enumerate}
\end{lemma}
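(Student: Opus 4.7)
The implications $(3) \Rightarrow (1)$ and $(3) \Rightarrow (2)$ are tautological, so the content lies in the converses. My plan is to reduce both $(1) \Rightarrow (3)$ and $(2) \Rightarrow (3)$ to establishing the equivalence $(1) \Leftrightarrow (2)$, and then deduce $(3)$ via the fibral isomorphism criterion.

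Since $G$ and $H$ are smooth (hence $A$-flat) and of finite presentation, the fibral isomorphism criterion \cite[IV\textsubscript{4}, Corollaire 17.9.5]{EGA} applied to $f\colon G \to H$ gives that $f$ is an isomorphism as soon as both $f_\eta$ and $f_s$ are. Thus the equivalence $(1) \Leftrightarrow (2)$ combined with either hypothesis immediately yields $(3)$.

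To prove $(1) \Leftrightarrow (2)$, I plan to show that $\underline{\mathrm{Iso}}(G, H)$ is a \emph{clopen} subfunctor of $\underline{\Hom}_{A\textrm{-}\rm{gp}}(G, H)$. Openness is a relative form of Lemma~\ref{lemma:iso-kernel}, exploiting that fiberwise triviality of the kernel scheme is an open condition on the Hom scheme. Closedness is the subtler point and uses Theorem~\ref{theorem:hom-scheme}: every connected component of $\underline{\Hom}_{A\textrm{-}\rm{gp}}(G, H)$ is a finitely presented affine $A$-scheme, and the isomorphism property should be detected by discrete invariants (such as the induced morphism on component groups and on the character lattice of a maximal torus) that are locally constant on each such component under the geometric reductivity hypothesis.

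Given clopenness, the preimage $f^{-1}(\underline{\mathrm{Iso}}(G, H))$ under the section $f\colon \Spec A \to \underline{\Hom}_{A\textrm{-}\rm{gp}}(G, H)$ is a clopen subset of $\Spec A = \{\eta, s\}$. Since $\Spec A$ is connected, this subset must be empty or all of $\Spec A$, forcing $f_\eta$ to be an isomorphism if and only if $f_s$ is, and so giving $(1) \Leftrightarrow (2)$. The main obstacle in this plan is the rigorous verification of closedness of $\underline{\mathrm{Iso}}(G, H)$; this is where the geometric reductivity of $G$ and $H$ is essential, together with a careful component-by-component analysis relying on the Hom scheme structure developed in Section~\ref{section:hom-scheme}.
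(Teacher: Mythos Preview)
Your overall reduction to showing $(1) \Leftrightarrow (2)$ via the fibral isomorphism criterion is correct and matches the paper. However, your plan for $(1) \Leftrightarrow (2)$ has a genuine circularity.

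You propose to show that $\underline{\mathrm{Iso}}(G, H)$ is clopen in $\underline{\Hom}_{A\textrm{-}\rm{gp}}(G, H)$, then pull back along the section $f$. But the closedness half of this is \emph{equivalent} to the statement you are trying to prove: saying that the isomorphism locus is closed along the image of $f\colon \Spec A \to \underline{\Hom}(G,H)$ is exactly saying that if $f_\eta$ is an isomorphism then so is $f_s$. Your justification for closedness (``the isomorphism property should be detected by discrete invariants \ldots locally constant on each component'') is not an argument; it is a restatement of the goal. Indeed, in the paper's logical order, the clopenness of $\Aut_{G/S}$ in $\underline{\Hom}(G,G)$ (Theorem~\ref{theorem:aut-scheme}) is proved \emph{using} Lemma~\ref{lemma:dvr-morphism}, not the other way around.

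The paper's proof supplies the missing substance by a direct fiberwise analysis of $\ker f$. For $(1)\Rightarrow(2)$ in the connected case: one deforms the special-fiber isomorphism through $A/\fm^n$, deduces that $f$ is \'etale, hence $\ker f$ is an \'etale normal (so central) subgroup of $G$, hence of multiplicative type inside $Z(G)$; triviality of its special fiber then forces $\ker f = 1$. For $(2)\Rightarrow(1)$: one lifts maximal tori from the special fiber and uses rigidity of multiplicative type groups to show $\ker f_s$ has no nontrivial semisimple elements; since $(\ker f_s)^0_{\rm red}$ is reductive with dense semisimple locus, it is trivial, so $\ker f_s$ is finite, and a dimension/flatness argument finishes. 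The disconnected case is then handled by applying this to $G^0 \to H^0$ and to the finite component groups separately. None of these concrete steps appear in your proposal, and they cannot be replaced by an appeal to the structure of the Hom scheme alone.
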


\begin{proof}
Clearly (3) implies (1) and (2). Conversely, if $f_s$ and $f_\eta$ are both isomorphisms, then $f$ is an isomorphism by the fibral isomorphism criterion \cite[IV\textsubscript{4}, Corollaire 17.9.5]{EGA}. Thus it suffices to show that (1) and (2) are equivalent. We may and do further assume that $A$ is complete with algebraically closed residue field. 

First, suppose that $G$ (and hence also $H$ under either (1) or (2), due to finiteness of $H/H^0$) has connected fibers. Assume $f_s$ is an isomorphism. By the fibral isomorphism criterion, $f_{A/\fm^n}$ is an isomorphism for all $n \geq 1$. By the local flatness criterion \cite[Theorem 22.3]{Matsumura}, $f$ is flat. In fact, $f$ is \'{e}tale near $1$ because the \'{e}tale locus is open, and since it is a homomorphism with $G$ fppf over $A$, it follows that $f$ is \'{e}tale. In particular, $\ker f$ is \'{e}tale. One can check that an \'{e}tale normal subgroup scheme of a connected group scheme over a field is automatically central, so $\ker f_\eta$ is central and thus $\ker f$ is contained in $Z(G)$. But $Z(G)$ is of multiplicative type and $\ker f$ is a flat closed $A$-subgroup scheme of $Z(G)$, so it is also of multiplicative type by \cite[Corollary B.3.3]{conrad}. Since $\ker f_s = \{1\}$, we conclude that $\ker f_\eta = \{1\}$ and hence $f_\eta$ is a closed embedding by \cite[Exp.\ VI\textsubscript{B}, Corollaire 1.4.2]{sga3}. For dimension reasons, $f_\eta$ is dominant, so it is surjective by \cite[Exp.\ VI\textsubscript{B}, Proposition 1.2]{sga3}, and hence it is an isomorphism since $H_\eta$ is smooth.

Conversely, suppose that $f_\eta$ is an isomorphism. If $g \in G(k(s))$ is a nontrivial semisimple element, then there is a maximal $k(s)$-torus $T_0 \subset G_s$ such that $g \in T_0(k(s))$. By \cite[Exp.\ IX, Th\'eor\`eme 3.6]{sga3}, since $A$ is complete we may find a maximal $A$-torus $T \subset G$ with special fiber $T_0$. Since $\ker f|_{T_\eta} = \{1\}$, it follows from \cite[Exp.\ IX, Th\'eor\`eme 6.8]{sga3} that $\ker f|_T = \{1\}$, so in particular $g \not\in (\ker f)(k(s))$. 
Note that $(\ker f_s)^0_{\rm{red}}$ is a smooth connected closed normal subgroup of $G_s$, so it is \textit{reductive}, and thus its semisimple locus is dense. But the above argument show that its semisimple locus is $\{1\}$, so in fact $(\ker f_s)^0_{\rm{red}} = \{1\}$, whence $\ker f_s$ is \textit{finite}. Thus for dimension reasons, $f_s$ is dominant, and since $G$ is smooth we find that $f_s$ is flat: this follows from generic flatness and a simple translation argument. By the fibral flatness criterion \cite[IV\textsubscript{3}, Th\'eor\`eme 11.3.10]{EGA}, $f$ is flat, and so $\ker f$ is flat. Since $\ker f_\eta = \{1\}$, it follows that $\ker f = \{1\}$: as a flat closed subscheme of $G$, $\ker f$ is the closure of its generic fiber. So $f_s$ is a surjective closed embedding, hence an isomorphism.

Now consider the general case, namely that $G/G^0$ is finite. Suppose that either $f_s$ or $f_\eta$ is an isomorphism. By the reductive case settled above, we find that $f|_{G^0}: G^0 \to H^0$ is an isomorphism. Moreover, the homomorphism $G/G^0 \to H/H^0$ between constant groups is an isomorphism, since this can be checked on either the special or generic fiber. Thus a diagram chase shows that $f$ is an isomorphism.
\end{proof}

\begin{theorem}\label{theorem:aut-scheme}
Let $S$ be a scheme and let $G$ be a geometrically reductive smooth affine $S$-group scheme. The functor $\underline{\Aut}_{G/S}$ is representable by an open and closed subscheme $\Aut_{G/S}$ of $\underline{\Hom}_{S\textrm{-}\rm{gp}}(G, G)$.
\end{theorem}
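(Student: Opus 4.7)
The plan is to realize $\underline{\Aut}_{G/S}$ as a clopen subscheme of $\mathcal{H} := \underline{\Hom}_{S\textrm{-}\rm{gp}}(G, G)$, which is representable by Theorem~\ref{theorem:hom-scheme}. By Lemma~\ref{lemma:iso-kernel}, a $T$-homomorphism $f : G_T \to G_T$ is an isomorphism if and only if $\ker f_t = \{1\}$ for every $t \in T$. It therefore suffices to show that for any $S$-scheme $T$ and any $T$-homomorphism $f : G_T \to G_T$, the subset
\[
U_T := \{t \in T : (\ker f)_t = \{1\}\} \subset T
\]
is clopen; applying this to the universal homomorphism $\mathbf{f}$ over $\mathcal{H}$ then shows that the corresponding subset $U := U_\mathcal{H} \subset |\mathcal{H}|$ is clopen, and the resulting open subscheme represents $\underline{\Aut}_{G/S}$.

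I would verify clopenness of $U_T$ by combining stability under both specialization and generalization with local constructibility. For stability: given any $t_0 \in \overline{\{t_1\}}$ in $T$, lift to a morphism $\Spec A \to T$ from a DVR $A$ realizing the specialization (\cite[\href{https://stacks.math.columbia.edu/tag/01J8}{Tag 01J8}]{stacks-project}), and apply Lemma~\ref{lemma:dvr-morphism} to $f_A : G_A \to G_A$ to conclude $t_0 \in U_T$ if and only if $t_1 \in U_T$. For local constructibility: reduce to the case $T$ is noetherian by spreading out (using that $\mathcal{H}$ is locally of finite presentation over $S$), let $K := \ker f$, a closed $T$-subgroup scheme of $G_T$ of finite type, and let $e : T \to K$ be the identity section, which is a closed immersion since $K$ is separated. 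Then
\[
T \setminus U_T = \pi(K \setminus e(T)),
\]
where $\pi : K \to T$ is the structure morphism, and this image is constructible by Chevalley's theorem.

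A locally constructible subset of a locally noetherian scheme that is stable under both specialization and generalization is clopen, so $U_T$ is clopen, as desired. The main obstacle is the constructibility step; it is enabled by replacing the intricate condition ``$f_t$ is an isomorphism'' with the scheme-theoretic condition ``$(\ker f)_t = \{1\}$'' via Lemma~\ref{lemma:iso-kernel}, which confines the analysis to the single finite-type closed subscheme $K \subset G_T$ and permits a direct appeal to Chevalley's theorem. The DVR analysis of Lemma~\ref{lemma:dvr-morphism}, which crucially uses the geometric reductivity of $G$, then supplies the stability needed to upgrade constructibility to clopenness.
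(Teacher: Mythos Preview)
Your proof is correct and follows essentially the same approach as the paper's: both reduce to showing that the locus in $\mathcal{H}$ where the universal kernel is fiberwise trivial is clopen, using Lemma~\ref{lemma:iso-kernel} to translate the isomorphism condition and Lemma~\ref{lemma:dvr-morphism} for stability along specializations. The paper's proof is terser and leaves the constructibility step implicit, whereas you spell it out explicitly via Chevalley's theorem applied to $\ker f$; this added detail is helpful and fills a small gap in the paper's exposition.
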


\begin{proof}
By spreading out, we may and do assume that $S$ is noetherian, so that $\mathscr{H} \coloneqq \underline{\Hom}_{S\textrm{-}\rm{gp}}(G, G)$ is locally noetherian by Lemma~\ref{lemma:demazure-rep}. Note that there is a universal $S$-homomorphism  
\[
f: G \times_S \mathscr{H} \to G \times_S \mathscr{H}
\]
whose fiber over a given section of $\mathscr{H}$ is the corresponding endomorphism of $G$. By Lemmas~\ref{lemma:iso-kernel} and \ref{lemma:dvr-morphism}, since $\mathscr{H}$ is locally noetherian the locus $U$ of $u \in \mathscr{H}$ such that $\ker f_u = \{1\}$ is open and closed, and $f_U: G \times_S U \to G \times_S U$ is an isomorphism. It follows from this reasoning that $U$ represents $\underline{\Aut}_{G/S}$.
\end{proof}

To use Theorem~\ref{theorem:aut-scheme}, it will be necessary to establish some more properties of the $\Aut$-scheme. If $G$ is a weakly reductive group scheme over a scheme $S$, then we will see in Lemma~\ref{lemma:aut-scheme-smooth} that $\Aut_{G/S}$ is always smooth. However, the following example shows that $\Aut_{G/k}$ may fail to be smooth if $k$ is a field of characteristic $p > 0$ and $G$ is a reductive $k$-group with component group of order divisible by $p$.

\begin{example}\label{example:non-smooth-aut}
Let $G = \bG_m \times \underline{\bZ/p \bZ}$ over a field $k$ of characteristic $p > 0$. If $S$ is a $k$-scheme, then $S$-automorphisms of $G_S$ correspond to pairs $(\phi_0, g)$, where $\phi_0: \bG_{m, S} \to \bG_{m, S}$ is an $S$-automorphism and $g \in G(S)$ is a section of order $p$ such that for all $s \in S$, $g_s$ does not lie in $G^0(k(s))$; the correspondence is given by sending an $S$-automorphism $\phi$ of $G_S$ to $(\phi|_{G_S^0}, \phi(x))$, where $x = (1, 1) \in \bG_m(S) \times \underline{\bZ/p \bZ}(S)$. Using this, one sees that $\Aut_{G/k}^0 \cong \mu_p$. In particular, $\Aut_{G/k}$ is not smooth.
\end{example}

\begin{lemma}\label{lemma:inner-open}
Let $G$ be a (possibly disconnected) reductive group over a field $k$. The natural map $G \to \Aut_{G/k}$ is open. If $k$ is perfect, then $\phi: G \to (\Aut_{G/k})_{\rm{red}}$ is flat.
\end{lemma}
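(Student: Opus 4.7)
\textit{Proof plan.} The strategy is to factor $\phi$ through the inner-automorphism subgroup and reduce openness to the \'etaleness of the outer-automorphism quotient. Explicitly, $\phi$ factors as $G \xrightarrow{\pi} G/Z(G) \xrightarrow{\alpha} \Aut_{G/k}$, where $Z(G) = \ker \phi$ is of multiplicative type (since $Z(G^0)$ is, and $Z(G)$ embeds in a disjoint union of $Z(G^0)$-torsors). In particular $Z(G)$ is flat, so $\pi$ is faithfully flat (topologically open) and $G/Z(G)$ is smooth. The map $\alpha$ is a monomorphism of finite type $k$-group schemes, hence a closed immersion by \cite[Exp.\ VI\textsubscript{B}, Corollaire 1.4.2]{sga3}. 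Let $I = \alpha(G/Z(G))$ denote the inner-automorphism subgroup.

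Topological openness of $\phi$ then reduces, via openness of $\pi$, to showing $|I|$ is open in $|\Aut_{G/k}|$; equivalently, that the fppf quotient $\Out(G) := \Aut_{G/k}/I$ is \'etale. For $G$ connected reductive this is classical \cite[Exp.\ XXIV, Th\'eor\`eme 1.3]{sga3}. For disconnected $G$ the plan is to analyze the restriction morphism $\rho \colon \Aut_{G/k} \to \Aut_{G^0/k}$: its image (modulo $\mathrm{Inn}(G^0)$) is \'etale by the connected case, and its kernel---automorphisms of $G$ trivializing $G^0$---is classified via 1-cocycles $G/G^0 \to G^0$ modulo coboundaries, yielding a subsheaf of $H^1_{\mathrm{fppf}}(\underline{G/G^0}, G^0)$. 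Since $G/G^0$ is finite \'etale, this $H^1$ is an \'etale group scheme, so combining both pieces shows $\Out(G)$ is \'etale. Hence $I \hookrightarrow \Aut_{G/k}$ is an open-and-closed immersion, making $\phi$ open.

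For the second assertion, when $k$ is perfect $(\Aut_{G/k})_{\rm{red}}$ is a closed $k$-subgroup scheme; since $G$ is reduced, $\phi$ factors through it, with image $I$ still open-and-closed (it is smooth of the same dimension as $(\Aut_{G/k})^0_{\rm{red}}$). The factorization $G \twoheadrightarrow I \hookrightarrow (\Aut_{G/k})_{\rm{red}}$ then presents this refinement of $\phi$ as a composition of an fppf morphism and an open immersion, hence flat.

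The hardest step is the \'etaleness of $\Out(G)$ for disconnected $G$, particularly when $\chara(k)$ divides $[G:G^0]$: here the naive averaging arguments used in characteristic coprime to $|G/G^0|$ fail, and the fppf framework must be used carefully to obtain \'etaleness of $H^1_{\mathrm{fppf}}(\underline{G/G^0}, G^0)$. Once this is in hand, the remaining deductions about openness and flatness are essentially formal.
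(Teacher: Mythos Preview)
Your strategy has a genuine gap: the claim that $\Out(G) = \Aut_{G/k}/I$ is \'etale is \emph{false} in general when $\chara k$ divides $|G/G^0|$. Consider $G = \mathbb{G}_m \times \underline{\bZ/p\bZ}$ over a field $k$ of characteristic $p$ (this is Example~\ref{example:non-smooth-aut} of the paper). Here $G$ is commutative, so $I = G/Z(G)$ is trivial and $\Out(G) = \Aut_{G/k}$; but one computes $\Aut_{G/k}^0 \cong \mu_p$, which is not \'etale. Correspondingly, your assertion that $H^1_{\mathrm{fppf}}(\underline{G/G^0}, G^0)$ is an \'etale group scheme fails here: with trivial action, this $H^1$ is $\underline{\Hom}(\bZ/p\bZ, \mathbb{G}_m) = \mu_p$. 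So \'etaleness of the quotient cannot be the mechanism behind openness.

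The paper's proof circumvents this by working topologically and with dimensions rather than aiming for \'etaleness. Over an algebraically closed field, it shows that $\phi(G^0)$ is open in $(\Aut_{G/k})_{\rm red}$ by proving $\dim \phi(G^0) = \dim \Aut_{G/k}$. The cocycle analysis you sketched is indeed the key input, but the correct target is $Z(G^0)$ (not $G^0$): automorphisms trivial on both $G^0$ and $G/G^0$ correspond to $1$-cocycles $G/G^0 \to Z(G^0)$, and the crucial point is that $\mathrm{H}^1(G/G^0, Z(G^0))$ is \emph{finite} (being $n$-torsion with $n = |G/G^0|$, hence a quotient of $\mathrm{H}^1(G/G^0, Z(G^0)[n])$). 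Finiteness suffices to conclude that $Z(G^0)$ surjects onto a finite-index subgroup of $(\ker r)(k)$, which yields the dimension equality. This is strictly weaker than your \'etaleness claim and is what actually holds; once the map $G \to (\Aut_{G/k})_{\rm red}$ is dominant onto the identity component, flatness follows from generic flatness and translation.
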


\begin{proof}
Since formation of $(\Aut_{G/k})_{\rm{red}}$ commutes with separable field extensions on $k$ and purely inseparable extensions leave topological spaces unchanged, we may and do assume that $k$ is algebraically closed. To show that $\phi$ is flat, it suffices to show that the map $G^0(k) \to (\Aut_{G/k})^0(k)$ is surjective. Indeed, then $\phi$ is a dominant map from a smooth finite type $k$-scheme to a smooth $k$-scheme, so it is generically flat, and being a group homomorphism translation arguments show that it is flat.

Let $r: \Aut_{G/k} \to \Aut_{G^0/k} \times \Aut_{(G/G^0)/k}$ denote the natural restriction homomorphism.  The map $G^0 \to \Aut_{G/k}$ induces a map $Z(G^0) \to \ker r$. We claim that the image of $Z(G^0)$ in $\ker r$ is of finite index. Once this is done, the lemma will follow: indeed, then 
\[
\dim \ker r|_{\phi(G^0)} = \dim \ker r|_{\phi(Z(G^0))} = \dim \ker r.
\]
Since the group scheme of outer automorphisms of $G^0$ is etale, $\mathrm{pr}_1 \circ r: G^0 \to \Aut_{G^0/k}^0$ is surjective. Since $\Aut_{(G/G^0)/k}$ is finite, we find in particular that $\dim r(\phi(G)) = \dim r(\Aut_{G/k})$, so
\[
\dim \phi(G^0) = \dim \ker r + \dim r(\Aut_{G/k}) = \dim \Aut_{G/k}.
\]
Therefore $\phi(G^0)$ is an open subgroup scheme of the smooth $k$-group scheme $(\Aut_{G/k})_{\rm{red}}$, so it contains $(\Aut_{G/k})^0$ and hence is equal to it.  

So it remains to show that the image of $Z(G^0)$ of finite index in $(\ker r)(k)$. First we must understand some properties of $\Aut_{G/k}$. Suppose that $f: G \to G$ is a $k$-homomorphism inducing the identity on $G^0$ and $G/G^0$, i.e., $f \in (\ker r)(k)$. We define a morphism $\lambda: G \to G$ by
\[
\lambda(g) = f(g)g^{-1}.
\]
One checks that $\lambda$ is a $1$-cocycle. Since $f|_{G^0}$ is the identity, we have
\[
\lambda(gh) = f(gh)(gh)^{-1} = f(g)f(h)h^{-1}g^{-1} = \lambda(g)
\]
for all functorial points $g$ and $h$ of $G$ and $G^0$, respectively. Thus $\lambda$ factors through a morphism $G/G^0 \to G$. Moreover, for such $g$ and $h$ we have
\[
\lambda(g) = \lambda(hg) = f(hg)(hg)^{-1} = h\lambda(g)h^{-1},
\]
so in fact $\lambda$ has image lying in $Z(G^0)$. Thus $\lambda$ is a $1$-cocycle $G/G^0 \to Z(G^0)$. Note that any such $\lambda$ determines $f$ uniquely.

As in ordinary group cohomology, there is a short exact sequence
\[
0 \to \mathrm{B}^1(G/G^0, Z(G^0)) \to \mathrm{Z}^1(G/G^0, Z(G^0)) \to \mathrm{H}^1(G/G^0, Z(G^0)) \to 0.
\]
Notice that $\mathrm{B}^1(G/G^0, Z(G^0)) = \mathrm{B}^1((G/G^0)(k), Z(G^0)(k))$ (and so on) because $G/G^0$ is constant. Under the correspondence between $f$ and $\lambda$ as in the previous paragraph, $\mathrm{B}^1(G/G^0, Z(G^0))$ consists of those $k$-homomorphisms $f$ induced by conjugation by an element of $Z(G^0)(k)$. Moreover, if $G/G^0$ is of order $n$ then $\mathrm{H}^1(G/G^0, Z(G^0))$ is $n$-torsion, so $\mathrm{H}^1(G/G^0, Z(G^0))$ admits a surjection from $\mathrm{H}^1(G/G^0, Z(G^0)[n])$ and hence is finite. So indeed the map $Z(G^0)(k) \to (\ker r)(k)$ has finite index image, and we are done. 
\end{proof}

\begin{lemma}\label{lemma:aut-scheme-smooth}
If $G$ is a weakly reductive group scheme over $S$, then $\Aut_{G/S}$ is smooth and the natural map $\phi: G \to \Aut_{G/S}$ is flat. If $Z(G^0)$ is smooth, then $\phi$ is smooth.
\end{lemma}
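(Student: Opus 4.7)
The plan is to establish the three assertions in order: smoothness of $\Aut_{G/S}$ via the infinitesimal lifting criterion, then flatness of $\phi$ via fibral flatness using Lemma~\ref{lemma:inner-open}, then smoothness of $\phi$ by showing smoothness of its kernel $Z(G)$. By Theorem~\ref{theorem:aut-scheme}, $\Aut_{G/S}$ is locally of finite presentation over $S$, and by standard spreading-out we may assume $S$ is affine and noetherian.

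\textbf{Smoothness of $\Aut_{G/S}$.} Let $A' \twoheadrightarrow A$ be a square-zero thickening of local Artinian $S$-algebras with kernel $I$, and let $f \colon G_A \to G_A$ be an automorphism; we must lift $f$ to an automorphism $f'$ of $G_{A'}$. The classical smoothness of $\Aut_{G^0/S}$ for reductive $G^0$ (e.g.\ \cite[Exp.\ XXIV, Th\'eor\`eme 1.3]{sga3}) furnishes an $A'$-automorphism $f_0' \colon G^0_{A'} \to G^0_{A'}$ lifting $f|_{G^0_A}$, and the induced automorphism of the finite \'etale $(G/G^0)_A$ has a unique lift to $(G/G^0)_{A'}$ by rigidity of the \'etale site. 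The obstruction to assembling these two lifts into an $A'$-automorphism of $G_{A'}$ lifting $f$ lives in a second cohomology group of $G/G^0$ with coefficients in a module built from the Lie algebra of $Z(G^0)_{A'}$ tensored with $I$ (cf.\ the $1$-cocycle analysis in the proof of Lemma~\ref{lemma:inner-open}). Because $|G/G^0|$ is invertible in $A'$, the finite \'etale group $G/G^0$ is linearly reductive over $A'$, and so this obstruction vanishes. Hence a lift $f'$ exists, and $\Aut_{G/S}$ is smooth.

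\textbf{Flatness of $\phi$.} With $\Aut_{G/S}$ now known to be smooth, both $G$ and $\Aut_{G/S}$ are $S$-flat, so by the fibral flatness criterion \cite[IV$_3$, Th\'eor\`eme 11.3.10]{EGA} it suffices to check that $\phi_s \colon G_s \to \Aut_{G_s/k(s)}$ is flat for every $s \in S$. Since $\Aut_{G_s/k(s)}$ is smooth, hence reduced, Lemma~\ref{lemma:inner-open} applied over the perfect closure of $k(s)$ shows that $\phi_s$ becomes flat after this faithfully flat base change, and fpqc descent of flatness yields flatness of $\phi_s$ over $k(s)$.

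\textbf{Smoothness of $\phi$ when $Z(G^0)$ is smooth.} The kernel of $\phi$ as a functor is precisely $Z(G) = C_G(G)$, so $\ker \phi$ is representable as a closed subscheme of $G$ and is $S$-flat by the flatness of $\phi$. By the fibral smoothness criterion it remains to show that $Z(G_s)$ is smooth over $k(s)$ for each $s$. One checks directly that $Z(G_s) \cap G_s^0 = Z(G_s^0)^{G_s/G_s^0}$, the fixed-point scheme for the conjugation action of the linearly reductive finite group $G_s/G_s^0$ on the smooth commutative group scheme $Z(G_s^0)$; this fixed-point scheme is smooth as the action is tame. For each component $\alpha$ of $G_s/G_s^0$, the intersection $Z(G_s) \cap \alpha$ is either empty or a torsor under $Z(G_s^0)^{G_s/G_s^0}$ (if $g_0 \in Z(G_s)\cap \alpha$, then $g_0 z \in Z(G_s)$ for $z \in G_s^0$ if and only if $z \in Z(G_s^0)^{G_s/G_s^0}$), so the decomposition $Z(G_s) = \bigsqcup_\alpha (Z(G_s) \cap \alpha)$ expresses $Z(G_s)$ as a finite disjoint union of smooth pieces. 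Thus $\phi$ is flat with smooth kernel, hence smooth. The main obstacle is the obstruction argument in the smoothness step for $\Aut_{G/S}$; this can be made precise by parameterizing homomorphisms $G_{A'} \to G_{A'}$ via coset representatives for $G/G^0$ as in the proof of Theorem~\ref{theorem:hom-scheme}, so that the extension of $(f'_0, \bar f')$ to $G_{A'}$ reduces to solving an explicit $2$-cocycle equation valued in $Z(G^0)_{A'} \otimes I$.
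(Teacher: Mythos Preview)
Your flatness argument for $\phi$ is the same as the paper's, and your smoothness argument for $\phi$ (when $Z(G^0)$ is smooth) is a valid alternative: the paper instead invokes \cite[Exp.\ III, Corollaire 2.9(i)]{sga3} to reduce to $\mathrm{H}^1(G_s,\fg_s)=0$, which it proves via Hochschild--Serre and \cite[Exp.\ XXIV, Corollaire 1.15.1]{sga3}. Your route---show $Z(G_s)\cap G_s^0 = Z(G_s^0)^{G_s/G_s^0}$ is smooth by tameness, hence each nonempty component of $Z(G_s)$ is a torsor under it, hence $\ker\phi$ has smooth fibers---is correct and pleasantly direct.

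There is, however, a real gap in your smoothness argument for $\Aut_{G/S}$. Once you fix a lift $f_0'$ of $f|_{G^0}$, the obstruction to extending it to an automorphism of $G_{A'}$ does \emph{not} lie only in an $\mathrm{H}^2$ of $G/G^0$. Concretely, for each coset representative $h_i$ you need $f'(h_i)$ to satisfy $\Ad(f'(h_i))|_{G^0_{A'}} = f_0'\circ \Ad(h_i)\circ (f_0')^{-1}$; any lift $\tilde g_i\in G(A')$ of $f(h_i)$ gives an inner automorphism $\Ad(\tilde g_i)$ that differs from the required one by a derivation of $G^0$, and the obstruction to correcting $\tilde g_i$ is the image of that derivation in $\mathrm{H}^1(G^0,\fg)\otimes I$. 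This group need not vanish (it is nonzero exactly when $Z(G^0)$ is not smooth), so for a fixed $f_0'$ the extension problem can genuinely be obstructed. To make your approach work you must also let $f_0'$ vary, and then carefully show that the resulting system of obstructions is governed by $\mathrm{H}^1(G/G^0,\mathrm{H}^1(G^0,\fg))$ together with $\mathrm{H}^2(G/G^0,\Lie Z(G^0))$, both of which vanish by linear reductivity of $G/G^0$. This is exactly the Hochschild--Serre analysis, and the paper short-circuits it by citing \cite[Exp.\ III, Corollaire 2.9(ii)]{sga3}: the obstruction to lifting $f$ lies in $\mathrm{H}^2(G_s,\fg_s)$, and the spectral sequence collapses to $\mathrm{H}^2(G_s,\fg_s)=\mathrm{H}^2(G^0_s,\fg_s)^{G_s/G_s^0}=0$ by \cite[Exp.\ XXIV, Corollaire 1.13(ii)]{sga3}. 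Either replace your by-hand assembly with this citation, or explicitly allow $f_0'$ to move and account for the $E_2^{1,1}$ term.
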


\begin{proof}
Let $\fg = \Lie G$. We claim that $\mathrm{H}^2(G_s, \fg_s) = 0$ for all $s \in S$. There is a Hochschild--Serre spectral sequence
\[
\mathrm{E}_2^{pq} = \mathrm{H}^p(G_s/G_s^0, \mathrm{H}^q(G_s^0, \fg_s)) \Rightarrow \mathrm{H}^{p+q}(G_s, \fg_s),
\]
and since $G_s/G_s^0$ has order prime to $\chara k(s)$ for all $s \in S$ we find
\[
\mathrm{H}^n(G_s, \fg_s) = \mathrm{H}^0(G_s/G_s^0, \mathrm{H}^n(G_s^0, \fg_s))
\]
for all $n$. By \cite[Exp.\ XXIV, Corollaire 1.13(ii)]{sga3} (in which reference reductive groups over \textit{fields} are also required to be connected), we have $\mathrm{H}^2(G_s^0, \fg_s) = 0$, so indeed $\mathrm{H}^2(G_s, \fg_s) = 0$ for all $s \in S$. The same argument, using \cite[Exp.\ XXIV, Corollaire 1.15.1]{sga3}, shows that $\mathrm{H}^1(G_s, \fg_s) = 0$ for all $s \in S$ if $Z(G^0)$ is smooth. 

To show that $\underline{\Aut}_{G/S}$ is smooth, it suffices to verify the infinitesimal criterion of smoothness, and for this \cite[Exp.\ III, Corollaire 2.9(ii)]{sga3} shows that it suffices to show $\mathrm{H}^2(G_s, \fg_s) = 0$ for all $s \in S$, which we showed above. Moreover, \cite[Exp.\ III, Corollaire 2.9(i)]{sga3} shows that if $\mathrm{H}^1(G_s, \fg_s) = 0$ for all $s \in S$, then the morphism $\phi$ satisfies the infinitesimal criterion of smoothness, so it is smooth. In particular, the previous paragraph shows that if $Z(G^0)$ is smooth then $\phi$ is smooth. 

Finally, to show that $\phi$ is flat in general, we may use the fibral flatness criterion \cite[IV\textsubscript{3}, Th\'eor\`eme 11.3.10]{EGA} to assume that $S = \Spec k$ for a field $k$. Thus since $\Aut_{G/k}$ is smooth, we may conclude using Lemma~\ref{lemma:inner-open}.
\end{proof}

\begin{prop}\label{prop:center}
If $G$ is a weakly reductive $S$-group scheme, the functorial center $Z(G)$ is an $S$-group scheme of multiplicative type. If $Z(G^0)$ is smooth, then $Z(G)$ is smooth.
\end{prop}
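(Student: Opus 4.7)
The plan is to realize $Z(G)$ as the kernel of the conjugation morphism $\varphi\colon G \to \Aut_{G/S}$, which is well-defined by Theorem~\ref{theorem:aut-scheme}, and then bootstrap using Lemma~\ref{lemma:aut-scheme-smooth}. Since $\varphi$ is flat, its kernel $Z(G)$ is automatically a flat, finitely presented, affine closed $S$-subgroup scheme of $G$; if in addition $Z(G^0)$ is smooth, then Lemma~\ref{lemma:aut-scheme-smooth} gives that $\varphi$ itself is smooth, so its kernel $Z(G)$ is smooth as the base change of a smooth morphism along the identity section. This disposes of the smoothness assertion.

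For the multiplicative-type assertion I would appeal to the criterion from \cite[Exp.~X]{sga3} that a flat, affine, finitely presented, commutative $S$-group scheme is of multiplicative type as soon as each of its geometric fibers is, thereby reducing to the case $S=\Spec k$ for an algebraically closed field $k$ with $\chara k\nmid |G/G^0|$. Writing $F=G/G^0$, I would analyze the short exact sequence
\[
1 \to Z(G)\cap G^0 \to Z(G) \to Z(G)/(Z(G)\cap G^0) \to 1.
\]
Unwinding definitions identifies $Z(G)\cap G^0$ with the fixed-point subscheme $Z(G^0)^F$ for the induced outer action of $F$ on $Z(G^0)$. Because $|F|$ is invertible in $k$ and taking fixed points on a diagonalizable group corresponds to taking coinvariants on its character lattice, $Z(G^0)^F$ is again diagonalizable. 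On the other hand, $Z(G)/(Z(G)\cap G^0)$ embeds into $Z(F)\subseteq F$ and so is a finite étale commutative $k$-group of order invertible in $k$, hence of multiplicative type.

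To close the loop I would verify that an extension of a finite étale $p'$-group by a multiplicative-type $k$-group over an algebraically closed field $k$ of characteristic $p$ is again of multiplicative type. The quickest approach is to pass to $\bar{k}$-points: the group of $\bar{k}$-points of a multiplicative-type group is divisible by every integer prime to $p$, so the corresponding extension of abstract abelian groups splits, and a morphism from a finite étale $k$-scheme to a $k$-group is determined by its $\bar{k}$-points. This upgrades the abstract splitting to a morphism of group schemes, exhibiting $Z(G)$ on the fiber as a product of diagonalizable groups.

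The main obstacle will be the scheme-theoretic bookkeeping when $Z(G^0)$ is not smooth (for example, $G^0=\SL_p$, in which case $Z(G^0)=\mu_p$). There $Z(G^0)^F$ can carry an infinitesimal part, so one must reason with diagonalizable group schemes in the full fppf sense rather than just with $\bar{k}$-points; the Cartier-duality/character-lattice viewpoint sketched above is robust enough to handle this, provided one carefully distinguishes invariants from coinvariants on the character side and confirms that the cited SGA~3 criterion applies in the required generality (which it does, thanks to the flatness established at the outset).
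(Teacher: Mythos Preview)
Your first half (realizing $Z(G)$ as $\ker\varphi$ and deducing flatness/smoothness from Lemma~\ref{lemma:aut-scheme-smooth}) is correct and is exactly what the paper does.

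The multiplicative-type argument has a genuine gap. Your splitting step asserts that ``the group of $\bar{k}$-points of a multiplicative-type group is divisible by every integer prime to $p$,'' but this is false: $\mu_n(\bar{k}) \cong \bZ/n\bZ$ when $p\nmid n$, which is not divisible. More fundamentally, the extension
\[
1 \to Z(G)\cap G^0 \to Z(G) \to Z(G)/(Z(G)\cap G^0) \to 1
\]
need not split at all (think of $1 \to \mu_2 \to \mu_4 \to \underline{\bZ/2} \to 1$), so attempting to split it is the wrong strategy even though the conclusion ($\mu_4$ is diagonalizable) is true. Your closing paragraph flags the right concern---non-smooth $Z(G^0)$---but the proposed fix (``Cartier duality is robust enough'') is not a proof.

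The paper avoids both the fibral reduction and the splitting issue by working directly over $S$ throughout. It identifies $C_{G^0}(G) = Z(G)\cap G^0$ as $\ker(\varphi|_{Z(G^0)})$, which is a flat closed subgroup of the multiplicative-type group $Z(G^0)$ and hence is itself of multiplicative type by \cite[Exp.~IX, Th\'eor\`eme 6.8]{sga3}. The quotient $Z(G)/C_{G^0}(G)$ is a closed flat subgroup of the finite \'etale $G/G^0$ of invertible order, hence also of multiplicative type. The conclusion then comes from \cite[Exp.~XVII, Proposition 7.1.1]{sga3}: a commutative extension of multiplicative-type group schemes is of multiplicative type. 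This single citation replaces your entire splitting argument and works uniformly over $S$.
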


\begin{example}\label{example:p-is-bad}
Before proving the proposition, we offer the following example to show that geometric reductivity for a smooth affine $G$ is not enough for flatness of $Z(G)$. Let $A = \bZ_p[\zeta_p]$. There is a non-split central extension of constant $A$-group schemes
\[
1 \to \bZ/p \to U \to (\bZ/p)^2 \to 1,
\]
where $U$ is isomorphic to the group of $\bF_p$-points of the unipotent radical of a Borel in $\SL_3$, i.e., $U$ is the ``Heisenberg group" over $\bF_p$. There is a homomorphism of $A$-group schemes $\bZ/p \to \mu_p$ which is trivial on the special fiber and an isomorphism on the generic fiber, given by the choice of a primitive $p$th root of unity $\zeta_p$. Pushing the above extension forward by the map $\bZ/p \to \mu_p \to \bG_m$ gives an extension
\[
1 \to \bG_m \to G \to (\bZ/p)^2 \to 1,
\]
where $G$ has commutative special fiber and non-commutative generic fiber. In fact, $Z(G)$ has special fiber $G$ and generic fiber $G^0$, so it is not flat.
\end{example}

\begin{proof}
Working locally and spreading out, we may and do assume that $S$ is locally noetherian. Consider the morphism $\phi: G \to \Aut_{G/S}$. By Lemma~\ref{lemma:aut-scheme-smooth}, $\phi$ is flat, so $Z(G) = \ker \phi$ is a flat closed $S$-subgroup scheme of $G$, and it is smooth provided that $Z(G^0)$ is smooth. So we need only show that $Z(G)$ is of multiplicative type. 

We define $C_{G^0}(G) \colonequals \ker \phi|_{G^0} = \ker \phi|_{Z(G^0)}$; note that $Z(G^0)$ is of multiplicative type. By \cite[Exp.\ IX, Th\'eor\`eme 6.8]{sga3}, it follows that $C_{G^0}(G)$ is of multiplicative type. Moreover, there is a short exact sequence
\[
1 \to C_{G^0}(G) \to Z(G) \to Z(G)/C_{G^0}(G) \to 1.
\]
Since $Z(G)/C_{G^0}(G)$ is a closed commutative flat and finitely presented $S$-subgroup scheme of $G/G^0$, it is finite \'{e}tale commutative of order invertible on $S$, and in particular it is of multiplicative type. Thus $Z(G)$ is a commutative extension of multiplicative type group schemes, so it is of multiplicative type by \cite[Exp.\ XVII, Prop.\ 7.1.1]{sga3}.
\end{proof}

\begin{remark}
With more work, the kind of arguments used in the proof of Lemma~\ref{lemma:inner-open} (expanded upon in the case of ``abstract" groups in \cite{Extensions}) can be used to proved that if $G$ is weakly reductive over $S$ then there is a short exact sequence 
\[
1 \to G/Z(G) \to \Aut_{G/S} \to \Out_{G/S} \to 1
\]
in which $\Out_{G/S}$ is an \'{e}tale-locally constant $S$-group scheme, just as in the theory of reductive group schemes. Proving this would take us somewhat far afield, so we omit it.
\end{remark}

We note that the above discussion gives a slight strengthening of \cite[Lemma 6.8]{martin03}.

\begin{corollary} \label{cor:finiteness of weyl}
Let $k$ be a field, let $G$ be a finite type $k$-group scheme, and let $H \subset G$ be a closed (not necessarily connected) reductive $k$-subgroup. The quotient $N_G(H)/HC_G(H)$ is finite. If $H/H^0$ has order prime to $\chara k$, then $N_G(H)/HC_G(H)$ is \'{e}tale.
\end{corollary}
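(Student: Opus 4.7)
The plan is to realize $W \colonequals N_G(H)/HC_G(H)$ as a subfunctor of $\Aut_{H/k}/\phi(H)$ and analyze it using Lemmas~\ref{lemma:inner-open} and \ref{lemma:aut-scheme-smooth}. To set this up, note that $H$ is geometrically reductive smooth affine over $k$ (allowing $H$ disconnected), so Theorem~\ref{theorem:aut-scheme} makes sense of $\Aut_{H/k}$; the conjugation action of $N_G(H)$ on $H$ produces a $k$-morphism $\Phi : N_G(H) \to \Aut_{H/k}$ whose scheme-theoretic kernel is $C_G(H)$. The induced monomorphism $\overline{\Phi} : N_G(H)/C_G(H) \hookrightarrow \Aut_{H/k}$ identifies $HC_G(H)/C_G(H) \cong H/Z(H)$ with the image $\phi(H)$ of the inner-automorphism morphism $\phi : H \to \Aut_{H/k}$ of Lemma~\ref{lemma:inner-open}, exhibiting $W$ as a subfunctor of the fppf quotient $\Aut_{H/k}/\phi(H)$.

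For finiteness, Lemma~\ref{lemma:inner-open} (applied to the possibly disconnected reductive group $H$) gives that $\phi$ is open, and its proof shows that the underlying space of $\phi(H^0)$ agrees with that of $(\Aut_{H/k})^0_{\rm{red}}$. Since $N_G(H)/C_G(H)$ is of finite type, its topological identity component lies in $(\Aut_{H/k})^0_{\rm{red}} \subset \phi(H)$, so together with the trivial inclusion $\phi(H) \subset \overline{\Phi}(N_G(H)/C_G(H))$ I would conclude
\[
\dim \overline{\Phi}(N_G(H)/C_G(H)) = \dim \phi(H) = \dim H - \dim Z(H),
\]
forcing $W$ to be zero-dimensional. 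Being a finite-type $k$-scheme of dimension zero, $W$ is finite.

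For étaleness under the hypothesis that $|H/H^0|$ is prime to $\chara k$, the group $H$ is weakly reductive, so Lemma~\ref{lemma:aut-scheme-smooth} gives that $\Aut_{H/k}$ is smooth. In particular $(\Aut_{H/k})^0$ is reduced, and the topological identification of the previous paragraph upgrades to a scheme-theoretic equality $\phi(H^0) = (\Aut_{H/k})^0$ of open subschemes of $\Aut_{H/k}$. Hence $\phi(H)$ contains the identity component of $\Aut_{H/k}$ as a subscheme, so the fppf quotient $\Aut_{H/k}/\phi(H)$ is smooth of relative dimension zero over $k$ (since $\phi(H) = H/Z(H)$ is itself smooth as an fppf quotient of the smooth group $H$), hence étale. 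Since $W$ is a finite-type $k$-scheme admitting a monomorphism into an étale $k$-scheme, a local analysis on the target (each component of which is $\Spec L$ for a finite separable extension $L/k$, so that every monomorphism $\Spec A \to \Spec L$ with $A$ finite over $k$ has $A = 0$ or $A = L$) shows that $W$ is étale.

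The central obstacle is the upgrade from the topological identification of $\phi(H^0)$ with $(\Aut_{H/k})^0_{\rm{red}}$ to the scheme-theoretic equality $\phi(H^0) = (\Aut_{H/k})^0$ needed for étaleness; this rests on smoothness of $\Aut_{H/k}$, which by Example~\ref{example:non-smooth-aut} can fail precisely when $|H/H^0|$ is divisible by $\chara k$, explaining why étaleness requires the coprimality hypothesis while finiteness does not.
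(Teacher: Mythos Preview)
Your proof is correct and follows essentially the same approach as the paper's: both embed $W$ into $A = \Aut_{H/k}/\phi(H)$ via the conjugation map $N_G(H) \to \Aut_{H/k}$, invoke Lemma~\ref{lemma:inner-open} for finiteness, and invoke Lemma~\ref{lemma:aut-scheme-smooth} for \'etaleness in the weakly reductive case. The only cosmetic difference is that the paper first reduces to $k$ algebraically closed and deduces finiteness from the observation that $A$ is discrete (so $A_{\rm red}$ is \'etale and any finite-type closed subscheme is finite), whereas you reach the same conclusion via a direct dimension count; note that the topological identification $\phi(H^0) = (\Aut_{H/k})^0_{\rm red}$ you borrow from the proof of Lemma~\ref{lemma:inner-open} is established there after passing to $\overline{k}$, so your argument implicitly uses that reduction as well.
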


\begin{proof}
We may and do assume that $k$ is algebraically closed. Let $\phi: H \to \Aut_{H/k}$ be the natural map. By Lemma~\ref{lemma:inner-open}, $\phi(H)$ is an open subset of $\Aut_{H/k}$, so $\phi(H)$ is an open subgroup scheme of $(\Aut_{H/k})_{\rm{red}}$ and the quotient $A \coloneqq \Aut_{H/k}/\phi(H)$ (which exists as a scheme by \cite[Exp.\ VI\textsubscript{A}, Th\'eor\`eme 3.2]{sga3}) is locally of finite type. Moreover, $A_{\rm{red}}$ is \'etale: since $\phi(H)$ is (topologically) open in $\Aut_{H/k}$, $A$ is discrete, and any reduced discrete scheme locally of finite type over an algebraically closed field is \'{e}tale. The natural map $N_G(H) \to A$ has kernel $HC_G(H)$, so $N_G(H)/HC_G(H)$ is a finite type closed subscheme of $A$. Since $A_{\rm{red}}$ is \'{e}tale, it follows that $N_G(H)/HC_G(H)$ is finite. If $H/H^0$ has order prime to $\chara k$, so $H$ is weakly reductive, then already $A$ is \'{e}tale by Lemma~\ref{lemma:aut-scheme-smooth} and thus $N_G(H)/HC_G(H)$ is \'{e}tale.
\end{proof}

\begin{defn}
If $G$ is a finite type $k$-group scheme and $H$ is a reductive $k$-subgroup scheme of $G$, then we set $W_H \colonequals N_G(H)/HC_G(H)$, which we call the \textit{Weyl group} of the pair $(G, H)$.
\end{defn} 

If $H$ is a maximal torus of a smooth affine $G$, then this is the usual Weyl group $W$ of $G$.

\begin{prop} \label{prop:p not divide weyl}
Let $G$ be a reductive group over a field $k$, and let $H$ be a reductive $k$-subgroup scheme of $G$. For a prime $p$, if $p \nmid |W|$ and $p \geq |H/H^0|$ then $p \nmid |W_H|$.
\end{prop}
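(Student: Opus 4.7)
The plan is to reduce to algebraically closed $k$, fix a maximal torus $T \subseteq H^0$ (which extends to a maximal torus $T'$ of $G$ by taking any maximal torus of the reductive centralizer $C_G(T)$), and split $|W_H|$ into a Weyl-theoretic factor dividing $|W|$ and a cohomological factor controlled by $|H/H^0|$.

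First, since $H^0$ acts transitively on its maximal tori, every element of $N_G(H)$ may be adjusted by an element of $H^0 \subseteq H$ so as also to normalize $T$; setting $N := N_G(H) \cap N_G(T)$, a direct computation using $C_G(H) \subseteq C_G(T)$ gives $HC_G(H) \cap N = N_H(T) \cdot C_G(H)$, and hence
\[
W_H \;=\; N\bigl/\bigl(N_H(T) \cdot C_G(H)\bigr).
\]
Next I would consider the natural map $\pi \colon N \hookrightarrow N_G(T) \twoheadrightarrow N_G(T)/C_G(T) =: W_G(T)$. A standard argument, using that $N_G(T)$ permutes the maximal tori of $C_G(T)$ (which are $C_G(T)^0$-conjugate), yields $N_G(T) = C_G(T)^0 \cdot (N_G(T) \cap N_G(T'))$ and identifies $W_G(T)$ with a subquotient of $W = N_G(T')/T'$; in particular $|W_G(T)|$ divides $|W|$. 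Since $C_G(H) \subseteq C_G(T)$ is killed by $\pi$, this yields an exact sequence of pointed finite sets
\[
1 \to K \to W_H \to \pi(N)/\pi(N_H(T)) \to 1,
\]
where $K := N_{C_G(T)}(H)/(C_H(T) \cdot C_G(H))$ and the right-hand set has size dividing $|W|$. Thus it suffices to show $p \nmid |K|$.

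Conjugation embeds $K$ into $A_T/(C_H(T)/Z(H))$, where $A_T \subseteq \Aut(H)$ is the subgroup of automorphisms fixing $T$ pointwise. Restriction to the connected reductive $H^0$ gives a surjection $A_T \twoheadrightarrow T/Z(H^0)$ (automorphisms of $H^0$ fixing a maximal torus pointwise are conjugation by elements of that torus) with kernel $K_{H^0} := \{\alpha \in \Aut(H) : \alpha|_{H^0} = \mathrm{id}\}$, while the subgroup $C_H(T)/Z(H)$ already surjects onto $T/Z(H^0)$ via its inclusion of $T$. A snake lemma then identifies $A_T/(C_H(T)/Z(H))$ with $K_{H^0}/(Z_H(H^0)/Z(H))$, which by the cohomological analysis in the proof of Lemma \ref{lemma:inner-open} sits in an extension of a subgroup of $\Aut(H/H^0)$ by $\mathrm{H}^1(H/H^0, Z_H(H^0))$. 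The second piece is $|H/H^0|$-torsion and the first divides $|H/H^0|!$, so when $p > |H/H^0|$ both factors are prime to $p$, giving $p \nmid |K|$ and hence $p \nmid |W_H|$.

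The main obstacle is the boundary case $p = |H/H^0|$ (which forces $|H/H^0|$ to be a prime), where the cohomology group $\mathrm{H}^1(H/H^0, Z_H(H^0))$ can genuinely have $p$-torsion. Resolving this would require exploiting that the classes in $K$ arise from conjugation by elements of $G$, a strictly stronger constraint than mere membership in $\mathrm{H}^1$; a Sylow-type argument applied to the finite group $H/H^0$ is an alternative avenue worth exploring.
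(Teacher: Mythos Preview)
Your approach is essentially the same as the paper's. Both arguments reduce to elements normalizing a fixed maximal torus $T$ of $H$, observe that $N_G(T)/C_G(T)$ is a subquotient of $W$ (so the ``Weyl-theoretic'' contribution is prime to $p$), and then analyze elements centralizing $T$ via the cocycle description of automorphisms trivial on $H^0$ and on $H/H^0$ coming from Lemma~\ref{lemma:inner-open}. You package this as a short exact sequence and bound each piece; the paper instead works element by element, successively raising $g$ to prime-to-$p$ powers and translating by $H(k)$ until $\Ad(g)$ becomes visibly inner. The content is the same: a factor controlled by $|W|$, a factor controlled by $|\Aut(H/H^0)|$ (prime to $p$ once $p\geq |H/H^0|$), and a factor controlled by $\mathrm{H}^1(H/H^0,Z(H^0))$.

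You correctly isolate the boundary case $p=|H/H^0|$ as the obstacle: there the relevant $\mathrm{H}^1$ is genuinely $p$-torsion and your bound breaks down. The paper's proof hits exactly the same wall---its final step passes from $g$ to $g^{|H/H^0|}$ to kill the class in $\mathrm{H}^1(H/H^0,Z(H^0)(k))$, and this is \emph{not} a prime-to-$p$ power when $|H/H^0|=p$. So neither argument, as written, handles $p=|H/H^0|$. Note however that the paper's sole application of this proposition (in the proof of Theorem~\ref{thm:main theorem}) invokes it only under the strict inequality $p>|\pi_0\fC_k|$, so the boundary case is never actually used. Your instinct that one must exploit the fact that $g$ arises from conjugation inside $G$ (rather than being an abstract automorphism of $H$) is the natural avenue, but neither your proposal nor the paper pursues it.
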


\begin{proof}
We may and do assume that $k$ is algebraically closed. Let $g \in N_G(H)(k)$; we need to show that under our hypotheses, $g^n$ acts by an inner automorphism on $H$ for some integer $n$ prime to $p$. Let $T$ be a maximal torus of $H$, and note that conjugacy of maximal tori in $H^0$ implies that after translation by $H^0(k)$, we may assume $g \in N_G(T)(k)$. Since $N_G(T)/C_G(T)$ is a subquotient of $W$, it follows that $g^{|W|} \in C_G(T)(k)$. Thus after replacing $g$ by $g^{|W|}$, we may and do assume that $g$ centralizes $T$. In particular, $g$ acts trivially on the Dynkin diagram of $(H^0, T)$, so $g$ acts on $H^0$ by an inner automorphism. Thus after further translation by $H^0(k)$ we may and do assume that $g$ centralizes $H^0$. Since $p \geq |H/H^0|$, we may pass to a further prime-to-$p$ power of $g$ to assume that $g$ acts trivially on $H/H^0$. (It is a general fact, easily checked, that if $p \mid |\Aut(A)|$ for a finite group $A$, then $p < |A|$.) 

Now that $g$ acts trivially on $H^0$ and $H/H^0$, the argument of Lemma~\ref{lemma:inner-open} shows that $\Ad(g)$ corresponds to a $1$-cocycle $\eta: H/H^0 \to Z(H^0)$ (which lands in $Z(H^0)(k)$ since $H/H^0$ is constant). The corresponding class in $\mathrm{H}^1(H/H^0, Z(H^0)(k))$ is killed by $|H/H^0|$, so further passing from $g$ to $g^{|H/H^0|}$ we may and do assume that $\Ad(g)$ is cohomologically trivial, from which is follows that $g$ acts on $H$ by conjugation by an element of $h \in Z(H^0)(k)$. Thus $g$ now acts on $H$ by inner automorphisms, and we are done.
\end{proof}

\subsection{Abelianization}

Our final goal in this section is to show the existence of the ``abelianization" of a weakly reductive group scheme $G$. We begin with the following folkloric lemma.

\begin{lemma}\label{lemma:faithfully-flat-criterion}
Let $S$ be a scheme, and let $f: G \to H$ be a homomorphism of finitely presented $S$-group schemes. The following are equivalent.
\begin{enumerate}
    \item $f$ is faithfully flat,
    \item $f$ is an epimorphism of fppf sheaves and $\ker f$ is flat.
\end{enumerate}
\end{lemma}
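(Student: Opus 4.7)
The implication $(1) \Rightarrow (2)$ is essentially immediate. If $f$ is faithfully flat, then since $f$ is automatically finitely presented (as a morphism between finitely presented $S$-schemes), it is an fppf cover and so induces an epimorphism of fppf sheaves. Moreover, $\ker f = G \times_H S$ is flat over $S$ as the base change of the flat morphism $f$ along the identity section $e \colon S \to H$.

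For $(2) \Rightarrow (1)$, my plan is to realize $G$ as an fppf $\ker f$-torsor over $H$ and then exploit that $\ker f$ is faithfully flat over $S$. The key observation is that the shearing map
\[
\ker f \times_S G \to G \times_H G, \qquad (k,g) \mapsto (kg, g),
\]
is an isomorphism, with inverse $(g_1, g_2) \mapsto (g_1 g_2^{-1}, g_2)$; note that $g_1 g_2^{-1}$ lies in $\ker f$ precisely because $(g_1, g_2) \in G \times_H G$ means $f(g_1) = f(g_2)$. This shows that $G \to H$ is a pseudo-$\ker f$-torsor. Since $f$ is an epimorphism of fppf sheaves, applying the epimorphism condition to $\mathrm{id}_H \in H(H)$ yields an fppf cover $H' \to H$ together with a lift $\sigma \colon H' \to G$ of the structure map $H' \to H$. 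Translation by $\sigma$ then produces an $H'$-isomorphism
\[
\ker f \times_S H' \xrightarrow{\sim} G \times_H H'.
\]

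To finish, observe that $\ker f \to S$ is flat by hypothesis and surjective since it contains the identity section $e_G$ of $G$; hence $\ker f \to S$ is faithfully flat, and therefore so is $\ker f \times_S H' \to H'$. The displayed isomorphism above then shows that $G \times_H H' \to H'$ is faithfully flat. Since $H' \to H$ is itself an fppf cover, by faithfully flat descent the morphism $f \colon G \to H$ is faithfully flat. The only step requiring care is the torsor/shearing identification and the translation of the fppf epimorphism hypothesis into the existence of a section of $G \times_H H' \to H'$; once those are in place the rest is formal.
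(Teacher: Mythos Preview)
Your argument is correct and is the standard torsor/descent proof of this folklore lemma. The paper itself simply writes ``Omitted'' for this proof, so there is nothing to compare against; your write-up would serve perfectly well as the missing proof.
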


\begin{proof}
Omitted. 
\end{proof}

\begin{lemma}\label{lemma:power-homomorphism}
Let $1 \to M \to E \to H \to 1$ be a central extension, where $M$ is an $S$-group scheme of multiplicative type and $H$ is a finite \'{e}tale group scheme of constant order $n$ invertible on $S$. Letting $N = n^2$, for every integer $d \geq 1$ the $S$-morphism $[Nd]: E \to E$ is a homomorphism. If $M$ is moreover a torus, then $E[N] \to H$ is faithfully flat, where $E[N] \colonequals [N]^{-1}(1)$.
\end{lemma}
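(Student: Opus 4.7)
The plan is to prove the two assertions separately. The key step is showing that $[n]_E \colon E \to E$ is a homomorphism; the rest follows formally.

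First observe that $[n]_E$ factors through the central subgroup $M$: for any $S$-scheme $T$ and $e \in E(T)$, the finite group $H(T)$ has order dividing $n$, so Lagrange's theorem gives $\bar e^n = 1$ and hence $e^n \in M(T)$. Consequently, for any $d \geq 1$, we can write $[Nd] = [nd]_M \circ [n]_E$, which expresses $[Nd]$ as a composition. Since $M$ is commutative (being of multiplicative type), $[nd]_M$ is a homomorphism, so the first claim reduces to showing that $[n]_E \colon E \to M$ is a homomorphism of $S$-group schemes.

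To see this, my plan is to use a classical transfer argument. For a group $G$ with a central subgroup $M$ of finite index $n$, the $n$-th power map $g \mapsto g^n$ coincides with the transfer $V \colon G \to M$, which is a homomorphism. The identification $V(g) = g^n$ follows from a direct orbit-decomposition: the left action of $\bar g$ on $G/M$ splits into $n / \ord(\bar g)$ free orbits each of size $\ord(\bar g)$, and a telescoping calculation using the centrality of $g^{\ord(\bar g)} \in M$ shows each orbit contributes $g^{\ord(\bar g)}$, summing to $g^n$. To implement this scheme-theoretically, I work fppf-locally on $S$: passing to an fppf cover on which $H$ is constant and the $M$-torsor $E \to H$ splits, the group-theoretic transfer applies on $T$-points (for $T$ over which $E(T) \to H(T)$ is surjective), and effective fppf descent of the homomorphism identity yields that $[n]_E \colon E \to M$ is a homomorphism of group schemes.

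For the faithful-flatness claim, I apply Lemma~\ref{lemma:faithfully-flat-criterion}. The kernel of $E[N] \to H$ is $M[N]$, which for $M$ a torus is a finite locally free group scheme of multiplicative type, hence flat over $S$. For fppf surjectivity: given $h \in H(T)$, lift to $e_0 \in E(T')$ over an fppf cover (possible since $E \to H$ is fppf); then $e_0^N \in M(T')$, and since $[N]_M$ is fppf surjective for a torus $M$, a further fppf cover $T''$ provides $\mu \in M(T'')$ with $\mu^N = e_0^N$. Then $e \coloneqq e_0 \mu^{-1} \in E[N](T'')$ lifts $h$ (using that $[N]$ is a homomorphism from the first part), and the criterion delivers faithful flatness. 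The main obstacle is carrying out the transfer argument rigorously in the scheme-theoretic setting, in particular descending the homomorphism identity through fppf covers without loss of information.
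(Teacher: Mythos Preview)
Your proposal is correct and takes a genuinely different, cleaner route than the paper. The paper reduces to a strictly henselian local base, uses that the extension class in Hochschild cohomology $\mathrm{H}^2(H,M)$ is killed by $n$ to build a set-theoretic section $\alpha\colon H\to E$ satisfying $\alpha(x)^{N}=1$, and then verifies $[Nd]$ is a homomorphism by an explicit computation with this $\alpha$. You instead identify $e\mapsto e^{n}$ with the transfer map $V\colon E\to M$, which is classically a homomorphism for any central subgroup of index $n$; this yields the strictly stronger conclusion that already $[n]\colon E\to M$ is a homomorphism, from which $[Nd]=[nd]_M\circ[n]_E$ follows at once.

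The scheme-theoretic implementation you flag as an obstacle is routine. After \'etale localization to make $H$ constant and fppf localization to split the $M$-torsor $E\to H$, fix lifts $\alpha(h)\in E(S)$ for each $h$ in the finite group $H(S)$ (one may also take $S$ connected, so $|H(S)|=n$). Using centrality of $M$, the identity $(ee')^{n}=e^{n}e'^{n}$ on $E_h\times_S E_{h'}$ reduces to the same identity for $e=\alpha(h)$, $e'=\alpha(h')$ in the abstract group $E(S)$, where the transfer computation applies verbatim since $M(S)$ is central of index $n$. Equality of the two morphisms $E\times_S E\to M$ then descends. Your faithful-flatness argument is essentially the paper's: both lift $h$ to $E$, divide by an $N$-th (or $n$-th) root in the torus $M$, and apply Lemma~\ref{lemma:faithfully-flat-criterion}; you use the homomorphism property of $[N]$ where the paper uses centrality directly, and you are slightly more explicit about flatness of the kernel $M[N]$.
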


\begin{proof}
By spreading out and \'{e}tale-localizing around a point of $S$, we may and do assume that $S = \Spec A$ for a strictly henselian noetherian local ring $A$. 
In particular, since $H$ is finite \'{e}tale it is a constant group and there exists a scheme-theoretic section $H \to E$. We may moreover pushforward by an inclusion of $M$ into a torus to assume that $M$ is a torus. The existence of a section implies that for every $S$-scheme $S'$ the sequence
\[
1 \to M(S') \to E(S') \to H(S') \to 1
\]
is exact. 

First note that as an extension of $H$ by $M$ which admits a section, $E$ corresponds to a cohomology class in $\mathrm{H}^2(H, M)$, the Hochschild cohomology group (see for example \cite[II, \S 3, Prop.\ 2.3]{DG} or \cite[Prop.\ 2.3.6]{Demarche}). Concretely, this is the space of $2$-cocycles $H \times H \to M$ modulo coboundaries. Since $H$ is a constant group scheme, we have $\mathrm{H}^2(H, M) = \mathrm{H}^2(H(S), M(S))$. Since $H(S)$ is finite of order $n$, $\mathrm{H}^2(H(S), M(S))$ is killed by $n$ by the classical theory. Since $n$ is invertible on the strictly henselian $S$, via the correspondence between extensions and classes in $\mathrm{H}^2$, the image of the extension corresponding to multiplication by $n$ on $M$ is
\[
1 \to M(S) \xleftarrow[n]{\cong} M(S)/M[n](S) \to E(S)/M[n](S) \to H(S) \to 1.
\]
Thus this extension is split, i.e., there is a section $\alpha_0: H(S) \to E(S)/M[n](S)$ which is a homomorphism. This is equivalent to a section $H \to E/M[n]$, which we also denote by $\alpha_0$. 

Since $S$ is strictly henselian, $\alpha_0$ lifts to a section $\alpha: H \to E$. Now $\alpha_0(x)^n = \alpha_0(x^n) = 1$ for all $x \in H(S)$, so we have $\alpha(x)^n \in M[n](S)$ and thus
\[
\alpha(x)^N = \alpha(x)^{n^2} = 1.
\]
Moreover, since $\alpha_0$ is a homomorphism we have $\alpha(xx')^{-1}\alpha(x)\alpha(x') \in M[n](S)$ for all $x, x' \in H(S)$. In other words,
\[
\alpha(x)\alpha(x') = \alpha(xx') g_{x, x'}
\]
for some $g_{x, x'} \in M[n](S)$. 

Now we conclude the proof that $[Nd]: E \to E$ is a homomorphism. We shall check this on $S'$-points for every $S$-scheme $S'$. Any element of $G(S')$ is Zariski-locally of the form $\alpha(x)g$ for some $x \in H(S')$ and $g \in M(S')$, and using centrality of $M$ we compute, for $x, x' \in H(S')$ and $g, g' \in M(S')$,
\[
(\alpha(x)g\alpha(x')g')^{Nd} = \alpha(xx')^{Nd}g_{x, x'}^{Nd} g^{Nd} g'^{Nd} = g^{Nd} g'^{Nd} = (\alpha(x) g)^{Nd} (\alpha(x') g')^{Nd},
\]
so indeed $[Nd]: E \to E$ is a homomorphism. 

Finally, we show that $E[N] \to H$ is faithfully flat when $M$ is a torus. For this, let $h$ be a local section of $H$. After fppf localization, we may lift $h$ to a section $e$ of $E$. Note that $e^n$ is a local section of $M$, so because $M$ is a divisible group scheme, we may pass to a further fppf localization to assume $e^n = m^n$ for some local section $m$ of $M$. Because $M$ is central in $E$, we see that $(em^{-1})^n = 1$, and thus $em^{-1}$ is a local section of $E[N]$ mapping to $h$. So faithful flatness follows from Lemma~\ref{lemma:faithfully-flat-criterion}.
\end{proof}

\begin{prop}\label{prop:abelianization}
Suppose $G$ is a weakly reductive $S$-group scheme.  
There exists a smooth $S$-group scheme $G^{\mathrm{ab}}$ of multiplicative type and a faithfully flat homomorphism $\pi: G \to G^{\mathrm{ab}}$ with the universal property that for any fppf abelian sheaf $H$ on the category of $S$-schemes and homomorphism of sheaves $f: G \to H$, there is a unique homomorphism $G^{\mathrm{ab}} \to H$ through which $f$ factors.

The kernel $\sD(G)$ of $\pi$ represents the fppf-sheafification of the functor $S' \mapsto [G(S'), G(S')]$ on $S$-schemes $S'$ and in particular the formation of $G^{\mathrm{ab}}$ commutes with any base change on $S$.
\end{prop}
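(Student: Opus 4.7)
The plan is to construct $G^{\mathrm{ab}}$ as a sequence of three fppf quotients of $G$, each by a closed normal subgroup visibly contained in any $\sD(G)$, and then to verify the universal property and the other assertions.

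In Stage 1, I would quotient out $\sD(G^0)$, which is normal in $G$ because it is characteristic in the normal subgroup $G^0$. By SGA3, $T := (G^0)^{\mathrm{ab}}$ is a torus whose formation commutes with base change, and $E := G/\sD(G^0)$ sits in a smooth affine extension $1 \to T \to E \to H \to 1$ with $H := G/G^0$ finite \'etale of order $n$ invertible on $S$; the $G$-conjugation on $T$ is trivial on $G^0$ and so descends to an $H$-action. In Stage 2, let $T_H$ be the maximal quotient torus of $T$ on which $H$ acts trivially (its character lattice is $X^*(T)^H$, a subgroup of the free module $X^*(T)$, hence free), let $I \subset T$ denote its kernel (a closed normal $S$-subgroup scheme of $E$ of multiplicative type), and set $E_1 := E/I$. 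This is a smooth affine \emph{central} extension $1 \to T_H \to E_1 \to H \to 1$.

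In Stage 3, Lemma~\ref{lemma:power-homomorphism} applied with $N := n^2$ gives that $[N] : E_1 \to E_1$ is a homomorphism and $F := E_1[N]$ is a finite \'etale $S$-subgroup scheme (since $N$ is invertible) which surjects fppf-locally onto $H$, so that $E_1 = T_H \cdot F$ fppf-locally. Let $D := \sD(F)$, the derived group of the finite \'etale group scheme $F$; it is a finite \'etale closed subgroup of $F$ whose formation commutes with base change. The centrality of $T_H$ in $E_1$ forces $D$ to be normal in $E_1$ and yields the key identity $[t_1 e_{0,1}, t_2 e_{0,2}] = [e_{0,1}, e_{0,2}] \in D$ for fppf-local sections (with $t_i \in T_H$, $e_{0,i} \in F$), so $D$ is the fppf commutator sheaf of $E_1$. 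I would then set $G^{\mathrm{ab}} := E_1/D$ and let $\pi$ be the composition of the three quotient maps.

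To verify the assertions, note that $G^{\mathrm{ab}}$ fits in an extension $1 \to T_H/K \to G^{\mathrm{ab}} \to H^{\mathrm{ab}} \to 1$ where $K := D \cap T_H[N]$ is finite \'etale of order invertible on $S$; then $T_H/K$ is a torus (a finite-index subgroup of a free character lattice is free), $H^{\mathrm{ab}}$ is finite \'etale commutative of order invertible, $G^{\mathrm{ab}}$ is commutative by construction, and \cite[Exp.\ XVII, Prop.\ 7.1.1]{sga3} then yields that $G^{\mathrm{ab}}$ is smooth of multiplicative type. The universal property is built into the staging: any homomorphism $f : G \to A$ to an abelian fppf sheaf factors uniquely through $E$ (universal property of $(G^0)^{\mathrm{ab}}$), then through $E_1$ (since $A$ has trivial $H$-action, so $f|_T$ factors through $T_H$), then through $G^{\mathrm{ab}}$ (since $f$ kills $D$). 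Faithful flatness and base-change compatibility are inherited stage by stage. Every element of $\ker \pi$ is fppf-locally a product of commutators in $G$ (this is true for $\sD(G^0)$, for the preimage of $I$ via $h(t)t^{-1} = [\tilde h, t]$, and for lifts of commutators in $F$ to $G$), and conversely every commutator in $G$ is killed by $\pi$; thus $\sD(G) := \ker \pi$ represents the fppf-sheafification of $S' \mapsto [G(S'), G(S')]$. The main obstacle will be Stage 3 — specifically, the identification $\sD(E_1) = D$ as fppf sheaves, which rests on the fppf-local decomposition $E_1 = T_H \cdot F$ from Lemma~\ref{lemma:power-homomorphism} combined with centrality of $T_H$; the other stages are mainly bookkeeping.
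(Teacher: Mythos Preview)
Your proof is correct and follows essentially the same three-stage architecture as the paper: kill $\sD(G^0)$, make the torus central, then use Lemma~\ref{lemma:power-homomorphism} to reduce to the derived group of a finite \'etale piece. The only difference is cosmetic: in Stage~2 you pass to the $H$-coinvariant torus $T_H$ (with character lattice $X^*(T)^H$), whereas the paper works \'etale-locally, chooses representatives $h_1,\dots,h_n$ for $H$, and quotients by the image $M$ of the explicit commutator map $\phi(g_1,\dots,g_n)=\prod h_ig_ih_i^{-1}g_i^{-1}$; a short computation with character lattices shows $M=I$, so the two reductions coincide.
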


\begin{example}\label{example:p-is-still-bad}
Before giving the proof, we illustrate again the relevance of weak reductivity (as opposed to geometric reductivity). For the $G$ in Example~\ref{example:p-is-bad}, we claim that $G^{\mathrm{ab}}$ cannot exist as a scheme. Indeed, note that $G_s$ is commutative, so on the special fiber we have $\sD(G_s) = 1$. However, since $\zeta_p$ does not lie in $pA$, it follows that $G_{A/p}$ is not commutative, so $\sD(G_{A/p}) \neq 1$. Since every scheme over $A/p$ with trivial special fiber is trivial (by ``nilpotent Nakayama"), this is a contradiction.
\end{example}

\begin{proof}[Proof of Proposition~\ref{prop:abelianization}]
The claims in the second paragraph of the proposition follow directly from the universal property of the first paragraph. First note that if $G$ has connected fibers then \cite[Thm.\ 5.3.1]{conrad} shows that $G^{\mathrm{ab}}$ exists and is a torus. In general, any $S$-homomorphism $f: G \to H$ as in the statement of the proposition induces an $S$-homomorphism $G^0 \to H$, so $f$ kills $\sD(G^0)$ and hence factors through an $S$-homomorphism $G/\sD(G^0) \to H$, where $G/\sD(G^0)$ is a smooth affine $S$-group scheme with torus identity component. Thus we may and do assume that $G^0$ is a torus.  

Working \'{e}tale-locally on $S$ (using effectivity of \'{e}tale descent for affine $S$-schemes), we may choose representatives $h_1, \dots, h_n \in G(S)$ for $G/G^0$. Define the map $\phi: (G^0)^n \to G^0$ by $\phi(g_1, \dots, g_n) = \prod_{i=1}^n h_ig_ih_i^{-1}g_i^{-1}$. Note that $\phi$ is an $S$-homomorphism, so by \cite[Exp.\ IX, Thm.\ 6.8]{sga3} there is some $S$-subgroup scheme $M$ of $G^0$ of multiplicative type through which $\phi$ factors and such that the factored map $\phi: (G^0)^n \to M$ is faithfully flat. Since $f$ vanishes on all commutators, it annihilates $M$, and thus $f$ must factor through $G/M$. By definition, if $g \in G^0(S')$ is a section, then $h_igh_i^{-1} \in gM$, so since $G^0$ is commutative it follows that $G^0/M$ is central in $G/M$. Thus replacing $G$ by $G/M$ we may and do assume that $G^0$ is central in $G$.

Working Zariski-locally on $S$, we may and do assume that the index of $G^0$ in $G$ is constant, say equal to $n$. By Lemma~\ref{lemma:power-homomorphism}, $G[N] \colonequals [N]^{-1}(1)$ is an $S$-subgroup scheme of $G$ where $N = n^2$. By the same lemma, the map $G[N] \to G/G^0$ is faithfully flat, so there is a short exact sequence
\[
1 \to G^0[N] \to G[N] \to G/G^0 \to 1.
\]
Since $G^0[N]$ and $G/G^0$ are both finite \'{e}tale (the former because $N$ is invertible on the base), it follows that $G[N]$ is also finite \'{e}tale. 

By working \'{e}tale locally on $S$, we may assume that $G[N]$ is a constant group scheme. In this case, the functor on $S$-schemes $S' \mapsto [G[N](S'), G[N](S')]$ is represented by the constant $S$-group scheme $\sD(G[N])$. Moreover, if $\sD(G)$ denotes the sheafification of the functor $S' \mapsto [G(S'), G(S')]$ then we have $\sD(G) = \sD(G[N])$: indeed, for an $S$-scheme $S'$ and $g, h \in G(S')$, centrality of $G^0$ in $G$ shows that the commutator $ghg^{-1}h^{-1}$ depends only on the images of $g$ and $h$ in $(G/G^0)(S')$. In particular, since the map $G[N] \to G/G^0$ is an epimorphism of fppf sheaves we may pass to an fppf cover of $S'$ to assume $g, h \in G[N](S')$. So indeed $\sD(G) = \sD(G[N])$, which is a finite \'{e}tale group scheme killed by $N$, so its order is invertible on $S$. 

Finally, note that $G^{\mathrm{ab}} \colonequals G/\sD(G[N])$ is a smooth $S$-group scheme by \cite[Exp.\ V, Thm. 4.1]{sga3}. It is commutative by the previous paragraph. Moreover, since $G^0$ is a torus the image of $G^0$ in $G^{\mathrm{ab}}$ (which makes sense by \cite[Exp.\ IX, Thm.\ 6.8]{sga3}) is also a torus, which must be equal to the relative identity component $(G^{\mathrm{ab}})^0$ for dimension reasons. Moreover, $G/G^0 \to G^{\mathrm{ab}}/(G^{\mathrm{ab}})^0$ is an epimorphism of sheaves, so the component group of $G^{\mathrm{ab}}$ is of order invertible on $S$. Thus it follows from \cite[Exp.\ XVII, Prop.\ 7.1.1]{sga3} that $G^{\mathrm{ab}}$ is a group scheme of multiplicative type, as desired.
\end{proof}

The center and abelianization of $G$ are related.  We begin with a useful lemma.

\begin{lemma}\label{lemma:equivariant-isogeny}
Let $T_1$ and $T_2$ be $S$-group schemes of multiplicative type such that $T_2$ is a torus, and let $f: T_1 \to T_2$ be an isogeny. Let $\Gamma$ be a finite group acting on $T_1$ and $T_2$ compatibly with $f$, and define maps
\begin{align*}
    \phi: T_1 \to \prod_{\gamma \in \Gamma} T_1, \,\,\,\,\,\,\,\,\,\, &\phi(x) = ((\gamma \cdot x) x^{-1})_{\gamma \in \Gamma} \\
    \psi: \prod_{\gamma \in \Gamma} T_2 \to T_2, \,\,\,\,\,\,\,\,\,\, &\psi((y_\gamma)_{\gamma \in \Gamma}) = \prod_{\gamma \in \Gamma} (\gamma \cdot y_\gamma) y_\gamma^{-1}
\end{align*}
The natural map $\overline{f}: \ker \phi \to \coker \psi$ is an isogeny, and if $\Gamma$ is of order $n$ then $\ker \overline{f}$ is contained in $[n]^{-1}(\ker f)$.
\end{lemma}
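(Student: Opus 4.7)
The plan is to reduce the statement to a calculation with finitely generated abelian $\Gamma$-modules via Cartier duality, then handle the inclusion $\ker \overline{f} \subset [n]^{-1}(\ker f)$ by a direct norm computation at the scheme level.

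Let $M_i = X^*(T_i)$ be the character group, a finitely generated $\Gamma$-module; $M_2$ is $\bZ$-free since $T_2$ is a torus, and $f^* : M_2 \hookrightarrow M_1$ has finite cokernel $X^*(\ker f)$ since $f$ is an isogeny. Using the defining presentations of $\phi$ and $\psi$ together with exactness of Cartier duality on multiplicative type group schemes, I identify $\ker \phi = T_1^\Gamma$ as having character group $(M_1)_\Gamma = M_1/I M_1$, where $I \subset \bZ[\Gamma]$ is the augmentation ideal, and $\coker \psi$ as having character group $M_2^\Gamma$. Because $f$ is $\Gamma$-equivariant, $\overline{f}$ is the composition $T_1^\Gamma \to T_2^\Gamma \to \coker \psi$, whose Cartier dual is $\overline{f}^\vee : M_2^\Gamma \hookrightarrow M_2 \xrightarrow{f^*} M_1 \twoheadrightarrow (M_1)_\Gamma$.

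To show $\overline{f}$ is an isogeny, I prove that $\overline{f}^\vee$ has finite kernel and cokernel. Let $N = \sum_{\gamma \in \Gamma} \gamma \in \bZ[\Gamma]$ be the norm; it annihilates $I M_1$, and it acts as multiplication by $n$ on any $\Gamma$-invariant. For the kernel, if $m \in M_2^\Gamma$ satisfies $f^*(m) \in I M_1$, then $\Gamma$-equivariance gives $f^*(nm) = f^*(Nm) = N f^*(m) = 0$, so $nm = 0$ by injectivity of $f^*$; thus $\ker \overline{f}^\vee$ is an $n$-torsion subgroup of the finitely generated $M_2^\Gamma$, hence finite. For the cokernel, the identity $nm - Nm = \sum_\gamma (1-\gamma) m \in I M_1$ shows that the natural map $M_1^\Gamma \to (M_1)_\Gamma$ has cokernel killed by $n$, while $M_1^\Gamma / f^*(M_2^\Gamma)$ injects into the finite group $M_1/f^*(M_2)$ (using $\Gamma$-equivariance and injectivity of $f^*$ to see $f^*(M_2) \cap M_1^\Gamma = f^*(M_2^\Gamma)$). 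So $\coker \overline{f}^\vee$ is an extension of a finite group by an $n$-torsion finitely generated group, hence finite.

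For the inclusion $\ker \overline{f} \subset [n]^{-1}(\ker f)$, I use the scheme-theoretic norm $N : T_2 \to T_2$ given by $N(y) = \prod_{\gamma \in \Gamma} \gamma \cdot y$. A reindexing verifies that $N$ annihilates $\mathrm{Im}(\psi)$ as a subfunctor of $T_2$, while $N$ restricts to $[n]$ on $T_2^\Gamma$. Given an $S$-scheme $S'$ and a point $x \in (\ker \overline{f})(S')$, the image $f(x)$ lies in $T_2^\Gamma(S')$ (since $f$ is $\Gamma$-equivariant and $x$ is $\Gamma$-invariant), and after passing to an fppf cover $S'' \to S'$ we may write $f(x)|_{S''} = \psi((y_\gamma))$ for some tuple $(y_\gamma)$. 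Then $f(x)^n|_{S''} = N(f(x)|_{S''}) = 1$, and this identity descends to $S'$ by faithful flatness of $S'' \to S'$. Hence $x^n \in (\ker f)(S')$ for every $S'$, and since $\ker \overline{f}$ and $[n]^{-1}(\ker f)$ are representable subgroup functors of $T_1$, the inclusion of subgroup schemes follows. The main bookkeeping is in the character-group identifications and the tracking of $\Gamma$-equivariance, both of which follow mechanically from Cartier duality.
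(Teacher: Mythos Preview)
Your proof is correct, but the route to ``$\overline{f}$ is an isogeny'' differs from the paper's. The paper argues directly at the sheaf level: it first shows $\overline{f}$ is an fppf epimorphism by using divisibility of the torus $T_2$ (given a local section $t_2$, take an $n$th root $t_2'$, lift it through $f$ to $t_1$, and observe that $\prod_\gamma \gamma\cdot t_1 \in \ker\phi$ hits $t_2$ in $\coker\psi$); then it proves the kernel bound $\ker\overline{f}\subset [n]^{-1}(\ker f)$ by the same norm computation you give, and deduces finiteness of $\ker\overline{f}$ from that inclusion. You instead pass to character groups and prove directly that $M_2^\Gamma \to (M_1)_\Gamma$ has finite kernel and cokernel via norm identities. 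Your approach is a clean module-theoretic alternative that avoids the divisibility trick; the paper's is more elementary and has the feature that the two conclusions are linked (the isogeny statement is deduced \emph{from} the kernel bound rather than proved independently). For the kernel inclusion itself, your argument is essentially identical to the paper's.

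One small omission: over a general base $S$, the Cartier duals $M_i$ are \'etale sheaves of finitely generated abelian groups, not bare abelian groups, so your ``finitely generated'' and torsion arguments implicitly require passing to an \'etale cover splitting the $T_i$. This is harmless since the conclusions (finiteness of kernel and cokernel of a map of multiplicative type groups) are \'etale-local, but it is worth saying.
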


Note that $\ker \phi$ and $\coker \psi$ are $S$-group schemes of multiplicative type and that $\ker \overline{f}$ is of multiplicative type, and in particular flat, by \cite[Exp.\ IX, Thm.\ 6.8]{sga3}.

\begin{proof}
By Lemma~\ref{lemma:faithfully-flat-criterion}, to show that $\overline{f}$ is faithfully flat it suffices to show that $\overline{f}$ is an epimorphism of fppf sheaves. To this end, let $t_2$ be a local section of $T_2$. Since $T_2$ is a torus, after localizing we may assume there is some section $t_2'$ such that $t_2'^n = t_2$. Further localizing, there exists a section $t_1$ of $T_1$ such that $f(t_1) = t_2'$. We  then have
\[
f\left(\prod_{\gamma \in \Gamma} (\gamma \cdot t_1)\right) = \prod_{\gamma \in \Gamma} (\gamma \cdot t_2') = t_2 \cdot \prod_{\gamma \in \Gamma} (\gamma \cdot t_2') t_2'^{-1},
\]
which has the same image in $\coker \psi$ as $t_2$. Since $\prod_{\gamma \in \Gamma} (\gamma \cdot t_1)$ lies in $\ker \phi$, we see that indeed $\overline{f}$ is an epimorphism of fppf sheaves. 

Now suppose that $t_1$ is a local section of $\ker \phi$ such that $\overline{f}(t_1) = 1$. We claim that then $f(t_1)^n = 1$. We may write $f(t_1) = \prod_{\gamma \in \Gamma} (\gamma \cdot t_{2, \gamma}) t_{2, \gamma}^{-1}$ locally. Using $\Gamma$-equivariance of $f$ and the fact that $t_1$ is fixed by $\Gamma$, we have
\[
f(t_1)^n = \prod_{\gamma' \in \Gamma} (\gamma' \cdot f(t_1)) = \prod_{\gamma, \gamma' \in \Gamma} (\gamma' \gamma \cdot t_{2, \gamma})(\gamma' \cdot t_{2, \gamma}^{-1}) = \prod_{\gamma \in \Gamma} \left(\prod_{\gamma' \in \Gamma} (\gamma'\gamma \cdot t_{2, \gamma})(\gamma' \cdot t_{2, \gamma})^{-1}\right) = 1,
\]
where the final equality follows by reindexing. So indeed $\ker \overline{f}$ lies in $[n]^{-1}(\ker f)$, and in particular it is quasi-finite.  Since it is of multiplicative type it is finite, and hence $\overline{f}$ is an isogeny.
\end{proof}

\begin{prop} \label{proposition:isogenies}
Let $G$ be a weakly reductive $S$-group scheme. If $Z(G)$ is smooth then the natural map $f: Z(G)^0 \to G^{\mathrm{ab}, 0}$ is an isogeny of $S$-tori. If moreover $Z(\sD(G^0))$ is smooth, then $f$ is smooth (equivalently, \'{e}tale).
\end{prop}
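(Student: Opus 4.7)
The plan is to apply Lemma~\ref{lemma:equivariant-isogeny} to the classical isogeny $f_0\colon Z(G^0)^0 \to (G^0)^{\mathrm{ab}}$ of tori, equipped with the conjugation action of $\Gamma \colonequals G/G^0$ (a finite \'etale $S$-group scheme of order $n$ invertible on $S$). By spreading out together with the fibral flatness/isomorphism criteria, I reduce to the case $S = \Spec k$ for an algebraically closed field $k$.

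Set $T = Z(G^0)^0$ and $A = (G^0)^{\mathrm{ab}}$. First I identify the source of $f$: since $Z(G)^0 \subset G^0$ (a connected subgroup containing the identity), one has $Z(G)^0 \subset Z(G) \cap G^0 = Z(G^0)^{\Gamma}$, and the reverse containment of identity components is immediate, so $Z(G)^0 = T^{\Gamma, 0}$. Applying Lemma~\ref{lemma:equivariant-isogeny} to $f_0$ with the $\Gamma$-action produces an isogeny $\bar f_0 \colon T^{\Gamma} \to A_{\Gamma}$; passing to identity components yields an isogeny $T^{\Gamma, 0} \to A_{\Gamma}$ of $S$-tori.

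For the target, since $\sD(G^0) \subset \sD(G)$, the quotient $E \colonequals G/\sD(G^0)$ satisfies $E^{\mathrm{ab}} = G^{\mathrm{ab}}$, and so $G^{\mathrm{ab}, 0} = E^{\mathrm{ab}, 0}$. Now $E$ sits in an extension $1 \to A \to E \to \Gamma \to 1$; because $G^{\mathrm{ab}}$ is abelian, the natural surjection $A \to G^{\mathrm{ab}, 0}$ is $\Gamma$-invariant on the source and therefore factors through $A \twoheadrightarrow A_{\Gamma} \to G^{\mathrm{ab}, 0}$, where $K_1 \subset A$ is the $\Gamma$-displacement subtorus generated by sections of the form $a \gamma(a)^{-1}$ and $A_{\Gamma} = A/K_1$. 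An extension-theoretic computation shows that $A_{\Gamma} \to G^{\mathrm{ab}, 0}$ is an isogeny: its kernel $(\sD(E) \cap A)/K_1$ is finite because, modulo $K_1$, it is generated by the (well-defined, independent of lifts) commutators $[\tilde\gamma_1, \tilde\gamma_2] \in A$ arising from the finitely many commuting pairs $(\gamma_1, \gamma_2) \in \Gamma^2$ (and vanishes entirely when $\Gamma$ is abelian, as in that case a direct expansion shows every commutator in $E$ already lies in $K_1$). Composing $T^{\Gamma, 0} \to A_{\Gamma} \to G^{\mathrm{ab}, 0}$ identifies $f$ as an isogeny, proving the first claim.

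For the \'etaleness claim, assume $Z(\sD(G^0))$ is smooth. Then $\ker f_0 = Z(G^0)^0 \cap \sD(G^0) \subset Z(\sD(G^0))$ is smooth of finite multiplicative type, hence \'etale. By Lemma~\ref{lemma:equivariant-isogeny}, $\ker \bar f_0 \subset [n]^{-1}(\ker f_0)$, which is \'etale since multiplication by $n$ is \'etale on a torus when $n$ is invertible on $S$ and extensions of \'etale groups by \'etale groups are \'etale. An analogous argument (the generating commutators have order dividing $n$) shows the kernel of $A_{\Gamma} \to G^{\mathrm{ab}, 0}$ is also \'etale under this hypothesis, so $\ker f$ is \'etale and $f$ is \'etale (equivalently, smooth, for a homomorphism of tori). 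The main obstacle is the verification that $A_{\Gamma} \to G^{\mathrm{ab}, 0}$ is an isogeny---equivalently, the finiteness of $(\sD(E) \cap A)/K_1$---which requires the careful analysis of commutators in the extension $E$ beyond the simple displacement relations.
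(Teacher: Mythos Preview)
Your strategy matches the paper's: reduce to an algebraically closed field and apply Lemma~\ref{lemma:equivariant-isogeny} to the isogeny $Z(G^0)^0 \to G^{0,\mathrm{ab}}$ under the conjugation action of $\Gamma = G/G^0$. Where you go further is in treating the surjection $A_\Gamma \to G^{\mathrm{ab},0}$ as a separate isogeny requiring its own analysis, whereas the paper simply asserts $\coker\psi = G^{\mathrm{ab},0}$. You are right to be careful here; that identification fails in general. For instance, let $\rho$ be the faithful irreducible representation of the quaternion group $Q_8$ in $\GL_2(k)$ (with $\chara k \neq 2$) and set $G = \Gm\cdot\rho(Q_8)$. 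Then $G^0 = \Gm$ is central, $\Gamma \cong (\bZ/2)^2$ acts trivially on $A = \Gm$, so $K_1 = \{1\}$ and $A_\Gamma = \Gm$; yet $\sD(G) = \mu_2$, so $G^{\mathrm{ab},0} = \Gm/\mu_2$ and the map $A_\Gamma \to G^{\mathrm{ab},0}$ is the degree-$2$ isogeny, not an isomorphism.

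That said, your justification of this extra isogeny is where the genuine gap in your argument lies. The same example refutes your parenthetical: $\Gamma$ is abelian, but the commutator $[\rho(i),\rho(j)] = -I$ does not lie in $K_1 = \{1\}$. More seriously, you assert that $(\sD(E)\cap A)/K_1$ is generated by the commutators $[\tilde\gamma_1,\tilde\gamma_2]$ arising from \emph{commuting} pairs, which is neither proved nor obviously true (products of commutators from non-commuting pairs can also land in $A/K_1$); and even granting it, ``finitely many generators'' inside a torus does not yield finiteness without the finite-order claim you only invoke later. The clean fix is to import the argument of Proposition~\ref{prop:abelianization}: after killing $K_1$, the torus $A/K_1$ is central in $\overline{E} = E/K_1$; by Lemma~\ref{lemma:power-homomorphism} the subgroup $\overline{E}[N]$ (with $N = n^2$) is finite \'etale of order invertible on $S$ and surjects onto $\Gamma$; since commutators in $\overline{E}$ depend only on images in $\Gamma$, one has $\sD(\overline{E}) = \sD(\overline{E}[N])$, which is finite \'etale of invertible order. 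This gives at once the finiteness of $(\sD(E)\cap A)/K_1$ and its \'etaleness, completing both parts of the proposition.
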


\begin{proof}
Propositions~\ref{prop:center} and \ref{prop:abelianization} show that $Z(G)^0$ and $G^{\mathrm{ab}, 0}$ are $S$-tori under our assumptions, so it suffices to check the result on geometric fibers. In other words, we may and do assume that $S = \Spec k$ for an algebraically closed field $k$. Note that the natural $Z(G^0) \to G^{0, \mathrm{ab}}$ (whose target is \textit{not} $G^{\mathrm{ab}, 0}$) is an isogeny of multiplicative type groups with kernel $Z(\sD(G^0))$ (irrespective of whether $Z(G^0)$ is smooth): a maximal central torus $T_0$ of $G^0$ is of finite index in $Z(G^0)$ and there is the standard central isogeny $T_0 \times \sD(G^0) \to G^0$. 

Now $G/G^0$ acts on $Z(G^0)$ and $G^0/\sD(G^0)$ by conjugation, so we can apply Lemma~\ref{lemma:equivariant-isogeny} with $T_1 = Z(G^0)$, $T_2 = G^{0, \mathrm{ab}}$, $\Gamma = G/G^0$, and $f: T_1 \to T_2$ the natural map. We will also use the notation $\phi$, $\psi$, $\overline{f}$ of Lemma~\ref{lemma:equivariant-isogeny}. In this case $\ker \phi = C_{G^0}(G)$, which admits $Z(G)^0$ as an open and closed $S$-subgroup scheme: indeed, $C_{G^0}(G) = G^0 \cap Z(G)$, and $Z(G)^0$ (resp.\ $G^0$) is open and closed in $Z(G)$ (resp.\ $G$). Moreover, $\coker\psi = G^{\mathrm{ab}, 0}$. Thus to establish the proposition it suffices to show that $\overline{f}$ is an isogeny, which is smooth provided $Z(\sD(G^0))$ is smooth. The fact that $\overline{f}$ is an isogeny follows directly from Lemma~\ref{lemma:equivariant-isogeny}. The last statement of Lemma~\ref{lemma:equivariant-isogeny} shows that $\ker \overline{f}$ lies in $[n]^{-1}(Z(\sD(G^0)))$, where $n$ is the order of $G/G^0$. Since $n$ is invertible on $S$ by hypothesis, smoothness of $Z(\sD(G^0))$ implies smoothness of $[n]^{-1}(Z(\sD(G^0)))$.
\end{proof}

\section{Centralizers} \label{sec:centralizers}

In this section we study various kinds of centralizers in weakly reductive group schemes. Before diving into the results, we would like to summarize what is available in the literature for centralizers of weakly reductive subgroups of weakly reductive groups.

\begin{enumerate}
    \item Over a field of pretty good characteristic (see Definition~\ref{def:prettygoodprimes}), \cite{herpel} shows that the centralizer of any subgroup scheme of any connected reductive group is smooth. In fact, this property characterizes pretty good characteristic.
    \item Over a field of good characteristic $p > 0$, the centralizer of any subgroup scheme of a connected reductive group has no $p$-torsion in its component group; this follows via a short argument with the Springer isomorphism. This general statement is false in every bad characteristic, as Springer showed in \cite[Theorem 4.12]{Springer} by considering centralizers of regular unipotent elements.
    \item\label{item:prasad-yu} If $G$ is a connected reductive group over a field $k$ of characteristic $p \geq 0$ and $H$ is a finite subgroup of $G(k)$ of order prime to $p$, then $C_G(H)$ has reductive identity component by \cite[Theorem 2.1]{Prasad-Yu}, and its component group is of order prime to $p$ (even in bad characteristic) by \cite[Proposition VIII.5.11]{Fargues-Scholze}.
    \item If $H$ is a connected reductive subgroup of a connected reductive group $G$ over a field of characteristic $p > 0$, then we are not aware of any general results concerning reductivity of $C_G(H)$ apart from the ``classical" case that $H$ is of multiplicative type \cite[Lemma 2.2.4]{conrad} or the upcoming Corollary~\ref{corollary:centralizer-of-weakly-reductive}.
    \item If $G$ is a weakly reductive group scheme over a base scheme $S$ and $H$ is a finite subgroup of $G(S)$ of order invertible on $S$, then $C_G(H)$ is smooth and affine with reductive identity component; this follows from simple deformation theory and (\ref{item:prasad-yu}). If $H$ is moreover solvable, then \cite[Theorem A.12]{DHKM} shows that $C_G(H)$ has finite component group. Apart from this, we are not aware of any results in the literature concerning smoothness, reductivity, or finiteness of component groups for centralizers of weakly reductive subgroup schemes of $G$ prior to Corollaries~\ref{corollary:centralizer-of-weakly-reductive} and \ref{cor:double-centralizer} below.  
\end{enumerate}

\subsection{Centralizers of weakly reductive subgroup schemes} \label{ss:centralizerswr}

We will now input the general results of Section~\ref{sec:weaklyreductive} into concrete results on centralizers. We begin with the following lemma.

\begin{lemma}\label{lemma:removing-special-fiber-components}
    Let $A$ be a DVR, and let $X$ be a locally noetherian $A$-scheme. If $X_0$ is an open and closed subscheme of the special fiber $X_s$, then the natural map $X - X_0 \to X$ is affine. In particular, if $X$ is affine then $X - X_0$ is also affine.
\end{lemma}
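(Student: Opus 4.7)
The plan is to verify affineness of the open immersion $X - X_0 \hookrightarrow X$ by producing an affine open cover of $X$ on which this immersion pulls back to an affine morphism, then invoking the Zariski-local character of affineness on the target. The key structural observation I would record first is that both $X_0$ and $Z \colonequals X_s - X_0$ are closed in $X$: $X_0$ is closed in $X_s$ by assumption, $Z$ is closed in $X_s$ because $X_0$ is also open, and both sit inside $X_s$, which is itself a closed subscheme of $X$ (cut out by a uniformizer $\pi$ of $A$). Moreover $X_0$ and $Z$ are disjoint, and both are closed in $X$.

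Given an arbitrary point $x \in X$, I would now select an affine open neighborhood $V \ni x$ tailored to one of two cases, using that affine opens form a basis in any scheme. If $x \notin X_0$, choose $V \subset X - X_0$; then $V \cap (X - X_0) = V$ is trivially affine. If $x \in X_0$, choose $V \subset X - Z$, which is possible since $Z$ is closed and avoids $x$. In this second case $V \cap X_s = V \cap X_0$, so $V \cap (X - X_0) = V \setminus V(\pi)$, the principal open $D(\pi)$ in the affine $V$, hence affine.

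Together these choices produce an affine open cover of $X$ on which the immersion pulls back to an affine morphism, which is exactly the definition of affineness of $X - X_0 \to X$. The final ``in particular'' statement is immediate: an open subscheme whose inclusion into an affine scheme is affine is itself affine.

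I do not anticipate any serious obstacle. The only subtle point is the case $x \in X_0$, where the shrinking step requires $Z$ to be closed in $X$; this is precisely where the full hypothesis that $X_0$ is \emph{open and closed} (and not merely closed) in $X_s$ is used. Locally noetherianness of $X$ plays no essential role beyond what is implicit in the setup.
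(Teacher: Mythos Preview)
Your proof is correct and takes essentially the same approach as the paper's: reduce to the case where, on an affine open, $X_0$ is either empty or the entire special fiber $V(\pi)$. The paper phrases the localization step tersely as shrinking $X$ until the special fiber is connected, whereas you more explicitly cover $X$ by affine opens avoiding either $X_0$ or $Z = X_s - X_0$; the content is the same.
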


\begin{proof}
    Affineness of a morphism can be checked Zariski-locally on the target, so we may freely shrink $X$ to assume that the special fiber of $X$ is connected. In this case, $X_0$ is either empty or all of $X_s$. If $X_0$ is empty, then the lemma is obvious; otherwise, $X_0 = V(\pi)$, where $\pi$ is a uniformizer of $A$, and the lemma is again clear.
\end{proof}

The proof of the following theorem is similar in spirit to one of the proofs of \cite[Theorem 2.1]{Prasad-Yu}, which shows reductivity of the centralizer $C_G(\Lambda)$ of a finite group $\Lambda$ over a field $k$ by realizing it as the stabilizer of the conjugation action of $G$ on $\underline{\Hom}_{k\textrm{-}\rm{gp}}(\Lambda, G)$.

\begin{theorem}\label{theorem:main-centralizer-theorem}
    Let $S$ be a scheme, and let $G$ and $H$ be geometrically reductive smooth affine $S$-group schemes. Suppose $f: H \to G$ is an $S$-homomorphism such that
    \begin{enumerate}
        \item $\mathrm{H}^1(H_s, \fg_s) = 0$ for all $s \in S$,
        \item $f_s(H_s)$ is $G_s$-cr for all $s \in S$.
    \end{enumerate}
    Then $C_G(H)$ is geometrically reductive smooth affine $S$-scheme. If $G$ is weakly reductive and $\chara k(s)$ is good for $G_s$ for all $s \in S$, then $C_G(H)$ is also weakly reductive.
\end{theorem}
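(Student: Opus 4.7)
The plan is to verify that $C_G(H)$ is affine, smooth, and geometrically reductive in sequence, and then upgrade to weak reductivity under the added hypotheses. Affineness is automatic: the fixed-point functor for the conjugation action of $H$ on $G$ is represented by a closed subscheme of the affine $S$-scheme $G$. For smoothness, I would apply the infinitesimal criterion (in the same style used in the proof of Lemma~\ref{lemma:aut-scheme-smooth} via [SGA 3, Exp.\ III, Cor.\ 2.9]): the vanishing of $\mathrm{H}^1(H_s, \fg_s)$ on every fiber $s \in S$ kills the obstruction to lifting fixed points infinitesimally, so $C_G(H)$ is $S$-smooth.

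The main step is geometric reductivity. By Theorem~\ref{theorem:alper}(2), since $G$ is geometrically reductive and $C_G(H)$ is a flat closed $S$-subgroup, it suffices to show that $G/C_G(H)$ is $S$-affine. To that end I would consider the orbit morphism
\[
\iota \colon G/C_G(H) \longrightarrow \underline{\Hom}_{S\textrm{-}\rm{gp}}(H,G), \qquad g \longmapsto \mathrm{conj}_g \circ f,
\]
which is a monomorphism because $C_G(H)$ is precisely the scheme-theoretic stabilizer of $f$ under the conjugation action of $G$ on $\underline{\Hom}_{S\textrm{-}\rm{gp}}(H,G)$. Working \'{e}tale-locally on $S$ and spreading out, we may reduce to the case that $S$ is noetherian, normal, and quasi-compact with $H^0$ admitting a maximal $S$-torus; then Theorem~\ref{theorem:hom-scheme} shows that the clopen subscheme $\mathscr{H}_0$ of $\underline{\Hom}_{S\textrm{-}\rm{gp}}(H,G)$ containing $\iota(e)$ is $S$-affine and of finite presentation. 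The heart of the argument is to show that $\iota$ lands in $\mathscr{H}_0$ as a closed embedding. On each fiber $s \in S$, hypothesis (2) combined with Lemma~\ref{lemma:g-cr} says that the $G_s$-orbit through $f_s$ is closed in $(\mathscr{H}_0)_s$. Combining this fibral closedness with the fact that $\iota$ is a monomorphism between $S$-flat finitely presented schemes whose image is automatically locally closed (as the orbit of a smooth group action), one deduces that $\iota$ is a closed embedding. Then $G/C_G(H)$ is a closed subscheme of the affine scheme $\mathscr{H}_0$, hence affine, and Matsushima's theorem gives geometric reductivity.

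For the weak reductivity assertion we need $C_G(H)/C_G(H)^0$ to have order invertible on $S$. This can be checked fiberwise, reducing to a statement over an algebraically closed field of good characteristic for $G$. Using the short exact sequence
\[
1 \to C_{G^0}(H) \to C_G(H) \to C_G(H)/C_{G^0}(H) \to 1
\]
and the fact that the right-hand quotient embeds into $G/G^0$ (whose order is invertible on $S$ by weak reductivity of $G$), one reduces to controlling the component group of $C_{G^0}(H)$. This is the classical Springer-isomorphism fact recalled at the start of Section~\ref{sec:centralizers}: in good characteristic, the centralizer of any smooth closed subgroup scheme of a connected reductive group has component group of order prime to the characteristic.

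The main obstacle will be the closed-embedding step in the middle paragraph. The fibral $G$-cr hypothesis gives closedness of orbits on each geometric fiber, but upgrading this to a closed embedding globally over $S$ requires both the sharpened \emph{affineness} assertion of Theorem~\ref{theorem:hom-scheme} (as opposed to mere ind-quasi-affineness, which would not suffice to place $G/C_G(H)$ inside an affine scheme) and a careful argument combining fibral topological closedness with the $S$-flat schematic structure of the orbit. This is precisely why the preceding section developed the refined study of $\underline{\Hom}_{S\textrm{-}\rm{gp}}(H,G)$ rather than relying on Demazure's classical representability result (Lemma~\ref{lemma:demazure-rep}) alone.
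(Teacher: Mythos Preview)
Your overall strategy matches the paper's: smoothness via the infinitesimal criterion from $\mathrm{H}^1$-vanishing, geometric reductivity via Matsushima by realizing $G/C_G(H)$ inside the Hom-scheme, and the component-group bound via Springer--Steinberg in good characteristic. The gap is in the middle step, precisely where you yourself flag the ``main obstacle.''

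You assert that the orbit map $\iota\colon G/C_G(H)\to\mathscr{H}_0$ is a \emph{closed} embedding, arguing from fibral closedness of orbits (via the $G$-cr hypothesis and Lemma~\ref{lemma:g-cr}) plus $\iota$ being a ``monomorphism between $S$-flat finitely presented schemes.'' Two problems. First, $\mathscr{H}_0$ is in general \emph{not} $S$-flat (Example~\ref{example:sga3-hom-scheme}), so the premise is wrong. Second, and more fundamentally, fiberwise closedness of the image does not imply that $\iota$ is a closed embedding over $S$: over a DVR, the schematic closure of the generic orbit may acquire extra components in the special fiber that are disjoint from the special-fiber orbit of $f_s$. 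In that case the image of $\iota$ is open but not closed in any affine clopen piece of the Hom-scheme, and your argument collapses. The appeal to ``orbits of smooth group actions are locally closed'' is a field-theoretic statement with no relative analogue here.

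The paper handles exactly this obstruction. It first reduces to $S=\Spec A$ for a complete DVR $A$, then takes $C$ to be the schematic closure of the generic orbit and defines $C_1$ by \emph{removing} those components of $C_s$ not meeting the $G_s$-orbit of $f_s$. The crucial input you are missing is Lemma~\ref{lemma:removing-special-fiber-components}: removing a clopen piece of the special fiber from a scheme over a DVR is an affine operation, so $C_1$ remains affine. One then shows $G/C_G(H)\to C_1$ is an isomorphism (a smooth surjective monomorphism, hence an isomorphism by \cite[IV\textsubscript{4}, 17.9.1]{EGA}); in particular $\iota$ is an \emph{open} embedding onto the affine $C_1$, and no global closedness of $\iota$ is ever claimed or needed.
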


\begin{proof}
    By (1) and deformation theory \cite[Exp.\ III, Corollaire 2.8]{sga3}, the orbit map $\phi: G \to \underline{\Hom}_{S\textrm{-}\rm{gp}}(H, G)$ is smooth, and thus $C_G(H)$ is smooth and affine. To show that $C_G(H)$ is geometrically reductive it suffices by Theorem~\ref{theorem:alper} to assume that $S = \Spec A$ for a complete DVR $A$ with algebraically closed residue field. In this case, Theorem~\ref{theorem:hom-scheme} shows that $\underline{\Hom}_{S\textrm{-}\rm{gp}}(H, G)$ is a disjoint union of finite type $S$-affine $S$-schemes.
    
    Let $C$ be the schematic closure of the $G$-orbit of $f_\eta$ in $\underline{\Hom}_{S\textrm{-}\rm{gp}}(H, G)$, so $C$ is a $G$-stable closed subscheme of $\underline{\Hom}_{S\textrm{-}\rm{gp}}(H, G)$ through which the orbit map of $f$ factors. Let $C_1$ be the $G$-stable open subscheme of $C$ obtained by deleting all of the components of $C_s$ not containing a $G$-translate of $f_s$, so $C_1$ is affine by Lemma~\ref{lemma:removing-special-fiber-components}. Since $\phi$ is smooth, $\phi$ has open image; moreover, each fiber of $\phi$ has closed image by (2) and Lemma~\ref{lemma:bmr}. Consequently each fiber of $C_1$ is the (open) orbit of $f$ in that fiber, and the map $i: C_1 \to \underline{\Hom}_{S\textrm{-}\rm{gp}}(H, G)$ is an open embedding on both fibers. Since $C_1$ is flat, it follows that $i$ is \'etale and radicial, and thus \cite[IV\textsubscript{4}, Th\'eor\`eme 17.9.1]{EGA} shows that $i$ is an open embedding.
    
    Now we show that $C_G(H)$ is geometrically reductive; for this, it is equivalent to show that $G/C_G(H)$ is affine by Theorem~\ref{theorem:alper}. We will show in fact that the natural map $G/C_G(H) \to C_1$ is an isomorphism. By definition, $\phi$ factors through $C_1$, and the previous paragraph shows that this factored map is surjective. Note that the quotient $G/C_G(H)$ exists as a smooth separated algebraic space of finite type by work of Artin \cite[Corollary 6.3]{Artin-stacks}. Moreover, the induced morphism $G/C_G(H) \to C_1$ is a monomorphism, so by \cite[II, 6.15]{Knutson-algebraic-spaces} it follows that $G/C_G(H)$ is a scheme. We claim that the morphism $G/C_G(H) \to C_1$ is an isomorphism. Indeed, from the above we see that it is a smooth surjective monomorphism. Thus by \cite[IV\textsubscript{4}, Th\'eor\`eme 17.9.1]{EGA} it is a surjective open embedding and thus an isomorphism. As remarked above, this shows that $C_G(H)$ is geometrically reductive.

    For the final claim, we may and do assume that $S = \Spec k$ for an algebraically closed field $k$ of characteristic $p > 0$. If $\pi_0 C_G(H)$ has any $p$-torsion, then a simple argument with the Jordan decomposition shows that $C_G(H)(k)$ admits unipotent elements not lying in $C_G(H)^0(k)$. This does not happen in good characteristic by the argument of \cite[III, 3.15]{Springer-Steinberg}.
\end{proof}

In order to apply Theorem~\ref{theorem:main-centralizer-theorem}, we need the following simple lemma.

\begin{lemma}\label{lemma:cr-criterion}
    Let $k$ be an algebraically closed field, and let $G$ and $H$ be (possibly disconnected) reductive groups over $k$. Let $T$ be a maximal $k$-torus of $H$. Suppose $f: H \to G$ is a $k$-homomorphism such that $\mathrm{H}^1(H, V) = 0$ for all representations $V$ isomorphic to $\Lie G$ under some $k$-homomorphism $H \to G$ such that the multiset of weights for $T$ on $V$ is the same as the multiset of weights for $T$ on $\fg$. Then $f(H)$ is $G$-cr.
\end{lemma}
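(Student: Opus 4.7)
The plan is to invoke Lemma~\ref{lemma:g-cr}, which reduces the claim to showing that the $G$-orbit of $f$ in $X := \underline{\Hom}_{k\textrm{-}\rm{gp}}(H, G)$ is closed. By Theorem~\ref{theorem:hom-scheme}, $X$ is a disjoint union of finitely presented affine $k$-schemes, so the connected component of $X$ containing $f$ is affine, putting us in a setting where the Hilbert--Mumford criterion applies. The argument proceeds by contradiction, assuming $G \cdot f$ is not closed. Writing $\overline{G \cdot f} = \bigcup_i g_i \overline{G^0 \cdot f} g_i^{-1}$ for coset representatives $g_i$ of $G/G^0$, and replacing $f$ by a $G$-conjugate (harmless, since $T$-weight multisets on $\fg$ are invariant under $G$-conjugation), we may assume $\overline{G^0 \cdot f} \not\subseteq G \cdot f$, and in particular $G^0 \cdot f$ is not closed. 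Applying Hilbert--Mumford to the action of the connected reductive group $G^0$ on the affine component containing $f$ then yields a cocharacter $\lambda \colon \Gm \to G^0$ such that $f'' := \lim_{t\to 0} \lambda(t) f \lambda(t)^{-1}$ exists in $X$ and lies in $\overline{G^0 \cdot f} \setminus G^0 \cdot f$.

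The next step is to verify that the multiset of $T$-weights on $\fg$ via $\Ad \circ f''$ coincides with that via $\Ad \circ f$. For $t \in \Gm$, the homomorphism $f_t := \lambda(t) f \lambda(t)^{-1}$ is $G^0$-conjugate to $f$, so for every $\mu \in T$ the character $\chi_t(\mu) := \tr(\Ad(f_t(\mu)))$ is independent of $t$. The family $(f_t)$ extends algebraically over $\bA^1$ with limit $f''$ at $t = 0$, so $t \mapsto \chi_t(\mu)$ is regular on $\bA^1$ and constant on $\Gm$, hence constant everywhere, giving $\chi_{f''}(\mu) = \chi_f(\mu)$. Since $T$-weight multisets of a representation are determined by the character restricted to $T$, the two multisets agree.

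With this in hand, the hypothesis of the lemma applies twice: once with $V = \fg_f$ (taking $f$ itself as the witnessing homomorphism) and once with $V = \fg_{f''}$ (using $f''$, together with the claim just established), yielding $\mathrm{H}^1(H, \fg_f) = \mathrm{H}^1(H, \fg_{f''}) = 0$. By the deformation-theoretic argument cited at the start of the proof of Theorem~\ref{theorem:main-centralizer-theorem} (using \cite[Exp.\ III, Corollaire 2.8]{sga3}), both orbit maps $G \to X$ through $f$ and $f''$ are smooth, and restricting to the open subscheme $G^0 \subset G$ shows that $G^0 \cdot f$ and $G^0 \cdot f''$ are both open in $X$. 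Both sit inside the irreducible closed subscheme $\overline{G^0 \cdot f}$ (the second because that closure is $G^0$-stable), so both are dense open subsets of this irreducible space; but as distinct $G^0$-orbits they are disjoint, contradicting the fact that two nonempty open subsets of an irreducible scheme must intersect.

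The main obstacle is the invariance of the $T$-weight multiset under the $\Gm$-degeneration, as this is the only step where the $T$-weight hypothesis is truly used. Everything else is a standard blend of Hilbert--Mumford, deformation theory, and the density of an orbit in its irreducible closure.
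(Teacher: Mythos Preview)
Your argument is correct, but it is considerably more elaborate than the paper's. The paper observes directly that the weight multiset for $T$ on $\fg$ is a locally constant function on $\underline{\Hom}_{k\textrm{-}\rm{gp}}(H, G)$ (since it factors through the discrete invariant classifying points of $\underline{\Hom}_{k\textrm{-}\rm{gp}}(T, \GL(\fg))$), so the Hom-scheme breaks into clopen pieces $U$ on which this multiset is constant. On the piece containing $f$, the hypothesis applies to \emph{every} point, so every $G$-orbit map is smooth and every $G$-orbit is open; since the orbits partition $U$, each is therefore closed, and Lemma~\ref{lemma:g-cr} finishes.

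Your route reaches the same conclusion via Hilbert--Mumford: you reduce to $G^0$ (needed because Kempf's theorem requires connectedness), produce a one-parameter degeneration $f''$, and then reprove by hand---via the trace argument over $\bA^1$---the special case of local constancy of the weight multiset along this particular path. The contradiction you extract (two disjoint dense open subsets of an irreducible set) is logically equivalent to the paper's ``all orbits open $\Rightarrow$ all orbits closed.'' So the essential content is the same; the paper just packages the weight-constancy as a global decomposition rather than checking it along a degeneration, which lets it avoid Hilbert--Mumford, the passage to $G^0$, and the replacement of $f$ by a $G$-conjugate (which, incidentally, is not actually needed in your argument either: if $\overline{G\cdot f}\neq G\cdot f$ then already $\overline{G^0\cdot f}\not\subseteq G\cdot f$ without any conjugation).
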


\begin{proof}
    Note that $\underline{\Hom}_{k\textrm{-}\rm{gp}}(H, G)$ admits a disjoint union decomposition into pieces on which the multiset of weights for $T$ on $\fg$ is constant; let $U$ be the piece containing $f$. By hypothesis and \cite[Exp.\ III, Corollaire 2.8]{sga3}, every orbit map $G \to U$ is smooth, and thus every orbit is an open subscheme of $U$. Thus every orbit is also closed. By Lemma~\ref{lemma:g-cr}, it follows that $f(H)$ is $G$-cr.
\end{proof}

Now we recall the following fundamental result of McNinch \cite{McNinch-semisimple}, which builds on work of Jantzen \cite{Jantzen-semisimple}. If $H$ is a (possibly disconnected) reductive group and $H_1, \dots, H_n$ are the simple factors of $\sD(H^0)$, we let $\ell_H = \inf_i(\rank H_i)$. Note that if $H^0$ is a torus, then $\ell_H = \infty$.

\begin{theorem}[{\hspace{1pt}\cite[Corollary 1.1.2]{McNinch-semisimple}}] \label{theorem:jantzen-semisimple}
    Let $k$ be a field of characteristic $p > 0$, and let $H$ be a connected reductive $k$-group. If $V$ is an algebraic $k$-representation of $H$ and $\dim V \leq p\ell_H$, then $V$ is semisimple. In particular, if $\dim V < p\ell_H$, then $\mathrm{H}^1(H, V) = \Ext_H^1(k, V) = 0$.
\end{theorem}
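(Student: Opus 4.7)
The $\mathrm{Ext}^1$-vanishing assertion reduces to the main semisimplicity claim by the usual extension trick: any class of $\mathrm{Ext}^1_H(k,V)$ is represented by $0\to V\to W\to k\to 0$ with $\dim W=\dim V+1\le p\ell_H$ under the hypothesis $\dim V<p\ell_H$, so $W$ is semisimple by the first part and the extension splits. I therefore focus on the main claim, that $\dim V\le p\ell_H$ forces $V$ to be semisimple.

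My plan has two reduction steps followed by the linkage principle and a dimension estimate. First, base-changing to the algebraic closure and passing along the isogeny $Z(H)^0\times \widetilde{\sD(H)}\to H$ (along which representations pull back, semisimplicity is preserved because the finite central kernel acts by a scalar on each simple, and which preserves $\ell_H$), I may assume $H$ is semisimple simply connected. Second, if $H=H_1\times\cdots\times H_n$ with each $H_i$ simple simply connected, I would induct on $n$: since $\dim V\le p\ell_{H_i}$ for each $i$, by the inductive hypothesis $V|_{H_i}$ is semisimple for each $i$, and the $H_1$-isotypic decomposition $V=\bigoplus_j W_j\boxtimes M_j$ is stable under the commuting action of $H_2\times\cdots\times H_n$. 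Each multiplicity space $M_j=\Hom_{H_1}(W_j,V)$ is a direct summand of $V|_{H_2\times\cdots\times H_n}$ and hence semisimple for that factor by induction; since the exterior tensor product of simples for commuting factors is simple, $V$ is $H$-semisimple. This leaves $H$ simple simply connected of rank $r=\ell_H$.

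For such $H$, if $V$ fails to be semisimple there must exist composition factors $L(\lambda),L(\mu)$ with $\lambda\ne\mu$ dominant and $\mathrm{Ext}^1_H(L(\lambda),L(\mu))\ne 0$, so by Andersen's linkage principle $\lambda$ and $\mu$ lie in a common orbit of the dot-action of the affine Weyl group $W_p=W\ltimes p\Z\Phi$. The remaining content --- and the main obstacle --- is then the dimension estimate
\[
\dim L(\lambda)+\dim L(\mu)>pr
\]
for any two distinct linked dominant weights $\lambda\ne\mu$ of such an $H$, which contradicts $\dim V\le pr=p\ell_H$. I would establish the inequality by splitting on $p$-restrictedness: when one of $\lambda,\mu$ is not $p$-restricted, Steinberg's tensor product theorem writes $L(\lambda)=\bigotimes_{i\ge 0}L(\lambda_i)^{(p^i)}$ with each $\lambda_i$ restricted, and the bound $\dim L(\nu)\ge r+1$ for any nontrivial restricted $\nu$ on a simple group of rank $r$ already yields $\dim L(\lambda)\ge p(r+1)>pr$; when both $\lambda$ and $\mu$ are $p$-restricted, linkage forces at least one of them to lie on or beyond a reflection wall of the fundamental $p$-alcove, and a combination of Jantzen's sum formula with Weyl's dimension formula for the corresponding Weyl module (together with a case analysis by Dynkin type, or uniform Premet--Suprunenko--L\"ubeck style estimates) delivers the bound. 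This dimension estimate is the technical heart of the theorem; everything else above is formal bookkeeping.
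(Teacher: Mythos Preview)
The paper does not prove this theorem; it simply cites McNinch \cite[Corollary 1.1.2]{McNinch-semisimple}, so there is no in-paper proof to compare against. Your outline is broadly in the spirit of the Jantzen--McNinch approach (reduce to simple simply connected, then use linkage plus dimension estimates), and your reduction steps are correct.

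There is, however, a genuine error in your treatment of the simple case. In the non-restricted subcase you assert that if $\lambda$ is not $p$-restricted then $\dim L(\lambda)\ge p(r+1)$. This is false: the Frobenius twist does not multiply dimensions by $p$. For instance, if $\lambda=p\omega_1$ in type $A_r$ then $L(\lambda)=L(\omega_1)^{(p)}$ has dimension $r+1$, not $p(r+1)$. More concretely, for $\SL_2$ with $r=1$ the weight $\lambda=p$ gives $\dim L(p)=2$, far below $2p$. The eventual inequality $\dim L(\lambda)+\dim L(\mu)>pr$ can still be salvaged in such examples (in the $\SL_2$ case the linked partner is $\mu=p-2$ with $\dim L(p-2)=p-1$, and $2+(p-1)=p+1>p$), but your argument as written does not establish it. McNinch's actual proof handles the non-restricted weights by a more careful classification of dominant $\lambda$ with $\dim L(\lambda)$ small, not by a blanket bound of the form you claim.

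Separately, in the restricted subcase you correctly identify the needed inequality as the ``technical heart,'' but what you write there is only a list of tools (Jantzen's sum formula, Weyl dimension, type-by-type or Premet--Suprunenko--L\"ubeck estimates) rather than an argument. That is fair as an outline, but combined with the incorrect non-restricted step, the simple case is not yet proved.
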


\begin{cor}\label{corollary:centralizer-of-weakly-reductive}
    Let $S$ be a scheme, and let $G$ and $H$ be geometrically reductive smooth affine $S$-group schemes. Suppose that $H$ is weakly reductive and that for every $s \in S$, either $\chara k(s) = 0$ or $\chara k(s) > \dim \sD(G^0_s)/\ell_{H_s}$. If $H$ acts faithfully on $G$, then $C_G(H)$ is smooth, affine, and geometrically reductive. If $G$ is weakly reductive, then $C_G(H)$ is also weakly reductive.
\end{cor}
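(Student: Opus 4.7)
The plan is to apply Theorem~\ref{theorem:main-centralizer-theorem} to the inclusion $\iota \colon H \hookrightarrow G' \colonequals G \rtimes H$ of $H$ as the second factor in the semidirect product defined by the action. Conjugation in $G'$ by $\iota(H)$ on the subgroup $G \subset G'$ recovers the given $H$-action on $G$, so $C_{G'}(\iota(H)) = C_G(H) \times Z(H)$. Theorem~\ref{theorem:main-centralizer-theorem} will then give smoothness, affineness, and geometric reductivity of $C_{G'}(\iota(H))$, and these properties descend to $C_G(H)$ as a direct factor: $Z(H)$ is a group scheme of multiplicative type by Proposition~\ref{prop:center}, smoothness of the nonempty product forces smoothness of each factor, and a direct factor of a geometrically reductive affine group scheme is again geometrically reductive.

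The core task is to verify the two hypotheses of Theorem~\ref{theorem:main-centralizer-theorem}. For hypothesis (1), $\mathrm{H}^1(H_s, \Lie(G'_s)) = 0$, split $\Lie(G') = \Lie(G) \oplus \Lie(H)$ as $H_s$-modules. On $\Lie(G)$: since $H^0_s$ is connected and $Z(G^0_s)$ is of multiplicative type, $H^0_s$ acts trivially on $\Lie(Z(G^0_s))$, giving $\mathrm{H}^1(H^0_s, \Lie(Z(G^0_s))) = \Hom(H^0_s, \Ga) = 0$; the quotient $\Lie(G^0_s)/\Lie(Z(G^0_s))$ has dimension $\dim \sD(G^0_s) < p \ell_{H_s}$, so Theorem~\ref{theorem:jantzen-semisimple} kills its $\mathrm{H}^1(H^0_s, -)$; combining via the long exact sequence and the Hochschild--Serre spectral sequence (valid because $|H_s/H^0_s|$ is invertible on $S$) yields $\mathrm{H}^1(H_s, \Lie(G_s)) = 0$. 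The same three-step argument handles $\Lie(H)$ under the adjoint action: faithfulness of the action embeds the semisimple group $\sD(H^0_s)$ into $\sD(\Aut(G_s)^0)$, and since $\Aut(G_s)^0 = G^0_s/(G^0_s \cap Z(G_s))$ is a central quotient of $G^0_s$ we have $\dim \sD(\Aut(G_s)^0) = \dim \sD(G^0_s)$, forcing $\dim \sD(H^0_s) \leq \dim \sD(G^0_s) < p \ell_{H_s}$ so that Theorem~\ref{theorem:jantzen-semisimple} applies again. For hypothesis (2), that $\iota_s(H_s)$ is $G'_s$-cr, invoke Lemma~\ref{lemma:cr-criterion} and reapply the same vanishing to every $H_s$-representation sharing the $T$-weight multiset of $\Lie(G'_s)$, exploiting that such representations admit analogous splittings into a trivial central piece and a complement whose dimension is controlled by the above bounds.

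The main obstacle is the careful verification of hypothesis (2), where Lemma~\ref{lemma:cr-criterion} demands $\mathrm{H}^1$-vanishing for all $H_s$-representations of the correct $T$-weight profile occurring via arbitrary homomorphisms $H_s \to G'_s$, not just the one coming from $\iota$; organizing such representations so that the three-step vanishing argument still applies under the hypothesis requires some bookkeeping but is feasible by tracking which weights are forced to be zero. Once both hypotheses are established, Theorem~\ref{theorem:main-centralizer-theorem} yields the desired conclusions for $C_G(H)$. Finally, the weak-reductivity assertion when $G$ is weakly reductive follows from the component-group argument at the end of Theorem~\ref{theorem:main-centralizer-theorem}'s proof, together with the observation that under the faithful-action hypothesis the bound $\dim \sD(G^0_s)/\ell_{H_s}$ is large enough (relative to the simple factors of $\sD(G^0_s)$ actually controlling $|\pi_0 C_G(H)_s|$) to force good characteristic for the relevant root datum of $G^0_s$.
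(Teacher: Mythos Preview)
Your semidirect-product idea is exactly what the paper uses for the toral and finite-\'etale pieces of $H$, and your computation $C_{G'}(\iota(H)) = C_G(H) \times Z(H)$ is correct. The difficulty is in hypothesis~(2) of Theorem~\ref{theorem:main-centralizer-theorem}, and this is where your argument has a genuine gap.

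Lemma~\ref{lemma:cr-criterion} requires $\mathrm{H}^1(H_s, V) = 0$ for \emph{every} $H_s$-module $V$ arising from some $f'\colon H_s \to G'_s$ with the same $T$-weight multiset as $\fg'_s$ via $\iota$. Your vanishing argument for $\fg'_\iota$ used the specific $H_s$-stable splitting $\fg'_s = \fg_s \oplus \fh_s$ and the further $H^0_s$-stable filtrations by $\Lie Z(G^0_s)$ and $\Lie Z(H^0_s)$. For a general $f'$ there is no reason to have such filtrations: the only invariant is the $T$-weight multiset, and the zero-weight space (which can be large, of dimension roughly $\rank G^0_s + \rank H^0_s$) need not sit inside $V$ as an $H^0_s$-submodule. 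Since $\dim \fg'_s = \dim G_s + \dim H_s$ typically exceeds $p\ell_{H_s}$ (the central tori contribute), Theorem~\ref{theorem:jantzen-semisimple} cannot be applied to $V$ as a whole, and your ``bookkeeping'' sketch does not supply a substitute.

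The paper sidesteps this precisely by \emph{not} using the semidirect product for the semisimple part of $H$. It first peels off $H^0/\sD(H^0)$ and $H/H^0$ using your $G\rtimes H$ trick (where hypothesis~(2) is automatic, as tori and finite prime-to-$p$ groups are linearly reductive and hence always $G'$-cr). Then for $H$ semisimple it works instead with the action map $f\colon H \to G^0/Z(G^0)$; here the target Lie algebra has dimension $\dim \sD(G^0_s) < p\ell_{H_s}$, so Theorem~\ref{theorem:jantzen-semisimple} applies \emph{uniformly} to every representation of that dimension, and Lemma~\ref{lemma:cr-criterion} goes through cleanly. The remaining passage from $C_{G^0/Z(G^0)}(H)$ back up to $C_G(H)$ is then handled by a separate affineness/stabilizer argument.

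A secondary issue: for the final weak-reductivity claim you invoke the last sentence of Theorem~\ref{theorem:main-centralizer-theorem}, which requires $\chara k(s)$ to be good for the \emph{ambient} group, here $G'_s = G_s \rtimes H_s$. You only argue goodness for $G^0_s$, and the root system of $(G')^0_s$ is not simply that of $G^0_s \times H^0_s$. The paper handles this instead by reducing to $G^0/Z(G^0)$ (semisimple case) or invoking \cite[Proposition~VIII.5.11]{Fargues-Scholze} on fibers (torus/finite case).
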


Note that if $H^0$ is a torus (in particular if $H$ is finite \'{e}tale), then Corollary~\ref{corollary:centralizer-of-weakly-reductive} involves no hypothesis on the residue characteristics of $S$.

\begin{proof}
    By passing separately from $H$ to $\sD(H^0)$, $H^0/\sD(H^0)$, and $H/H^0$ (and from $G$ to centralizers of these), we can assume that $H$ is either semisimple, a torus, or finite \'etale of order invertible on $S$. Assume first that $H$ is either a torus or finite \'etale. In this case, let $G_1 = G \ltimes H$, a geometrically reductive smooth affine $S$-group scheme which is weakly reductive whenever $G$ is. By Theorem~\ref{theorem:main-centralizer-theorem} (whose hypotheses always hold in the current setting), the centralizer $C_{G_1}(H)$ is smooth, affine, and geometrically reductive, and it is weakly reductive whenever $G$ is. There is a natural projection map $f: C_{G_1}(H) \to H$, and $C_G(H) = f^{-1}(1)$. If $H$ is a torus then in particular it is commutative, and so $f$ is split by the natural inclusion $H \to G_1$.  Thus $C_{G_1}(H) = C_G(H) \times H$ and $C_G(H)$ is geometrically reductive, smooth, and affine. Moreover, $C_{G^0}(H)$ has connected fibers by the classical theory, so $C_G(H)$ is weakly reductive if $G$ is weakly reductive. Now assume that $H$ is finite \'etale of order invertible on $S$. Note $f$ factors through a constant map $C_{G_1}(H)/C_{G_1}(H)^0 \to H$, so $C_G(H)$ is an open and closed $S$-subgroup scheme of $C_{G_1}(H)$, from which the result follows in this case. If $G$ is weakly reductive, then to show that $C_G(H)$ is weakly reductive it suffices to show that $C_{G^0}(H)$ is weakly reductive, which follows from the above and \cite[Proposition VIII 5.11]{Fargues-Scholze} applied on $S$-fibers.

    Now assume that $H$ is semisimple. By Lemma~\ref{lemma:cr-criterion}, Theorem~\ref{theorem:jantzen-semisimple}, \cite[Corollary 3.42]{BMR05}, and our bounds on the residue characteristics, $H_s$ is $G_s$-cr for all $s \in S$. The action of $H$ on $G^0$ is given by a map $f: H \to G^0/Z(G^0)$, and Theorem~\ref{theorem:main-centralizer-theorem} combines with Theorem~\ref{theorem:jantzen-semisimple} to show that $C_{G^0/Z(G^0)}(H)$ is geometrically reductive, smooth, and affine.
    The assumption on residue characteristics implies (by considering the simple types) that $Z(G^0)$ is smooth, so as $H$ acts faithfully we conclude that $C_{G^0}(H)$ is geometrically reductive, smooth, and affine. If $X$ is the stabilizer of $f$ in $G$, then $C_{G^0}(H) = X \cap G^0$ is an open $S$-subgroup scheme of $X$, so $X$ is smooth affine, and we claim that $X$ is geometrically reductive. For this, we may and do assume $S = \Spec A$ for a DVR $A$. Now the quotient $G/X$ is affine: first, $G^0/C_{G^0}(H)$ is isomorphic to an affine open subscheme $U$ of $\underline{\Hom}_{S\textrm{-}\rm{gp}}(H, G^0/Z(G^0))$ which is also a closed subscheme on $S$-fibers, as follows from the proof of Theorem~\ref{theorem:main-centralizer-theorem}. The quotient $G/X$ is a finite union of $G/G^0$-translates of $U$ in $\underline{\Hom}_{S\textrm{-}\rm{gp}}(H, G^0/Z(G^0))$, so it is an open subscheme which is closed on fibers. By Theorem~\ref{theorem:hom-scheme} and Lemma~\ref{lemma:removing-special-fiber-components}, it follows that $G/X$ is affine and thus $X$ is geometrically reductive by Theorem~\ref{theorem:alper}. We may now pass from $G$ to $X$ to assume that $H$ acts trivially on $G^0$.

    Since $H$ has connected fibers, it also acts trivially on the finite \'etale $S$-group scheme $G/G^0$. It would now be enough to show that $H$ acts trivially on $G$. Thus there is a natural map $\vp: H \times G \to G^0$ given by $\vp(h, g) = (hgh^{-1})g^{-1}$. Since $H$ acts trivially on $G^0$, it follows that $\vp$ factors through a map $H \times G/G^0 \to Z(G^0)$. For fixed $g \in (G/G^0)(S)$, the map $\vp(-, g): H \to Z(G^0)$ is an $S$-homomorphism. Since $Z(G^0)$ is of multiplicative type and $H$ is semisimple, $\vp(-, g)$ is trivial, as desired. Finally, if $G$ is weakly reductive then to show that $C_G(H)$ is weakly reductive it is easy to pass from $G$ to $G^0/Z(G^0)$ and thus reduce to Theorem~\ref{theorem:main-centralizer-theorem}.
\end{proof}

Next we obtain a similar result under better bounds when $H$ is the centralizer of a finite group of order invertible on the base. For this, we will use recent fundamental results of Fargues--Scholze. First, we must recall the notion of good filtration. The following theorem is a basic consequence of \cite[Proposition VIII.5.12]{Fargues-Scholze}.

\begin{theorem}\cite[Proposition VIII.5.12]{Fargues-Scholze}\label{theorem:fargues-scholze-vanishing}
    Let $k$ be an algebraically closed field of characteristic $p > 0$, and let $G$ be a connected reductive $k$-group such that $p$ is pretty good for $G$. If $\Lambda$ is a finite group of order prime to $p$ acting on $G$, then $\mathrm{H}^i(C_G(\Lambda), \fg) = 0$ for all $i > 0$.
\end{theorem}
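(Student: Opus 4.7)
The plan is to deduce the result from \cite[Proposition VIII.5.12]{Fargues-Scholze} after a standard reduction from $C_G(\Lambda)$ to its identity component.

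\textbf{Step 1: Structure of $C_G(\Lambda)$.} I would first apply Corollary~\ref{corollary:centralizer-of-weakly-reductive} to the action of the constant finite \'etale $k$-group scheme $\underline{\Lambda}$ on the connected reductive (hence weakly reductive) group $G$. Because $\underline{\Lambda}$ is finite \'etale of order invertible in $k$, the remark following that corollary shows that no hypothesis on $\chara k$ is needed, and one concludes that $C_G(\Lambda)$ is weakly reductive. In particular $C_G(\Lambda)^0$ is connected reductive and the component group $\pi_0 C_G(\Lambda)$ is of order prime to $p$.

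\textbf{Step 2: Reduction to the identity component.} The Hochschild--Serre spectral sequence
\[
E_2^{i,j} = \mathrm{H}^i\!\bigl(\pi_0 C_G(\Lambda),\, \mathrm{H}^j(C_G(\Lambda)^0, \fg)\bigr) \;\Longrightarrow\; \mathrm{H}^{i+j}(C_G(\Lambda), \fg)
\]
collapses onto the line $i = 0$: since $|\pi_0 C_G(\Lambda)|$ is invertible in $k$, the usual Maschke argument forces $\mathrm{H}^i(\pi_0 C_G(\Lambda), V) = 0$ for every $k$-vector space $V$ and every $i \geq 1$. Consequently
\[
\mathrm{H}^n(C_G(\Lambda), \fg) \;\cong\; \mathrm{H}^n(C_G(\Lambda)^0, \fg)^{\pi_0 C_G(\Lambda)}
\]
for every $n$, reducing the problem to showing that $\mathrm{H}^i(C_G(\Lambda)^0, \fg) = 0$ for every $i \geq 1$.

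\textbf{Step 3: Invoking the reference.} Under the pretty good prime hypothesis on $G$, \cite[Proposition VIII.5.12]{Fargues-Scholze} supplies a good filtration on $\fg$ as a representation of the connected reductive group $C_G(\Lambda)^0$. A standard consequence of Kempf vanishing (see \cite{Jantzen}) is that $\mathrm{H}^i(H, V) = 0$ for every connected reductive $k$-group $H$, every $i \geq 1$, and every $H$-module $V$ admitting a good filtration. Applying this to $H = C_G(\Lambda)^0$ and $V = \fg$ completes the proof.

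The only non-formal input is \cite[Proposition VIII.5.12]{Fargues-Scholze} itself; everything else is a routine reduction. The main subtlety will be verifying that the reference is stated in a form compatible precisely with the pretty good prime hypothesis on $G$ (rather than some stronger quantitative bound on $p$), and then matching its output to the good filtration language used in Step 3.
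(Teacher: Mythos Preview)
Your reduction to the identity component (Steps 1--2) is fine but not the route the paper takes, and your Step 3 misidentifies what \cite[Proposition VIII.5.12]{Fargues-Scholze} actually provides. That proposition does \emph{not} hand you a good filtration on $\fg$ as a $C_G(\Lambda)^0$-module; rather, the paper uses it to show that $\cO(G/H)$ admits a good filtration as a $G$-module, where $H = C_G(\Lambda)$. Separately, the paper must establish that $\fg$ itself has a good filtration as a $G$-module: this uses \cite[4.4]{Andersen-Jantzen} (giving good filtrations on all $\Sym^n \fg^*$) together with the isomorphism $\fg \cong \fg^*$ valid when $p$ is pretty good, taken from \cite[Theorem 5.2]{herpel}. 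You omit this ingredient entirely, and it is precisely where the pretty-good hypothesis enters.

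With both good filtrations in hand, the paper avoids any reduction to $H^0$ and instead applies the Shapiro-type identity $\mathrm{H}^i(H,\fg) = \mathrm{H}^i(G,\fg \otimes_k \cO(G/H))$ (valid because $G/H$ is affine), reducing to the vanishing of $\mathrm{H}^i(G,\mathrm{H}^0(\lambda)\otimes_k \mathrm{H}^0(\mu))$ for $i>0$, which is \cite[II, 4.13]{Jantzen}. Your route could be made to work, since ``$\cO(G/H)$ has a good filtration'' is Donkin's criterion for restriction along $H \subset G$ to preserve good filtrations; but you would still need the Andersen--Jantzen/Herpel input to know $\fg$ has a good filtration over $G$ before restricting, and you should say explicitly how one passes from $H$ to $H^0$ in that framework. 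Your closing caveat about ``matching its output to the good filtration language'' is exactly the gap: as written, Step 3 asserts a conclusion that is a theorem, not a citation.
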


\begin{proof}
    By \cite[4.4]{Andersen-Jantzen}, every symmetric power $\Sym^n \fg^*$ admits a good filtration: in other words, there is a filtration of $\Sym^n \fg^*$ such that every subquotient is isomorphic to $\mathrm{H}^0(\lambda)$ for some dominant weight $\lambda$. Since $p$ is pretty good for $G$, \cite[Theorem 5.2]{herpel} shows that $\fg \cong \fg^*$ as $G$-representations and thus in particular $\fg$ admits a good filtration. Now if $H = C_G(\Lambda)$ then the above discussion and \cite[Proposition VIII.5.12]{Fargues-Scholze} show that $\cO(G/H)$ also admits a good filtration. Using the equality $\mathrm{H}^i(H, \fg) = \mathrm{H}^i(G, \fg \otimes_k \cO(G/H))$ (which relies on the fact that $G/H$ is affine), it is enough to show that $\mathrm{H}^i(G, \mathrm{H}^0(\lambda) \otimes_k \mathrm{H}^0(\mu)) = 0$ for all $i > 0$ and all dominant weights $\lambda$ and $\mu$. This is proved in \cite[II, 4.13]{Jantzen}.
\end{proof}

\begin{cor}\label{cor:double-centralizer}
Let $S$ be a scheme, and let $G$ be a weakly reductive $S$-group scheme. Let $\Lambda$ be a finite \'etale $S$-group scheme of order invertible on $S$ which acts on $G$. Then $C_G(\Lambda)$ is a weakly reductive $S$-group scheme. 
Moreover, suppose that for every $s \in S$, $\chara k(s)$ is pretty good for $G_s$. Then $C_G(C_G(\Lambda))$ is a weakly reductive $S$-group scheme.
\end{cor}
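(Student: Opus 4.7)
The first claim follows from Corollary~\ref{corollary:centralizer-of-weakly-reductive}: $\Lambda$ is weakly reductive (trivial identity component, component group $\Lambda$ of order invertible on $S$), and the finite-\'etale branch of that corollary's proof imposes no characteristic restriction. If the action of $\Lambda$ on $G$ is not faithful, first replace $\Lambda$ by $\Lambda' \coloneqq \Lambda/\ker(\Lambda \to \Aut_{G/S})$, which is again finite \'etale of order invertible on $S$ and satisfies $C_G(\Lambda) = C_G(\Lambda')$.

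For the second claim, set $H \coloneqq C_G(\Lambda)$, which is weakly reductive by the first part, and apply Theorem~\ref{theorem:main-centralizer-theorem} to the inclusion $f \colon H \hookrightarrow G$. To verify the cohomological hypothesis, fix $s \in S$, pass to $\overline{k(s)}$, and write $p = \chara k(s)$ and $\fg = \Lie G = \Lie G^0$. The chain $H^0 \subset C_{G^0}(\Lambda) \subset H$ consists of normal inclusions with successive quotients of order prime to $p$: indeed, $C_{G^0}(\Lambda)/H^0 = \pi_0(C_{G^0}(\Lambda))$ has order prime to $p$ by \cite[Proposition VIII.5.11]{Fargues-Scholze}, while $H/C_{G^0}(\Lambda)$ embeds into $G/G^0$, which has order invertible on $S$ by weak reductivity of $G$. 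Two applications of Hochschild--Serre therefore give
\[
\mathrm{H}^1(H, \fg) = \mathrm{H}^1(H^0, \fg)^{H/H^0} \subseteq \mathrm{H}^1(H^0, \fg)^{\pi_0(C_{G^0}(\Lambda))} = \mathrm{H}^1(C_{G^0}(\Lambda), \fg),
\]
using that $\pi_0(C_{G^0}(\Lambda)) = C_{G^0}(\Lambda)/H^0$ is a subgroup of $H/H^0$, and the rightmost term vanishes by Theorem~\ref{theorem:fargues-scholze-vanishing} applied to $G^0$ in pretty good characteristic.

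For the $G$-complete-reducibility hypothesis, it suffices to check on geometric fibers that $C_G(\Lambda) \subset G$ is $G$-cr. Since $|\Lambda|$ is prime to $p$, the subgroup $\Lambda \subset G$ is itself $G$-cr: any R-parabolic $P = L \ltimes U$ of $G$ containing $\Lambda$ admits a $U$-conjugate of $\Lambda$ lying in $L$, because $\mathrm{H}^1(\Lambda, U) = 0$ when $|\Lambda|$ is prime to $p$ (as $U$ is a successive extension of vector groups). One then invokes the classical fact---in the tradition of Richardson, with modern formulations in \cite{BMR05} and subsequent work---that the centralizer in $G$ of a linearly reductive subgroup is itself $G$-cr; hence $H = C_G(\Lambda)$ is $G$-cr. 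Theorem~\ref{theorem:main-centralizer-theorem}, together with the fact that pretty good implies good, then yields the weak reductivity of $C_G(H) = C_G(C_G(\Lambda))$.

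I expect the last step to be the main obstacle: although $G$-complete reducibility of the centralizer of a linearly reductive subgroup is a standard input, locating a clean citation in the possibly-disconnected (weakly reductive) setting requires some care, and a direct orbit-closure argument via Lemma~\ref{lemma:g-cr} (applied to a tuple $(h_1, \ldots, h_n)$ topologically generating $H$) provides a natural fallback.
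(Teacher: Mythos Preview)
Your approach is essentially the same as the paper's: apply Corollary~\ref{corollary:centralizer-of-weakly-reductive} for the first claim, then apply Theorem~\ref{theorem:main-centralizer-theorem} for the second, verifying the cohomology vanishing via Theorem~\ref{theorem:fargues-scholze-vanishing} and the $G$-cr condition via a result from \cite{BMR05}. Two small remarks. First, your Hochschild--Serre reduction from $\mathrm{H}^1(H,\fg)$ to $\mathrm{H}^1(C_{G^0}(\Lambda),\fg)$ is a genuine elaboration: Theorem~\ref{theorem:fargues-scholze-vanishing} is stated only for connected $G$, so the paper's one-line citation implicitly relies on exactly the argument you spell out. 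Second, the precise citation you are looking for in the last step is \cite[Corollary 3.17]{BMR05}, which states that the centralizer of a linearly reductive subgroup of a reductive group is $G$-cr; this is what the paper invokes, and it dispenses with your fallback orbit-closure argument. One minor caveat: you phrase the argument as ``$\Lambda \subset G$ is itself $G$-cr'', but in the statement $\Lambda$ merely acts on $G$. In the paper's applications $\Lambda$ is always a subgroup, and in general one can pass to $G \rtimes \Lambda$; neither the paper nor your sketch makes this reduction explicit.
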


\begin{proof}
    The first claim follows immediately from Corollary \ref{corollary:centralizer-of-weakly-reductive}.
    Let $H = C_G(\Lambda)$. By Theorem~\ref{theorem:main-centralizer-theorem}, it is enough to show that $\mathrm{H}^1(H_s, \fg_s) = 0$ and $H_s$ is $G_s$-cr for all $s \in S$. The first condition holds by Theorem~\ref{theorem:fargues-scholze-vanishing}, and the second condition holds by \cite[Corollary 3.17]{BMR05}.
\end{proof}

\begin{example}
In general, some bound on the residue characteristics as in Corollary~\ref{corollary:centralizer-of-weakly-reductive} is necessary, even in pretty good characteristic. For example, let $p$ be any prime number, let $S = \Spec \bF_p[\![t]\!]$, let $H = \SL_2$, and let $G = \GL_{p+1}$. With notation as in \cite[II, Chapter 2]{Jantzen}, there is an indecomposable $p+1$-dimensional representation $\mathrm{H}^0(p)$ of $\SL_2$ equipped with a filtration
\[
0 \to \mathrm{L}(p) \to \mathrm{H}^0(p) \to \mathrm{L}(p-2) \to 0.
\]
This gives an extension class in $\Ext^1_H(\mathrm{L}(p-2), \mathrm{L}(p))$. Multiplying this class by $t$ gives a $p+1$-dimensional representation $V$ of $\SL_2$ over $S$ with special fiber $\mathrm{L}(p) \oplus \mathrm{L}(p-2)$ and generic fiber $\mathrm{H}^0(p)$. Relative to the corresponding map $H \to G$, we have $C_G(H)_s \cong \bG_m^2$, while $C_G(H)_\eta \cong \bG_m$ \cite[Proposition II.2.8]{Jantzen}, so $C_G(H)$ is not flat.

Moreover, the centralizer of a connected reductive subgroup of a reductive group need not be reductive if the characteristic is too small; this is related to the fact that representations of connected reductive groups can fail to be semisimple in positive characteristic. For explicit examples, see the MathOverflow answer \cite{McNinch-Mathoverflow}, as well as the comments on that answer.

We do not know the optimal bound on the residue characteristics in Corollary~\ref{corollary:centralizer-of-weakly-reductive}; if $G = \GL_n$, then one can improve the bound from $n^2/\ell_H$ to $n/\ell_H$. If $G = \Sp_n$ (resp.\ $\SO_n$), then one can improve the bound from $n(2n+1)/\ell_H$ (resp.\ $n(n-1)/2\ell_H$) to $n/\ell_H$. In general, it seems reasonable to expect that something like $h_G/\ell_H$ is the correct bound, where $h_G$ is the Coxeter number of $G$.
\end{example}

\subsection{Centralizers of Pure Unipotent Elements} \label{ss:centralizerunipotent}

Later arguments rely heavily on the smoothness of centralizers of pure fiberwise unipotent elements.  The notion of purity will capture the idea that an element of $G$ ``looks similar'' across all fibers.  For example, an element like $\ttm{1}{p}{0}{1} \in \GL_2(\Z_p)$ will not be pure.

\begin{defn} \label{defn:pure}
Let $G$ a reductive group over a scheme $S$.  
A section $g \in G(S)$ is \emph{pure} if $s \mapsto C_{G_s}(g_s)$ is locally constant on $s$.  There is an analogous definition for elements of the Lie algebra.  
\end{defn}

If $S$ is the spectrum of a discrete valuation ring, this means that the special and generic fibers of the centralizer of $g$ have the same dimension.

Using a Springer isomorphism, the smoothness for centralizers of unipotent elements is closely related to the smoothness for centralizers of nilpotent elements in the Lie algebra.  This smoothness was claimed in \cite{mcninch08}, but the argument there has a gap.  This has been fixed by Hardesty (for pure nilpotents) and by the second author (for pure unipotents and nilpotents) \cite{Hardesty,cotner}.  Since our later arguments are naturally phrased in terms of unipotent elements, we will build on the latter.

\begin{theorem} \label{theorem:unipotentcentralizer}
Let $A$ be a DVR with residue characteristic $p$, and $G$ be a weakly reductive group scheme over $\Spec A$.
 If $p$ is pretty good for $G$ and $u \in G(A)$ is a pure fiberwise unipotent element then $C_G(u)$ is $A$-smooth. 
\end{theorem}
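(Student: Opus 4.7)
My strategy is to reduce the theorem to the case of connected reductive $G$ over $A$ and then invoke \cite{cotner}, which handles this case directly. I would organize the argument as follows.

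\textbf{Step 1: Reduction to $G = G^0$.} The image of $u$ in the finite étale $A$-group scheme $G/G^0$ is a section with fiberwise unipotent values. Each geometric fiber of $G/G^0$ is a finite group of order prime to $p$ and so has no nontrivial unipotent elements; étaleness of $G/G^0$ then forces this section to be the identity, so $u \in G^0(A)$. Consequently $C_{G^0}(u) = C_G(u) \cap G^0$ is clopen in $C_G(u)$. For any closed point $x$ of a non-identity component of $C_G(u)$, an étale base change $A \to A_1$ admits a section of $C_G(u)$ through $x$, and left-translation by that section identifies the component with $C_{G^0}(u) \times_A A_1$; by fpqc descent of smoothness, smoothness of $C_G(u)$ therefore reduces to smoothness of $C_{G^0}(u)$, and we may assume $G$ is connected reductive.

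\textbf{Step 2: Fibral smoothness.} In pretty good characteristic, the centralizer of any closed subgroup scheme of a connected reductive group is smooth on fibers, by Herpel's theorem (cited at the opening of Section~\ref{sec:centralizers}). In particular each fiber $C_{G_s}(u_s)$ is smooth of some dimension $d_s$, and purity of $u$ says $d_s$ is constant in $s$. By the fibral smoothness criterion, smoothness of $C_G(u)$ over $A$ is now equivalent to $A$-flatness of $C_G(u)$.

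\textbf{Step 3: Purity implies flatness (the main obstacle).} At the identity one has $\Lie C_G(u) = \ker(\Ad(u) - 1 : \fg \to \fg)$, and constancy of the fiber ranks of $\Ad(u) - 1$ (i.e.\ purity) forces this kernel to be a direct $A$-summand of $\fg$, hence a finite free $A$-module of rank $d_s$. Combined with fibral smoothness of the right dimension, this would heuristically give smoothness of $C_G(u)$ near the identity section, and then translation within $C_G(u)$ (as a naive group in schemes) would propagate smoothness globally. However, upgrading this tangent-level information into bona-fide flatness is delicate in positive characteristic. The clean approach, developed in \cite{cotner}, is to use a $G$-equivariant Springer isomorphism $\cU \xrightarrow{\sim} \cN$ over $A$ (available in pretty good characteristic) to convert $u$ into a pure fiberwise nilpotent element $X \in \fg(A)$ with $C_G(u) = C_G(X)$; on the nilpotent side, an $A$-family of $\mathfrak{sl}_2$-triples attached to $X$ produces a weighted grading of $\fg$ whose uniformity across fibers (guaranteed by purity) yields an explicit flat description of $C_G(X)$. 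The conclusion then follows by citing \cite{cotner} once the reduction of Step 1 is in hand.
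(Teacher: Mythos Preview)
Your Steps 2 and 3 are correct and match the paper's treatment of the connected case: fibral smoothness is Herpel, and flatness is \cite[Theorem 1.1]{cotner}.

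The gap is in Step 1. The assertion that an \'etale base change admits a section of $C_G(u)$ through any closed point $x$ in a non-identity component is exactly the nontrivial content of the reduction, and you have not justified it. Smoothness of $G$ lets you lift such an $x$ to some $g \in G(A_1)$, but nothing forces $g$ to centralize $u$; put differently, for a non-identity component $C$ of $G$, the intersection $C_G(u) \cap C$ is only a pseudo-torsor under $C_{G^0}(u)$ and could a priori be supported entirely in the special fiber, in which case $C_G(u)$ is not flat and your translation argument never gets started. The paper handles this by invoking \cite{cotner} a second time, now as a conjugacy statement rather than a flatness statement: given $\bar g \in C_G(u)(k)$ in a non-identity component, lift it to $g \in G(A)$ by smoothness of $G$; then $gug^{-1}$ and $u$ are pure fiberwise unipotents in $G^0(A)$ with the same reduction $u_s$, so \cite[Theorem 5.11]{cotner} produces $h \in G^0(A)$ with $huh^{-1} = gug^{-1}$, and $h^{-1}g \in C_G(u)(A)$ is the section you need in the component of $g$.
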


Using Remark~\ref{rmk:pretty good}, it would be equivalent to suppose that $p$ is good for $G^0$, that $\# \pi_1 (\sD(G^{0}))$ is prime-to-$p$, and that $Z(G^0)$ is $A$-smooth. 

\begin{proof}
If $G$ is connected, flatness comes from {\cite[Theorem 1.1]{cotner}}.  The smoothness of the fibers follows for example from  \cite[Theorem 1.1]{herpel}.

In general, note that $\overline{u}$ has order a power of $p$ and hence $u$ lies in the identity component of $G$.
It suffices to show that every component of $G$ which contains a point in the special fiber centralizing $u$ has an $A$-point centralizing $u$.  For then the centralizer $C_G(u)$ is a union of copies of the smooth $C_{G^0}(u)$.

Given $g \in G(A)$ such that $\overline{g}$ centralizes $u_s$ in the special fiber,
as $u$ is pure \cite[Theorem 5.11]{cotner} shows that $gu g^{-1}$ and $u$ are $G^0(A)$-conjugate.  Thus there is $h \in G^0(A)$ such that $h u h^{-1} = g u g^{-1}$, and hence $h^{-1} g \in G(A)$ centralizes $u$ and lies in the desired component of $G$. 
\end{proof}

\subsection{Complements on Pure Unipotents}

We conclude with some additional results about pure unipotents which elaborate the sense in which a pure unipotent ``looks similar'' in the special and generic fibers. 

We begin by reviewing the Bala-Carter method which classifies nilpotent orbits for a connected reductive group over an algebraically closed field $k$ of good characteristic $p \geq 0$.  (More information can be found in \cite[\S 4]{jantzen04}, and a uniform proof without case-checking in small characteristic is due to Premet \cite{premet03}.)  Using a Springer isomorphism, this equivalently gives a classification of conjugacy classes of unipotent elements.  To state it, we need to define some terminology.

Let $H$ be a connected reductive $k$-group with $p$ good for $H$, and $\h = \Lie H$.

\begin{itemize}
 \item A nilpotent $N \in \h$ is a \emph{distinguished nilpotent} if each torus contained in $C_{H}(N)$ is contained in the center of $H$.
 
 \item  For a parabolic $P \subset H$ with unipotent radical $U$, the \emph{Richardson orbit} associated to $P$ is the unique nilpotent orbit of $H$ with dense intersection with $\Lie U$.  Its intersection with $\Lie P$ is a single orbit under $P$.

 \item  A parabolic subgroup $P \subset H$ with unipotent radical $U$ is a \emph{distinguished parabolic} if $\dim P/U = \dim U/ \mathscr{D}(U)$.
\end{itemize}

Bala and Carter classified nilpotent orbits when the characteristic is good.  One can check that if $p$ is good for $H$, it will also be good for any Levi factor of a parabolic subgroup of $H$. The following fact can be found in \cite[\S 4]{jantzen04}.

\begin{fact} \label{fact:balacarter}
If $p$ is a good prime for $H$, the nilpotent orbits for $H$ are in bijection with $H(k)$-conjugacy classes of pairs $(L,P)$ where $L$ is a Levi factor of a parabolic subgroup of $H$ and $P$ is a distinguished parabolic of $L$.  The nilpotent orbit for $H$ associated to $(L,P)$ is the unique one meeting $\Lie(P)$ in its Richardson orbit for $L$.
\end{fact}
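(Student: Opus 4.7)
The plan is to follow the classical Bala-Carter strategy, executed in good characteristic via Premet's extension of Jacobson-Morozov. The argument has two logically distinct stages: a reduction to distinguished nilpotent orbits inside Levi subgroups, and then a classification of the distinguished orbits via distinguished parabolics.

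For the first stage, given a nilpotent $N \in \h$ I would take a maximal torus $S$ of $C_H(N)^0$ (which is smooth in good characteristic, so its identity component is well-behaved) and set $L \coloneqq C_H(S)$. Centralizers of tori in a connected reductive group are Levi factors of parabolics, so $L$ is a Levi. By construction $N \in \Lie L$ and $S \subseteq Z(L)$; moreover $S$ is a maximal torus of $C_L(N) \subseteq C_H(N)$, since any larger torus would lift to $C_H(N)$. Hence every torus of $C_L(N)$ is conjugate into $S \subseteq Z(L)$, which is exactly distinguishedness of $N$ in $\Lie L$. To promote $H(k)$-conjugacy of $N$'s to $H(k)$-conjugacy of pairs $(L,N)$, I would use that two such maximal tori of $C_H(N)^0$ are conjugate under $C_H(N)^0(k)$.

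For the second stage, let $M$ be connected reductive in good characteristic with $N \in \Lie M$ distinguished. By Premet's theorem there is an associated cocharacter $\lambda: \Gm \to M$ satisfying $\Ad(\lambda(t))N = t^2 N$, unique up to $C_M(N)^0$-conjugacy. The weight decomposition $\Lie M = \bigoplus_i \Lie M(i)$ produces a parabolic $P$ with $\Lie P = \bigoplus_{i \geq 0} \Lie M(i)$, Levi factor $L_P = C_M(\lambda)$, and $N \in \Lie M(2) \subseteq \Lie \mathrm{Rad}_u(P)$. Distinguishedness of $N$ forces $L_P$ to be a torus, since otherwise $Z(L_P)^0$ would contribute a torus to $C_M(N)$ lying outside $Z(M)$. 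One then checks via $\mathfrak{sl}_2$-representation theory on $\Lie M$ that $\dim \Lie M(0) = \dim \Lie M(2)$; combined with $\Lie \sD(\mathrm{Rad}_u(P)) = \bigoplus_{i \geq 4} \Lie M(i)$, this is precisely the condition $\dim P/\mathrm{Rad}_u(P) = \dim \mathrm{Rad}_u(P)/\sD(\mathrm{Rad}_u(P))$, i.e.\ $P$ is distinguished. Conversely, for a distinguished parabolic $P$ of $M$, Richardson's theorem produces a unique dense $P$-orbit in $\Lie \mathrm{Rad}_u(P)$, and one verifies that its elements are distinguished nilpotents.

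The main obstacle is the availability of Jacobson-Morozov in positive characteristic: for bad primes the $\mathfrak{sl}_2$-triple may fail to exist, and without it the bridge between distinguished nilpotents and distinguished parabolics collapses. Premet's construction of an associated cocharacter in good characteristic, together with its uniqueness up to $C_M(N)^0$-conjugacy, is exactly what is needed to make the bijection on conjugacy classes go through. Once this input is in hand, the remainder is bookkeeping with $\mathfrak{sl}_2$-weight decompositions and verifying that $H(k)$-conjugacy of the pair $(L,P)$ is controlled by the $H(k)$-orbit of the nilpotent $N$.
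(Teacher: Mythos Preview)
The paper does not prove this statement; it is recorded as a Fact with a reference to \cite[\S4]{jantzen04} (and to \cite{premet03} for a uniform argument in good characteristic). Your two-stage outline follows the standard Bala--Carter strategy, and the first stage is essentially correct, though the parenthetical ``smooth in good characteristic'' is not quite right: $C_H(N)$ can fail to be smooth in merely good characteristic (the regular nilpotent in $\SL_p$ over a field of characteristic $p$ has non-smooth scheme-theoretic centralizer). This does no harm, since one may work with a maximal torus of $(C_H(N)_{\rm red})^0$ instead.

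There is, however, a genuine error in the second stage. The claim that $L_P = C_M(\lambda)$ is a torus when $N$ is distinguished is false: the subregular class $G_2(a_1)$ in the exceptional group of type $G_2$ is distinguished, yet its weighted Dynkin diagram carries a label $0$, so the Levi of its canonical parabolic has a simple factor of type $A_1$. The justification you offer also fails: $Z(L_P)^0$ centralizes $\lambda$, but it has no reason to centralize $N \in \fm(2)$---indeed, the image of $\lambda$ itself lies in $Z(L_P)^0$ and acts on $N$ with weight $2$. What distinguishedness actually buys is the weaker statement that $N$ is \emph{even}, i.e., $\fm(i)=0$ for all odd $i$; this is what is needed for $\Lie\mathrm{Rad}_u(P)=\bigoplus_{i\geq 2}\fm(i)$ and $\Lie\sD(\mathrm{Rad}_u(P))=\bigoplus_{i\geq 4}\fm(i)$. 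The correct dimension identity is then $\dim\fm(0)=\dim\fm(2)+\dim Z(M)$, matching the $\dim Z_L$ term in the paper's definition of a distinguished parabolic rather than the unadorned equality you wrote. Proving that distinguished nilpotents are even is a separate short step (e.g., in good characteristic $p\neq 2$ the element $\lambda(-1)$ is a semisimple element of $C_M(N)$, hence central by distinguishedness, forcing the odd graded pieces to vanish; the residual type-$A$ case in characteristic $2$ is handled by observing that distinguished there means regular); see \cite[\S4]{jantzen04}.
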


The \emph{Bala-Carter data} for $H$ is the set of $H(k)$-conjugacy classes of pairs $(L,P)$ as above.  It turns out it is independent of $k$ in the sense that it can be described completely in terms of the root datum of $H$ as follows.  All Levi subgroups $L$ of a parabolic $k$-subgroup $Q$ of $H$ are a single $\mathcal{R}_{u,k}(Q)$-orbit, so in Fact~\ref{fact:balacarter} we may restrict to one $Q$ per $H(k)$-conjugacy class and one $L$ per $Q$.  We may pick $L$ so that it contains a (split) maximal torus $T$.  After conjugation by $L(k)$, the distinguished parabolic subgroup $P \subset L$ may be assumed to contain $T$ as well.  But we know that parabolic subgroups $Q$ of $H$ containing $T$ are in bijection with parabolic subsets of $\Phi(H,T)$ via $Q \mapsto \Phi(Q,T)$, so the possible Levi factors $L$ of $Q$ containing $T$ are described just in terms of the root datum.  Likewise, parabolic subgroups $P$ of $L$ containing $T$ are in bijection with parabolic subsets of $\Phi(L,T)$.  If we can 
characterize the condition 
that $P$ is distinguished just in terms of the root 
data, this would mean that the Bala-Carter data can be described solely in terms of the root data and so is completely combinatorial. 

We do so by constructing a grading on the Lie algebra of a parabolic $P$.  Pick a Borel subgroup $B \subset H$ satisfying $T \subset B \subset P$.  Let $\ft = \Lie(T)$ and $\Delta \subset \Phi = \Phi(L,T)$ be the set of positive simple roots determined by $B$.  There is a unique subset $I \subset \Delta$ such that $P = B W_I B$ where $W_I$ is the subset of the Weyl group generated by reflections with respect to roots in $I$.  Define a group homomorphism $f: \ZZ \Phi \subset \ZZ^\Delta \to \ZZ$ by specifying that on the basis $\Delta$ we have
\[
 f(\alpha) = \begin{cases}
              2 & \alpha \in \Delta - I, \\
              0 & \alpha \in I.
             \end{cases}
\]
This function gives a grading on $\fl = \Lie(L)$:
\[
\fl(i) = \bigoplus_{f(\alpha) = i} \fl_\alpha \quad \text{and} \quad \fl(0) = \left( \bigoplus_{f(\alpha) = 0} \fl_\alpha \right) \oplus \ft
\]
(sums indexed by $\alpha \in \Phi$).  With respect to this grading,
\[
 \Lie P = \bigoplus_{i \geq 0} \fl(i) \quad \text{and}\quad \Lie U = \bigoplus_{i >0 } \fl(i).
\]
The condition that $P$ is distinguished is equivalent to the condition that
\[
 \dim \fl(0) = \dim \fl(2) + \dim Z_{L}
\]
by \cite[Corollary 5.8.3]{carter85} as $p$ is good for $L$.  But this condition depends only on the root datum.  Thus the Bala-Carter data for $H$ can be described in a manner independent of the choice of algebraically closed field.  We call this combinatorial description the \emph{Bala-Carter label} for the nilpotent (or unipotent) orbit.

\begin{defn}
If $K$ is a field (not necessarily algebraically closed), then the Bala-Carter label label for a nilpotent (resp. unipotent) element of $\Lie H$ (resp. $H(K)$)  is the Bala-Carter label for the corresponding nilpotent (resp. unipotent) orbit over the algebraic closure.
\end{defn}

\begin{remark}
 From the classification of nilpotent orbits over algebraically closed fields, it is known that the corresponding nilpotent orbits in characteristic zero and characteristic $p$ have the same dimension.  Thus the dimension of centralizers of elements in an orbit is also  independent of the characteristic and depends only on the Bala-Carter label.   There is a similar statement for conjugacy classes of unipotent elements and their centralizers.
\end{remark}

We now return to the relative setting.

\begin{lem} \label{lemma:purelift}
Let $G$ be a connected reductive group scheme over a DVR $A$ with residue field $k$ whose characteristic is good for $G$.  Given a unipotent element $\overline{u} \in G(k)$ with Bala-Carter label $\sigma$, there exists a pure fiberwise unipotent element $u \in G(A)$ lifting $\overline{u}$ with Bala-Carter label $\sigma$ in the generic fiber, and a similar statement for nilpotent elements of the Lie algebra.  
\end{lem}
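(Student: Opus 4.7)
The plan is to realize $\overline{u}$ as a Richardson element in a lift to $A$ of its Bala--Carter data, and then invoke smoothness of the Richardson locus inside the unipotent radical to extend $\overline{u}$ to the desired $A$-point.

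First I would lift the Bala--Carter data. By the combinatorial description, the label $\sigma$ corresponds to a $G(k)$-conjugacy class of pairs $(\overline{L}, \overline{P})$, with $\overline{L}$ a Levi factor of some parabolic $\overline{Q}$ of $G_k$ and $\overline{P}$ a distinguished parabolic of $\overline{L}$, and $\overline{u}$ is $G(k)$-conjugate to a Richardson element for $(\overline{L}, \overline{P})$. After conjugating, I may assume $\overline{u}$ itself lies in the unipotent radical $\overline{U}$ of $\overline{P}$ and is Richardson there. Since the $A$-scheme of parabolic subgroups of $G$ of a given type is smooth projective, and since Levi subgroups of a fixed parabolic form a torsor under its smooth unipotent radical, smoothness lets me successively lift $\overline{Q}$ to a parabolic $Q$ over $A$, then $\overline{L}$ to a Levi $L$ of $Q$, then $\overline{P}$ to a parabolic $P$ of $L$ over $A$. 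Distinguishedness is a condition on the combinatorial type of $P$ and is locally constant on $\Spec A$, so $P$ is fiberwise distinguished. Let $U$ denote the unipotent radical of $P$, a smooth affine $A$-group scheme.

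Next I would produce the lift inside the Richardson locus. Let $d_R$ be the centralizer dimension of a Richardson element for $(L, P)$, which is combinatorially determined by $\sigma$ and is the same on both fibers. Upper semicontinuity of fiber dimension applied to the conjugation stabilizer scheme $\{(\ell, u) \in L \times_A U : \ell u \ell^{-1} = u\} \to U$ shows that the subset $V \subset U$ defined by $\dim C_L(u) \leq d_R$ is open. Richardson's theorem says that every $L$-orbit meeting $U$ on a fiber lies in the closure of the Richardson orbit and hence has centralizer dimension $\geq d_R$, with equality exactly on the Richardson orbit; so $V_s$ (resp.\ $V_\eta$) is precisely the Richardson locus in $U_s$ (resp.\ $U_\eta$). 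Since $V$ is open in the $A$-smooth $U$, it is $A$-smooth, and thus $\overline{u} \in V(k)$ lifts to some $u \in V(A) \subset U(A)$. This $u$ is fiberwise unipotent (as a section of the unipotent $U$), satisfies $u_s = \overline{u}$, and has $u_\eta$ in the Richardson orbit of $(L_\eta, P_\eta)$; since the Bala--Carter label is combinatorially determined by the (constant) type of the pair, $u_\eta$ carries the label $\sigma$.

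Purity then follows because the Bala--Carter label determines the orbit dimension in all good characteristics (as in the remark preceding the lemma), so $\dim C_{G_s}(u_s) = \dim C_{G_\eta}(u_\eta)$, matching Definition~\ref{defn:pure}. The nilpotent statement is handled by the verbatim argument with $U$ replaced by $\Lie(U)$ viewed as a vector $A$-group and the Richardson unipotent orbit replaced by its nilpotent counterpart; alternatively one can transport the unipotent lift through a Springer isomorphism over $A$, available in good characteristic and $G$-equivariant so that centralizers are preserved. The main technical subtlety is checking openness and smoothness of $V$; this reduces to semicontinuity of centralizer dimensions in the conjugation stabilizer family together with Richardson's characterization of the open orbit in the unipotent radical, both well-established in good characteristic.
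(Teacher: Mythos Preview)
Your proof is correct and takes a genuinely different route from the paper's. The paper works on the nilpotent side via \emph{associated cocharacters}: it lifts a cocharacter $\tau_s$ attached to $\overline{X}$ to a cocharacter $\tau$ of $G$ over $A$, lifts $\overline{X}$ to any $X$ in the free $A$-module $\mathfrak{g}(\tau,2)$, and then argues purity by comparing the dense $P_G(\tau)$-orbit on $\bigoplus_{n\geq 2}\mathfrak{g}(\tau,n)$ in the two fibers; the Bala--Carter label is read off from the instability parabolic $P_G(\tau)$ and the cocharacter $\tau$ via \cite[Proposition 2.13(6)]{cotner}, and the unipotent statement is deduced through an integral Springer isomorphism. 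Your approach instead lifts the Bala--Carter pair $(\overline{L},\overline{P})$ directly and then lifts $\overline{u}$ inside the Richardson locus of the unipotent radical, never leaving the Bala--Carter framework that the paper sets up just before the lemma. This is more self-contained and avoids the instability/associated-cocharacter machinery entirely; the paper's choice, on the other hand, dovetails with the pure-unipotent conjugacy results of \cite{cotner} that are used repeatedly afterward (e.g.\ in Proposition~\ref{prop:pureequivalent}).

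One small technical point: your sentence ``$V$ is $A$-smooth, and thus $\overline{u}\in V(k)$ lifts to some $u\in V(A)$'' is not justified over a DVR that is not henselian, since smoothness alone does not guarantee surjectivity on points. A cleaner phrasing is to lift $\overline{u}$ arbitrarily to $u\in U(A)$ (possible because $U$, being an iterated extension of $\mathbf{G}_a$'s by root groups, has $U(A)\twoheadrightarrow U(k)$) and then observe that $u$ automatically lands in $V$: the special fiber is in $V_s$ by hypothesis, and for the generic fiber your own semicontinuity argument gives $\dim C_{L_\eta}(u_\eta)\leq d_R$ while the Richardson bound gives $\geq d_R$. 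The paper's proof has the analogous issue when it invokes smoothness of the scheme of maximal tori to lift $\tau_s$, so this is not a defect specific to your approach; in the paper's applications $A=\cO$ is complete and the point is moot.
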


\begin{proof}
We will prove this for a nilpotent element $\overline{X}$; the unipotent case follows using an integral Springer isomorphism \cite[Theorem 1.1]{Integral-Springer}.  Let $\tau_s : (\Gm)_s \to G_s$ be a cocharacter associated to $\overline{X}$, which exists since $\textrm{char}(k)$ is good for $G$.  This lifts to a cocharacter $\tau : \Gm \to G$ by smoothness of the scheme of maximal tori \cite[Theorem 3.2.6]{conrad}.  
This cocharacter defines 
\[
\bigoplus_{n \geq 2} \g(\tau,n) \subset \g.
\]
Over the special fiber $\overline{X}$ is in $\g_s(\tau,2)$ and the $\Ad_{P(\tau_s)}$-orbit of $\overline{X}$ is open and dense.  Pick a fiberwise nilpotent $X \in \g(\tau,2)$ lifting $\overline{X}$ and consider the parabolic $P_G(\tau) \subset G$.  It naturally acts on $\bigoplus_{n \geq 2} \g(\tau,n) $ and the stabilizer of $X$ is $C_G(X)$.  As we know $\dim C_G(X)_\eta \leq \dim C_G(X)_s$ and since the orbit is open and dense in the special fiber it follows that $X$ is pure.  Then argue as in \cite[Lemma 5.2]{cotner} to show that $P_G(\tau)$ gives the instability parabolic for $X$ and $\tau$ is an associated cocharacter for $X$ in the special and generic fibers.  By \cite[Proposition 2.13(6)]{cotner} this information determines the Bala-Carter data in the special and generic fibers.  
\end{proof}

\begin{defn}
Let $K$ be a $p$-adic field and $G$ be a connected reductive group over the ring of integers $\cO_K$.  
We say that $g_1,g_2 \in G(\cO_K)$ are geometrically conjugate if there exists a finite extension $L$ of $K$ such that $g_1$ and $g_2$ are $G(\cO_L)$-conjugate.
\end{defn}

\begin{proposition} \label{prop:pureequivalent}
Let $K$ be a $p$-adic field with residue field $k$ and let $G$ be a connected reductive group over $\cO_K$ such that $p$ is pretty good for $G$. Fix a Bala-Carter label $\sigma$ and a pure unipotent $u_\sigma$ with Bala-Carter label $\sigma$ in $G(K)$ and $G(k)$ using Lemma~\ref{lemma:purelift}.
The following are equivalent for a fiberwise unipotent $u \in G(\cO_K)$:
\begin{enumerate}
    \item  \label{pureequivalent1} $u$ is 
    geometrically conjugate to $u_{\sigma}$; 
    \item \label{pureequivalent2} the images of $u$ in $G(K)$ and $G(k)$ have Bala-Carter label $\sigma$;
    \item \label{pureequivalent3}  $u$ is pure and it has Bala-Carter label $\sigma$ in $G(K)$;
    \item  \label{pureequivalent4}  $u$ is pure and it has Bala-Carter label $\sigma$ in $G(k)$;
\end{enumerate}
\end{proposition}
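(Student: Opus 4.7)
The plan is to establish $(1) \Rightarrow (2) \Rightarrow (3),(4)$ and then close the cycle via $(4) \Rightarrow (1)$ and $(3) \Rightarrow (1)$; the implications $(1) \Rightarrow (3)$ and $(1) \Rightarrow (4)$ are equally direct. The two essential inputs are Theorem~\ref{theorem:unipotentcentralizer} on smoothness of centralizers of pure fiberwise unipotents (whose ``pretty good'' hypothesis on $p$ is precisely what the proposition assumes) and the conjugacy statement \cite[Theorem 5.11]{cotner} used in its proof: two pure fiberwise unipotents in $G(\cO_K)$ whose special fibers are $G(k)$-conjugate are themselves $G(\cO_K)$-conjugate.

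The straightforward implications come first. $(1) \Rightarrow (2)$ follows because $G(\cO_L)$-conjugation descends to $G(L)$-conjugation on the generic fiber and $G(\ell)$-conjugation on the special fiber, both of which preserve BC labels, and $u_\sigma$ has label $\sigma$ in both fibers by construction. The same conjugation preserves the dimensions of fiberwise centralizers, so $(1) \Rightarrow (3)$ and $(1) \Rightarrow (4)$. For $(2) \Rightarrow (3)$ and $(2) \Rightarrow (4)$, the fact recorded in the remark following Fact~\ref{fact:balacarter} --- that the centralizer of a unipotent in a connected reductive group over an algebraically closed field has dimension depending only on its BC label --- forces $\dim C_{G_s}(u_s) = \dim C_{G_K}(u_K)$ once the two fiberwise labels agree, and hence $u$ is pure.

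The heart of the argument is $(4) \Rightarrow (1)$, from which $(3) \Rightarrow (1)$ will follow after upgrading $(3)$ to include the special-fiber information. For $(4) \Rightarrow (1)$: since $u_s$ and $(u_\sigma)_s$ both have BC label $\sigma$, they are $G(\overline{k})$-conjugate; descending this conjugacy to a finite separable extension $\ell/k$ and letting $L/K$ be the corresponding unramified extension, one views $u$ and $u_\sigma$ as pure fiberwise unipotents in $G(\cO_L)$ whose special fibers are $G(\ell)$-conjugate, so \cite[Theorem 5.11]{cotner} yields the required $G(\cO_L)$-conjugacy. For $(3) \Rightarrow (1)$, let $\tau$ denote the BC label of $u_s$; by Lemma~\ref{lemma:purelift} applied with $\overline{u} = u_s$, there exists a pure $u' \in G(\cO_K)$ with $u'_s = u_s$ and generic-fiber BC label $\tau$. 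Because $u$ and $u'$ are both pure with identical special fibers, \cite[Theorem 5.11]{cotner} shows that they are $G(\cO_K)$-conjugate, so their generic-fiber BC labels agree, forcing $\tau = \sigma$, and the case reduces to the preceding one.

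The main obstacle is the correct deployment of \cite[Theorem 5.11]{cotner}: purity of both elements, together with smoothness of their common centralizer from Theorem~\ref{theorem:unipotentcentralizer}, is precisely what lifts equality of conjugacy classes on the special fiber to an actual conjugacy over the DVR (possibly after unramified base change). The pretty-good hypothesis on $p$ validates this smoothness, while the combinatorial independence of BC labels from the characteristic --- guaranteed by the root-datum description preceding Fact~\ref{fact:balacarter} --- supplies the bridge between the two fibers.
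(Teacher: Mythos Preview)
Your proof is correct and follows essentially the same strategy as the paper: the implications $(1)\Rightarrow(2)\Rightarrow(3),(4)$ are handled identically, and your $(4)\Rightarrow(1)$ matches the paper's ``Similarly'' step. The one genuine difference is in $(3)\Rightarrow(1)$: the paper applies \cite[Theorem 5.11]{cotner} directly with \emph{generic}-fiber conjugacy as input (since $u$ and $u_\sigma$ share Bala--Carter label $\sigma$ in $G(K)$, they are $G(K')$-conjugate for some finite extension $K'/K$, and the theorem promotes this to $G(\cO_{K'})$-conjugacy), whereas you reduce $(3)$ to $(4)$ via an auxiliary application of Lemma~\ref{lemma:purelift}. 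Your route is slightly longer but has the virtue of only invoking the special-fiber form of \cite[Theorem 5.11]{cotner}; the paper's route is shorter but uses that the theorem also lifts generic-fiber conjugacy. One small quibble: Theorem~\ref{theorem:unipotentcentralizer} (smoothness of centralizers) is not itself an input to this proposition---it is \cite[Theorem 5.11]{cotner} that does the lifting work, and the pretty-good hypothesis is needed for that result rather than for smoothness here.
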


\begin{proof}
Note that
(\ref{pureequivalent1}) implies (\ref{pureequivalent2}) as the image of $u_{\sigma}$ in $G(K)$ and $G(k)$ have the same Bala-Carter labels.  Since the dimensions of the centralizer of a unipotent element over an algebraically closed field depends only on the Bala-Carter label and not the characteristic, (\ref{pureequivalent2}) implies (\ref{pureequivalent3}) and (\ref{pureequivalent4}).  If $u$ is pure and generically has Bala-Carter label $\sigma$, then $u$ and $u_\sigma$ are $G(K')$-conjugate for some extension $K'$ of $K$.  By
\cite[Theorem 5.11]{cotner}, $u$ and $u_\sigma$ are $G(\cO_{K'})$-conjugate.  
Thus (\ref{pureequivalent3}) implies (\ref{pureequivalent1}).  Similarly, (\ref{pureequivalent4}) implies (\ref{pureequivalent1}).
\end{proof}

\begin{remark}
\begin{itemize}
    \item   When defining the minimally ramified deformation condition in \cite{Booher19} and this paper, a key step is to restrict to deformations where a particular inertial element $\sigma$, whose mod-$p$ image is unipotent, is sent to a conjugate of a particular pure unipotent lift.  Proposition~\ref{prop:pureequivalent} shows that this condition is equivalent to controlling the conjugacy class of the lift in the generic fiber or to enforcing purity of the lift.

    \item  There is of course an analogous statement for nilpotents.  These can be cheaply deduced using an integral Springer isomorphism \cite[Theorem 1.1]{Integral-Springer} or proven directly using similar techniques.
    
\end{itemize}
\end{remark}

We will also need the following lemma later.

\begin{lemma} \label{lem:conjugacypower}
Let $K$ be a $p$-adic field and $G$ a connected reductive group defined over $\cO_K$ such that $p$ is good for $G$.   Suppose $u$ is a fiberwise unipotent element of $G(\cO_K)$ satisfying any of the equivalent conditions of Proposition~\ref{prop:pureequivalent}.  Then for any integer $n$ relatively prime to $n$, the elements $u$ and $u^n$ generically have the same Bala-Carter label and $u^n$ is pure.
\end{lemma}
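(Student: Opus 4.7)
The plan is to derive the lemma from Proposition~\ref{prop:pureequivalent} by showing that $u^n$ has Bala-Carter label $\sigma$ on each fiber of $\Spec \cO_K$. Once this is known, the equivalence of conditions (2) and (3) in Proposition~\ref{prop:pureequivalent} immediately yields that $u^n$ is pure with generic Bala-Carter label $\sigma$, as desired. Since the generic fiber $K$ has characteristic zero and the residue characteristic $p$ is good for $G$ by hypothesis, this task reduces to the following geometric claim: for any algebraically closed field $\Omega$ such that $\chara(\Omega)$ is good for $G$ and does not divide $n$, every unipotent $v \in G(\Omega)$ is $G(\Omega)$-conjugate to $v^n$. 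Applying this to the two geometric fibers of $u$ shows that $(u^n)_\eta$ and $(u^n)_s$ have the same Bala-Carter labels as $u_\eta$ and $u_s$, both of which are $\sigma$ by assumption.

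To prove the geometric claim, invoke the Bala-Carter classification to exhibit $v$ as a distinguished unipotent element of some Levi $L \subseteq G_\Omega$. Since $L$-conjugacy implies $G_\Omega$-conjugacy, and a good prime for $G$ remains good for any Levi of $G$, I may replace $G_\Omega$ by $L$ and so assume $v$ itself is distinguished in $G_\Omega$. Then $v$ is a Richardson element of some distinguished parabolic $P \subset G_\Omega$, and in particular lies in the unique dense $P$-orbit of $U \colonequals R_u(P)$. Choose any cocharacter $\tau \colon \Gm \to G_\Omega$ with $P_{G_\Omega}(\tau) = P$, so that $\tau$ acts on $U$ with strictly positive weights and the conjugation action contracts $U$ to the identity as $t \to 0$.

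Now consider the $n$th power morphism $\psi_n \colon U \to U$, $w \mapsto w^n$. A standard tangent-space computation gives $(d\psi_n)_1 = n \cdot \mathrm{id}$ on $\Lie U$, which is an isomorphism because $n$ is invertible in $\Omega$, so $\psi_n$ is \'{e}tale at $1$. Since $\psi_n$ is $\Gm$-equivariant for the conjugation action via $\tau$, its \'{e}tale locus is a $\Gm$-stable open subset of $U$ containing the attractor fixed point $1$, which forces this locus to coincide with all of $U$ by the contraction property. In particular $\psi_n$ is dominant; since it is also $P$-equivariant (as $(pwp^{-1})^n = p w^n p^{-1}$), the image of the dense Richardson $P$-orbit through $v$ is again a dense $P$-orbit on $U$, and by uniqueness of the dense orbit it must be the Richardson orbit itself. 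Hence $v^n = \psi_n(v)$ lies in the $G_\Omega$-orbit of $v$, as desired. The main technical point is the global \'{e}taleness of $\psi_n$, which relies essentially on the strict positivity of the weights of $\tau$ on $U$; everything else is routine bookkeeping with the Bala-Carter reduction and Proposition~\ref{prop:pureequivalent}.
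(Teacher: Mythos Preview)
Your proof is correct and essentially self-contained, but it takes a different route from the paper. The paper's argument is a two-line citation: over an algebraically closed field of good characteristic, nonzero scalar multiples of a nilpotent element lie in the same nilpotent orbit (this is \cite[2.10 Lemma]{jantzen04}, proved via an associated cocharacter scaling the weight-$2$ part), and one transfers this to powers of unipotents via a Springer isomorphism. You instead work directly on the unipotent side: reduce to a distinguished unipotent via Bala--Carter, realize it as a Richardson element for a distinguished parabolic $P$ with unipotent radical $U$, and then show that the $n$th power map $\psi_n \colon U \to U$ is \'etale (by computing the differential at $1$ and propagating via the contracting $\Gm$-action), hence open and $P$-equivariant, so it carries the unique dense $P$-orbit to itself. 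Your argument avoids invoking any Springer isomorphism or the nilpotent-side result from Jantzen's survey, at the cost of being longer; the paper's proof is quicker but leans on outside references and the implicit passage between nilpotent and unipotent orbits.
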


\begin{proof}
Over an algebraically closed field, if $X$ is nilpotent then all non-zero multiples of $X$ lie in the same nilpotent orbit \cite[2.10 Lemma]{jantzen04}.  Thus the same is true for powers of unipotents.  In particular $u^n$ is therefore pure and $u^n$ has the same Bala-Carter labels as $u$ generically and in the special fiber.
\end{proof}

\section{Decomposition Types} \label{sec:decomp types}
Throughout this section, let $G$ be a weakly reductive group scheme over a DVR $\cO$ with residue field $k$ of characteristic $p$. 

\subsection{Definitions and Examples} \label{ss:decomposition defs}

\begin{defn} \label{def:decomp type}
A \emph{decomposition type} for $G$ over $\cO$ is a pair $(\fC, \Dt)$ where $\fC$ and $\Dt$ are closed subgroup schemes of $G$ defined over $\cO$ such that 
\begin{itemize}
    \item $\fC$ and $\Dt$ are weakly reductive, \textit{i.e.} $\cO$-smooth with (not necessarily connected) reductive fibers and finite \'{e}tale component groups of order invertible on $\cO$.
    \item $\Dt$ (resp. $\fC$) represents the scheme theoretic centralizer of $\fC$ (resp. $\Dt$).
\end{itemize}
\end{defn}

\begin{defn} \label{def:adapted}
Let $\Lbar \subset G(k)$ be a finite subgroup with order prime to $p$. We say a decomposition type $(\fC, \Dt)$ over $\cO$ is \emph{adapted to} $\Lbar$ if 
$\cent{G_k}{\Lbar}=\fC_k$ (hence also $\cent{G_k}{\cent{G_k}{\Lbar}}=\Dt_k$).
\end{defn}

This definition is motivated by the goal of giving a group theoretic reformulation of the isotypic decomposition of a representation, which we now explain.

\begin{example} \label{example:gln}
This is an elaboration of the example of $\GL_n$ discussed in the introduction.  Let $M$ be a free $\cO$-module of rank $n$, $G = \Aut(M)$, and $V = M_k$.   As $\Lbar$ has pro-order prime to $p$, $V$ decomposes as a direct sum of irreducible $\Lbar$-representations
\[
V=V_1^{\oplus m_1} \oplus \cdots \oplus V_r^{\oplus m_r} 
\]
where $V_i$ and $V_j$ are non-isomorphic for $i \neq j$, and the $m_i$'s are multiplicities of the irreducibles.  Letting $W_i = \Hom_{\Lbar}(V_i,V)$ be the weight space associated to $V_i$, we have that $\dim W_i = m_i$ and
\begin{equation} \label{eq:isotypicdecomp}
V = (V_1 \tensor{k} W_1) \oplus \ldots \oplus (V_r \tensor{k} W_r).
\end{equation}
Enlarging $k$ if necessary, we may and do assume that $V_i$ is absolutely irreducible for all $i$. 
Let 
\[
L_k \colonequals \GL(V_1)^{m_1} \times \cdots \times \GL(V_r)^{m_r} \subset \GL(V)
\]
be the standard Levi associated to the decomposition of $\taubar$, and let $d_i=\dim_k V_i$.
By Schur's lemma, 
\[
\fC_k \colonequals C_{\GL(V)} (\Lbar) = \Aut_\Lambda(V) = \prod_{i=1}^r \GL(W_i) \subset \GL(V).
\]
A direct computation then shows that
\[
\Delta_k \colonequals C_{\GL(V)}(\fC_k) = \prod_{i=1}^r \GL(V_i) \subset \GL(V)
\]
where the embedding into $\GL(V)$ comes from acting on the first factors of the tensor products in \eqref{eq:isotypicdecomp}.  Note that $\Delta_k \subset L_k$, with $\GL(V_i)$ embedding in $\GL(V_i)^{m_i}$ diagonally.  One checks that $(\fC_k,\Delta_k)$ is a decomposition type adapted to $\Lbar$.

A decomposition type for $\GL_n$ over $\cO$ adapted to $\Lbar$ is just a version of this decomposition for the $\cO$-module $M$ phrased group-theoretically.  We can specify a representation (of a group containing $\Lbar$) on $M$ compatible with the isotypic decomposition using the groups $\fC$ and $\Delta$, which are the automorphisms of the weight spaces and irreducible constituents.  
\end{example}

We also give some examples for the symplectic groups that are reminiscent of the ``isotypic decomposition with pairings'' in \cite[\S6.1]{Booher19}. Fix the symmetric form so that 
$$\Sp_{2n}= \left\{M \in \GL_{2n} :  M \begin{pmatrix}
0 & I_n \\
-I_n & 0
\end{pmatrix} M^t=\begin{pmatrix}
0 & I_n \\
-I_n & 0
\end{pmatrix} \right \}.$$
For $A \in \GL_n$, we write $A^*$ for $(A^t)^{-1}$.  Notice that $L=\left\{ 
\begin{pmatrix}
A & 0 \\
0 & A^*
\end{pmatrix}: A \in \GL_n
\right \}$ is a Levi subgroup of $\Sp_{2n}$. Let $\Lbar \subset L(k)$ be an $L$-irreducible subgroup. This gives a natural $2n$-dimensional representation of the form $\tau \oplus \tau^*$ where $\tau$ is irreducible representation of $\Lbar$ with dimension $n$. 

\begin{example} \label{ex:sympl 1}
If $\tau$ and $\tau^*$ are non-isomorphic $\Lbar$-representations, we have 
\[
\fC_k=\left\{ 
\begin{pmatrix}
aI_n & 0 \\
0 & a^{-1}I_n
\end{pmatrix}: a \in \Gm
\right \}=Z(L)_k, \quad \Dt_k=L_k, 
\]
\end{example}

\begin{example} \label{ex:sympl 2}
Suppose $\tau \cong \tau^*$. In this case, it is easy to see that there exists $J \in 
\GL_n(k)$, either symmetric or skew-symmetric, such that $\tau J \tau^t=J$. Let us suppose that $p \neq 2$ and $J$ is skew-symmetric (the other case is similar but occurs inside an orthogonal group). Conjugating $\tau$ if necessary, we may assume that $J=\begin{pmatrix}
0 & I_{n/2} \\
-I_{n/2} & 0
\end{pmatrix}$, so $\tau$ takes values in $\Sp_n(k)$. We then have
\[
\fC_k=\left\{ 
\begin{pmatrix}
aI_n & 0 \\
0 & a^{-1}I_n
\end{pmatrix} : a \in \Gm
\right\}
\cup \begin{pmatrix}
0 & J \\
-J & 0
\end{pmatrix}
\left\{ 
\begin{pmatrix}
aI_n & 0 \\
0 & a^{-1}I_n
\end{pmatrix}
: a \in \Gm
\right \}, 
\]

\[
\Dt_k=\left\{\begin{pmatrix}
A & 0 \\
0 & A^*
\end{pmatrix}: A \in \Sp_n
\right \}.
\]
\end{example}

\begin{defn} \label{def:goodprime}
Let $(\fC,\Dt)$ be a decomposition type over $\cO$. We say $p$ is good for $(\fC,\Dt)$ if 
\begin{enumerate}
    \item $p$ is pretty good for $G$.
    \item $Z(\sD(\Dt^{0}))_k$ is smooth. 
    \item $p$ does not divide $\# \nlz{G_k}{\Dt_k}/\fC_k \Dt_k$. 
\end{enumerate}
\end{defn}

\subsection{Basic Properties}

As the group schemes constituting a decomposition type are weakly reductive, we begin by recalling what we know about their centers and abelianizations.

\begin{prop} \label{prop:centers and abelianization}
Let $H$ be a weakly reductive group scheme over $\cO$. 
\begin{enumerate}
    \item The center $Z(H)$ and abelianization $H^{\ab}$ both exist as group schemes of multiplicative type over $\cO$. 
    \item $H^{\ab}$ is smooth.
    \item \label{prop:centers and abelianization3} If $Z(H)$ is smooth, then the natural map $Z(H)^0 \to H^{\ab,0}$ is an isogeny of tori. If $Z(\sD(H^{0}))$ is smooth, then this map is smooth.
\end{enumerate}
\end{prop}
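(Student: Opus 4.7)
The proposition is essentially a concatenation of three results established earlier in the paper, specialized from an arbitrary base scheme $S$ to $S = \Spec \cO$. My plan is therefore to dispatch each part by citing the relevant statement and checking that its hypotheses are met in our setting.

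For part (1), I would appeal directly to Proposition~\ref{prop:center} to obtain that $Z(H)$ is an $\cO$-group scheme of multiplicative type, and to Proposition~\ref{prop:abelianization} to obtain that $H^{\ab} = H/\sD(H)$ exists and is a smooth $\cO$-group scheme of multiplicative type. The only verification needed is that $H$ is weakly reductive over $\cO$, which is our standing assumption; the propositions just cited apply over any base scheme $S$, so in particular over $\Spec \cO$.

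For part (2), smoothness of $H^{\ab}$ is already part of the statement of Proposition~\ref{prop:abelianization}, so there is nothing further to do.

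For part (3), I would invoke Proposition~\ref{proposition:isogenies} applied to $G = H$. That proposition states that when $Z(H)$ is smooth, the natural map $Z(H)^0 \to H^{\ab,0}$ is an isogeny of $\cO$-tori, and that when in addition $Z(\sD(H^0))$ is smooth, this isogeny is smooth (equivalently, \'etale). The hypotheses for invoking Proposition~\ref{proposition:isogenies} are exactly those assumed in part (3) of our proposition, and no further input is needed. Since the entire content is a packaging of existing results, I do not anticipate any genuine obstacle — the only thing to keep straight is which smoothness hypothesis yields which part of the conclusion, and that is already spelled out precisely in Proposition~\ref{proposition:isogenies}.
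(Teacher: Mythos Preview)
Your proposal is correct and matches the paper's own proof exactly: the paper simply writes ``Combine Proposition~\ref{prop:center}, Proposition~\ref{prop:abelianization}, and Proposition~\ref{proposition:isogenies},'' which is precisely the three citations you give, dispatched to the three parts in the same way.
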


\begin{proof}
Combine 
Proposition~\ref{prop:center}, Proposition~\ref{prop:abelianization}, and Proposition~\ref{proposition:isogenies}.
\end{proof}

\begin{lem} \label{lem:martin}
Let $(\fC,\Dt)$ be a decomposition type over $k$. 
Then the quotient group scheme
$\nlz{G}{\fC}/\fC \Dt = \nlz{G}{\Dt}/\fC \Dt$ is finite \'{e}tale.
\end{lem}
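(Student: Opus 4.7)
The plan is to reduce this lemma to Corollary~\ref{cor:finiteness of weyl} (on finiteness of the Weyl group $N_G(H)/H\,C_G(H)$). There are two things to check: first that $N_G(\fC) = N_G(\Dt)$ as closed subgroup schemes of $G$, and then that the common quotient by $\fC\Dt$ is finite \'etale.

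For the equality $N_G(\fC) = N_G(\Dt)$, I would argue functorially. Fix a $k$-algebra $R$ and a section $g \in G(R)$ normalizing $\fC_R$. Since the formation of scheme-theoretic centralizers commutes with base change and with conjugation, conjugation by $g$ carries $C_{G_R}(\fC_R)$ to $C_{G_R}(g\fC_R g^{-1}) = C_{G_R}(\fC_R)$. By Definition~\ref{def:decomp type}, $C_{G_R}(\fC_R) = \Dt_R$, so $g$ normalizes $\Dt_R$. The symmetric argument, using $C_G(\Dt) = \fC$, gives the reverse inclusion, so $N_G(\fC) = N_G(\Dt)$ on $R$-points and thus as subgroup schemes of $G$.

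For the second point, I would apply Corollary~\ref{cor:finiteness of weyl} with $H = \Dt$. Since $(\fC,\Dt)$ is a decomposition type, $\Dt$ is weakly reductive over $k$; in particular $\Dt$ is a closed (possibly disconnected) reductive $k$-subgroup of $G$ whose component group $\Dt/\Dt^0$ has order invertible in $k$, i.e.\ prime to $\chara k$. Corollary~\ref{cor:finiteness of weyl} then gives that $N_G(\Dt)/\Dt\,C_G(\Dt)$ is finite \'etale. Using $C_G(\Dt) = \fC$ and the identification $N_G(\fC) = N_G(\Dt)$ from the previous step, this is precisely $N_G(\fC)/\fC\Dt = N_G(\Dt)/\fC\Dt$, finishing the proof.

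There is no real obstacle here beyond checking that the hypotheses of Corollary~\ref{cor:finiteness of weyl} hold for $\Dt$ (or symmetrically for $\fC$), which is immediate from the weak reductivity built into the definition of a decomposition type. The one delicate input is the equality $N_G(\fC) = N_G(\Dt)$, but this is a direct consequence of the double centralizer condition $C_G(C_G(\fC)) = \Dt$ and functoriality of centralizers under conjugation.
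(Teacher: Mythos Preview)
Your proposal is correct and follows essentially the same route as the paper: the paper's proof simply notes that the quotient is well-defined because $C_G(\Dt)=\fC$ and $C_G(\fC)=\Dt$ (your functorial argument for $N_G(\fC)=N_G(\Dt)$ makes this explicit), and then invokes Corollary~\ref{cor:finiteness of weyl} using weak reductivity of $\fC$ and $\Dt$. Your version is just a more detailed write-up of the same two steps.
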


\begin{proof}
Note that the quotient is well-defined since $\cent{G}{\Dt}=\fC$ and $\cent{G}{\fC}=\Dt$. 
Since $\fC$ and $\Dt$ are weakly reductive, the finiteness follows from Corollary \ref{cor:finiteness of weyl}. 
\end{proof}

\begin{lem} \label{lem:symmetry}
Let $(\fC, \Dt)$ be a decomposition type for $G$ over $\cO$. Then for any $\cO$-algebra $R$, $\nlz{G(R)}{\fC(R)}=\nlz{G(R)}{\Dt(R)}$.
Moreover, 
$Z(\fC)=Z(\Dt)$.
\end{lem}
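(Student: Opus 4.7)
The plan is to observe that both assertions follow directly from the fact that $\fC$ and $\Dt$ are each other's scheme-theoretic centralizers in $G$, together with the standard functoriality of centralizers and their compatibility with base change.

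For the first statement, I read $\nlz{G(R)}{\fC(R)}$ as the $R$-points of the scheme-theoretic normalizer functor $\nlz{G}{\fC}$, i.e.\ as $\{g \in G(R) : g \fC_R g^{-1} = \fC_R \text{ as closed } R\text{-subgroup schemes of } G_R\}$. Since $\Dt$ represents $C_G(\fC)$ and centralizers of closed subgroup schemes are compatible with base change, we have $\Dt_R = C_{G_R}(\fC_R)$ and, symmetrically, $\fC_R = C_{G_R}(\Dt_R)$. If $g \in \nlz{G}{\fC}(R)$, then conjugation by $g$ is an $R$-automorphism of $G_R$ carrying $\fC_R$ to itself, and by functoriality of centralizers
\[
g \Dt_R g^{-1} \;=\; g\,C_{G_R}(\fC_R)\,g^{-1} \;=\; C_{G_R}(g\fC_R g^{-1}) \;=\; C_{G_R}(\fC_R) \;=\; \Dt_R,
\]
so $g \in \nlz{G}{\Dt}(R)$. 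The reverse inclusion follows from the same argument with the roles of $\fC$ and $\Dt$ interchanged, using $\fC = C_G(\Dt)$.

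For the second statement, recall that the center $Z(H)$ of an $\cO$-group scheme $H$ is by definition the scheme-theoretic centralizer $C_H(H)$, which, when $H$ is a closed $\cO$-subgroup scheme of $G$, is just the pullback of $C_G(H)$ along the inclusion $H \hookrightarrow G$. Therefore $Z(\fC) = \fC \cap C_G(\fC) = \fC \cap \Dt$, and symmetrically $Z(\Dt) = \Dt \cap C_G(\Dt) = \Dt \cap \fC$, so the two centers coincide as closed $\cO$-subgroup schemes of $G$.

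There is no substantive obstacle here; once the scheme-theoretic reading of the notation is fixed, both assertions are formal consequences of the mutual-centralizer relations $\fC = C_G(\Dt)$ and $\Dt = C_G(\fC)$ in the definition of a decomposition type, together with the base-change compatibility built into our representability of the functors $C_G(\fC)$ and $C_G(\Dt)$.
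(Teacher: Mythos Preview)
Your proof is correct and follows essentially the same approach as the paper: both arguments rest on the observation that normalizing $\fC$ forces normalizing its centralizer $\Dt = C_G(\fC)$ (and symmetrically), and that $Z(\fC) = \fC \cap C_G(\fC) = \fC \cap \Dt = \Dt \cap C_G(\Dt) = Z(\Dt)$. The only difference is one of emphasis: the paper works literally with the abstract groups $G(R)$, $\fC(R)$, $\Dt(R)$ and invokes the group-theoretic identity $\Dt(R) = \cent{G(R)}{\fC(R)}$, while you recast everything scheme-theoretically and use base-change compatibility of centralizers. Your version is slightly more careful about the functorial setup, but the mathematical content is the same.
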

\begin{proof}
This is basic group theory using Definition~\ref{def:decomp type}. For $\cO$-algebras $R$, if $g \in G(R)$ normalizes $\fC(R)$, then it normalizes $\Dt(R)=\cent{G(R)}{\fC(R)}$, so $\nlz{G(R)}{\fC(R)} \subset \nlz{G(R)}{\Dt(R)}$, and hence $\nlz{G(R)}{\fC(R)}=\nlz{G(R)}{\Dt(R)}$ by symmetry. Note that for $\cO$-algebras $R$, $Z(\Dt)(R)=\Dt(R) \cap \fC(R)=Z(\fC)(R)$ by Definition~\ref{def:decomp type}. 
\end{proof}

\begin{lemma} \label{lem:adapted}
If $(\fC,\Delta)$ is a decomposition type adapted to $\Lbar$, and $\Ld$ is as in Proposition \ref{prop:adapted complete dvr},
then
$\cent{\Dt}{\Lambda}=Z(\Dt)$. 
\end{lemma}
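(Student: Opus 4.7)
The plan is to reduce the lemma to a formal centralizer computation by upgrading the special-fiber equality of Definition~\ref{def:adapted} to an $\cO$-level equality, then invoking Lemma~\ref{lem:symmetry}. The key input is Proposition~\ref{prop:adapted complete dvr}, which should supply $\Lambda$ as a finite \'etale $\cO$-flat lift of $\Lbar$ sitting inside $\Dt$ (note that $\Lbar$ automatically centralizes $\cent{G_k}{\Lbar} = \fC_k$, so $\Lbar \subset \cent{G_k}{\fC_k} = \Dt_k$, making such a lift into $\Dt$ natural), and for which the adaptedness equality $\cent{G_k}{\Lbar} = \fC_k$ promotes to the $\cO$-level equality $\cent{G}{\Lambda} = \fC$ of closed subgroup schemes of $G$.

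Granting $\cent{G}{\Lambda} = \fC$, the proof is a short chain of identifications. First,
\[
\cent{\Dt}{\Lambda} \;=\; \Dt \cap \cent{G}{\Lambda} \;=\; \Dt \cap \fC,
\]
the first equality because $\Dt \subset G$, and the second by the promoted adaptedness. Since $\fC = \cent{G}{\Dt}$ by the definition of a decomposition type, an $R$-valued point of $\Dt \cap \fC$ is an element of $\Dt(R)$ that commutes with every element of $\cent{G(R)}{\Dt(R)}$, equivalently an $R$-valued point of $Z(\Dt)$; this is the content of Lemma~\ref{lem:symmetry}, which identifies $Z(\fC)$ with $Z(\Dt)$ and thus realizes both as $\Dt \cap \fC$. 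This yields $\cent{\Dt}{\Lambda} = Z(\Dt)$.

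The main obstacle is therefore to secure the $\cO$-level equality $\cent{G}{\Lambda} = \fC$; once that is in hand the lemma is purely formal. If Proposition~\ref{prop:adapted complete dvr} only delivers this equality on the special fiber, one must bridge the gap by comparing $\fC$ with $\cent{G}{\Lambda}$ directly as $\cO$-subgroup schemes of $G$. Both are weakly reductive (the latter by Corollary~\ref{corollary:centralizer-of-weakly-reductive}, applied to the finite \'etale $\Lambda$ of order invertible on $\cO$, for which the residue-characteristic hypothesis is vacuous), have equal special fibers by base-change compatibility of centralizers of finite \'etale subgroups, and one is a closed subgroup scheme of the other. Comparing identity components (smooth $\cO$-flat connected group schemes of equal relative dimension, so that the closed embedding is forced to be an isomorphism) and component groups (finite \'etale $\cO$-schemes of order invertible on $\cO$ and therefore determined by their special fibers) separately gives $\fC = \cent{G}{\Lambda}$ over $\cO$, after which the chain of identifications above closes the argument.
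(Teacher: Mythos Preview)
Your proposal is correct and follows essentially the same route as the paper: the paper's proof is the one-line chain $\cent{\Dt(R)}{\Lambda}=\cent{G(R)}{\Lambda}\cap\Dt(R)=\fC(R)\cap\Dt(R)\subset Z(\Dt)(R)$, which is exactly your second paragraph (with the reverse inclusion trivial since $\Lambda\subset\Dt(\cO)$). Your third paragraph is unnecessary caution: Proposition~\ref{prop:adapted complete dvr} already asserts $\cent{G}{\Lambda}=\fC$ as $\cO$-group schemes (not merely on the special fiber), so no additional bridging argument is needed.
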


\begin{proof}
This is basic group theory using Definition~\ref{def:decomp type}: for $\cO$-algebras $R$ we see that $\cent{\Dt(R)}{\Lambda}=\cent{G(R)}{\Lambda} \cap \Dt(R)=\fC(R) \cap \Dt(R) \subset Z(\Dt(R)) = Z(\Dt)(R)$.
\end{proof}

We also record a few useful properties about $C_G(\Lambda)$ for later use, where $\Lambda$ is a constant group scheme whose order is invertible in $\cO$ (equivalently, the order of $\Lambda$ is prime to $p$).  We know it is $\cO$-smooth and weakly reductive by Corollary~\ref{cor:double-centralizer}.

\begin{lemma} \label{lem:cglambda pretty good}
If $p$ is \textit{pretty good} for $G$, then $p$ is \textit{pretty good} for $\cent{G}{\Ld}$.
\end{lemma}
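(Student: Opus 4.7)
The plan is to apply Herpel's characterization of pretty-good primes, cited at the start of Section~\ref{sec:centralizers}(1): for a connected reductive group $G$ over an algebraically closed field, $p$ is pretty good for $G$ if and only if the scheme-theoretic centralizer $C_G(H)$ is smooth for every closed subgroup scheme $H \subset G$.

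I would first reduce to the case that $\cO$ is an algebraically closed field of characteristic $p$ and that $G$ is connected. Pretty-goodness depends only on the root datum of the identity component, which is invariant under base change; moreover $G^0$ is a characteristic subgroup of $G$ (hence $\Lambda$-stable), and $C_G(\Lambda)^0 = C_{G^0}(\Lambda)^0$ since any connected subscheme of $G$ containing the identity lies in $G^0$. After this reduction $G$ is connected reductive, and Corollary~\ref{cor:double-centralizer} shows that $C_G(\Lambda)$ is weakly reductive, so $C_G(\Lambda)^0$ is connected reductive, as needed in order to apply Herpel's characterization to it.

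The main step is to verify, via Herpel in the reverse direction, that every scheme-theoretic centralizer in $C_G(\Lambda)^0$ is smooth. Given a closed subgroup scheme $X \subset C_G(\Lambda)^0$, every section of $X$ commutes with every element of $\Lambda$, so the set-theoretic product $\Lambda X$ is a closed subgroup scheme of $G$: the multiplication map $\Lambda \times X \to G$ is finite with closed image, and commutativity of $\Lambda$ with $X$ makes that image a subgroup. A direct check on functorial points gives $C_G(\Lambda X) = C_G(\Lambda) \cap C_G(X)$ (a section centralizes every product $\lambda x$ iff it centralizes each $\lambda$ and each $x$ separately). Herpel applied to $G$ (using that $p$ is pretty good for $G$) then yields smoothness of $C_G(\Lambda X)$; since $C_G(\Lambda)^0$ is open in $C_G(\Lambda)$, the desired centralizer $C_{C_G(\Lambda)^0}(X) = C_G(X) \cap C_G(\Lambda)^0$ is an open subscheme of the smooth $C_G(\Lambda X)$, hence smooth. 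Applying Herpel's characterization in reverse to $C_G(\Lambda)^0$ then delivers pretty-goodness of $p$ for $C_G(\Lambda)^0$, which is by definition pretty-goodness for the weakly reductive $C_G(\Lambda)$. I foresee no serious obstacle; the only subtle points are the subgroup property of $\Lambda X$ and the centralizer identity, both routine once commutativity is invoked.
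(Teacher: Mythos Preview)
Your argument is correct and takes a genuinely different route from the paper's. Both proofs hinge on Herpel's characterization of pretty-good primes (his Theorem~3.3), but they differ in how smoothness descends from $G$ to $C_G(\Lambda)^0$. The paper observes that the $\Lambda$-isotypic decomposition of $\Lie G$ exhibits $(G, C_G(\Lambda))$ as a \emph{reductive pair} in Herpel's sense and then invokes his Lemma~3.6, which says that in a reductive pair smoothness of $C_G(H)$ implies smoothness of $C_{C_G(\Lambda)}(H)$. You bypass the reductive-pair machinery entirely: forming the product subgroup $\Lambda X \subset G$ lets you realize $C_{C_G(\Lambda)^0}(X)$ directly as an open subscheme of the single centralizer $C_G(\Lambda X)$, so one application of Herpel's characterization to $G$ suffices. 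Your route is more elementary and self-contained; the paper's route has the advantage that it applies verbatim even if $\Lambda$ acts on $G$ by non-inner automorphisms. One minor caution on your reduction step: once you replace $G$ by $G^0$, the group $\Lambda$ need no longer lie inside $G$, so the product $\Lambda X$ ceases to make sense. Either skip that reduction and note instead that $C_{C_G(\Lambda)^0}(X)$ is open in $(C_{G^0}(X))^{\Lambda}$, which is smooth because $C_{G^0}(X)$ is smooth by Herpel applied to $G^0$ and fixed points of a finite prime-to-$p$ group on a smooth affine scheme are smooth; or simply observe that in the paper's principal applications $G$ is already connected, so the issue does not arise.
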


\begin{proof}
It suffices to work over a field of characteristic $p$.  
By \cite[Theorem 3.3]{herpel}, $p$ is pretty good if and only if the centralizer of every closed subgroup is smooth.  Note $(G, C_G(\Lambda))$ is a reductive pair in the sense of \cite[2.7]{herpel} since the $\Ld$-isotypic decomposition of $\Lie G$ is stable under $C_G(\Lambda)$.  Using \cite[Lemma 3.6]{herpel}, if $H$ is a closed subgroup of $C_G(\Lambda)$ such that $C_G(H)$ is smooth then $C_{C_G(\Lambda)}(H)$ is smooth as well. Since $p$ is pretty good for $G$, $C_G(H)$ is always smooth, and therefore $C_{C_G(\Lambda)}(H)$ is always smooth.
\end{proof}

\begin{prop} \label{prop:adapted complete dvr}
Let $(\fC,\Dt)$ and $\Lbar$ be as in Definition \ref{def:adapted}. Suppose that $\cO$ is a complete DVR. Then there exists a subgroup $\Ld \subset G(\cO)$ reducing to $\overline{\Ld}$ such that
$\cent{G}{\Ld}=\fC$ (hence also $\cent{G}{\cent{G}{\Ld}}=\Dt$). 
\end{prop}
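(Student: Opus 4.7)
The strategy is to lift the inclusion $\overline{\Lambda} \hookrightarrow \Delta_k$ to a subgroup $\Lambda \subset \Delta(\cO)$ using smoothness of an appropriate Hom-scheme, and then show that $C_G(\Lambda) = \fC$ by a fibral comparison argument. I will carry out the argument in three steps.

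\textbf{Step 1: Lifting $\overline{\Lambda}$.} View $\overline{\Lambda}$ as a constant finite étale $\cO$-group scheme, which has order invertible on $\cO$. By Theorem~\ref{theorem:hom-scheme}, the functor $\underline{\Hom}_{\cO\textrm{-}\rm{gp}}(\overline{\Lambda}_\cO, \Delta)$ is representable by an ind-quasi-affine $\cO$-scheme locally of finite presentation. The inclusion $\overline{\iota} \colon \overline{\Lambda} \hookrightarrow \Delta_k$ gives a $k$-point. By \cite[Exp.\ III, Corollaire 2.9(ii)]{sga3}, smoothness at $\overline{\iota}$ is controlled by $\mathrm{H}^2(\overline{\Lambda}, \Lie(\Delta_k))$, which vanishes because $|\overline{\Lambda}|$ is prime to $p = \chara k$. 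Completeness of $\cO$ then yields a lift $\iota \colon \overline{\Lambda} \to \Delta(\cO)$ of $\overline{\iota}$. The map $\iota$ is injective since its reduction mod $\fm$ is, so $\Lambda \colonequals \iota(\overline{\Lambda}) \subset \Delta(\cO) \subset G(\cO)$ is a finite subgroup of order $|\overline{\Lambda}|$ reducing to $\overline{\Lambda}$.

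\textbf{Step 2: Containment $\fC \subset C_G(\Lambda)$.} Since $\Lambda \subset \Delta(\cO)$ and $\fC = C_G(\Delta)$ by the definition of a decomposition type, $\fC$ centralizes $\Lambda$, giving a closed immersion $\fC \hookrightarrow C_G(\Lambda)$ of $\cO$-group schemes. Here $C_G(\Lambda)$ is weakly reductive and in particular smooth over $\cO$ by Corollary~\ref{corollary:centralizer-of-weakly-reductive} (applied with $\Lambda$ finite étale of order prime to $p$, so no residue characteristic hypothesis is needed). The adapted hypothesis gives $C_G(\Lambda)_k = C_{G_k}(\overline{\Lambda}) = \fC_k$, so $\fC \hookrightarrow C_G(\Lambda)$ is an isomorphism on special fibers.

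\textbf{Step 3: Equality $\fC = C_G(\Lambda)$.} Taking relative identity components, we obtain a closed immersion $\fC^0 \hookrightarrow C_G(\Lambda)^0$ of reductive $\cO$-group schemes with fibers of the same dimension (the common dimension of $\fC_k = C_G(\Lambda)_k$ being preserved on the generic fiber by smoothness). On the generic fiber, $\fC^0_\eta$ is a closed connected reductive subgroup of the connected reductive group $C_G(\Lambda)^0_\eta$ of the same dimension, hence equal to it. The fibral isomorphism criterion \cite[IV\textsubscript{4}, Corollaire 17.9.5]{EGA} then gives $\fC^0 = C_G(\Lambda)^0$. For the component groups, $\fC/\fC^0 \hookrightarrow C_G(\Lambda)/C_G(\Lambda)^0$ is a closed immersion of finite étale $\cO$-group schemes. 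Since $\cO$ is henselian, reduction mod $\fm$ is an equivalence from finite étale $\cO$-schemes to finite étale $k$-schemes, so this closed immersion is an isomorphism because it is an isomorphism on special fibers. Combining both pieces gives $\fC = C_G(\Lambda)$, from which $C_G(C_G(\Lambda)) = C_G(\fC) = \Delta$ follows automatically.

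The main subtlety is Step 1: one must locate the lift inside $\Delta$ rather than merely inside $G$, so that Step 2's inclusion $\fC \subset C_G(\Lambda)$ holds. Once that is arranged, the proof is routine, with Steps 2 and 3 reducing to the standard dichotomy between identity components (controlled by dimension of reductive groups) and component groups (controlled by étale descent from the special fiber over a complete local base).
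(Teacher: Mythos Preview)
Your proof is correct and follows essentially the same route as the paper's: lift $\overline{\Lambda}$ into $\Delta(\cO)$, use $\fC = C_G(\Delta)$ to get the containment $\fC \subset C_G(\Lambda)$ with equality on the special fiber, then split into identity components (fibral isomorphism criterion) and component groups (finite \'etale comparison over the henselian base). The only difference is cosmetic: in Step~1 you invoke the $\Hom$-scheme and \cite[Exp.\ III, Corollaire 2.9(ii)]{sga3}, whereas the paper simply says ``since $\Delta$ is $\cO$-smooth and $\cO$ is complete, there is a subgroup $\Lambda \subset \Delta(\cO)$ lifting $\overline{\Lambda}$''---both are the same $\mathrm{H}^2$-vanishing argument underneath.
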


\begin{proof}
Note that $\Lbar \subset \Dt(k)$. Since $\Dt$ is $\cO$-smooth and $\cO$ is complete, there is a subgroup $\Ld \subset \Dt(\cO)$ lifting $\Lbar$. We have $\cent{G}{\Ld} \supset \cent{G}{\Dt} = \fC$, which is an equality on special fibers. Since $\fC$ and $C_G(\Lambda)$ are smooth $\cO$-group schemes of the same dimension, it follows that $\fC^0 = C_G(\Lambda)^0$: by the fibral isomorphism criterion, it is enough to show that if $k$ is an algebraically closed field and $H \subset H'$ is a closed embedding of reduced connected $k$-schemes of the same dimension, then $H = H'$. For this, note that the natural map $H \to H'$ is dominant for dimension reasons, so it is surjective by closedness. Being a surjective closed embedding with reduced target, we find $H = H'$.  

In view of the previous paragraph, there is a monomorphism $\pi_0 \fC \into \pi_0 \cent{G}{\Ld}$. Since $\pi_0 \fC_{k} = \pi_0 \cent{G}{\Ld}_{k}$ and both $\pi_0\fC$ and $\pi_0\cent{G}{\Ld}$ are finite \'{e}tale (by Corollary~\ref{cor:double-centralizer}), it follows that $\pi_0 \fC = \pi_0 \cent{G}{\Ld}$ and hence $\fC = \cent{G}{\Ld}$ by the five lemma.
\end{proof}

\begin{prop} \label{prop:smoothness of delta etc}
Let $(\fC,\Dt)$ be a decomposition type adapted to $\Lbar$. 
Suppose that $p$ is \emph{pretty good} for $G$ and $Z(\sD(\Dt^{0}))_k$ is smooth. 
Then $Z(\Dt)$ is smooth over $\cO$ and the natural map $Z(\Dt)^0 \to \Dt^{\ab,0}$ is a \emph{smooth} isogeny of $\cO$-tori. 
\end{prop}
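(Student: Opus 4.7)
The plan is to verify smoothness of both $Z(\Dt)$ and $Z(\sD(\Dt^{0}))$ over $\cO$, and then invoke Proposition~\ref{prop:centers and abelianization}(3) to produce the desired smooth isogeny of tori.

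My first move is to reduce both smoothness questions to the special fiber. By Proposition~\ref{prop:centers and abelianization}(1), $Z(\Dt)$ is a group scheme of multiplicative type over $\cO$; similarly, $Z(\sD(\Dt^{0}))$ is a finite group of multiplicative type, since $\sD(\Dt^{0})$ is semisimple. For any multiplicative type $\cO$-group $M$, after passing to a finite \'etale cover of $\cO$ the group becomes diagonalizable, of the form $D(A)$ for a finitely generated abelian group $A$; both smoothness of $M$ over $\cO$ and smoothness of $M_{k}$ over $k$ are then equivalent to $A$ having no $p$-torsion. Thus smoothness of a multiplicative type group over $\cO$ can be checked on the special fiber, and for $Z(\sD(\Dt^{0}))_{k}$ this is given as a hypothesis of the proposition.

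The remaining work is to verify smoothness of $Z(\Dt)_{k}$. Since formation of the center of a weakly reductive group commutes with base change (via its realization as the kernel of the map to $\Aut_{G/S}$), one has $Z(\Dt)_{k} = Z(\Dt_{k})$. Because $(\fC_{k}, \Dt_{k})$ is adapted to $\Lbar$ in the sense that $\fC_{k} = C_{G_{k}}(\Lbar)$, the elementary group-theoretic computation underlying Lemma~\ref{lem:adapted} shows
\[
Z(\Dt_{k}) = \Dt_{k} \cap \fC_{k} = \Dt_{k} \cap C_{G_{k}}(\Lbar) = C_{\Dt_{k}}(\Lbar).
\]
Since $\Lbar$ is a constant finite \'etale group of order prime to $p$ and $\Dt_{k}$ is weakly reductive, Corollary~\ref{cor:double-centralizer} applied over $k$ guarantees that $C_{\Dt_{k}}(\Lbar)$ is weakly reductive, and in particular smooth.

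With both smoothness conditions verified, Proposition~\ref{prop:centers and abelianization}(3) directly produces the smooth isogeny $Z(\Dt)^{0} \to \Dt^{\ab,0}$ of $\cO$-tori. The main subtlety I anticipate is confirming that smoothness of a multiplicative type group descends from the special fiber to all of $\Spec \cO$---a fact that follows from the structure theorem for groups of multiplicative type---and, to a lesser extent, verifying commutation of center-formation with base change for weakly reductive groups; both are standard but warrant careful citation.
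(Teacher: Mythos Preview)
Your proof is correct and takes a genuinely different route from the paper's. The paper first invokes Proposition~\ref{prop:adapted complete dvr} to lift $\Lbar$ to a subgroup $\Lambda \subset G(\cO)$ with $C_G(\Lambda)=\fC$, writes $Z(\Dt)=Z(\fC)=Z(C_G(\Lambda))$ via Lemma~\ref{lem:symmetry}, and then appeals to Proposition~\ref{prop:center}: smoothness of $Z(C_G(\Lambda))$ follows once $Z(C_G(\Lambda)^0)$ is smooth, and the latter is obtained from the inheritance of the pretty good condition under passage to $C_G(\Lambda)$ (Lemma~\ref{lem:cglambda pretty good} together with Remark~\ref{rmk:pretty good}). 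You instead stay on the special fiber, identify $Z(\Dt_k)=\Dt_k\cap\fC_k=C_{\Dt_k}(\Lbar)$ directly from Lemma~\ref{lem:symmetry} and the adaptedness hypothesis, and then use Corollary~\ref{cor:double-centralizer} applied to the weakly reductive $\Dt_k$ to see that this centralizer is smooth; the passage back to $\cO$ goes through the structure theory of multiplicative type groups, as you note. Your argument is shorter, avoids the lifting step and Lemma~\ref{lem:cglambda pretty good} entirely, and in fact shows that the smoothness of $Z(\Dt)$ does not require the pretty good hypothesis at all. The paper's route, on the other hand, keeps the argument over $\cO$ throughout and makes the role of the pretty good condition explicit, which is consistent with its use elsewhere in Section~\ref{sec:liftingsmr}.
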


\begin{proof}
By hypothesis and Proposition \ref{prop:adapted complete dvr} there is a group $\Ld \subset G(\cO)$ such that $\cent{G}{\Ld}=\fC$. 
To show the smoothness of $Z(\Dt)$, first note that $Z(\Dt)= Z(\fC)=Z(\cent{G}{\Ld})$ by Lemma \ref{lem:symmetry}. By Proposition~\ref{prop:center}, the smoothness of $Z(\Dt)$ would then follow from the smoothness of $Z(\cent{G}{\Ld}^0)$.  As $Z(\Dt)$ is of multiplicative type and hence $\cO$-flat, it suffices to check smoothness on fibers.  But since $p$ is pretty good for $G$, Lemma \ref{lem:cglambda pretty good} and Remark \ref{rmk:pretty good} imply that $Z(\cent{G}{\Ld}^0)$ is smooth. Thus $Z(\Dt)$ is smooth.  By Proposition \ref{prop:centers and abelianization}(\ref{prop:centers and abelianization3}) it follows that the natural map $Z(\Dt)^0 \to \Dt^{\ab,0}$ is an isogeny of $\cO$-tori.  But $Z(\sD(\Dt^{0}))_k$ is smooth by assumption, so the isogeny is smooth by \textit{loc. cit.} 
\end{proof}

\begin{prop} \label{prop:existence of decomp type}
Let $\Lbar$ be as in Definition \ref{def:adapted}.
Suppose that $p$ is pretty good for $G$. Then there exists a decomposition type $(\fC,\Delta)$ over $\cO$ adapted to $\Lbar$. 
\end{prop}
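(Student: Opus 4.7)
The plan is to lift $\Lbar$ to a closed $\cO$-subgroup scheme $\Ld\subseteq G$, set $\fC\colonequals C_G(\Ld)$ and $\Dt\colonequals C_G(\fC)$, and then verify the axioms of Definition~\ref{def:decomp type} via the weak reductivity results of Section~\ref{sec:centralizers}. To produce $\Ld$, I would use that $|\Lbar|$ is invertible in $\cO$, which forces $H^i(\Lbar,M)=0$ for $i\geq 1$ and every $\cO$-linear $\Lbar$-module $M$. By the infinitesimal criterion of \cite[Exp.\ III, Corollaire 2.8]{sga3} applied to the deformation theory of homomorphisms, the scheme $\underline{\Hom}(\underline{\Lbar}_\cO, G)$ (representable by a disjoint union of affine $\cO$-schemes of finite presentation via Theorem~\ref{theorem:hom-scheme}) is $\cO$-smooth at the $k$-point $\Lbar\hookrightarrow G_k$. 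When $\cO$ is henselian -- in particular after passage to the completion $\widehat{\cO}$ -- smoothness immediately yields the desired $\Ld\hookrightarrow G$ lifting $\Lbar$; for a general DVR an additional spreading-out and descent along an \'etale neighborhood of the closed point achieves the same.

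With $\Ld$ in hand, Corollary~\ref{cor:double-centralizer} directly provides weak reductivity of $\fC=C_G(\Ld)$ (first assertion, since $\Ld$ is finite \'etale of order invertible on $\cO$) and of $\Dt=C_G(\fC)$ (second assertion, using the pretty good hypothesis on $p$). The identity $\Dt=C_G(\fC)$ is by definition of $\Dt$; the reverse identity $\fC=C_G(\Dt)$ reduces to the inclusion $\Ld\subseteq\Dt$, which is immediate because $\Ld$ centralizes $\fC=C_G(\Ld)$ and hence lies in $C_G(\fC)=\Dt$. Then $C_G(\Dt)\subseteq C_G(\Ld)=\fC\subseteq C_G(\Dt)$, forcing equality. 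Finally, formation of the scheme-theoretic centralizer is compatible with base change, and $\Ld_k=\Lbar$ by construction, so $\fC_k=C_{G_k}(\Lbar)$, which is the adapted condition.

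The main obstacle is the lifting step itself. Over a complete or henselian DVR it is a clean consequence of smoothness of $\underline{\Hom}(\underline{\Lbar}_\cO,G)$; for a general DVR the additional descent from the henselization to $\cO$ requires some care, but this subtlety is inessential for the applications of the proposition, which all take place over complete DVRs. Everything else in the proof is essentially a formal consequence of the structural results about weakly reductive group schemes already established in Sections~\ref{sec:weaklyreductive} and \ref{sec:centralizers}.
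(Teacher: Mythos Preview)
Your proposal is correct and follows essentially the same approach as the paper: lift $\Lbar$ to $\Ld\subset G(\cO)$, set $\fC=C_G(\Ld)$ and $\Dt=C_G(\fC)$, and invoke Corollary~\ref{cor:double-centralizer} for weak reductivity. The paper's proof is terser---it simply says ``Fix a lift $\Ld\subset G(\cO)$'' and then appeals to Corollary~\ref{cor:double-centralizer}---whereas you spell out (i) why the lift exists (smoothness of the $\Hom$-scheme, which over a complete DVR is immediate), and (ii) the double-centralizer identity $\fC=C_G(\Dt)$ via $\Ld\subset\Dt$, which the paper leaves implicit. Your remark that the lifting step is cleanest over a henselian or complete DVR is well taken; the applications in the paper all live over a complete $p$-adic ring, so this is not an issue in practice.
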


\begin{proof}
Fix a lift $\Ld \subset G(\cO)$ of $\Lbar$ such that the natural map $\Ld \to \Lbar$ is an isomorphism. 
Let $\fC=\cent{G}{\Ld}$ and let $\Dt=\cent{G}{\cent{G}{\Ld}}$. By Corollary \ref{cor:double-centralizer}, $(\fC,\Delta)$ is a decomposition type over $\cO$ adapted to $\Lbar$. 
\end{proof}

\section{Clifford theory} \label{sec:clifford}

This section proves a result about lifting and extending representations that is the analog of Part 2 of the argument for $\GL_n$ sketched in Section~\ref{ss:weakly-reductive}.

\subsection{Local Galois Groups} \label{ss:galois}

Fix a local field $F$ with residue characteristic $\ell \neq p$.
Let the residue field of $F$ have size $q$, a power of $\ell$.

\begin{lemma} \label{lem:tamestructure}
The maximal tame extension of $F$ has inertia group isomorphic to $\prod_{\ell' \neq \ell} \ZZ_{\ell}$.
The Galois group of the maximal tame extension is a semi-direct product of the inertia group with $\Zhat$, with $\Zhat$ acting via the cyclotomic character.  In particular, conjugation by the Frobenius is multiplication by $q$ on $\prod_{\ell' \neq \ell} \ZZ_{\ell}$.
\end{lemma}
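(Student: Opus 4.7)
The plan is to invoke the standard structure theory of the tame quotient of the local Galois group, with the small caveat that the statement should read $\prod_{\ell' \neq \ell} \ZZ_{\ell'}$ (clearly a typo). First I would recall that $F^{\mathrm{tame}}$ contains the maximal unramified extension $F^{\mathrm{unr}}$, so that we have the tame inertia $I^{\mathrm{tame}}_F \colonequals \Gal(F^{\mathrm{tame}}/F^{\mathrm{unr}})$ sitting in a short exact sequence
\[
1 \to I^{\mathrm{tame}}_F \to \Gal(F^{\mathrm{tame}}/F) \to \Gal(F^{\mathrm{unr}}/F) \to 1,
\]
with quotient topologically generated by the arithmetic Frobenius and isomorphic to $\Zhat$.

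Next I would fix a uniformizer $\pi$ of $F$ together with, for every $n$ prime to $\ell$, a compatible system of $n$-th roots $\pi^{1/n} \in F^{\mathrm{tame}}$. The classification of tame extensions of local fields shows $F^{\mathrm{tame}} = F^{\mathrm{unr}}\bigl(\{\pi^{1/n}\}_{(n,\ell)=1}\bigr)$. Kummer theory then yields a compatible system of isomorphisms $\Gal(F^{\mathrm{unr}}(\pi^{1/n})/F^{\mathrm{unr}}) \xrightarrow{\sim} \mu_n(F^{\mathrm{unr}})$, $\sigma \mapsto \sigma(\pi^{1/n})/\pi^{1/n}$. Passing to the inverse limit and using $\mu_n(F^{\mathrm{unr}}) \cong \ZZ/n\ZZ$ (since $n$ is prime to $\ell$) gives
\[
I^{\mathrm{tame}}_F \;\cong\; \varprojlim_{(n,\ell)=1} \mu_n(F^{\mathrm{unr}}) \;\cong\; \prod_{\ell' \neq \ell} \ZZ_{\ell'}.
\]
The compatible choice of $\pi^{1/n}$ also supplies a splitting: there is a unique $\phi \in \Gal(F^{\mathrm{tame}}/F)$ acting as arithmetic Frobenius on $F^{\mathrm{unr}}$ and fixing every $\pi^{1/n}$, and such a $\phi$ topologically generates a copy of $\Zhat$ complementary to $I^{\mathrm{tame}}_F$.

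Finally, to verify the Frobenius action I would compute directly: if $\sigma \in I^{\mathrm{tame}}_F$ corresponds to $(\zeta_n)_n$, so $\sigma(\pi^{1/n}) = \zeta_n \pi^{1/n}$, then using $\phi(\pi^{1/n}) = \pi^{1/n}$ and $\phi(\zeta_n) = \zeta_n^q$ (the Frobenius acts as the $q$-th power on the residue field of $F^{\mathrm{unr}}$, hence on $\mu_n$), one finds
\[
\phi\sigma\phi^{-1}(\pi^{1/n}) \;=\; \phi(\zeta_n)\,\pi^{1/n} \;=\; \zeta_n^q \pi^{1/n}.
\]
Hence conjugation by $\phi$ raises each $\zeta_n$ to the $q$-th power, which in the additive identification $\prod_{\ell' \neq \ell} \ZZ_{\ell'}$ is exactly multiplication by $q$; this matches the cyclotomic character since that character is defined by $\phi \cdot \zeta = \zeta^q$ on $\mu_n$.

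There is no substantive obstacle here — this is classical material (see, e.g., Neukirch--Schmidt--Wingberg or Serre's \emph{Local Fields}). The only care needed is in matching conventions (arithmetic versus geometric Frobenius, and the sign in the Kummer pairing) so that the cyclotomic character appears as $q$ rather than $q^{-1}$.
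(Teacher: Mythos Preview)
Your argument is correct and complete; it is precisely the standard computation with Kummer theory and a compatible system of roots of a uniformizer. The paper's own proof consists solely of the sentence ``This is standard,'' so you have simply supplied the details that the paper omits (and you are right that $\prod_{\ell' \neq \ell} \ZZ_{\ell}$ is a typo for $\prod_{\ell' \neq \ell} \ZZ_{\ell'}$).
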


\begin{proof}
This is standard.  
\end{proof}

Let $\Gamma_F$ be the absolute Galois group of $F$ and let $I_F$ be the inertia group.  Using Lemma~\ref{lem:tamestructure}, we may fix a surjection $I_F \to \ZZ_p$ with kernel $\Lambda_F$.

\begin{lem} \label{lem:tq}
The group $\Lambda_F$ is normal in $\Gamma_F$.  The quotient $\Gamma_F / \Lambda_F$ is isomorphic to $T_q \colonequals \Zhat \ltimes \Z_p$, where conjugating by $1 \in \Zhat$ is multiplication by $q$ on $\Zp$.
\end{lem}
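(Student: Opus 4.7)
The plan is to identify $\Lambda_F$ intrinsically as the maximal closed subgroup of $I_F$ of pro-order prime to $p$, and then read off the structure of $\Gamma_F/\Lambda_F$ directly from Lemma~\ref{lem:tamestructure}. Concretely, I will argue in two steps: first establishing normality of $\Lambda_F$ in $\Gamma_F$, and then computing the quotient as a semi-direct product.

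For normality, I will observe that any continuous surjection $I_F \to \Z_p$ must factor through the maximal pro-$p$ quotient of $I_F$. By Lemma~\ref{lem:tamestructure}, the wild inertia $P_F$ is pro-$\ell$ and the tame quotient $I_F/P_F$ is $\prod_{\ell' \neq \ell} \Z_{\ell'}$; since $\ell \neq p$, the maximal pro-$p$ quotient of $I_F$ is canonically $\Z_p$ (the $\ell' = p$ factor), and therefore $\Lambda_F$ must be the preimage in $I_F$ of $\prod_{\ell' \neq \ell, p} \Z_{\ell'}$. This description makes $\Lambda_F$ characteristic in $I_F$, and since $I_F$ is normal in $\Gamma_F$, we conclude that $\Lambda_F$ is normal in $\Gamma_F$.

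For the quotient, because $P_F \subset \Lambda_F$, the quotient $\Gamma_F/\Lambda_F$ is a quotient of the tame Galois group $\Gamma_F/P_F$. By Lemma~\ref{lem:tamestructure}, the latter is a semi-direct product
\[
\Gamma_F/P_F \;\cong\; \Bigl(\prod_{\ell'\neq \ell} \Z_{\ell'}\Bigr) \rtimes \Zhat,
\]
with $\Zhat$ acting on each factor $\Z_{\ell'}$ by multiplication by $q$ (the cyclotomic character on the pro-$\ell'$ part). Quotienting by the $\Zhat$-stable closed normal subgroup $\Lambda_F/P_F = \prod_{\ell' \neq \ell, p} \Z_{\ell'}$ kills every $\Z_{\ell'}$ factor except the one with $\ell' = p$, yielding
\[
\Gamma_F/\Lambda_F \;\cong\; \Z_p \rtimes \Zhat,
\]
with the generator $1 \in \Zhat$ (the Frobenius) acting on $\Z_p$ by multiplication by $q$. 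This is exactly $T_q$.

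There is no real obstacle here: the only small point requiring care is the observation that the subgroup $\Lambda_F/P_F \subset \prod_{\ell'\neq\ell}\Z_{\ell'}$ is stable under the Frobenius action, which is immediate since the Frobenius acts separately on each $\Z_{\ell'}$-factor. Everything else is a direct unpacking of Lemma~\ref{lem:tamestructure}.
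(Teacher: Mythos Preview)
Your proof is correct and complete. The paper itself does not give a proof beyond ``This is also well-known,'' so your argument---identifying $\Lambda_F$ as the kernel of the maximal pro-$p$ quotient of $I_F$ (hence characteristic in $I_F$ and thus normal in $\Gamma_F$), and then reading off the quotient from the semi-direct product structure of the tame Galois group in Lemma~\ref{lem:tamestructure}---is exactly the standard unpacking one would expect.
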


\begin{proof}
This is also well-known.
\end{proof}

\begin{lemma} \label{lem:semidirect prod}
The exact sequence
\[
1 \to \Ld_F \to \Gamma_F \to \Gamma_F / \Lambda_F \to 1
\]
is topologically split, so $\Gamma_F$ is a semi-direct product.
\end{lemma}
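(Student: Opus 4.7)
The plan is to apply a profinite version of Schur--Zassenhaus inside $I_F$ and then to adjust a Frobenius lift so that it respects the resulting splitting. First, I observe that $\Lambda_F$ is pro-solvable and of pro-order coprime to $p$: it contains the wild inertia $P_F$, a pro-$\ell$ group, and by Lemma~\ref{lem:tamestructure} the quotient $\Lambda_F/P_F$ is the prime-to-$p$ part $\prod_{\ell' \neq \ell, p} \Z_{\ell'}$ of the tame inertia. Since $I_F/\Lambda_F \cong \Zp$ is pro-$p$ and coprime in pro-order to $\Lambda_F$, the profinite Schur--Zassenhaus theorem (available because $\Lambda_F$ is pro-solvable) produces a closed subgroup $Z \subset I_F$ with $Z \cong \Zp$ and $I_F = \Lambda_F \rtimes Z$, and moreover any two such complements are $\Lambda_F$-conjugate.

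Next, I would produce a Frobenius lift compatible with $Z$. Choose any Frobenius lift $\phi_0 \in \Gamma_F$. Since $\Lambda_F$ is normal in $\Gamma_F$ by Lemma~\ref{lem:tq}, conjugation by $\phi_0$ preserves $\Lambda_F$ and therefore carries $Z$ to another complement of $\Lambda_F$ in $I_F$. By the $\Lambda_F$-conjugacy of complements, there exists $\lambda \in \Lambda_F$ with $\phi_0 Z \phi_0^{-1} = \lambda Z \lambda^{-1}$, so the element $\phi \colonequals \lambda^{-1} \phi_0$ is again a Frobenius lift (it differs from $\phi_0$ by an element of $I_F$) and now normalizes $Z$.

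Finally, I would read off the section. The conjugation action of $\phi$ on $Z \cong \Zp$ is multiplication by some $r \in \Zp^\times$. The natural projection $Z \to I_F/\Lambda_F \cong \Zp$ is an isomorphism that intertwines this action with conjugation on $I_F/\Lambda_F$, and by Lemma~\ref{lem:tq} the latter is multiplication by $q$; hence $r = q$. Therefore the closed subgroup $Z \cdot \overline{\langle \phi \rangle} \subset \Gamma_F$ carries the structure of $\Zp \rtimes \Zhat = T_q$. Because $\phi \notin I_F$, the closed subgroup $\overline{\langle \phi \rangle}$ meets $I_F$ trivially, so $Z \cdot \overline{\langle \phi \rangle}$ meets $\Lambda_F$ trivially and projects isomorphically onto $T_q = \Gamma_F/\Lambda_F$, which is the desired topological section. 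The only non-formal input beyond the tame structure recorded in Lemmas~\ref{lem:tamestructure} and~\ref{lem:tq} is profinite Schur--Zassenhaus; the main point to verify in the write-up is the pro-solvability hypothesis that makes it applicable.
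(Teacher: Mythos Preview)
Your argument is correct and is essentially the standard proof; the paper simply cites \cite[2.4.10]{cht08}, and that reference also proceeds by a Schur--Zassenhaus splitting inside $I_F$ followed by adjusting a Frobenius lift, so your route matches the intended one.

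One small point for the write-up: the inference ``because $\phi \notin I_F$, the closed subgroup $\overline{\langle \phi \rangle}$ meets $I_F$ trivially'' deserves a word of justification. The real reason is that $\Gamma_F/I_F \cong \Zhat$ is the free profinite group on one generator, so the choice of $\phi$ determines a continuous homomorphism $\Zhat \to \Gamma_F$ whose composite with the projection to $\Zhat$ is the identity; hence this homomorphism is injective with image $\overline{\langle \phi \rangle}$, and $\overline{\langle \phi \rangle} \cap I_F = 1$. Stating it this way also makes clear that $\overline{\langle \phi \rangle}$ really is a copy of $\Zhat$, so that $Z \cdot \overline{\langle \phi \rangle} \cong \Zp \rtimes \Zhat$ as you claim.
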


\begin{proof}
This is basically \cite[2.4.10]{cht08}. 
\end{proof}

For the rest of this paper, fix a preimage $\sigma$ of a topological generator of $\Z_p$ under the surjection $I_F \to \Z_p$ and a Frobenius $\phi$ satisfying  
\[ \phi \sigma \phi^{-1}=\sigma^q. \]

\subsection{\texorpdfstring{$\nu$}{v}-Tame Extensions} \label{ss:tameextension}

Let $\cO$ be the ring of integers in a $p$-adic field with residue field $k$ and $\Dt$ be as in Definition \ref{def:decomp type}. Recall Proposition \ref{prop:centers and abelianization}. We write $Z$ for $Z(\Dt)$ and write $S$ for $\Dt^{\ab}$, which are both multiplicative groups. The natural morphism $\nu: \Dt \to S$ restricts to an isogeny $Z \to S$ (also denoted by $\nu$). 
Let $R$ be a complete local noetherian $\cO$-algebra with residue field $k$.

\begin{defn} \label{def:nutame}
We say a homomorphism $\rho: I_F \to \Dt(R)$ is \emph{$\nu$-tame} if for any $\sigma \in I_F$, $(\nu \circ \rho)(\sigma) \in S(R)$ is of finite prime-to-$p$ order. 
\end{defn}

We will prove the following:

\begin{prop} \label{prop:extension}
Given $\Delta$ as above. Assume that $\nu \colon Z \to S$ is smooth.   
Given a continuous homomorphism $\tau: \Ld_F \to \Dt(R)$ such that: 
\begin{enumerate}
    \item \label{prop:extensioni} $\Ker\tau=\Ker\taubar$ (in particular, the image of $\tau$ is finite);
    \item \label{prop:extensionii} $\taubar^{\sigma} \cong \taubar$ for any $\sigma \in I_F$ (i.e. the representations are conjugate by an element in $\Dt(k)$); 
    \item \label{prop:extensioniii} $\cent{\Dt(R)}{\tau(\Ld_F)}=Z(R)$;
\end{enumerate}
then $\tau$ admits a unique continuous, $\nu$-tame extension to $I_F$.
\end{prop}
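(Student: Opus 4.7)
My approach separates uniqueness from existence, with the latter constructed first over the residue field $k$ and then lifted to $R$.

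\textbf{Uniqueness.} Given two $\nu$-tame extensions $\tilde\tau_1, \tilde\tau_2$ of $\tau$, I would consider $c(\gamma) \colonequals \tilde\tau_2(\gamma)^{-1}\tilde\tau_1(\gamma)$. Since each $\tilde\tau_i(\gamma)$ conjugates $\tau(\lambda)$ to $\tau(\gamma\lambda\gamma^{-1})$ for all $\lambda \in \Ld_F$, hypothesis (\ref{prop:extensioniii}) places $c(\gamma) \in Z(R)$. Centrality of $Z(R)$ makes $c \colon I_F \to Z(R)$ a continuous homomorphism; it vanishes on $\Ld_F$ and so factors through $I_F/\Ld_F \cong \Z_p$. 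The $\nu$-tame hypothesis on both $\tilde\tau_i$ forces $\nu \circ c$ to take values in a finite prime-to-$p$ subgroup of $S(R)$, and since $\Z_p$ is pro-$p$ we conclude $\nu \circ c = 1$. Thus $c$ factors through $(\ker\nu)(R)$, a finite group of order prime to $p$ (the smooth isogeny $\nu \colon Z \to S$ has finite étale multiplicative-type kernel of order invertible in $\cO$), and the pro-$p$ source again forces $c = 1$.

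\textbf{Existence over $k$.} Using the splitting $I_F = \Ld_F \rtimes \Z_p$ from Lemma \ref{lem:semidirect prod}, I would arrange for the fixed $\sigma$ to be a topological generator of the $\Z_p$-factor, so that the conjugation action $\psi \colon I_F \to \Aut(\taubar(\Ld_F))$ satisfies $\psi(\sigma)$ of $p$-power order (continuous maps from pro-$p$ $\Z_p$ into a finite group have pro-$p$ image). Hypothesis (\ref{prop:extensionii}) supplies $\bar g \in \Dt(k)$ with $\bar g \taubar(\lambda) \bar g^{-1} = \taubar(\sigma\lambda\sigma^{-1})$. Smoothness of the scheme-theoretic centralizer $C_\Dt(\tau(\Ld_F))$ (Corollary \ref{corollary:centralizer-of-weakly-reductive} applied to the finite étale prime-to-$p$ action), combined with hypothesis (\ref{prop:extensioniii}), descends the centralizer equality to $C_{\Dt(k)}(\taubar(\Ld_F)) = Z(k)$, so $\Dt(k)/Z(k)$ injects into $\Aut(\taubar(\Ld_F))$. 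I would then take the Jordan decomposition $\bar g = \bar g_s \bar g_u$ in $\Dt(k)$ (valid since $k$ is perfect): the images of $\bar g_s$ and $\bar g_u$ in $\Aut(\taubar(\Ld_F))$ commute, have coprime orders (prime-to-$p$ and $p$-power respectively), and multiply to $\psi(\sigma)$ of $p$-power order; hence the prime-to-$p$ image of $\bar g_s$ must be trivial, i.e., $\bar g_s \in Z(k)$. Replacing $\bar g$ with $u \colonequals \bar g_s^{-1} \bar g$ produces a \emph{unipotent} conjugator. This Jordan-decomposition step is the main obstacle of the proof, essential because only a $p$-power-order element $u$ gives a power map $n \mapsto u^n$ that extends continuously from $\Z$ to $\Z_p$.

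\textbf{Lifting to $R$.} Smoothness of $\Dt$ produces a lift $U_0 \in \Dt(R)$ of $u$. The two homomorphisms $U_0 \tau(-) U_0^{-1}, \tau^\sigma \colon \Ld_F \to \Dt(R)$ both lift $\taubar^\sigma$ and share the $I_F$-stable kernel $\Ker \taubar$ (by hypothesis (\ref{prop:extensionii})), so by the standard rigidity of lifts of finite prime-to-$p$ group homomorphisms into smooth group schemes (via vanishing of the relevant $H^1$) there is $v \in \Ker(\Dt(R) \to \Dt(k))$ such that $U_1 \colonequals v U_0$ satisfies the exact relation $U_1 \tau(\lambda) U_1^{-1} = \tau(\sigma \lambda \sigma^{-1})$. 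Because $\nu(u) = 1$ ($u$ is unipotent and $S$ is of multiplicative type), $\nu(U_1) \in \Ker(S(R) \to S(k))$; as $\nu$ is a smooth isogeny, it restricts to an isomorphism of formal completions at the identity, and so $\nu(U_1)$ has a unique preimage $z \in \Ker(Z(R) \to Z(k))$. Setting $U \colonequals U_1 z^{-1}$ preserves the conjugation relation (by centrality of $z$) and the reduction $\bar U = u$, while securing $\nu(U) = 1$. Finally, defining $\tilde\tau(\lambda \sigma^n) \colonequals \tau(\lambda) U^n$ gives the desired extension: $\overline{\langle U \rangle}$ is pro-$p$ in $\Dt(R)$ (since $\bar U = u$ has $p$-power order), so $n \mapsto U^n$ extends continuously from $\Z$ to $\Z_p$, and $\nu$-tameness is immediate from $\nu(U) = 1$ together with $\nu(\tau(\Ld_F))$ being of finite prime-to-$p$ order.
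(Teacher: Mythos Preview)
Your argument is correct; the route differs from the paper's in organization and in one key step. The paper works directly over $R$: it first lifts the $\Dt(k)$-conjugator to an $A \in \Dt(R)$ satisfying $\tau^\sigma = A\tau A^{-1}$, observes via hypothesis~(\ref{prop:extensioni}) that $A^{p^b} \in Z(R)$ for suitable $b$, and then exploits that the $p$-power map is a bijection on the finite prime-to-$p$ group $Z(k)$ to multiply $A$ by a lifted central element so that $A^{p^b}$ reduces to the identity; a final adjustment by $\widehat{Z}(R)$ (using the isomorphism $\nu\colon \widehat{Z}(R) \to \widehat{S}(R)$) makes $\nu(A)$ of finite prime-to-$p$ order. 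Your approach instead descends hypothesis~(\ref{prop:extensioniii}) to $k$ and uses the Jordan decomposition in $\Dt(k)$ to locate a \emph{unipotent} conjugator $u$ directly, replacing the paper's $p$-divisibility trick; you then lift and correct to obtain the stronger condition $\nu(U)=1$ rather than merely prime-to-$p$ order. Both methods ultimately rest on the same isomorphism $\nu\colon \widehat{Z}(R)\xrightarrow{\sim}\widehat{S}(R)$ coming from the \'etaleness of $\nu$. For uniqueness, you argue that a continuous homomorphism $\Z_p \to (\ker\nu)(R)$ into a finite prime-to-$p$ group is trivial, whereas the paper shows the discrepancy lies in $\widehat{Z}(R)$ and invokes injectivity of $\nu$ there; these are equivalent. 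Your Jordan-decomposition step is arguably more transparent about why the extension sends $\sigma$ to an element with unipotent reduction, while the paper's version avoids the auxiliary descent of the centralizer condition to $k$.
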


We first establish some useful lemmas.  Recall that for a group scheme $H$ over $\cO$, $\widehat{H}(R)$ is the kernel of the reduction map $H(R) \to H(k)$.

\begin{lem}\label{isomorphism}
For any $\sigma \in I_F$, $\tau^{\sigma} \cong \tau$ (i.e. they are conjugate by an element in $\Dt(R)$).
\end{lem}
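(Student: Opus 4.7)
The plan is to lift the mod-$\mathfrak{m}_R$ conjugation supplied by \eqref{prop:extensionii} to an $R$-valued conjugation by an obstruction-theoretic induction along the $\mathfrak{m}_R$-adic filtration, using smoothness of $\Delta$ to lift conjugators and the prime-to-$p$ finiteness of the image of $\tau$ (from \eqref{prop:extensioni}) to kill the obstructions.

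First, I would fix $\sigma \in I_F$ and use \eqref{prop:extensionii} to choose $\bar g \in \Delta(k)$ satisfying $\bar g\,\bar\tau\,\bar g^{-1} = \bar\tau^\sigma$. Since $\Delta$ is $\cO$-smooth and $R$ is complete local Noetherian with residue field $k$, formal smoothness lifts $\bar g$ to some $g \in \Delta(R)$. Then $g\tau g^{-1}$ and $\tau^\sigma$ are two continuous homomorphisms $\Lambda_F \to \Delta(R)$ agreeing modulo $\mathfrak{m}_R$, reducing the problem to showing that any two continuous $\tau_1,\tau_2 \colon \Lambda_F \to \Delta(R)$ with a common mod-$\mathfrak{m}_R$ reduction $\bar\tau$ are conjugate by an element of $\widehat{\Delta}(R)$.

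Next, by \eqref{prop:extensioni}, $\tau$ factors through the finite quotient $\Lambda_F/\ker \tau$, which has order prime to $p$ because $\Lambda_F$ is pro-(prime-to-$p$); the same holds for $\tau^\sigma$, whose kernel is $\sigma^{-1}(\ker \tau)\sigma$ (using normality of $\Lambda_F$ in $\Gamma_F$). After enlarging if necessary I may assume each $\tau_i$ factors through a single common finite prime-to-$p$ quotient $\Lambda_0$. I then write $R = \varprojlim_n R/\mathfrak{m}_R^n$ and induct on $n$: given $h_n \in \widehat{\Delta}(R/\mathfrak{m}_R^n)$ with $h_n \tau_1 h_n^{-1} \equiv \tau_2 \pmod{\mathfrak{m}_R^n}$, smoothness of $\Delta$ lifts $h_n$ to some $h \in \widehat{\Delta}(R/\mathfrak{m}_R^{n+1})$. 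The defect
\[
\eta(\lambda) \colonequals \bigl(h\,\tau_1(\lambda)\,h^{-1}\bigr)\,\tau_2(\lambda)^{-1}
\]
takes values in the kernel of reduction from $\Delta(R/\mathfrak{m}_R^{n+1})$ to $\Delta(R/\mathfrak{m}_R^n)$, which I identify with the $k$-vector space $V \colonequals \Lie(\Delta)_k \otimes_k (\mathfrak{m}_R^n/\mathfrak{m}_R^{n+1})$; a direct check shows $\eta$ is a $1$-cocycle for the $\Lambda_0$-action on $V$ via $\Ad \circ \bar\tau$.

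The main obstacle reduces to the cohomology vanishing $\mathrm{H}^1(\Lambda_0, V) = 0$, which follows from the classical fact that the cohomology of a finite group vanishes on any module on which the group order is invertible (here $|\Lambda_0|$ is prime to $p$ while $V$ is a $k$-vector space). Hence $\eta$ is a coboundary, producing $x \in V \subset \widehat{\Delta}(R/\mathfrak{m}_R^{n+1})$ such that $(1+x)h$ conjugates $\tau_1$ to $\tau_2$ modulo $\mathfrak{m}_R^{n+1}$. Passing to the inverse limit yields an element of $\widehat{\Delta}(R)$ conjugating $g\tau g^{-1}$ to $\tau^\sigma$; composing with $g$ gives the desired element of $\Delta(R)$ conjugating $\tau$ to $\tau^\sigma$.
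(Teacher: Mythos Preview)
Your proof is correct and follows essentially the same approach as the paper: lift the mod-$\mathfrak{m}_R$ conjugator via smoothness of $\Delta$, then show that any two lifts of $\bar\tau^\sigma$ to $\Delta(R)$ are $\widehat{\Delta}(R)$-conjugate using the vanishing of $\mathrm{H}^1(\Lambda_F,\ad\bar\tau)$ (the paper simply cites this standard deformation-theoretic fact from \cite{tilouine96}, whereas you spell out the $\mathfrak{m}_R$-adic induction explicitly).
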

\begin{proof}
Note that $\mathrm{H}^1(\Ld_F, \ad \taubar)=0$ since $\Ld_F$ has pro-order prime to $p$ and $\ad \taubar$ has order a power of $p$.  Therefore if $\tau': \Ld_F \to \Dt(R)$ is another continuous lift of $\taubar$ then $\tau'$ is $\widehat{\Dt}(R)$-conjugate to $\tau$.  (See \cite[\S3]{tilouine96} for deformation theory in this level of generality.)
By assumption for $\sigma \in I_F$ we have  $\taubar^{\sigma} \cong \taubar$, i.e. $\taubar^{\sigma} = \bar g \taubar \bar g^{-1}$ for some $\bar g \in \Dt(k)$.   As $\Dt$ is $\cO$-smooth, there exists a lift $g \in \Dt(R)$ of $\bar g$. Now both $\tau^{\sigma}$ and $g \tau g^{-1}$ reduce to $\taubar^{\sigma}$ and hence they are $\widehat \Dt(R)$-conjugate.  Thus $\tau^\sigma \cong \tau$.
\end{proof}

\begin{lem} \label{lem:identityhat}
If $H$ is a smooth group scheme over $\cO$, 
then $\widehat{H}(R)=\widehat{H^0}(R)$ for any $\cO$-algebra $R$.
\end{lem}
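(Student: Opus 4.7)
The plan is essentially a one-step topological argument: use that $H^0$ is a clopen subscheme of $H$, together with the connectedness of $\Spec R$ for the local ring $R$, to force any element of $\widehat{H}(R)$ to factor through $H^0$. The reverse inclusion $\widehat{H^0}(R) \subset \widehat{H}(R)$ is immediate from functoriality.

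For the nontrivial inclusion, I would first recall that since $H$ is smooth over $\cO$, the relative identity component $H^0$ is a smooth open subgroup scheme of $H$, and $H/H^0$ is \'etale and separated over $\cO$; in particular, $H^0$ is both open and closed in $H$ (as the preimage of the identity section in the separated \'etale quotient). Then, given $g \in \widehat{H}(R)$, I would view $g$ as a morphism of $\cO$-schemes $\Spec R \to H$. By hypothesis, the closed point of $\Spec R$ maps to the identity of $H(k)$, which lies in $H^0(k) \subset H^0$.

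Now the key observation: since $R$ is a complete local noetherian $\cO$-algebra (as set up at the start of the subsection), $\Spec R$ is connected. The preimage $g^{-1}(H^0) \subset \Spec R$ is clopen (because $H^0 \subset H$ is clopen) and it is nonempty (it contains the closed point). By connectedness of $\Spec R$, we conclude $g^{-1}(H^0) = \Spec R$, so $g$ factors through $H^0$, i.e., $g \in H^0(R)$. Since the reduction of $g$ in $H(k)$ lands in $H^0(k)$ and equals the identity there, we conclude $g \in \widehat{H^0}(R)$, as desired.

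There is no serious obstacle: the argument is essentially formal once one has the structural fact that $H^0 \hookrightarrow H$ is an open and closed immersion (from \cite[Proposition 3.1.3]{conrad} or the analogous statements in SGA3), together with connectedness of $\Spec R$. The only point worth noting is that the statement quietly uses that $R$ is local (so that $\Spec R$ is connected); this is built into the running hypothesis of Section~\ref{ss:tameextension}.
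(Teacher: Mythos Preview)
Your proof is correct and is precisely an expansion of the paper's one-line argument (``If an $R$-point reduces to the identity, it lies in the identity component of $H$''). As a small simplification, you need only that $H^0$ is \emph{open} in $H$ (which holds for any smooth group scheme): since $R$ is local, the only open subset of $\Spec R$ containing the closed point is $\Spec R$ itself, so $g^{-1}(H^0) = \Spec R$ follows without invoking closedness of $H^0$ or the citation to \cite[Proposition 3.1.3]{conrad} (which, as stated, concerns the reductive case).
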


\begin{proof}
If an $R$-point reduces to the identity, it lies in the identity component of $H$. 
\end{proof}

\begin{lem}\label{lem:decomposition of S}
The group $S(R)$ is the product of $\widehat{S}(R)$ and a finite group whose order is prime to $p$.  
\end{lem}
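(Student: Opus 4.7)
The plan is to exhibit the finite factor as the $N$-torsion subgroup $S(R)[N]$, where $N \colonequals |S(k)|$, and verify that $S(R)$ splits as an internal direct product $S(R)[N] \times \widehat{S}(R)$. Since $S$ is commutative (being of multiplicative type) this is purely a question about the abelian group $S(R)$, and the key structural input is that $N$ is prime to $p$ while $\widehat{S}(R)$ is pro-$p$.

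First I will check that $N$ is prime to $p$. Because $S$ is a smooth group scheme of multiplicative type over $\cO$, its geometric character group has no $p$-torsion (otherwise $S_{\kbar}$ would contain a copy of $\mu_p$, which is not smooth in characteristic $p$). Thus $S_{\kbar}$ is isomorphic to the product of a torus with a finite \'etale group of order prime to $p$, so $S(\kbar)$ contains no non-trivial $p$-torsion. Since $k$ is finite (as $\cO$ is the ring of integers in a $p$-adic field), the finite subgroup $S(k) \subset S(\kbar)$ therefore has order prime to $p$. Next I will check that $\widehat{S}(R)$ is pro-$p$: writing $R = \varprojlim_n R/\fm_R^n$ and using smoothness of $S$ to commute $S(-)$ with this limit, the standard deformation-theoretic filtration on $\widehat{S}(R/\fm_R^n)$ has successive quotients of the form $\Lie(S)_k \otimes_k \fm_R^n/\fm_R^{n+1}$, which are finite-dimensional $k$-vector spaces and hence finite elementary abelian $p$-groups. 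Consequently multiplication by $N$ is an automorphism of the commutative pro-$p$ group $\widehat{S}(R)$.

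With these ingredients the lemma follows easily. The map $S(R)[N] \to S(k)$ is injective with kernel $\widehat{S}(R)[N] = 1$, and it is surjective: given $x \in S(k)$, lift to some $y \in S(R)$ by smoothness and completeness of $R$; since $y^N$ maps to $x^N = 1$ in $S(k)$ it lies in $\widehat{S}(R)$, and writing $y^N = z^N$ for some $z \in \widehat{S}(R)$ (possible since $N$ acts invertibly) gives $yz^{-1} \in S(R)[N]$ lifting $x$. Then $S(R)[N] \cap \widehat{S}(R) = 1$, and any $s \in S(R)$ factors as $t \cdot (t^{-1}s)$ where $t \in S(R)[N]$ is the unique preimage of the image of $s$ in $S(k)$. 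This exhibits the desired internal direct product, with $S(R)[N] \cong S(k)$ the finite factor of order prime to $p$. No step is technically difficult; the only conceptual point is the interplay between the prime-to-$p$ order of $S(k)$ and the pro-$p$ structure of $\widehat{S}(R)$.
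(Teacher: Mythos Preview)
Your proof is correct and follows essentially the same approach as the paper: both arguments split the exact sequence $1 \to \widehat{S}(R) \to S(R) \to S(k) \to 1$ by exploiting that $\widehat{S}(R)$ is pro-$p$ while $|S(k)|$ is prime to $p$. The paper simply invokes ``Teichm\"uller lifts'' for the splitting, whereas you spell out the construction of these lifts explicitly via the $N$-torsion subgroup $S(R)[N]$; the content is the same.
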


\begin{proof}
Recall that $S$ is smooth multiplicative by Proposition \ref{prop:centers and abelianization}. In particular, $S(k)$ has order prime to $p$.
Consider the exact sequence of groups
\[ 1 \to \widehat S(R) \to S(R) \to S(k) \to 1. \] 
Lemma \ref{lem:identityhat} implies that $\widehat S(R)=\widehat{S^0}(R)$, the latter is pro-$p$. 
Then since $S$ is commutative, $S(R)$ is the product of the pro-$p$ group $\widehat{S^0}(R)$ and the Teichmuller lifts of elements in $S(k)$.
\end{proof}

\begin{lem}\label{lem:isogeny of tori}
The map $\nu \colon \widehat{Z}(R) \to \widehat{S}(R)$ is an isomorphism.
\end{lem}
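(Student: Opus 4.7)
The plan is to exploit that, by hypothesis, $\nu: Z \to S$ is a \emph{smooth} isogeny between groups of multiplicative type, so its kernel $K \colonequals \ker(\nu)$ is automatically a finite \'etale $\cO$-group scheme whose order is invertible in $k$. By Lemma~\ref{lem:identityhat}, $\widehat{Z}(R) = \widehat{Z^0}(R)$ and $\widehat{S}(R) = \widehat{S^0}(R)$, so I may replace $Z$ and $S$ by their identity components and assume throughout that they are $\cO$-tori related by a smooth isogeny.

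For injectivity, let $z \in \widehat{Z}(R)$ satisfy $\nu(z)=1$, so $z \in K(R)$. Because $K$ is finite \'etale over $\cO$ and $R$ is complete local noetherian with residue field $k$, the reduction $K(R) \to K(k)$ is a bijection: it is bijective on each Artinian quotient $R/\mathfrak{m}_R^n$ because finite \'etale schemes are formally \'etale, and one passes to the limit via $R = \varprojlim_n R/\mathfrak{m}_R^n$. Since $z$ reduces to $1$ in $K(k)$, it follows that $z=1$.

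For surjectivity, let $s \in \widehat{S}(R)$. Its reduction $\bar s \in S(k)$ is trivial, so it lifts tautologically to $1 \in Z(k)$. Because $\nu$ is smooth, the infinitesimal lifting criterion for smooth morphisms provides, inductively in $n \geq 1$, a compatible system of lifts $z_n \in Z(R/\mathfrak{m}_R^n)$ of the image of $s$ in $S(R/\mathfrak{m}_R^n)$ with $z_1 = 1$: at each step one applies smoothness of $\nu$ to the small surjection $R/\mathfrak{m}_R^{n+1} \twoheadrightarrow R/\mathfrak{m}_R^n$. Taking the inverse limit produces $z \in Z(R)$ with $\nu(z)=s$ and $\bar z = 1$, hence $z \in \widehat{Z}(R)$.

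The argument is essentially formal once one recognizes that smoothness of the isogeny forces its kernel to be \'etale; this bypasses any fppf-cohomological obstruction to surjectivity and collapses the injectivity into the infinitesimal rigidity of \'etale schemes. There is no serious obstacle beyond the bookkeeping required to pass from Artinian quotients to the complete local ring $R$.
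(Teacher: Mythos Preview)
Your proof is correct and follows essentially the same approach as the paper. The paper observes that a smooth isogeny of multiplicative groups is \'etale and then invokes the infinitesimal criterion for \'etale morphisms to get both existence and uniqueness of lifts at once; you unpack this into separate injectivity (via the finite \'etale kernel) and surjectivity (via smooth lifting) arguments, which amounts to the same thing. Your reduction to identity components via Lemma~\ref{lem:identityhat} is valid (since $S$ is smooth and $\nu$ smooth forces $Z$ smooth) but unnecessary, and the remark that $K$ has order invertible in $k$ is true but not used in your argument.
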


\begin{proof}
By the completeness of $R$, it suffices to prove the following: suppose $A$ is a local Artinian $\cO$-algebra and $I \subset A$ is a nilpotent ideal, let $z \in Z(A/I)$ and $s \in S(A/I)$ such that $\nu(z)=s$, then for any $\tilde s \in S(A)$ mapping to $s \in S(A/I)$, there exists a unique element $\tilde z \in Z(A)$ mapping to $z \in Z(A/I)$ such that $\nu(\tilde z)=\tilde s$. By assumption $\nu \colon Z \to S$ is a smooth isogeny of multiplicative groups, in particular it is \'etale, so the above follows immediately from the infinitesimal criterion for \'etale morphisms. 
\end{proof}

\begin{proof}[Proof of Proposition \ref{prop:extension}]
A continuous extension of $\tau$ to $I_F$ is determined by its value on $\sigma$, a chosen topological generator of the $\Zp$-part of the tame inertia. By Lemma \ref{isomorphism}, there is an $A \in \Dt(R)$ such that for $g \in \Ld_F$ 
\[ \tau^\sigma(g) = \tau(\sigma g\sigma^{-1})=A\tau(g)A^{-1}. \]
As $\taubar$ is continuous it factors through a finite quotient, so there is a power $p^b$ such that $\taubar^{\sigma^{p^b}}=\taubar$ as $\sigma$ has pro-$p$ order.

Since $\Ker\tau=\ker\taubar$, we see that for all $g \in \Lambda_F$
\[
A^{p^b}\tau(g) A^{-p^b}=\tau^{\sigma^{p^b}}(g)=\tau(g).
\] 
It follows that $A^{p^b} \in \cent{\Dt(R)}{\tau(\Ld_F)}$, which equals $Z(R)$ by assumption. 
As $k$ is perfect and $Z$ is multiplicative,
the $p$-th power map is an automorphism of $Z(k)$.  
We can then modify $A$ by a lift of an appropriate element of $Z(k)$ 
so that $A^{p^b}$ reduces to the identity.  
By Lemma \ref{lem:decomposition of S} and (the surjectivity part of) Lemma \ref{lem:isogeny of tori} we can further multiply $A$ by an element in $\widehat{Z}(R)$ so that $\nu(A)$ has finite prime-to-$p$ order.
We can now obtain a continuous, $\nu$-tame extension $\tau : I_F \to \Delta(R)$ by sending $\sigma$ to $A$.

It remains to show this extension is unique.  Suppose another (continuous and $\nu$-tame) extension sends $\sigma$ to $B \in \Dt(R)$.  Note that $BA^{-1}$ commutes with $\tau(\Lambda_F)$ so $z\colonequals BA^{-1} \in Z(R)$. By continuity, there is a power $p^b$ such that $A^{p^b}$ and $B^{p^b}$ reduce to the identity
in $\Delta(k)$,
and hence $z^{p^b}$ does as well.  Hence $z$ reduces to the identity as the $p$-th power map is an automorphism of $Z(k)$. Since both extensions are $\nu$-tame, we see that $\nu(z)$ has finite order relatively prime to $p$. Since $\nu(z)$ also reduces to the identity, we see that $\nu(z)=1$. By Lemma \ref{lem:isogeny of tori}, $\nu: \widehat{Z}(R) \to \widehat{S}(R)$ is injective, we conclude that $z=1$ and hence the extensions are the same. 
\end{proof}

\begin{remark}
We call this step in the argument ``Clifford theory'' as the analogous step for $\GL_n$ \cite[Lemma 2.4.11]{cht08} makes use of ideas from Clifford theory (see \cite[\S11]{CR81}).
\end{remark}

\section{Lifts and Minimally Ramified Deformations} \label{sec:liftingsmr}

As before, let $F$ be a local field of residue characteristic $\ell \neq p$ and $G$ be a weakly reductive group scheme over a $p$-adic ring of integers $\cO$ with residue field $k$.  

\subsection{Lifting Residual Representations} \label{ss:lifting}

Recall the terminology for decomposition types from Definitions~\ref{def:decomp type} and \ref{def:adapted}.  We will prove the following theorem over the course of this subsection.

\begin{theorem} \label{thm:liftingfixed}
Let $\rhobar : \Gamma_F \to G(k)$ be a continuous homomorphism.  Suppose there exists a decomposition type $(\fC,\Delta)$ over $\cO$ adapted to $\rhobar(\Lambda_F)$.  Suppose that $p$ is good for the decomposition type $(\fC,\Delta)$.
Then there exists a continuous homomorphism $\rho : \Gamma_F \to G(\cO)$ that lifts $\rhobar$ such that $\cent{G}{\rho(I_F)}$ is $\cO$-smooth. %
\end{theorem}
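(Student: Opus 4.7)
The plan is to adapt the strategy for $\GL_n$ outlined in Section~\ref{ss:weakly-reductive}, replacing the isotypic decomposition by the decomposition type $(\fC,\Delta)$ adapted to $\Lbar = \rhobar(\Lambda_F)$.

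\emph{Constructing $\rho|_{I_F}$.} Proposition~\ref{prop:adapted complete dvr} supplies $\Lambda \subset \Delta(\cO)$ reducing isomorphically to $\Lbar$ with $C_G(\Lambda)=\fC$; composing $\rhobar|_{\Lambda_F}$ with $\Lbar \xrightarrow{\sim}\Lambda$ gives a lift $\tau_0 : \Lambda_F \to \Delta(\cO)$. I would verify the hypotheses of the Clifford-theoretic Proposition~\ref{prop:extension}: (i) is immediate; (ii) follows from normality of $\Lambda_F$ in $I_F$; (iii) is Lemma~\ref{lem:adapted}; and smoothness of $\nu: Z(\Delta) \to \Delta^{\mathrm{ab}}$ comes from Proposition~\ref{prop:smoothness of delta etc} using that $p$ is good for $(\fC,\Delta)$. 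This produces a unique $\nu$-tame continuous extension $\tau : I_F \to \Delta(\cO)$. By inspection of the proof of Proposition~\ref{prop:extension}, $\taubar(\sigma)$ has $p$-power order in $\Delta(k)$, hence is unipotent; likewise $\rhobar(\sigma)$ has $p$-power order in $G(k)$ and is unipotent, since $\sigma$ topologically generates a pro-$p$ group. Setting $\omegabar(\sigma) := \rhobar(\sigma)\taubar(\sigma)^{-1}$, a direct computation using that $\rhobar(\sigma)$ and $\taubar(\sigma)$ conjugate $\Lbar = \taubar(\Lambda_F)$ identically places $\omegabar(\sigma) \in \fC(k)$. Since $\fC = C_G(\Delta)$ centralizes $\Delta$, the elements $\omegabar(\sigma)$ and $\taubar(\sigma)$ commute, so $\omegabar(\sigma)$ is a product of commuting unipotents and is itself unipotent, lying in $\fC^0(k)$. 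Lemma~\ref{lemma:purelift} applied to $\fC^0$ (allowed because $p$ is pretty good for $\fC$ by Lemma~\ref{lem:cglambda pretty good}, hence good) produces a pure fiberwise unipotent $u \in \fC^0(\cO)$ lifting $\omegabar(\sigma)$. The commutativity of $\fC$ and $\Delta$ in $G$ then makes $\rho_I(\lambda\sigma^n) := \tau(\lambda\sigma^n)\,u^n$ a continuous homomorphism $I_F \to G(\cO)$ lifting $\rhobar|_{I_F}$.

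\emph{Extending through Frobenius.} Lemma~\ref{lem:symmetry} identifies $N_G(\fC) = N_G(\Delta)$, and this is $\cO$-smooth: the quotient $N_G(\fC)/\fC\Delta$ is finite \'etale of prime-to-$p$ order by Lemma~\ref{lem:martin} and Definition~\ref{def:goodprime}(3), while $\fC\Delta$ is the image of $\fC\times\Delta \to G$ modulo the smooth central kernel $Z(\Delta)$. Lift $\rhobar(\phi)$ to $g_0 \in N_G(\fC)(\cO)$. The two maps $g_0\tau(\cdot)g_0^{-1}$ and $\tau\circ\mathrm{conj}_\phi$, restricted to $\Lambda_F$, are $\Delta(\cO)$-valued lifts of a common reduction, so the vanishing of $\mathrm{H}^1(\Lambda_F,\ad\taubar)$ (prime-to-$p$ versus $p$-power coefficients) yields $\delta \in \widehat{\Delta}(\cO)$ such that $g_1 := \delta^{-1}g_0$ satisfies $g_1\tau(\lambda)g_1^{-1} = \tau(\phi\lambda\phi^{-1})$ for $\lambda \in \Lambda_F$. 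Both $\tau\circ\mathrm{conj}_\phi$ and $g_1\tau(\cdot)g_1^{-1}$ are then $\nu$-tame extensions (the latter because $\nu$ descends through $\Delta^{\mathrm{ab}}$ and group automorphisms preserve orders) of this common restriction, so uniqueness in Proposition~\ref{prop:extension} upgrades the identity to all of $I_F$. Now $v := g_1 u g_1^{-1}$ and $u^q$ are pure fiberwise unipotents in $\fC$ (Lemma~\ref{lem:conjugacypower} for $u^q$) with the same reduction $\omegabar(\sigma)^q$, so the pseudo-torsor $\{h \in \fC : hvh^{-1}=u^q\}$ under the smooth group scheme $C_\fC(v)$ (smoothness from Theorem~\ref{theorem:unipotentcentralizer} applied to $\fC$) is a non-empty smooth torsor and admits an $\cO$-point $h$ reducing to $1$. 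Since $h\in\fC$ automatically centralizes $\tau(I_F) \subset \Delta$, the element $\rho(\phi) := hg_1$ simultaneously realizes $\rho(\phi)\tau(g)\rho(\phi)^{-1}=\tau(\phi g \phi^{-1})$ and $\rho(\phi)\,u\,\rho(\phi)^{-1}=u^q$; continuous extension yields the desired $\rho : \Gamma_F \to G(\cO)$.

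\emph{Smoothness of the centralizer and main obstacle.} Since $\rho(I_F) = \tau(I_F)\cdot\langle u\rangle$ with $\tau(I_F) \subset \Delta$ centralized by $\fC$, one computes $C_G(\rho(I_F)) = C_G(\tau(\Lambda_F)) \cap C_G(u) = \fC \cap C_G(u) = C_\fC(u)$, which is $\cO$-smooth by Theorem~\ref{theorem:unipotentcentralizer} applied to the weakly reductive $\fC$ (with $p$ pretty good for $\fC$) and the purity of $u$. The principal obstacle is the Frobenius extension, where three conjugation compatibilities---for $\tau|_{\Lambda_F}$, for $\tau(\sigma)$, and for $u$---must be secured simultaneously. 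Uniqueness of $\nu$-tame Clifford extensions elegantly collapses the first two into a single $H^1$-adjustment of $g_0$, and the smoothness furnished by purity turns the third into a Hensel-type lift in a smooth torsor; the compatibility of these adjustments hinges on the decomposition-type identity $\fC = C_G(\Delta)$, which guarantees that the torsor-correction $h \in \fC$ does not disturb the $\tau$-compatibility already achieved.
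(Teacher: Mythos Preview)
Your outline follows the paper's approach closely, but there is one genuine gap. In verifying hypothesis (ii) of Proposition~\ref{prop:extension}, you assert that $\taubar^{\sigma} \cong \taubar$ ``follows from normality of $\Lambda_F$ in $I_F$''. Normality only gives $\taubar^{\sigma} = \rhobar(\sigma)\,\taubar\,\rhobar(\sigma)^{-1}$ with $\rhobar(\sigma) \in G(k)$, whereas Proposition~\ref{prop:extension}(ii) demands conjugacy by an element of $\Delta(k)$. The paper's argument fills this as follows: since $\rhobar(\sigma) \in N_{G(k)}(\Lbar) \subset N_{G(k)}(\Delta(k))$ and the image of $\sigma$ in $I_F/\Lambda_F$ is pro-$p$, condition~(3) of Definition~\ref{def:goodprime} (that $p \nmid \#\,N_{G_k}(\Delta_k)/\fC_k\Delta_k$) forces $\rhobar(\sigma) \in \Delta(k)\fC(k)$. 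Writing $\rhobar(\sigma) = \delta c$ with $\delta \in \Delta(k)$ and $c \in \fC(k)$, one gets $\taubar^{\sigma} = \delta\,\taubar\,\delta^{-1}$ since $\fC$ centralizes $\Delta$. You invoke Definition~\ref{def:goodprime}(3) only later, for Frobenius; it is needed here first.

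Two smaller points. First, lifting $\rhobar(\phi)$ directly into $N_G(\fC)(\cO)$ requires $\cO$-smoothness of $N_G(\fC)$, which Lemma~\ref{lem:martin} (stated over $k$) does not establish; the paper avoids this by lifting to $G(\cO)$ via smoothness of $G$ and then correcting by $\widehat{G}(\cO)$, after which membership in $N_G(\Lambda) \subset N_G(\Delta)$ is automatic. Second, your argument that $\omegabar(\sigma)$ is unipotent via unipotence of $\rhobar(\sigma)$ presumes $\sigma$ topologically generates a pro-$p$ subgroup of $I_F$, which is not guaranteed by the paper's choice of $\sigma$ as an arbitrary preimage; it is cleaner (and is what the paper does) to note directly that $\omegabar: I_F/\Lambda_F \to \fC^0(k)$ is a continuous homomorphism from a pro-$p$ group, so $\omegabar(\sigma)$ has $p$-power order. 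With these repairs your argument coincides with the paper's.
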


\begin{remark} \label{remark:extraconditions}
The proof will also show that $\rho(\Lambda_F) \subset \Delta(\cO)$ and give some control over the ``inertial type'' of the lift. We remark that the $\cO$-smoothness of $\cent{G}{\rho(I_F)}$ is crucial for identifying a formally smooth component of the deformation ring, see Theorem \ref{thm:liftingcondition}. In the course of the proof, we will construct ``many'' lifts parameterized by an element $z \in \fC^0(\cO)$. This is not strictly necessary for the proof, and in fact, one can take $z$ to be the identity element throughout this subsection. However, we use this $z$ to build flexibility into our argument and allow inertial types that are not minimally ramified, which correspond to other components of the deformation ring. 
\end{remark}

Set $ \overline{\tau} \colonequals \rhobar |_{\Lambda_F}$ and let $\overline{\Ld} \colonequals  \taubar(\Ld_F)$. 
As the decomposition type is adapted to $\rhobar(\Lambda_F)$, by Proposition~\ref{prop:adapted complete dvr}
    there exists $\Lambda \subset \Delta(\cO)$ lifting $\Lbar$ 
    such that $C_G(\Lambda)= \fC$ and $\Delta = C_G(\fC)$.  As the pro-order of $\Lambda_F$ is prime to $p$ there is also
a lift $\tau : \Lambda_F \to \Delta(\cO)$ of $\rhobar|_{\Lambda_F}$ with $\ker(\tau) = \ker(\overline{\rho}|_{\Lambda_F})$. Thus $\Lambda = \tau(\Lambda_F)$.

\begin{lemma} \label{saturation}
The order of $\pi_0\cent{G}{\Lambda}$ is prime to $p$. 
\end{lemma}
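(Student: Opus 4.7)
The plan is to deduce this lemma directly from the results on weakly reductive group schemes developed in Section~\ref{sec:weaklyreductive}. First, I would observe that $\Lambda = \tau(\Lambda_F)$ is a finite constant subgroup of $\Delta(\cO)$ whose order is prime to $p$: since $\tau$ has the same kernel as $\taubar$, it factors through the finite quotient $\Lambda_F/\ker(\taubar)$, which is a finite quotient of $\Lambda_F$. As $\Lambda_F$ has pro-order prime to $p$ (by its definition as the kernel of the surjection $I_F \to \ZZ_p$, together with Lemma~\ref{lem:tamestructure}), the order of $\Lambda$ is prime to $p$.

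Next, I would view $\Lambda$ as a constant finite \'etale $\cO$-subgroup scheme of $G$ of order invertible on $\cO$, acting on $G$ by conjugation. By Corollary~\ref{cor:double-centralizer}, the centralizer $C_G(\Lambda)$ is then a weakly reductive $\cO$-group scheme. (Note that in the case of a finite \'etale $\Lambda$ of order invertible on the base, no bound on residue characteristic is required for this step.)

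The lemma now follows immediately from the definition of weak reductivity: $\pi_0 C_G(\Lambda) = C_G(\Lambda)/C_G(\Lambda)^0$ is a finite \'etale $\cO$-group scheme whose order is invertible on $\cO$. Since $\cO$ has residue characteristic $p$, invertibility on $\cO$ entails primality to $p$, as desired.

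There is no substantive obstacle here; the lemma is a direct corollary of the results in Section~\ref{sec:centralizers}, which constitute the genuinely nontrivial input. Alternatively, one could observe that because $(\fC,\Dt)$ is adapted to $\overline{\Lambda}$, Proposition~\ref{prop:adapted complete dvr} (combined with the fact that $\Lambda = \tau(\Lambda_F)$ reduces to $\overline{\Lambda}$) identifies $C_G(\Lambda)$ with $\fC$, and weak reductivity of $\fC$ is built into the definition of a decomposition type (Definition~\ref{def:decomp type}); the conclusion then follows in the same way.
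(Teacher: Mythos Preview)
Your proof is correct and takes essentially the same approach as the paper: the paper's proof is the single sentence ``This follows from Corollary~\ref{cor:double-centralizer},'' and you have simply unpacked why that corollary applies and what weak reductivity of $C_G(\Lambda)$ gives. Your alternative observation that $C_G(\Lambda) = \fC$ is already weakly reductive by the definition of a decomposition type is also valid and perhaps even more direct in this context.
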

\begin{proof}
This follows from Corollary~\ref{cor:double-centralizer}. 
\end{proof}

\begin{lemma} \label{lem:normalizers} 
We have that $\nlz{G}{\Lambda} \subset \nlz{G}{\Dt}$.  
\end{lemma}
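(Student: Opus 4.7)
The plan is to exploit the tautological fact that conjugation commutes with taking centralizers, combined with the $\fC \leftrightarrow \Dt$ symmetry of a decomposition type recorded in Lemma~\ref{lem:symmetry}.

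I would work on functors of points. Given an $\cO$-algebra $R$ and a section $g \in \nlz{G}{\Ld}(R)$, conjugation by $g$ preserves the finite constant subgroup scheme $\Ld \subset G_R$. Since $\fC = C_G(\Ld)$ by the choice of $\Ld$ coming from Proposition~\ref{prop:adapted complete dvr}, applying the centralizer operation to this equality gives
\[
g \cdot \fC_R \cdot g^{-1} = C_{G_R}(g \Ld_R g^{-1}) = C_{G_R}(\Ld_R) = \fC_R,
\]
so $g \in \nlz{G}{\fC}(R)$.

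It remains to upgrade this to $g \in \nlz{G}{\Dt}(R)$, which I would do by invoking Lemma~\ref{lem:symmetry} functorially. The scheme-theoretic condition $g \fC_R g^{-1} = \fC_R$ is equivalent to requiring $g \in \nlz{G(R')}{\fC(R')}$ for every $R$-algebra $R'$ (viewing $g$ inside $G(R')$ via base change). Lemma~\ref{lem:symmetry} identifies this with the condition $g \in \nlz{G(R')}{\Dt(R')}$ for every such $R'$, which reassembles into the scheme-theoretic statement $g \Dt_R g^{-1} = \Dt_R$, i.e.\ $g \in \nlz{G}{\Dt}(R)$.

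I do not foresee any real obstacle: the only thing to be careful about is the bookkeeping between the scheme-theoretic normalizer $\nlz{G}{H}$ and the abstract-group normalizer $\nlz{G(R)}{H(R)}$, which is handled by quantifying over all $R$-algebras so that Lemma~\ref{lem:symmetry} applies uniformly.
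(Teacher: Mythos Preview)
Your proposal is correct and follows essentially the same approach as the paper: normalizing $\Lambda$ forces one to normalize $C_G(\Lambda) = \fC$, and then to normalize $C_G(\fC) = \Dt$. The only cosmetic difference is that the paper applies the centralizer-of-centralizer step directly (using $\Dt = C_G(C_G(\Lambda))$), whereas you route that second step through Lemma~\ref{lem:symmetry}; since the proof of Lemma~\ref{lem:symmetry} is exactly this argument, the two are the same.
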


\begin{proof}
Any point of $\nlz{G}{{\Lambda}}$ normalizes ${\Ld}$ and hence normalizes $\cent{G}{\Ld}$ and therefore normalizes $\cent{G}{\cent{G}{\Ld}}=\Dt$.
\end{proof}

As in Section~\ref{ss:galois}, fix a preimage $\sigma$ of a topological generator of $\Z_p$ under the surjection $I_F \to \Z_p$ and a Frobenius $\phi$ satisfying  
\[ \phi \sigma \phi^{-1}=\sigma^q \]
where $q$ is the size of the residue field of $F$ (a power of $\ell$).  
As $\sigma$ and $\phi$ normalize $\Ld_F$, $\rhobar(\sigma)$ and $\rhobar(\phi)$ belong to $\nlz{G(k)}{\overline{\Lambda}}$.
Then Lemma~\ref{lem:normalizers} shows that
$\rhobar(\sigma)$ and $\rhobar(\phi)$ are contained in $\cN(k) \colonequals \nlz{G(k)}{\Dt(k)}$.
As $I_F/\Ld_F$ is pro-p and $p$ does not divide the index of 
$\Delta(k) \fC(k)$ in $\cN(k)$ as $p$ is good for the decomposition type, it follows that $\rhobar(\sigma) \in \Dt(k) \cdot \fC(k)$.  

Recall the morphism $\nu : \Delta \to \Dt^{\ab}$ and the notion of $\nu$-tameness from Definition~\ref{def:nutame}.

\begin{proposition} \label{prop:extension R=O}
There is a continuous, $\nu$-tame extension of $\tau$ to a homomorphism
\[ \tau: I_F \to \Dt(\cO). \]
\end{proposition}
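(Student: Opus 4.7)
The plan is to reduce the assertion directly to Proposition~\ref{prop:extension} applied with $R = \cO$, so I will verify its three hypotheses on $\tau$ together with the required smoothness of $\nu \colon Z \to S$. The smoothness of $\nu$ is immediate from Proposition~\ref{prop:smoothness of delta etc}: the assumption that $p$ is good for the decomposition type $(\fC,\Dt)$ includes both that $p$ is pretty good for $G$ and that $Z(\sD(\Dt^{0}))_k$ is smooth, which together make $\nu$ a smooth isogeny of $\cO$-tori.

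The first and third hypotheses of Proposition~\ref{prop:extension} are nearly automatic. By the initial choice of $\tau$, we have $\ker\tau = \ker\taubar$, which gives hypothesis~(\ref{prop:extensioni}). For hypothesis~(\ref{prop:extensioniii}), since $\Ld = \tau(\Ld_F)$ and $C_G(\Ld) = \fC$ by construction, we compute
\[
C_{\Dt(\cO)}(\tau(\Ld_F)) = \cent{\Dt}{\Ld}(\cO) = Z(\Dt)(\cO) = Z(\cO),
\]
where the middle identification is Lemma~\ref{lem:adapted}.

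The main step is hypothesis~(\ref{prop:extensionii}), namely that $\taubar^{\sigma} \cong \taubar$ in $\Dt(k)$ for every $\sigma \in I_F$. It was observed just before the statement of the proposition that the hypothesis that $p$ is good for $(\fC,\Dt)$ forces $\rhobar(\sigma) \in \Dt(k)\fC(k)$, so we may write $\rhobar(\sigma) = \delta c$ with $\delta \in \Dt(k)$ and $c \in \fC(k)$. Since the decomposition type is adapted to $\rhobar(\Ld_F)$, we have $\fC_k = \cent{G_k}{\overline{\Ld}}$, and therefore $c$ centralizes $\overline{\Ld} = \taubar(\Ld_F)$. Consequently, for any $g \in \Ld_F$,
\[
\taubar^{\sigma}(g) \;=\; \rhobar(\sigma)\,\taubar(g)\,\rhobar(\sigma)^{-1} \;=\; \delta\, c\, \taubar(g)\, c^{-1}\,\delta^{-1} \;=\; \delta\,\taubar(g)\,\delta^{-1},
\]
so $\taubar^{\sigma}$ is conjugate to $\taubar$ by the element $\delta \in \Dt(k)$, as required.

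With all hypotheses of Proposition~\ref{prop:extension} verified, that proposition produces the desired (and in fact unique) continuous, $\nu$-tame extension of $\tau$ to a homomorphism $\tau \colon I_F \to \Dt(\cO)$. The only piece of the plan that requires real input beyond routine checking is the conjugacy statement in hypothesis~(\ref{prop:extensionii}), and this in turn rests on the numerical condition $p \nmid \#\nlz{G_k}{\Dt_k}/\fC_k\Dt_k$ built into goodness of the decomposition type.
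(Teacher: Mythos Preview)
Your proof is correct and follows essentially the same approach as the paper: both reduce to Proposition~\ref{prop:extension} with $R=\cO$, invoke Proposition~\ref{prop:smoothness of delta etc} for the smoothness of $\nu$, and verify the three hypotheses using the construction of $\tau$, the containment $\rhobar(\sigma)\in\Dt(k)\fC(k)$, and Lemma~\ref{lem:adapted}. Your write-up is slightly more explicit in unpacking the conjugacy computation for hypothesis~(\ref{prop:extensionii}), but the argument is the same.
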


We use $\tau$ and $\taubar$ to denote the extensions as well as the functions originally defined on $\Lambda_F$. 

\begin{proof}
We will apply 
Proposition \ref{prop:extension} with $R=\cO$. 
First, since $p$ is good for the decomposition type, Proposition \ref{prop:smoothness of delta etc} implies that $\nu: Z(\Dt)^0 \to \Dt^{\ab,0}$ is smooth. 
It remains to check the three hypotheses.  The first is built into the construction of $\tau$. Since $\rhobar(\sigma) \in \Dt(k) \cdot \fC(k)$ and $\Dt$ and $\fC$ commute, it follows that $\taubar^{\sigma}=\rhobar(\sigma) \taubar \, \rhobar(\sigma)^{-1}$ and $\taubar$ are isomorphic as $\Dt(k)$-valued representations, whence the second condition. Lemma~\ref{lem:adapted} gives the third condition.
\end{proof}

\begin{corollary} \label{cor:conjugation}
We have
$\taubar^{\phi}=\rhobar(\phi) \cdot \taubar \cdot \rhobar(\phi)^{-1}$ on $I_F$.
\end{corollary}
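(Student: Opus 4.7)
The plan is to deduce the identity from the uniqueness clause in Proposition \ref{prop:extension} applied with $R = k$ to the restrictions of both sides to $\Lambda_F$. First I need both sides of the claimed identity to be continuous homomorphisms $I_F \to \Delta(k)$. For the right-hand side this requires $\rhobar(\phi) \in N_G(\Delta)(k)$, which holds because $\phi$ normalizes $\Lambda_F$, so $\rhobar(\phi)$ normalizes $\overline\Lambda = \rhobar(\Lambda_F)$, hence normalizes $C_{G_k}(\overline\Lambda)=\fC_k$ (by adaptedness) and therefore $\Delta_k = C_{G_k}(\fC_k)$. On $\Lambda_F$ the two sides coincide: for $g \in \Lambda_F$, using $\taubar|_{\Lambda_F} = \rhobar|_{\Lambda_F}$ and $\phi\Lambda_F\phi^{-1} = \Lambda_F$,
\[
\rhobar(\phi)\taubar(g)\rhobar(\phi)^{-1} = \rhobar(\phi g \phi^{-1}) = \taubar(\phi g \phi^{-1}) = \taubar^\phi(g).
\]

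Next I check that $\taubar^\phi|_{\Lambda_F}$ satisfies the three hypotheses of Proposition \ref{prop:extension} with $R=k$. Condition (1) is vacuous when $R=k$. Condition (3) holds since $\taubar^\phi(\Lambda_F) = \overline\Lambda$ and $C_\Delta(\Lambda) = Z(\Delta)$ by Lemma \ref{lem:adapted}. For condition (2), given $\sigma' \in I_F$, I set $\sigma'' \colonequals \phi\sigma'\phi^{-1} \in I_F$ (using normality of $I_F$ in $\Gamma_F$), and for $g \in \Lambda_F$ compute
\[
(\taubar^\phi)^{\sigma'}(g) = \taubar(\phi\sigma' g \sigma'^{-1}\phi^{-1}) = \taubar^{\sigma''}(\phi g \phi^{-1}).
\]
Condition (2) for $\taubar|_{\Lambda_F}$ (established when invoking Proposition \ref{prop:extension} in Proposition \ref{prop:extension R=O}) gives $A \in \Delta(k)$ with $\taubar^{\sigma''}|_{\Lambda_F} = A\taubar|_{\Lambda_F}A^{-1}$, whence $(\taubar^\phi)^{\sigma'}|_{\Lambda_F} = A(\taubar^\phi|_{\Lambda_F})A^{-1}$.

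Finally I verify that both sides are $\nu$-tame on $I_F$. The left-hand side is $\nu$-tame because $\taubar$ is (by Proposition \ref{prop:extension R=O}) and $I_F$ is $\phi$-stable. For the right-hand side, conjugation by $\rhobar(\phi) \in N_G(\Delta)(k)$ descends to an automorphism of the commutative group $S_k = \Delta_k^{\ab}$ that preserves orders of $k$-points, so $\nu(\rhobar(\phi)\taubar(g)\rhobar(\phi)^{-1})$ has the same finite prime-to-$p$ order as $\nu(\taubar(g))$. By the uniqueness clause of Proposition \ref{prop:extension}, the two continuous $\nu$-tame extensions of $\taubar^\phi|_{\Lambda_F}$ must agree on all of $I_F$. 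There is no real obstacle here: the substance of the corollary is the recognition that direct manipulation on $I_F$ is unavailable and that uniqueness of $\nu$-tame extensions must be used to promote an equality on $\Lambda_F$ to one on $I_F$.
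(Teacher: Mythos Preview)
Your argument is correct and follows exactly the approach of the paper, which simply notes that $\rhobar(\phi)\in N_{G(k)}(\Delta(k))$ and invokes the uniqueness part of Proposition~\ref{prop:extension}. You have carefully spelled out the verifications (agreement on $\Lambda_F$, the three hypotheses with $R=k$, and $\nu$-tameness of both sides) that the paper leaves implicit.
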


\begin{proof}
Since $\rhobar(\phi) \in \nlz{G(k)}{\Dt(k)}$,
this is a consequence of the uniqueness part of Proposition~\ref{prop:extension}.
\end{proof}

We next analyze $\rhobar$ as a combination of $\taubar$ (valued in $\Delta$) and a representation valued in $\fC^0$.

\begin{proposition}\label{prop:omega} 
Continuing the standing assumptions:
\begin{enumerate}
    \item There is a continuous homomorphism 
$\omegabar: I_F/\Ld_F \to \fC^0(k)$
such that 
\[ \rhobar|_{I_F}=\taubar \cdot \omegabar = \omegabar \cdot \taubar. \]
\item  The element $\omegabar(\sigma)$ is unipotent, and there exists a pure fiberwise unipotent $u \in \fC^0(\cO)$ lifting $\omegabar(\sigma)$.  

\item  For any element $z \in \fC^0(\cO)$ which reduces to the identity in $\fC^0(k)$ and commutes with $u$, define a homomorphism $\omega_z : I_F / \Lambda_F \to \fC^0(\cO)$ sending $\sigma$ to $z u$.
 The function $ \tau \cdot \omega_z : I_F \to \Delta(\cO) \fC^0(\cO) \subset G(\cO)$ is a continuous homomorphism lifting $\overline{\rho}|_{I_F}$.
\end{enumerate}
\end{proposition}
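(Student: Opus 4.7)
The plan is to construct $\omegabar$ as the ratio $\taubar^{-1}\rhobar|_{I_F}$. Specifically, define $\omegabar \colon I_F \to G(k)$ by $\omegabar(\gamma) = \taubar(\gamma)^{-1}\rhobar(\gamma)$. To see that $\omegabar(\gamma) \in \fC(k) = C_G(\Lambda)(k)$, I would compute that for any $\lambda \in \Lambda_F$, using normality of $\Lambda_F$ in $I_F$ together with the agreement of $\rhobar$ and $\taubar$ on $\Lambda_F$:
\[
\omegabar(\gamma)\rhobar(\lambda)\omegabar(\gamma)^{-1}
= \taubar(\gamma)^{-1}\rhobar(\gamma\lambda\gamma^{-1})\taubar(\gamma)
= \taubar(\gamma)^{-1}\taubar(\gamma\lambda\gamma^{-1})\taubar(\gamma)
= \taubar(\lambda) = \rhobar(\lambda),
\]
so $\omegabar(\gamma)$ centralizes $\Lambda = \rhobar(\Lambda_F)$. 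Since $\omegabar$ vanishes on $\Lambda_F$, it factors through the pro-$p$ quotient $I_F/\Lambda_F$ and therefore lands in the pro-$p$ part of $\fC(k)$; because $\pi_0\fC$ has prime-to-$p$ order by Lemma~\ref{saturation}, this forces the image to lie in $\fC^0(k)$. Finally, since $\fC^0$ and $\Dt$ commute by the defining property of a decomposition type, the values of $\omegabar$ and $\taubar$ commute, and a direct expansion shows that $\omegabar$ is a homomorphism and that $\rhobar|_{I_F} = \taubar \cdot \omegabar = \omegabar \cdot \taubar$.

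For (2), the factored map $\omegabar \colon I_F/\Lambda_F \to \fC^0(k)$ has image in a finite quotient of the pro-$p$ group $I_F/\Lambda_F \cong \Z_p$, so $\omegabar(\sigma)$ has $p$-power order; over $k$ of characteristic $p$, Jordan decomposition forces any such element of a connected reductive group to be unipotent (the semisimple part has prime-to-$p$ order). The existence of a pure fiberwise unipotent lift $u \in \fC^0(\cO)$ is then an application of Lemma~\ref{lemma:purelift} to the connected reductive $\cO$-group scheme $\fC^0$, provided $p$ is good for $\fC^0$. Since $p$ is pretty good for $G$ by hypothesis, Lemma~\ref{lem:cglambda pretty good} applied to $\fC = C_G(\Lambda)$ shows $p$ is pretty good, in particular good, for $\fC^0$. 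This invocation of the pure-lift lemma, which rests on the full strength of Section~\ref{ss:centralizerunipotent}, is the main ingredient imported from outside the present proof.

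For (3), the main verification is that $\omega_z$ is a well-defined continuous homomorphism on $I_F/\Lambda_F$. Since this group is topologically generated by $\sigma$ with $\sigma^{p^n} \to 1$, I would check that $(zu)^{p^n} \to 1$ in $\fC^0(\cO)$; because $z$ and $u$ commute, $(zu)^{p^n} = z^{p^n} u^{p^n}$. The reduction of $u^{p^n}$ in $\fC^0(k)$ equals $\omegabar(\sigma)^{p^n}$, which is $1$ for $n$ sufficiently large by (2), so $u^{p^n} \in \widehat{\fC^0}(\cO)$ for $n \gg 0$, and $z \in \widehat{\fC^0}(\cO)$ by hypothesis. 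By smoothness of $\fC^0$ and completeness of $\cO$, the group $\widehat{\fC^0}(\cO)$ is pro-$p$, so $(zu)^{p^n} \to 1$ and $\omega_z$ extends uniquely to a continuous homomorphism $I_F/\Lambda_F \to \fC^0(\cO)$. Since $\omega_z$ takes values in $\fC^0(\cO)$ and $\tau$ in $\Dt(\cO)$ — and these subgroup schemes commute — the product $\tau \cdot \omega_z$ is a continuous homomorphism $I_F \to G(\cO)$; its reduction sends $\lambda \in \Lambda_F$ to $\taubar(\lambda) = \rhobar(\lambda)$ and sends $\sigma$ to $\taubar(\sigma) \omegabar(\sigma) = \rhobar(\sigma)$, since $z$ reduces to the identity and $u$ reduces to $\omegabar(\sigma)$, so this homomorphism lifts $\rhobar|_{I_F}$.
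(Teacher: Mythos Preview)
Your proof is correct and follows essentially the same approach as the paper: you define $\omegabar = \taubar^{-1}\rhobar|_{I_F}$, verify it centralizes $\Lbar$ and hence lands in $\fC^0(k)$ via the prime-to-$p$ component group, use Jordan decomposition plus Lemma~\ref{lemma:purelift} for (2), and check continuity and the homomorphism property via the commuting of $\Delta$ and $\fC^0$. Your treatment of (3) is slightly more detailed than the paper's (you spell out why $(zu)^{p^n}\to 1$), and you make explicit the appeal to Lemma~\ref{lem:cglambda pretty good} to justify applying Lemma~\ref{lemma:purelift} to $\fC^0$, but the substance is identical.
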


(At a first reading, it is fine to take $z$ to be the identity.)

\begin{proof}
We define $\omegabar \colonequals \taubar^{-1} \cdot \rhobar|_{I_F}$. For any $g \in I_F$ and for any $h \in \Ld_F$, 
\[\omegabar(g)\rhobar(h)\omegabar(g)^{-1}=\taubar(g^{-1})\rhobar(ghg^{-1})\taubar(g)=\taubar(g^{-1})\taubar(ghg^{-1})\taubar(g)=\taubar(h)=\rhobar(h),\]
where the second equality holds because $ghg^{-1} \in \Ld_F$ and $\rhobar|_{\Ld_F}=\taubar|_{\Ld_F}$. Thus $\omegabar$ is valued in $\cent{G_{k}}{\Lbar}(k)$. 
Since $I_F/\Ld_F$ is pro-$p$ and $\pi_0\cent{G_{k}}{\Lbar}$ has order prime to $p$ by Lemma \ref{saturation},
$\omegabar$ is valued in $\cent{G}{\Lbar}^0(k)=\fC^0(k)$, with the equality following from Definition \ref{def:adapted}.

Moreover, for any $g,h \in I_F$ we see that
\[
\omegabar(gh)=\taubar(gh)^{-1}\rhobar(gh)=\taubar(h)^{-1}\taubar(g)^{-1}\rhobar(g)\rhobar(h)=\taubar(g)^{-1}\rhobar(g)\taubar(h)^{-1}\rhobar(h)=\omegabar(g)\omegabar(h),\] where the third equality uses that $\taubar(g)^{-1}\rhobar(g)=\omegabar(g) \in \cent{G_{k}}{\Lbar}(k)=\fC(k)$ commutes with $\taubar(h)^{-1} \in \Dt(k)$ (Definition \ref{def:decomp type}).
So $\omegabar$ is a group homomorphism. It is continuous because both $\rhobar|_{I_F}$ and $\taubar$ are continuous.

Second, we show that $\omegabar(\sigma)$ is unipotent.
This element decomposes as a commuting product of semisimple and unipotent elements of $\fC^0(k)$. The
order of a semisimple element in $\fC^0(k)$ is prime to $p$,  while by continuity there is an $r \geq 0$ such that $\sigma^{p^r} \in \ker \omegabar$. Thus $\omegabar(\sigma)$ is unipotent. By Lemma~\ref{lemma:purelift} there exists a pure unipotent in $\fC^0(\cO)$ lifting $\rhobar(\sigma)$.

For the last statement, note that $\omega_z$ is continuous as the image of a topological generator of $I_F /\Lambda_F \simeq \Z_p$ reduces to a unipotent element.  The function $\tau \cdot \omega_z$ is a homomorphism 
 as $\Dt$ and $\fC^0$ commute. 
\end{proof}

\begin{defn} \label{def:theta}
For $z$ as above, define $\theta_z : I_F \to \Delta(\cO) \fC^0(\cO)$ to be the product $\tau \cdot \omega_z$.
\end{defn}

 Note that $\bar\theta^{\phi}_z= \rhobar^{\phi} |_{I_F} =  \rhobar(\phi) \cdot \bar\theta_z \cdot \rhobar(\phi)^{-1}$.  Corollary~\ref{cor:conjugation} implies that $\omegabar^{\phi}=\rhobar(\phi) \cdot \omegabar \cdot \rhobar(\phi)^{-1}$, hence using the structure of $\Gamma_F/\Lambda_F$ from Lemma~\ref{lem:tq} that
\begin{equation} \label{eq:conjugationmodp}
     \overline{\omega}(\sigma)^q = \rhobar(\phi) \overline{\omega}(\sigma) \rhobar(\phi)^{-1}.
 \end{equation}

\begin{lemma} \label{lemma:nlifting}
Continuing the standing assumptions, there exists $n \in G(\cO)$ lifting $\rhobar(\phi)$ such that 
\begin{enumerate}
    \item $\tau^\phi = n \tau n^{-1}$ on $I_F$ and
    \item  $u^q = n u n^{-1}$.
\end{enumerate}
\end{lemma}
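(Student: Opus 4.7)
My plan is to first construct a lift $n_2 \in G(\cO)$ of $\rhobar(\phi)$ satisfying (1), and then modify it by an element of $\widehat{\fC^0}(\cO)$ to additionally obtain (2) while preserving (1).

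For (1), I first lift $\rhobar(\phi)$ to an $\cO$-point $n_1$ of the normalizer $N_G(\Delta)$. This is possible since $N_G(\Delta)$ is $\cO$-smooth: the quotient $N_G(\Delta)/\fC\Delta$ is finite \'{e}tale of order prime to $p$ by Lemma~\ref{lem:martin} and Definition~\ref{def:goodprime}(3), while $\fC \cdot \Delta$ is smooth since $\fC$ and $\Delta$ are commuting smooth subgroups whose intersection is the multiplicative type $Z(\fC) = Z(\Delta)$ (Lemma~\ref{lem:symmetry}). Then $n_1 \tau n_1^{-1}|_{\Lambda_F}$ and $\tau^\phi|_{\Lambda_F}$ are two $\Delta(\cO)$-valued lifts of $\taubar^\phi|_{\Lambda_F}$ (using Corollary~\ref{cor:conjugation}); since $\Lambda_F$ has pro-order prime to $p$, the standard vanishing of $\mathrm{H}^1(\Lambda_F, \ad\,\taubar)$ produces $d \in \widehat{\Delta}(\cO)$ such that $n_2 \colonequals d n_1$ satisfies $n_2 \tau n_2^{-1} = \tau^\phi$ on $\Lambda_F$. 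Both $n_2 \tau n_2^{-1}$ and $\tau^\phi$ are continuous $\nu$-tame extensions of $\tau^\phi|_{\Lambda_F}$ to $I_F$ (the $\nu$-tameness of the former uses that conjugation by $n_2$ induces an automorphism of the multiplicative type $\Delta^{\mathrm{ab}}$, which preserves finite prime-to-$p$ orders), so the uniqueness clause of Proposition~\ref{prop:extension} forces them to coincide on all of $I_F$, yielding (1).

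To arrange (2), observe that for any $c \in \widehat{\fC^0}(\cO)$ the element $n \colonequals c^{-1} n_2$ still lifts $\rhobar(\phi)$ and still satisfies (1), because $c \in \fC = C_G(\Delta)$ commutes with $\tau^\phi(I_F) \subset \Delta(\cO)$. Set $v \colonequals n_2 u n_2^{-1}$. Since $n_2$ normalizes $\fC^0$ and $u \in \fC^0(\cO)$, we have $v \in \fC^0(\cO)$, and $\bar v = \rhobar(\phi)\omegabar(\sigma)\rhobar(\phi)^{-1} = \omegabar(\sigma)^q = \overline{u^q}$ by \eqref{eq:conjugationmodp}. Moreover $v$ is pure, because $\mathrm{Int}(n_2)$ is an $\cO$-automorphism of $\fC^0$ sending the pure unipotent $u$ to $v$; and $u^q$ is pure with the same Bala--Carter label on both fibers by Lemma~\ref{lem:conjugacypower}. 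A direct computation shows that any $c \in \widehat{\fC^0}(\cO)$ with $c u^q c^{-1} = v$ yields $n u n^{-1} = c^{-1} v c = u^q$, as required.

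The main obstacle is producing this conjugator $c$. The key input is the $\cO$-smoothness of the centralizer $C_{\fC^0}(u^q)$, which holds by Theorem~\ref{theorem:unipotentcentralizer} since $p$ is pretty good for $\fC^0$ by Lemma~\ref{lem:cglambda pretty good}. Given this smoothness, the scheme $X \colonequals \{c \in \fC^0 : c u^q c^{-1} = v\}$, if nonempty, is a $C_{\fC^0}(u^q)$-torsor and therefore $\cO$-smooth, and its special fiber $X_k = C_{\fC^0_k}(\overline{u^q})$ contains the identity; lifting this $k$-point via the infinitesimal criterion produces the desired $c \in \widehat{\fC^0}(\cO)$. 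The nonemptiness of $X$ is the conjugacy statement that pure fiberwise unipotents in $\fC^0(\cO)$ with equal special fibers (hence equal Bala--Carter labels on both fibers, by Proposition~\ref{prop:pureequivalent} applied to $\fC^0$) lie in a common $\fC^0$-orbit over $\cO$, which I will obtain from Cotner's conjugacy theorem \cite[Theorem 5.11]{cotner} together with a descent from $\cO_{K'}$ to $\cO$ enabled by the smoothness of $C_{\fC^0}(u^q)$ and the flexibility to adjust a given conjugator to reduce to the identity.
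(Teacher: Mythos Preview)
Your overall strategy mirrors the paper's closely: achieve (1) first by lifting $\rhobar(\phi)$ and adjusting to match $\tau^\phi$ on $\Lambda_F$, then use uniqueness of the $\nu$-tame extension; achieve (2) by using purity of both $u^q$ and $n_2un_2^{-1}$, Cotner's conjugacy theorem, and smoothness of $C_{\fC}(u^q)$ to correct by an element of $\widehat{\fC^0}(\cO)$. Most of the steps are sound.

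There is, however, a genuine gap in your first step. You claim that $N_G(\Delta)$ is $\cO$-smooth so that $\rhobar(\phi)$ can be lifted directly into $N_G(\Delta)(\cO)$, citing Lemma~\ref{lem:martin}. But Lemma~\ref{lem:martin} is stated only over the residue field $k$: it gives finite \'etaleness of $N_{G_k}(\Delta_k)/\fC_k\Delta_k$, not of $N_G(\Delta)/\fC\Delta$ over $\cO$. To conclude $\cO$-smoothness of $N_G(\Delta)$ you would need flatness of this quotient over $\cO$ (equivalently, constancy of its order on the two fibers), which the paper never establishes and which is not immediate. Your subsequent use of $\widehat{\Delta}(\cO)$-valued deformation theory in step~2 genuinely relies on knowing $n_1\tau n_1^{-1}$ is $\Delta$-valued, so this is not a cosmetic issue.

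The paper sidesteps this cleanly: lift $\rhobar(\phi)$ to an arbitrary $n \in G(\cO)$ using smoothness of $G$, then adjust by $\widehat{G}(\cO)$ (not $\widehat{\Delta}(\cO)$) to force $\tau^\phi = n\tau n^{-1}$ on $\Lambda_F$. This equation then \emph{implies} $n\Lambda n^{-1} = \Lambda$, whence $n \in N_{G(\cO)}(\Lambda) \subset N_{G(\cO)}(\Delta(\cO))$ a posteriori via Lemma~\ref{lem:normalizers}. No smoothness of $N_G(\Delta)$ is needed.

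One minor comment on your final paragraph: the ``descent from $\cO_{K'}$ to $\cO$'' is unnecessary. Cotner's \cite[Theorem~5.11]{cotner} already gives $\fC^0(\cO)$-conjugacy directly (not merely geometric conjugacy), so $X(\cO)$ is nonempty without any descent. The paper then adjusts the conjugator by an element of $C_{\fC}(u^q)(\cO)$, using smoothness of that centralizer, to make it reduce to the identity; your torsor formulation accomplishes the same thing.
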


\begin{proof}
There exists an element $n \in G(\cO)$ lifting $\rhobar(\phi)$ as $G$ is $\cO$-smooth. Since the mod-$p$ reductions of $\tau^{\phi}$ and $n \tau n^{-1}$ agree on $\Ld_F$ and $\Ld_F$ is prime to $p$, we may and do multiply $n$ by an element in $\widehat G(\cO)$ so that $\tau^{\phi}=n \tau n^{-1}$ on $\Ld_F$. In particular, using Lemma~\ref{lem:normalizers} we see that 
$n \in N_{G(\cO)}(\Ld) \subset N_{G(\cO)}(\Dt(\cO))$.

Since $\tau$ is $\nu$-tame, so is $\tau^{\phi}$. On the other hand, $\Ad n: \Dt_{\cO} \to \Dt_{\cO}$ induces an automorphism $A: \Dt^{\ab}_{\cO} \to \Dt^{\ab}_{\cO}$ with $\nu \circ \Ad n = A \circ \nu$ by natural properties of the abelianization map $\nu: \Dt \to \Dt^{\ab}$. So for any $\sigma \in I_F$, $\nu(n \cdot \tau(\sigma) \cdot n^{-1})=A(\nu(\tau(\sigma)))$ has finite prime to $p$ order. Therefore $n \tau n^{-1}$ is $\nu$-tame, and the uniqueness part of Proposition~\ref{prop:extension} then implies that $\tau^{\phi}=n \tau n^{-1}$ on $I_F$.

Note that $u$ is pure by construction, and Lemma~\ref{lem:conjugacypower} implies that $u^q$ is pure. Furthermore, $n u n^{-1} \in \fC^0(\cO)$ is also pure.  
Now \eqref{eq:conjugationmodp} shows that $u^q$ and $n u n^{-1}$ agree in the special fiber.
Recall that $p$ is pretty good for $G$, so Lemma \ref{lem:cglambda pretty good} implies that $p$ is pretty good for $\fC=\cent{G}{\Ld}$.
Hence we can apply \cite[Theorem 5.11]{cotner} to conclude that $u^q$ and $n u n^{-1}$ are $\fC^0(\cO)$-conjugate, i.e. $c n u n^{-1} c^{-1} = u^q $ for some $c \in \fC^0(\cO)$.  

We claim that we may choose $c$ so that it reduces to the identity.  We know that the reduction of $c$ centralizes $\overline{u}^q = \rhobar(\phi) \overline{u} \rhobar(\phi)^{-1}$.  As the centralizer $C_{\fC}(u^q)$ is smooth by Theorem \ref{theorem:unipotentcentralizer} (recall that $\fC$ is weakly reductive and $u^q$ is pure fiberwise unipotent), we may modify $c$ so that it reduces to the identity.  
Making this choice, it is clear that $cn$ reduces to $\rhobar(\phi)$ and that $u^q = (cn) u (cn)^{-1}$.  As $\tau^{\phi}=n \tau n^{-1}$ takes values in $\Dt$ and the groups $\Dt$ and $\fC$ commute, we have $\tau^{\phi}=cn \tau n^{-1}c^{-1}$ on $I_F$ as desired.
\end{proof}

We now have the necessary ingredients to complete the proof.

\begin{proof}[Proof of Theorem~\ref{thm:liftingfixed}]
Given $\rhobar : \Gamma_F \to G(k)$, 
construct $\theta_z : I_F \to G(\cO)$ lifting $\rhobar|_{I_F}$ as in Proposition~\ref{prop:omega}, depending on the choice of pure fiberwise unipotent $u \in \fC^0(\cO)$ lifting $\omegabar(\sigma)$ and choice of $z$.  
Then Lemma~\ref{lemma:nlifting} gives an element $n \in G(\cO)$ lifting $\rhobar(\phi)$ such that $\tau^{\phi} = n \tau n^{-1}$ on $I_F$ and such that $u^q = n u n^{-1}$.  

We now fix a choice of $z \in \fC^0(\cO)$ so that:
\begin{itemize}
    \item  $z$ reduces to the identity in $\fC^0(k)$;

    \item  $z$ commutes with $u$;
    
    \item  $n z n^{-1} = z^q$.
\end{itemize}
(For example, we may take $z=1$.)
We define the lift $\rho : \Gamma_F \to G(\cO)$ to be $\theta_z$ on $I_F$ and by setting $\rho(\phi) = n$.  Note that
\[
\omega_z^\phi(\sigma) = \omega_z(\sigma^q) = (z u)^ q = z^q u^q .
\]
Then $\rho$ is a homomorphism as
\[
\rho(\phi) \theta_z(\sigma) \rho(\phi)^{-1} = n \tau(\sigma) n^{-1} n z u n^{-1} = \tau^\phi(\sigma) \omega_z^\phi(\sigma) = \theta_z^{\phi}(\sigma)
\]
where the penultimate step uses that $n u n^{-1} = u^q$ and that $n z n^{-1} = z^q$.   
It is continuous as $\theta_z$ is continuous and $\overline{\rho}(\phi) = \bar{n}$ has finite order. 

Finally, Lemma \ref{lem:centralizer of theta} implies that if moreover $z \in Z(\fC)(\cO)$ then the centralizer of the inertia is $\cO$-smooth. 
\end{proof}

\begin{example} \label{ex:inertialtype}
Taking $z=1$ is simplest, and gives a minimally ramified lift.  Other choices of $z$ give lifts with different inertial types, which are of interest but are not the focus of the present work.  We will just give an example.

 After fixing the choice of $\tau$, the restriction of $\rho$ to $I_F$ depends on $u$ and $z$.  Let $V = \spn_{\cO}\{e_1,e_2,e_3,e_4\}$ and $\fC$ be the Levi preserving the grading $V= \spn \{e_1,e_2\} \oplus \spn \{e_3,e_4\}$; observe $V \simeq \GL_2 \times \GL_2 \subset \GL_4$.  Suppose $u$ is the unipotent such that
\[
u(e_1) = e_1, \quad  u(e_2) = e_1  +e_2, \quad u(e_3) = e_3, \quad u(e_4) = e_3 + e_4.
\]
One possible case is that $\rhobar(\phi)$ and hence $n$ swap $\spn \{e_1,e_2\}$ and $\spn \{e_3,e_4\}$.  In that case, for any $p^a$-th root of unity $\zeta \in \cO$ where $p^a | (q-1)$, we can take the scalar matrix $z = \zeta I_4$.
It is clear that $\zeta$ reduces to the identity in $k$, that $n$ and $u$ commute with $z$, and that $z^q = z$.  In this case, the centralizer of $\rho(I_F)$ equals $C_{\fC}(u)$ and is $\cO$-smooth.
\end{example}

\begin{lem} \label{lem:centralizer of theta}
Under the standing assumptions, if $z \in Z(\fC)(\cO)$ and satisfies the conditions on $z$ in the proof of Theorem~\ref{thm:liftingfixed} then
$\cent{G}{\theta_z(I_F)}=\cent{\fC}{\omega_z(\sigma)}$ is $\cO$-smooth.
\end{lem}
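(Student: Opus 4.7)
The plan is to split the assertion into two parts: first, reduce the centralizer on the left to the centralizer of a single pure fiberwise unipotent element of $\fC$; then invoke Theorem~\ref{theorem:unipotentcentralizer} to obtain smoothness.

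For the first part, I would unpack the equality $\cent{G}{\theta_z(I_F)}=\cent{\fC}{\omega_z(\sigma)}$ by working with the generators of $\theta_z(I_F)$. Topologically, $\theta_z(I_F)$ is generated by $\tau(\Lambda_F)=\Lambda$ together with the single element $\theta_z(\sigma)=\tau(\sigma)\cdot zu$. Any functorial point of $G$ centralizing $\theta_z(I_F)$ must centralize $\Lambda$, and by construction of $\Lambda$ from the adapted decomposition type we have $C_G(\Lambda)=\fC$. So the centralizer in question is contained in $\fC$. Inside $\fC$, every point automatically centralizes $\tau(\sigma)$, because $\tau(\sigma)\in\Delta$ and $\Delta=C_G(\fC)$ from Definition~\ref{def:decomp type}; hence within $\fC$, centralizing $\theta_z(\sigma)$ is equivalent to centralizing $\omega_z(\sigma)=zu$. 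This gives the claimed equality $\cent{G}{\theta_z(I_F)}=\cent{\fC}{\omega_z(\sigma)}$.

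Now I would use the hypothesis $z\in Z(\fC)(\cO)$ to simplify further: since $z$ is central in $\fC$, any section of $\fC$ commutes with $z$, and so commutes with $zu$ if and only if it commutes with $u$. Hence $\cent{\fC}{\omega_z(\sigma)}=\cent{\fC}{u}$, reducing the smoothness problem to showing that $\cent{\fC}{u}$ is $\cO$-smooth.

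For this final smoothness step I would apply Theorem~\ref{theorem:unipotentcentralizer} to the $\cO$-group scheme $\fC$ and the element $u\in\fC^0(\cO)\subset\fC(\cO)$. The required hypotheses are checked quickly: $\fC$ is weakly reductive by Corollary~\ref{cor:double-centralizer}; $p$ is pretty good for $\fC=C_G(\Lambda)$ by Lemma~\ref{lem:cglambda pretty good}, using that $p$ is pretty good for $G$ as part of ``good for the decomposition type'' (Definition~\ref{def:goodprime}); and $u$ is pure fiberwise unipotent by construction in Proposition~\ref{prop:omega}. The only point that needs a brief check is that purity of $u$ as an element of $\fC^0$ implies purity in the ambient $\fC$, but this is immediate from Definition~\ref{defn:pure} since $\fC/\fC^0$ is finite \'etale of constant order, so $\dim C_{\fC_s}(u_s)=\dim C_{\fC^0_s}(u_s)$ is locally constant in $s$. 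I expect this to be the easiest verification rather than the main obstacle; the real substance of the lemma has already been absorbed into Theorem~\ref{theorem:unipotentcentralizer} and the centralizer formalism of decomposition types.
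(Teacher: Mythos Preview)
Your proposal is correct and follows essentially the same approach as the paper: identify the generators of $\theta_z(I_F)$, use $C_G(\Lambda)=\fC$ and $\tau(\sigma)\in\Delta$ to reduce to $\cent{\fC}{\omega_z(\sigma)}$, use centrality of $z$ to reduce to $\cent{\fC}{u}$, and then invoke Theorem~\ref{theorem:unipotentcentralizer}. Your added verification of the hypotheses of Theorem~\ref{theorem:unipotentcentralizer} (in particular the appeal to Lemma~\ref{lem:cglambda pretty good} for pretty goodness of $p$ for $\fC$, and the purity check in $\fC$ versus $\fC^0$) is more explicit than the paper's, but the argument is the same.
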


\begin{proof}
Note that $\theta_z(I_F)$ is generated by $\theta_z(\Ld_F)=\tau(\Ld_F)$ and $\theta_z(\sigma)=\tau(\sigma)\omega_z(\sigma)$. 
For any $\cO$-algebra $R$, we have
$$\cent{G(R)}{\theta_z(I_F)}=\cent{G(R)}{\tau(\Ld_F)} \cap \cent{G(R)}{\theta_z(\sigma)}=\fC(R) \cap \cent{G(R)}{\theta_z(\sigma)}.$$
Now an element of $\fC(R)$ commutes with $\theta_z(\sigma) = \tau(\sigma) \omega_z(\sigma)$ if and only if it commutes with $\omega_z(\sigma)$ since $\tau(\sigma) \in \Delta(\cO)$ and $\Delta$ commutes with $\fC$, and hence
$\cent{G(R)}{\theta_z(I_F)}=\cent{\fC(R)}{\omega_z(\sigma)}$. When $z$ is central in $\fC$, we see that $\cent{\fC(R)}{\omega_z(\sigma)} = \cent{\fC(R)}{u}$.  Thus the smoothness follows from Theorem \ref{theorem:unipotentcentralizer} (recall that $\fC$ is weakly reductive and $u$ is pure fiberwise unipotent). 
\end{proof}

\begin{remark}
If $z$ is not central in $\fC$, the centralizer need not be smooth.  In particular, the dimension of the centralizer of the semisimple part of $\omega_z(\sigma) = z u$ is not locally constant as would be needed to apply \cite[Theorem 1.1]{cotner} (since $z$ reduces to the identity). See \cite[Remark 5.5]{cotner} for some examples of the failure of flatness for centralizers of pure sections.
\end{remark}

\subsection{The Minimally Ramified Deformation Condition} \label{ss:mrdefcondition}

We now fix a residual representation $\rhobar : \Gamma_F \to G(k)$.  
For a continuous homomorphism $\theta \colon I_F \to G(\cO)$ lifting $\rhobar|_{I_F}$, let $\theta_R$ denote the composition $I_F \to G(\cO) \to G(R)$ for an $\cO$-algebra $R$. Recall that $\CLN_{\cO}$ is the category of coefficient $\cO$-algebras (complete local Noetherian rings with residue
field $k$, with morphisms local homomorphisms inducing the identity map on $k$ and with the
structure morphism a map of coefficient rings). 
For the rest of this subsection, we suppose there exists a decomposition type $(\fC, \Dt)$ over $\cO$ adapted to $\rhobar(\Ld_F)$ and that $p$ is good for $(\fC, \Dt)$. By the paragraph below Remark \ref{remark:extraconditions}, there is a lift $\tau \colon \Ld_F \to \Dt(\cO)$ of $\rhobar|_{\Ld_F}$ such that $\cent{G}{\tau(\Ld_F)}=\fC$. Moreover, $\tau$ extends to a homomorphism $\tau \colon I_F \to \Dt(\cO)$ by Proposition \ref{prop:extension R=O}. We fix this homomorphism throughout this subsection.

\begin{lemma} \label{lem:omega over rings}
Let $\theta \colon I_F \to G(\cO)$ be a continuous homomorphism lifting $\rhobar|_{I_F}$. Then there exists $g \in \widehat G(\cO)$ and $\omega \colon I_F/\Ld_F \to \fC(\cO)$ such that 
$\theta^g = \tau \cdot \omega$. 
Moreover, $\omega$ is unique up to $\fC(\cO)$-conjugacy.
\end{lemma}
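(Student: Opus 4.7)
The plan is to first conjugate $\theta$ so that it agrees with $\tau$ on $\Ld_F$, then to define $\omega$ pointwise as the quotient $\tau^{-1}\theta$. The subtle points will all come down to the defining property of a decomposition type, namely that $\fC = C_G(\Ld)$ and $\Dt = C_G(\fC)$ centralize each other.

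For the reduction, both $\theta|_{\Ld_F}$ and $\tau|_{\Ld_F}$ are continuous lifts of $\rhobar|_{\Ld_F}$ to $G(\cO)$. Since $\Ld_F$ has pro-order prime to $p$ we have $\mathrm{H}^1(\Ld_F,\ad\rhobar) = 0$, and, as noted in the proof of Lemma~\ref{isomorphism} (following \cite[\S 3]{tilouine96}), standard deformation theory then shows any two such lifts are $\widehat G(\cO)$-conjugate. So I can find $g \in \widehat G(\cO)$ with $\theta^g|_{\Ld_F} = \tau|_{\Ld_F}$. Absorbing this $g$ into the conclusion, I may assume outright that $\theta|_{\Ld_F} = \tau|_{\Ld_F}$.

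Now define $\omega \colon I_F \to G(\cO)$ by $\omega(x) \colonequals \tau(x)^{-1}\theta(x)$; this is continuous and restricts to the identity on $\Ld_F$. To see $\omega(x) \in \fC(\cO)$, fix $\lambda \in \Ld_F$; normality of $\Ld_F$ in $\Gamma_F$ (Lemma~\ref{lem:tq}) gives $x\lambda x^{-1} \in \Ld_F$, hence
\[
\theta(x)\tau(\lambda)\theta(x)^{-1} = \theta(x\lambda x^{-1}) = \tau(x\lambda x^{-1}) = \tau(x)\tau(\lambda)\tau(x)^{-1},
\]
which is equivalent to $\omega(x)$ commuting with $\tau(\lambda)$. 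Hence $\omega(x) \in C_G(\tau(\Ld_F))(\cO) = \fC(\cO)$. For the homomorphism property I compute
\[
\omega(xy) = \tau(y)^{-1}\tau(x)^{-1}\theta(x)\theta(y) = \tau(y)^{-1}\omega(x)\theta(y) = \omega(x)\omega(y),
\]
where the last step uses that $\omega(x) \in \fC$ and $\tau(y) \in \Dt = C_G(\fC)$ commute by Definition~\ref{def:decomp type}. Since $\omega|_{\Ld_F} = 1$, it factors through $I_F/\Ld_F$.

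For uniqueness, suppose $\theta^g = \tau\omega$ and $\theta^{g'} = \tau\omega'$ are two such decompositions, and set $h = g'g^{-1}$. Conjugating the first equation by $h$ gives $h\tau(x)\omega(x)h^{-1} = \tau(x)\omega'(x)$ for all $x \in I_F$. Restricting to $\Ld_F$, where $\omega$ and $\omega'$ are trivial, yields $h\tau(\lambda)h^{-1} = \tau(\lambda)$ for every $\lambda \in \Ld_F$, so $h \in \fC(\cO)$. Since $h \in \fC$ then commutes with every $\tau(x) \in \Dt$, the equation simplifies to $\omega'(x) = h\omega(x)h^{-1}$, exhibiting $\omega$ and $\omega'$ as $\fC(\cO)$-conjugate. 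The main challenge throughout is just careful bookkeeping; no new cohomological input beyond the prime-to-$p$ order of $\Ld_F$ is needed, because that property both makes the initial conjugation possible and ensures that the centralizer structure of the decomposition type controls the computation.
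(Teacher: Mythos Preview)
Your proof is correct and follows essentially the same approach as the paper's: conjugate to align $\theta|_{\Ld_F}$ with $\tau|_{\Ld_F}$ using prime-to-$p$ deformation theory, then define $\omega = \tau^{-1}\theta$ and use the mutual centralizing property of $\fC$ and $\Dt$ to verify it is an $\fC(\cO)$-valued homomorphism; the uniqueness argument is likewise the same. The paper abbreviates the existence step by referring back to Proposition~\ref{prop:omega}, whereas you write out the computations explicitly, and your conjugation convention differs (you take $h = g'g^{-1}$ while the paper uses $c = g^{-1}g'$), but this is immaterial.
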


\begin{proof}
Since both $\theta|_{\Ld_F}$ and $\tau|_{\Ld_F}$ lift $\rhobar|_{\Ld_F}$, they are $\widehat G(\cO)$-conjugate. So after conjugating, we may assume that $\theta|_{\Ld_F}=\tau|_{\Ld_F}$. An argument similar to the first part of the proof of Proposition \ref{prop:omega} then gives the first statement. For the second part, suppose $\theta^g=\tau \cdot \omega$ and $\theta^{g'}=\tau \cdot \omega'$ for $\omega, \omega' \colon I_F/\Ld_F \to \fC(\cO)$. 
Restricting to $\Ld_F$, we see that $g^{-1}g'$ centralizes $\tau(\Ld_F)$, and hence $c \colonequals g^{-1}g' \in \fC(\cO)$. So $\theta^{g'}=\theta^{gc}=\tau^c \cdot \omega^c=\tau \cdot \omega^c$, and hence $\omega'=\omega^c$. 
\end{proof}

\begin{cor} \label{cor:unique theta}
There is a unique $\widehat G(\cO)$-conjugacy class of lifts $\theta \colon I_F \to G(\cO)$ of $\rhobar|_{I_F}$ such that the associated homomorphism $\omega$ satisfies that $\omega(\sigma)$ is a pure unipotent element in $\fC^0(\cO)$. Moreover, $\theta$ extends to a continuous homomorphism $\gma{F} \to G(\cO)$.
\end{cor}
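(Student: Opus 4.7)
The plan is to use Theorem~\ref{thm:liftingfixed} for existence and \cite[Theorem 5.11]{cotner} together with Theorem~\ref{theorem:unipotentcentralizer} for uniqueness. For existence, I apply Theorem~\ref{thm:liftingfixed} with the choice $z = 1 \in \fC^0(\cO)$, which lies in $Z(\fC)(\cO)$ and trivially satisfies every condition demanded of $z$ in the proof of that theorem. This yields a continuous homomorphism $\rho \colon \Gamma_F \to G(\cO)$ lifting $\rhobar$ with $\rho|_{I_F} = \tau \cdot \omega$, where $\omega(\sigma) = u$ is the pure fiberwise unipotent of $\fC^0(\cO)$ produced by Proposition~\ref{prop:omega}(2). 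Hence $\theta \colonequals \rho|_{I_F}$ satisfies the hypothesis of the corollary, and it automatically extends to $\Gamma_F$ via $\rho$.

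For uniqueness, let $\theta$ and $\theta'$ be two lifts of $\rhobar|_{I_F}$ satisfying the hypothesis. By Lemma~\ref{lem:omega over rings}, after replacing each of $\theta, \theta'$ by a $\widehat{G}(\cO)$-conjugate I may assume $\theta = \tau \cdot \omega$ and $\theta' = \tau \cdot \omega'$ with $\omega(\sigma), \omega'(\sigma) \in \fC^0(\cO)$ both pure fiberwise unipotents lifting $\omegabar(\sigma)$. Since $p$ is pretty good for $G$, Lemma~\ref{lem:cglambda pretty good} yields that $p$ is pretty good for $\fC$, so \cite[Theorem 5.11]{cotner} applied inside the weakly reductive group scheme $\fC^0$ produces $c \in \fC^0(\cO)$ with $c\,\omega(\sigma)\,c^{-1} = \omega'(\sigma)$. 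Its reduction $\bar{c}$ centralizes $\omegabar(\sigma)$, and because $C_{\fC^0}(\omega(\sigma))$ is $\cO$-smooth by Theorem~\ref{theorem:unipotentcentralizer}, I can lift $\bar{c}$ to a section $c_1 \in C_{\fC^0}(\omega(\sigma))(\cO)$ and replace $c$ by $c c_1^{-1} \in \widehat{\fC^0}(\cO)$ without disturbing the conjugation relation. Since $\omega$ and $\omega'$ are determined by their unipotent values on the topological generator $\sigma$ of $I_F/\Ld_F \simeq \Zp$, it follows that $c \omega c^{-1} = \omega'$; and as $c \in \fC(\cO)$ centralizes $\tau(I_F) \subset \Dt(\cO)$, conjugation by $c$ carries $\tau \cdot \omega$ to $\tau \cdot \omega'$, as required.

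Finally, the extension statement for an arbitrary $\theta$ in the unique conjugacy class follows from the existence step: writing $\theta = g(\rho|_{I_F})g^{-1}$ for some $g \in \widehat{G}(\cO)$, the map $h \mapsto g\rho(h)g^{-1}$ is a continuous homomorphism $\Gamma_F \to G(\cO)$ extending $\theta$. The main obstacle is the uniqueness step, which relies crucially on \cite[Theorem 5.11]{cotner} and on the $\cO$-smoothness of centralizers of pure fiberwise unipotent sections (Theorem~\ref{theorem:unipotentcentralizer}) in order to refine the conjugating element so that it lies in $\widehat{\fC^0}(\cO)$.
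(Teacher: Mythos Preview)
Your proof is correct and follows the same approach as the paper: existence via the $z=1$ case of Theorem~\ref{thm:liftingfixed}, and uniqueness by reducing via Lemma~\ref{lem:omega over rings} to the $\fC^0(\cO)$-conjugacy of two pure unipotent lifts of $\omegabar(\sigma)$, settled by \cite[Theorem 5.11]{cotner} after invoking Lemma~\ref{lem:cglambda pretty good}. Your extra refinement step---using Theorem~\ref{theorem:unipotentcentralizer} to modify $c$ so that it lies in $\widehat{\fC^0}(\cO)$---makes the passage from $\fC^0(\cO)$-conjugacy of the $\omega$'s to $\widehat G(\cO)$-conjugacy of the $\theta$'s explicit; the paper's proof elides this point, though the identical maneuver appears in the proof of Lemma~\ref{lemma:nlifting}.
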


\begin{proof}
The existence follows from Section \ref{ss:lifting} (taking $z=1$), which also shows that $\theta$ extends to $\gma{F}$.
For uniqueness, by Lemma \ref{lem:omega over rings} it suffices to show that if $u$ and $u'$ are pure unipotents in $\fC^0(\cO)$ lifting $\omegabar(\sigma) \in \fC^0(k)$ then they are $\fC^0(\cO)$-conjugate. 
Recall that $p$ is pretty good for $G$, so Lemma \ref{lem:cglambda pretty good} implies that $p$ is pretty good for $\fC=\cent{G}{\tau(\Ld_F)}$.
Since $u$ and $u'$ agree on the special fiber, \cite[Theorem 5.11]{cotner} implies that $u$ and $u'$ are $\fC^0(\cO)$-conjugate. 
\end{proof}

\begin{defn} \label{def:minimally ramified}
Let $\theta \colon I_F \to G(\cO)$ be a continuous homomorphism lifting $\rhobar|_{I_F}$.
Let $\mathrm{Lift}_{\rhobar}^{\theta} \colon \widehat{\cC}_{\cO} \to \textrm{Sets}$ be the functor whose set of $R$-points $\mathrm{Lift}_{\rhobar}^{\theta}(R)$ is given by lifts $\rho: \gma{F} \to G(R)$ of $\rhobar$ such that there exists $g \in \widehat{G}(R)$ such that $\rho^g|_{I_F}=\theta_R$.

Let $\mathrm{Lift}_{\rhobar}^{\mr}$ be this functor when $\theta$ is a homomorphism as in Corollary \ref{cor:unique theta}. We call this the \emph{minimally ramified lifting condition} for $\rhobar$. 
\end{defn}

We will use the notions of lifting conditions and deformation conditions for Galois representations which are reviewed in \cite[Definition 2.3]{Booher19}, in particular the notion of liftability and its connection with formal smoothness of the representing object.

\begin{thm} \label{thm:liftingcondition}
Let $\theta \colon I_F \to G(\cO)$ be a continuous homomorphism lifting $\rhobar|_{I_F}$.
Suppose that $\cent{G}{\theta(I_F)}$ is $\cO$-smooth and $\theta$ extends to a continuous homomorphism $\rho_0 \colon \Gamma_F \to G(\cO)$.  Then $\mathrm{Lift}_{\rhobar}^{\theta}$ is a well-defined lifting condition. Moreover, it is liftable and the tangent space to the corresponding deformation functor $\mathrm{Def}_{\rhobar}^{\theta}$ has dimension $\dim_{k} \mathrm{H}^0(\gma{F}, \rhobar(\fg))$. 
\end{thm}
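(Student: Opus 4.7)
The plan has three steps, all driven by the $\cO$-smoothness of $\fZ \colonequals C_G(\theta(I_F))$.

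First, to verify the axioms of a lifting condition from \cite[Definition 2.3]{Booher19}: closure under strict equivalence is immediate from the definition. The nontrivial axiom is stability under fiber products, i.e.\ whenever we are given $\rho_i \in \mathrm{Lift}_{\rhobar}^\theta(R_i)$ for $i=1,2$ whose restrictions to $R$ agree (for maps $R_i \to R$ in $\CLN_\cO$ with $R_2 \twoheadrightarrow R$ small), the induced $\rho \colon \Gamma_F \to G(R_1 \times_R R_2)$ should also lie in $\mathrm{Lift}_{\rhobar}^\theta$. I would strictly conjugate each $\rho_i$ to arrange $\rho_i|_{I_F} = \theta_{R_i}$; the two restrictions to $G(R)$ then differ by some $g \in \widehat{\fZ}(R)$, which lifts to $\widehat{\fZ}(R_2)$ by the $\cO$-smoothness of $\fZ$. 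Using this lift to modify $\rho_2$ by strict equivalence makes the two restrictions literally match over $R$, so the induced map over $R_1 \times_R R_2$ has restriction to $I_F$ equal to $\theta_{R_1 \times_R R_2}$.

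Second, for liftability, given a small surjection $R' \twoheadrightarrow R$ in $\CLN_\cO$ and $\rho \in \mathrm{Lift}_{\rhobar}^\theta(R)$, I would strictly conjugate to arrange $\rho|_{I_F} = \theta_R$ and set $\rho'|_{I_F} \colonequals \theta_{R'}$. The remaining task is to lift $\rho(\phi)$ to some $\rho'(\phi) \in G(R')$ satisfying the twisted commutation $\rho'(\phi) \theta_{R'}(x) \rho'(\phi)^{-1} = \theta_{R'}(\phi x \phi^{-1})$ for all $x \in I_F$. Because $\rho_0$ is a homomorphism over $\cO$, the element $(\rho_0)_R(\phi) \rho(\phi)^{-1}$ centralizes $\theta_R(I_F)$ and hence lies in $\fZ(R)$; by $\cO$-smoothness of $\fZ$ it lifts to some $c' \in \fZ(R')$, and then $\rho'(\phi) \colonequals (c')^{-1} (\rho_0)_{R'}(\phi)$ satisfies the required relation. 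Continuity is automatic, since $\rho'|_{I_F} = \theta_{R'}$ is continuous by construction and $\rho'(\phi)$ is a single element.

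Third, for the tangent space $\mathrm{Def}_{\rhobar}^\theta(k[\epsilon])$: after strict equivalence every first-order deformation has $\rho|_{I_F} = \theta_{k[\epsilon]}$ and $\rho(\phi) = (\rho_0)_{k[\epsilon]}(\phi)(1 + \epsilon X)$, with $X$ forced by the twisted commutation to lie in $\Lie(\fZ) \otimes_\cO k = \mathrm{H}^0(I_F, \rhobar(\fg))$ (using $\cO$-smoothness of $\fZ$ and $\theta_k = \rhobar|_{I_F}$). The residual strict equivalences preserving $\rho|_{I_F}$ are conjugations by $1 + \epsilon Y$ with $Y \in \mathrm{H}^0(I_F, \rhobar(\fg))$, which shift $X$ by $(\Ad(\rho_0(\phi))^{-1} - 1) Y$. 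Hence the tangent space is the cokernel of $\phi - 1$ on $\mathrm{H}^0(I_F, \rhobar(\fg))$, whose $k$-dimension equals that of the kernel, which in turn equals $\mathrm{H}^0(\Gamma_F, \rhobar(\fg))$ since $\Gamma_F$ is topologically generated by $I_F$ together with $\phi$. The main obstacle is really just bookkeeping: ensuring the lifted centralizing elements remain compatible with the single topological relation $\phi \sigma \phi^{-1} = \sigma^q$ in $\Gamma_F/\Lambda_F$. This is handled transparently by always lifting inside $\fZ$ and exploiting that $\rho_0$ itself realizes this relation integrally over $\cO$.
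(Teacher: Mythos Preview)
Your proof is correct and follows essentially the same approach as the paper: the verification of the lifting-condition axioms and of liftability are identical (conjugate to normalize $\rho|_{I_F}=\theta$, then use $\cO$-smoothness of $C_G(\theta(I_F))$ to lift the discrepancy at $\phi$), and your explicit tangent-space computation (cokernel of $\Ad(\rhobar(\phi))^{-1}-1$ on $\rhobar(\fg)^{I_F}$ has the same dimension as the kernel) is precisely the content of the paper's inflation--restriction step together with the cited fact $\dim \mathrm{H}^1(\widehat{\bZ},M)=\dim \mathrm{H}^0(\widehat{\bZ},M)$.
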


\begin{proof}
The functor $\mathrm{Lift}_{\rhobar}^{\theta}$ is obviously closed under strict equivalence.  The key to checking the second condition of \cite[Definition 2.3]{Booher19} is that $\cent{G}{\theta(I_F)}$ is $\cO$-smooth.  In particular, suppose we have a Cartesian diagram in $\CLN_{\cO}$
 \[
  \xymatrix{
  R_1 \times_{R_0} R_2 \ar[r]\ar[d] & R_2\ar[d]\\
  R_1 \ar[r] & R_0
  }
 \]
and $\rho_1 \in \mathrm{Lift}_{\rhobar}^{\theta}(R_1) $ and $\rho_2 \in \mathrm{Lift}_{\rhobar}^{\theta}(R_2)$ with common image in $\mathrm{Lift}_{\rhobar}^{\theta}(R_0)$.  There exists $g_1 \in \widehat{G}(R_1)$ and $g_2 \in \widehat{G}(R_2)$ such that $g_1 \rho_1|_{I_F} g_1^{-1} = \theta_{R_1}$ and $g_2 \rho_2 |_{I_F} g_2^{-1} = \theta_{R_2}$.  Looking at the images in $R_0$, we conclude that $g_1 g_2^{-1} \in C_{G(R_0)}(\theta(I_F))$. Since $\cent{G}{\theta(I_F)}$ is $\cO$-smooth, there exists a lift $x \in C_{G(R_1)}(\theta(I_F))$.  Then the element $(x g_1, g_2) \in G(R_1 \times_{R_0} R_2)$ conjugates $(\rho_1,\rho_2)$ to $(\theta_{R_1},\theta_{R_2})$ so $(\rho_1,\rho_2) \in \mathrm{Lift}_{\rhobar}^{\theta}(R_1 \times_{R_0} R_2)$ as desired.

To check smoothness, let $R \to S$ be a small morphism in $\CLN_{\cO}$, and let $\rho_S \in \mathrm{Lift}_{\rhobar}^{\theta}(S)$.  We need to show that there exists $\rho_R \in \mathrm{Lift}_{\rhobar}^{\theta}(R)$ mapping to $\rho_S$ under the morphism $R \to S$. Since $G$ is $\cO$-smooth, we may assume that $\rho_S|_{I_F}=\theta_S$ by $G(S)$-conjugation. 
Since $\theta^{\phi}= \rho_0(\phi) \theta \rho_0(\phi)^{-1}$ and 
$\theta^{\phi}_S = \rho_S(\phi) \theta_S \rho_S(\phi)^{-1}$, $\rho_0(\phi)^{-1} \rho_S(\phi)$ centralizes $\theta_S(I_F)$.  Using the $\cO$-smoothness of $\cent{G}{\theta(I_F)}$, we obtain an element $c \in C_{G(R)}( \theta(I_F))$ lifting $\rho_0(\phi)^{-1} \rho_S(\phi)$.  Then $\rho_0(\phi) c$ lifts $\rho_S(\phi)$ and on $I_F$
\[
(\rho_0(\phi) c) \theta_R (\rho_0(\phi) c)^{-1} = \rho_0(\phi) (\rho_0|_{I_F}) \rho_0(\phi)^{-1} = \theta_R^{\phi}.
\]
Define $\rho_R: \gma{F} \to G(R)$ by $\rho_R|_{I_F}=\theta_R$ and $\rho_R(\phi)=\rho_0(\phi) c$, and observe that $\rho_R \in \mathrm{Lift}_{\rhobar}^{\theta}(R)$ and that it maps to $\rho_S$. 

Notice that the tangent space to $\mathrm{Def}_{\rhobar}^{\theta}$ equals 
$\Ker (\mathrm{H}^1(\gma{F}, \rhobar(\fg)) \to \mathrm{H}^1(I_F, \rhobar(\fg))$. The last claim then follows from inflation-restriction and the fact that $\dim \mathrm{H}^1(\gma{F}/I_F, \rhobar(\fg)^{I_F})=\dim \mathrm{H}^0(\gma{F}, \rhobar(\fg))$ \cite[Lemma 1]{washington95}.
\end{proof}

\begin{corollary}  \label{cor:liftingcondition}
Let $\rhobar : \Gamma_F \to G(k)$ be a continuous homomorphism.  Suppose there exists a decomposition type $(\fC,\Delta)$ over $\cO$ adapted to $\rhobar(\Lambda_F)$, and that $p$ is good for $(\fC,\Delta)$.
Then $\mathrm{Lift}_{\rhobar}^{\mr}$ is a formally smooth lifting condition. Moreover, the corresponding deformation condition
$\mathrm{Def}_{\rhobar}^{\mr}$ has dimension $\dim_{k} \mathrm{H}^0(\gma{F}, \rhobar(\fg))$.  Equivalently, $\Spf R^{\mr,\square}_{\rhobar}$ is $\cO$-formally smooth of relative dimension $\dim_k G_k$.
\end{corollary}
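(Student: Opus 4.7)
The plan is to recognize this corollary as a direct packaging of the preceding results: the existence of a canonical $\theta$ from Corollary \ref{cor:unique theta} and the general formalism of Theorem \ref{thm:liftingcondition}. First, I would invoke Corollary \ref{cor:unique theta} to obtain the (unique up to $\widehat{G}(\cO)$-conjugacy) lift $\theta : I_F \to G(\cO)$ of $\rhobar|_{I_F}$ whose associated $\omega$ sends $\sigma$ to a pure unipotent, along with its extension to a continuous $\rho_0 : \Gamma_F \to G(\cO)$. Since the hypotheses on $p$ and on the existence of the decomposition type are built into both corollaries, these inputs are available.

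Next, I would verify the two hypotheses of Theorem \ref{thm:liftingcondition}. The extension of $\theta$ to $\Gamma_F$ is exactly $\rho_0$. For the $\cO$-smoothness of $\cent{G}{\theta(I_F)}$, I apply Lemma \ref{lem:centralizer of theta} with $z=1$ (which is certainly central in $\fC$), so that $\cent{G}{\theta(I_F)} = \cent{\fC}{u}$, where $u \in \fC^0(\cO)$ is a pure fiberwise unipotent. Since $\fC$ is weakly reductive by Corollary \ref{cor:double-centralizer} and $p$ is pretty good for $\fC$ by Lemma \ref{lem:cglambda pretty good} (as $p$ is good for the decomposition type, hence pretty good for $G$), Theorem \ref{theorem:unipotentcentralizer} gives the desired smoothness.

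Applying Theorem \ref{thm:liftingcondition} then immediately yields that $\mathrm{Lift}_{\rhobar}^{\mr}$ is a liftable (equivalently, formally smooth) lifting condition and that the tangent space of $\mathrm{Def}_{\rhobar}^{\mr}$ has dimension $\dim_k \mathrm{H}^0(\Gamma_F, \rhobar(\fg))$. The final claim — that $\Spf R_{\rhobar}^{\mr,\square}$ is $\cO$-formally smooth of relative dimension $\dim_k G_k$ — follows from the standard relation between framed and unframed deformations: the tangent space of $R_{\rhobar}^{\mr,\square}$ over $\cO$ equals $\dim_k \mathrm{Def}_{\rhobar}^{\mr} + \dim_k G_k - \dim_k \mathrm{H}^0(\Gamma_F, \rhobar(\fg)) = \dim_k G_k$, and formal smoothness of the lifting functor transfers to formal smoothness of the representing ring.

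There is essentially no obstacle here, since all the substantive work has been done: the construction of $\theta$ in Section \ref{ss:lifting}, the uniqueness up to conjugacy via the conjugacy theorem for pure unipotents (\cite[Theorem 5.11]{cotner}), the smoothness of centralizers of pure unipotents (Theorem \ref{theorem:unipotentcentralizer}), and the abstract lifting/tangent-space criterion (Theorem \ref{thm:liftingcondition}). The corollary simply assembles these ingredients with the choice $z = 1$.
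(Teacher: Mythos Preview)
Your proposal is correct and takes essentially the same approach as the paper's proof: construct $\theta$ with $z=1$ via Section~\ref{ss:lifting} (equivalently, invoke Corollary~\ref{cor:unique theta}), use Lemma~\ref{lem:centralizer of theta} for the smoothness of $\cent{G}{\theta(I_F)}$, apply Theorem~\ref{thm:liftingcondition}, and then pass to the framed dimension. The only cosmetic difference is that the paper builds $\theta$ from the Section~\ref{ss:lifting} construction and afterwards identifies $\mathrm{Lift}_{\rhobar}^{\theta}$ with $\mathrm{Lift}_{\rhobar}^{\mr}$, whereas you start from Corollary~\ref{cor:unique theta}; these are the same.
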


\begin{proof}
Construct a lift $\rho$ as in Section \ref{ss:lifting} with $z=1$, and observe that $\cent{G}{\theta(I_F)}$ is $\cO$-smooth by Lemma~\ref{lem:centralizer of theta}.  
Applying Theorem~\ref{thm:liftingcondition} with $\theta=\rho|_{I_F}$, we obtain that $\mathrm{Lift}_{\rhobar}^{\theta}$ is formally smooth. Note that $\theta$ satisfies Corollary \ref{cor:unique theta} by construction (since $z=1$ in Proposition \ref{prop:omega}), and so $\mathrm{Lift}_{\rhobar}^{\theta}=\mathrm{Lift}_{\rhobar}^{\mr}$. 
Finally, note that adding a framing increases the relative dimension by $\dim_k G_k$ minus the dimension of the automorphisms compatible with the $\Gamma_F$-action, $\dim_k H^0(\Gamma_F,\rhobar(\fg))$.
\end{proof}

\begin{remark}
Let $\nu_R : G \to G^{\mathrm{ab}}=G/\sD(G)$ be the quotient map (see Proposition \ref{prop:abelianization}).
We could also formulate a variant of the deformation condition where the morphism $\nu_R \circ \rho_R : \Gamma_F \to G(R) \to G^{\mathrm{ab}}(R)$ is a fixed lift of $\nu_k \circ \rhobar$.
This generalizes adding the requirement that the determinant of the lift be fixed when $G = \GL_n$.
\end{remark}

\begin{remark}
The deformation conditions $\mathrm{Def}_{\rhobar}^{\theta}$ where $\theta$ has different inertial types should be of interest when investigating $\ell \neq p$ versions of the Breuil-M\'{e}zard conjecture. Lifts where $C_G(\theta(I_F))$ is $\cO$-smooth give formally smooth components of the lifting ring $\Spf R^{\square}_{\rhobar}$.
For example, we may take $\theta=\theta_z$ for $z \neq 1$ as in Lemma \ref{lem:centralizer of theta}. 
\end{remark}

We can now prove our main theorem from the introduction. Recall that $G$ is a weakly reductive group defined over $\cO$.

\begin{theorem} \label{thm:main theorem}
Let $\rhobar : \Gamma_F \to G(k)$ be a continuous homomorphism. Suppose that $p$ is large enough relative to the root datum of $G$ (the bound can be made effective). 
Then $\mathrm{Lift}_{\rhobar}^{\mr}$ is a formally smooth lifting condition. Moreover, the corresponding deformation condition
$\mathrm{Def}_{\rhobar}^{\mr}$ has dimension $\dim_{k} \mathrm{H}^0(\gma{F}, \rhobar(\fg))$.  Equivalently, $\Spf R^{\mr,\square}_{\rhobar}$ is $\cO$-formally smooth of relative dimension $\dim_k G_k$.
\end{theorem}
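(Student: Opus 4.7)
The plan is to reduce to Corollary~\ref{cor:liftingcondition}, which delivers exactly the same conclusion under the additional hypotheses that $(\fC,\Dt)$ is a decomposition type adapted to $\rhobar(\Ld_F)$ and that $p$ is good for $(\fC,\Dt)$ in the sense of Definition~\ref{def:goodprime}. The task is therefore to produce such a decomposition type and to verify the three items of Definition~\ref{def:goodprime} under an explicit lower bound on $p$ depending only on the root datum of $G$, as asserted in Remark~\ref{rmk:bound for G}.

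First I would set $\Lbar \colonequals \rhobar(\Ld_F)$. Because $\Ld_F$ was chosen as the kernel of a surjection $I_F \twoheadrightarrow \Z_p$, the group $\Lbar$ is finite of prime-to-$p$ order. Provided $p$ is pretty good for $G$, Proposition~\ref{prop:existence of decomp type} then produces a decomposition type $(\fC,\Dt)$ over $\cO$ adapted to $\Lbar$, with $\fC = \cent{G}{\Ld}$ for some lift $\Ld \subset G(\cO)$ of $\Lbar$ and $\Dt = \cent{G}{\fC}$. This immediately gives item~(1) of Definition~\ref{def:goodprime}.

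Items~(2) and~(3) are then verified separately. For~(2), Remark~\ref{rmk:pretty good} reduces smoothness of $Z(\sD(\Dt^{0}))_k$ to the statement that $p$ does not divide $\#\pi_1(\sD(\Dt^{0}))$. The connected reductive group $\Dt^{0}$ is the identity component of the centralizer in $G$ of the weakly reductive group $\fC$, so its root datum is controlled by closed sub-root-systems of that of $G^0$; the pretty-goodness of $p$ for $G$, unpacked via Definition~\ref{def:prettygoodprimes}, forces $X/\Z\Phi'$ and $Y/\Z{\Phi'}^{\vee}$ to be $p$-torsion-free for every subset $\Phi' \subset \Phi$, which is exactly what is needed to rule out $p$-torsion in $\pi_1(\sD(\Dt^{0}))$. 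For~(3) I would invoke Proposition~\ref{prop:p not divide weyl} with $H = \Dt_k$: the conclusion $p \nmid \#\nlz{G_k}{\Dt_k}/\fC_k \Dt_k$ is automatic once $p \nmid |W(G)|$ and $p \geq |\Dt/\Dt^{0}|$. The Weyl group order depends only on the root datum, while $|\Dt/\Dt^{0}|$ admits an effective bound in terms of $G$ alone because $\Dt$ is a double centralizer in $G$ of a finite prime-to-$p$ subgroup, and over $\kbar$ such double centralizers fall into finitely many conjugacy classes of pseudo-Levi-type subgroup schemes whose component groups are combinatorial invariants of the root datum.

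Collecting these constraints yields an explicit lower bound on $p$ depending only on the root datum of $G$, under which all three items of Definition~\ref{def:goodprime} hold; Corollary~\ref{cor:liftingcondition} then delivers the formal smoothness of $\mathrm{Lift}_{\rhobar}^{\mr}$ and the dimension $\dim_k \mathrm{H}^0(\Gamma_F,\rhobar(\fg))$ of $\mathrm{Def}_{\rhobar}^{\mr}$, equivalently the $\cO$-formal smoothness of $\Spf R^{\mr,\square}_{\rhobar}$ of relative dimension $\dim_k G_k$. The delicate step will be giving an explicit dependence of the bound on the root datum in item~(2), which requires tracking how $\pi_1(\sD(\Dt^{0}))$ is controlled by the subsystems of $\Phi(G)$ that can actually arise as root systems of centralizers; once that book-keeping is done, the remaining pieces are formal consequences of results already established in the paper.
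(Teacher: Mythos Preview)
Your overall strategy is exactly the paper's: reduce to Corollary~\ref{cor:liftingcondition} by producing a decomposition type via Proposition~\ref{prop:existence of decomp type} and then verifying the three items of Definition~\ref{def:goodprime}. Item~(1) is indeed immediate. However, your arguments for items~(2) and~(3) have gaps.

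For item~(2), the sentence ``Remark~\ref{rmk:pretty good} reduces smoothness of $Z(\sD(\Dt^{0}))_k$ to $p \nmid \#\pi_1(\sD(\Dt^{0}))$'' is incorrect: Remark~\ref{rmk:pretty good} gives a characterization of pretty-goodness, not an equivalence between smoothness of the center and $p$-torsion in $\pi_1$. Indeed, $\SL_p$ has trivial $\pi_1$ but non-smooth center $\mu_p$, while $\PGL_p$ has $\pi_1 = \bZ/p$ and trivial (hence smooth) center. What you actually need is $p \nmid \#Z(\sD(\Dt^0))$. The paper bypasses any analysis of the root system of $\Dt^0$ with the elementary observation that for any connected semisimple group $H$, every prime dividing $|Z(H)|$ is at most $\rank H + 1$ (check this on simply-connected simple factors); since $\rank \sD(\Dt^0) \leq \rank \sD(G^0)$, requiring $p > \rank \sD(G^0)+1$ suffices. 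Your attempt to control the root datum of $\Dt^0$ via closed subsystems of $\Phi(G)$ is not justified: $\Dt^0$ is a centralizer of a weakly reductive group, not of a torus, so its root system need not be a closed subsystem of $\Phi(G)$.

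For item~(3), applying Proposition~\ref{prop:p not divide weyl} with $H = \Dt_k$ is valid (by Lemma~\ref{lem:martin} it gives the same Weyl group as $H = \fC_k$), but you then need an effective bound on $|\pi_0 \Dt_k|$, and your justification (``double centralizers fall into finitely many conjugacy classes of pseudo-Levi-type subgroup schemes'') is neither precise nor proved in the paper. The paper instead takes $H = \fC_k$, so that the required bound is on $|\pi_0 C_{G_k}(\Lbar)|$, and invokes Theorem~\ref{thm:bound nonconnected} from the appendix, which bounds $|\pi_0 C_G(\Lambda)|$ for \emph{solvable} finite prime-to-$p$ subgroups $\Lambda$; this applies because $\Lbar = \rhobar(\Lambda_F)$ is a quotient of the prosolvable group $\Lambda_F$. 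That appendix result (ultimately resting on Liebeck's theorem) is the genuine content behind the effective bound, and your proposal does not supply a substitute.
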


\begin{proof}
By Proposition \ref{prop:existence of decomp type}, if $p$ is pretty good for $G$, then there exists a decomposition type $(\fC,\Delta)$ over $\cO$ adapted to $\Lbar \colonequals \rhobar(\Lambda_F)$. We need to ensure that $p$ is good for $(\fC,\Delta)$ in the sense of Definition \ref{def:goodprime}.
Condition (1) is trivially satisfied.  If $p>\rank \sD(G^0) +1$, condition (2) follows as well. In fact, for any connected semisimple group $H$ and any prime $p$ dividing the order of $Z(H)$, $p \leq \rank H +1$. To check this,   
we may assume that $H$ is simply-connected, and hence it is a direct product of simple, simply-connected groups, in which case the claim follows from a simple case-checking.  
It remains to consider Definition \ref{def:goodprime}, (3). By Proposition \ref{prop:p not divide weyl}, this will hold if $p \nmid |W_G|$ and $p>|\pi_0 \fC_k|=|\pi_0 \cent{G_k}{\Lbar}|$. Theorem \ref{thm:bound nonconnected} gives a bound for the last quantity which depends only on $G$.
The theorem now follows from Corollary \ref{cor:liftingcondition}. 
\end{proof}

\begin{remark} \label{rmk:bound for G}
By the proof of the above theorem, the condition that $p$ is large enough relative to the root datum of $G$ is equivalent to the following: 
\begin{itemize}
    \item $p$ is pretty good for $G$ in the sense of Definition \ref{def:prettygoodprimes}.
    \item $p>\rank \sD(G^0) +1$.
    \item $p$ does not divide the order of $W_G$.
    \item $p$ is larger than the constant in Theorem \ref{thm:bound nonconnected}. 
\end{itemize}

The last condition is explicit but not always pleasant.  To illustrate, notice that the constant in Theorem \ref{thm:bound nonconnected} must be at least the number of components of the centralizer of any particular prime-to-$p$ solvable subgroup of $G_k$.  

 When $G = \GL_n$, Schur's lemma shows that centralizer of a completely reducible subgroup is a product of general linear groups.  Hence the component group is always trivial.

 For the exceptional group of type $G_2$, the lower bound for $p$ is 72, see Example \ref{example:g2 bound}.

 When $G = \PGL_n$, there is an example of a subgroup whose centralizer is finite of order $n^2$ \cite[Examples(3)]{liebeck}.  Similarly, when $G = \Sp_{2n}$ there is an example of a subgroup whose centralizer is finite of order $2^n$ \cite[Examples(2)]{liebeck}.

In particular, for $G = \Sp_{2n}$ the last example shows that we must require $p>2^n$.  This is far from optimal: the techniques from \cite{Booher19} work when $p>2n$.
\end{remark}

\begin{remark}
Note that the last bulleted point in Remark \ref{rmk:bound for G} is needed to ensure Definition \ref{def:goodprime}, (3) for the decomposition type $(\fC, \Dt)$ adapted to the residual representation $\rhobar$. This in turn implies that $\rhobar(I_F) \subset \fC(k) \Dt(k)$, which is the starting point of the argument in Section \ref{ss:lifting}. Currently we do not know how to construct lifts of $\rhobar$ without this condition. 
\end{remark}

\begin{remark}\label{remark:dhkm-interpretation}
    A variant of the arguments in this section can be used to show the following mild strengthening of Theorem~\ref{thm:main theorem}. Maintain all of the assumptions on $F$, $\cO$, $G$, and so on. Let $W$ be the Weil group of $F$, and let $W^0$ be the ``discretized version" from \cite{DHKM}, obtained by choosing a topological generator of tame intertia. Let $\sH_W$ be the moduli space $\underline{\Hom}_{\cO\textrm{-}\rm{gp}}(W^0, G)$, which is representable by \cite{DHKM}. Moreover, \cite[Theorem 1.5]{DHKM} shows that the $p$-adic formal completion of $\sH_W$ represents the functor on $p$-adically complete (and separated) $\cO$-algebras given by
    \[
    R \mapsto \Hom_{\mathrm{cts}}(W, G(R)).
    \]
    There is a similar functor $\sH_{\Lambda}$, obtained from $\sH_W$ by replacing $W$ by $\Lambda_F$. Attached to $\sH_{\Lambda}$ are the universal centralizer $\fC$ and the universal double centralizer $\Delta$.

    For any continuous $f: W \to G(R)$ as above, the restriction $f|_{I_F}$ factors uniquely as $f|_{I_F} = \tau \cdot \omega$, where $\tau: I_F \to \Delta(R)$ is the unique tame extension of $f|_{\Lambda_F}$ (which exists by a variant of Proposition~\ref{prop:extension}) and $\omega: I_F/\Lambda_F \to \fC(R)$ is a homomorphism. Let $\sH_W^{\rm{pure}}$ be the subfunctor of $\sH_W$ with $\sH_W^{\rm{pure}}(R)$ consisting of those $f$ as above such that $\omega(\sigma)$ is pure fiberwise unipotent, where $\sigma$ is a generator of the pro-$p$ part of $I_F$. (If $R$ is not reduced, ``pure fiberwise unipotent" should be interpreted to mean that $f(\sigma)$ is fppf-locally on $R$ conjugate to a pure fiberwise unipotent section of $G(\cO')$, for some finite extension $\cO'$ of $\cO$.)
    
    The arguments of this section can be used to show that $\sH_W^{\rm{pure}}$ is formally smooth and the inclusion map $\sH_W^{\rm{pure}} \to \sH_W$ gives a stratification of $\sH_W$ into smooth pieces, each of which is open in an irreducible component of $\sH_W$ (for dimension reasons coming from Theorem~\ref{thm:main theorem}). Because of this openness statement, it follows that every point $f$ in the special fiber of $\sH_W$ lies in an irreducible component $X$ such that $f$ is a smooth point of $X_{\rm{red}}$.
\end{remark}

\appendix

\section{Some group theory}
The following Theorem is due to Liebeck \cite{liebeck}, which originated in an email correspondence between the third author (S.T.) and Martin Liebeck. S.T. would like to thank Martin Liebeck for his interest in our question and for answering it, which allows us to obtain an effective lower bound for $p$ in Theorem \ref{thm:intromain}.

\begin{thm} \label{thm:liebeck}
Let $G$ be a connected semisimple group over an algebraically closed field $k$ and let $H$ be a $G$-irreducible subgroup. Then there is a constant $c \leq 197$ such that $\# \cent{G}{H} \leq c^{\rank G}\# Z(G)$.
\end{thm}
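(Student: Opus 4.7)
The plan is to follow Liebeck's strategy in \cite{liebeck} via standard reductions and a case-by-case analysis over the simple adjoint types.

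First I would show that $C_G(H)$ is in fact finite, so that the cardinality bound is meaningful. Since $H$ is $G$-irreducible it is $G$-completely reducible (e.g.\ by \cite[Property 4]{Serre03}), so $C_G(H)^0$ is reductive. Any nontrivial torus $T \subset C_G(H)^0$ would centralize $H$ and place it inside the Levi $C_G(T)$, which is a \emph{proper} Levi because $Z(G)^0 = 1$ (as $G$ is semisimple). This contradicts $G$-irreducibility, so $C_G(H)^0$ has no nontrivial torus and is trivial; hence $C_G(H)$ is finite and equal to its component group.

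Next I would perform standard reductions to the case of simple adjoint $G$. The central isogeny $\pi: G \to G^{\mathrm{ad}}$ has kernel $Z(G) \subseteq C_G(H)$, so the induced map $C_G(H) \to C_{G^{\mathrm{ad}}}(\pi(H))$ has kernel exactly $Z(G)$, giving $\#C_G(H) \leq \#Z(G) \cdot \#C_{G^{\mathrm{ad}}}(\pi(H))$; and since parabolics correspond under $\pi$, $\pi(H)$ is $G^{\mathrm{ad}}$-irreducible. Writing $G^{\mathrm{ad}} = \prod_i G_i$ as a product of simple adjoint factors, the projection $H_i$ of $\pi(H)$ to $G_i$ is $G_i$-irreducible (proper parabolics of a product are products in which at least one factor is proper), and a direct functorial computation gives $C_{G^{\mathrm{ad}}}(\pi(H)) = \prod_i C_{G_i}(H_i)$ even when $\pi(H)$ is embedded diagonally. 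Since $\rank G^{\mathrm{ad}} = \sum_i \rank G_i$, it suffices to prove $\#C_{G_i}(H_i) \leq c^{\rank G_i}$ for each simple adjoint factor.

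For classical $G$ I would argue using the natural module $V$ of the simply connected cover $\tilde G$. In type $A$ with $G = \mathrm{PSL}(V)$, $G$-irreducibility means $H$ acts irreducibly on $V$, so Schur gives $C_{\GL(V)}(H) = k^\times$ and hence $C_G(H) = 1$. In types $B$, $C$, $D$, decompose $V = \bigoplus_j V_j^{\oplus m_j}$ into $H^0$-isotypic pieces; the form on $V$ pairs the $V_j$ into self-dual summands and dual pairs, and the centralizer of $H^0$ in $\tilde G$ is realized as a product of small classical groups on the multiplicity spaces intersected with a determinant/spinor condition. Its component group is bounded polynomially in $\rank G$, and the passages $H^0 \rightsquigarrow H$ and $\tilde G \rightsquigarrow G$ are controlled by the action of $H/H^0$ on the isotypic decomposition, which is bounded exponentially in the number of isotypic components and hence in $\rank G$. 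This yields a small absolute constant $c_0$ with $\#C_G(H) \leq c_0^{\rank G}$ in the classical types.

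The main obstacle is the exceptional types $G \in \{G_2, F_4, E_6, E_7, E_8\}$, where no natural module is available. Here I would invoke the classification of connected reductive $G$-irreducible subgroups of the exceptional groups, due to Dynkin in characteristic zero and extended to positive characteristic by Liebeck--Seitz, Litterick, Thomas, and others. Since the rank is at most $8$ and the number of conjugacy classes of such subgroups is finite, one computes centralizers case by case and takes the maximum of $\#C_G(H)$; the constant $c \leq 197$ presumably arises from a specific maximum component group in $E_8$. This step is the genuine difficulty: centralizers of $G$-irreducible subgroups in $E_8$ can have rather intricate component groups, and these must be controlled uniformly across all characteristics, where the Liebeck--Seitz classification is the essential but delicate input.
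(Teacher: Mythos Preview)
The paper does not give its own proof of this statement: it records the theorem as a result of Liebeck and cites \cite{liebeck}, noting only that $c \leq 16$ when all simple factors are classical and that \cite[Lemma~2.5]{liebeck} gives precise constants for the exceptional types. So there is no argument in the paper to compare against; your outline is an attempt to reconstruct Liebeck's proof.

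Your reductions (finiteness of $\cent{G}{H}$, passage to the adjoint group, splitting over simple factors) are correct, and the overall strategy of a type-by-type analysis is indeed Liebeck's. However, your treatment of type $A$ contains a genuine error. You claim that for $G = \mathrm{PSL}(V)$, Schur's lemma gives $\cent{G}{H} = 1$. But $H$ is a subgroup of $\mathrm{PSL}(V)$, not of $\GL(V)$; it is only the preimage $\widetilde{H} \subset \SL(V)$ that acts on $V$, and Schur yields $C_{\GL(V)}(\widetilde{H}) = k^\times$, hence $C_{\SL(V)}(\widetilde{H}) = Z(\SL(V))$. This does \emph{not} force $\cent{\mathrm{PSL}(V)}{H} = 1$: an element $\overline{g} \in \mathrm{PSL}(V)$ centralizes $H$ iff $ghg^{-1}h^{-1} \in Z(\SL(V))$ for all $h \in \widetilde{H}$, a strictly weaker condition than $g$ centralizing $\widetilde{H}$. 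Concretely, with $n = \dim V$, the images of the cyclic shift matrix and of $\mathrm{diag}(1,\zeta,\dots,\zeta^{n-1})$ generate a $G$-irreducible subgroup $H \cong (\bZ/n\bZ)^2$ of $\PGL_n$ with $\cent{G}{H} = H$ of order $n^2$; this is precisely the example from \cite{liebeck} recalled in Remark~\ref{rmk:bound for G}. So already in type $A$ one needs $c \geq 4$ (attained at $n=2$), and the argument must account for the enlargement of centralizers under the adjoint quotient. The same subtlety affects your sketch for types $B$, $C$, $D$: bounding the centralizer in the isometry group does not immediately bound it in the adjoint group, and this is where the actual work in the classical case lies.
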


By \cite{liebeck}, the constant $c$ is at most 16 if all the simple factors of $G$ are classical, and \cite[Lemma 2.5]{liebeck} gives precise bounds for $c$ when $G$ is of exceptional type. 

\begin{cor} \label{cor:liebeck}
Let $G$ be a connected reductive group over an algebraically closed field $k$ and let $H$ be a $G$-irreducible subgroup. Then 
$\# \pi_0 \cent{G}{H} \leq c^{\rank G^{\ad}} \# \pi_0 Z(G)$.
\end{cor}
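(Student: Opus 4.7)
The plan is to reduce from the reductive group $G$ to the adjoint semisimple group $G^{\mathrm{ad}} = G/Z(G)$, to which Theorem~\ref{thm:liebeck} applies directly. Let $\pi \colon G \to G^{\mathrm{ad}}$ be the quotient map and $\overline{H} = \pi(H)$. Using the standard bijection between R-parabolic subgroups of $G$ and those of $G^{\mathrm{ad}}$ (they both correspond to parabolic subsets of the common root system), $G$-irreducibility of $H$ transfers to $G^{\mathrm{ad}}$-irreducibility of $\overline{H}$. Since $G^{\mathrm{ad}}$ is adjoint semisimple (so $Z(G^{\mathrm{ad}}) = 1$) and has rank $\mathrm{rank}\, G^{\mathrm{ad}}$, Theorem~\ref{thm:liebeck} gives
\[
\# C_{G^{\mathrm{ad}}}(\overline{H}) \leq c^{\mathrm{rank}\, G^{\mathrm{ad}}}.
\]

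Next, I would identify the identity component of $C_G(H)$. Obviously $Z(G)^0 \subseteq C_G(H)^0$. Conversely, $\pi(C_G(H)^0)$ is a connected subgroup of the finite group $C_{G^{\mathrm{ad}}}(\overline{H})$, hence trivial, so $C_G(H)^0 \subseteq \ker \pi = Z(G)$, and by connectedness $C_G(H)^0 \subseteq Z(G)^0$. Thus $C_G(H)^0 = Z(G)^0$, which makes the component group $\pi_0 C_G(H) = C_G(H)/Z(G)^0$ manageable.

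Finally, applying $\pi$ and using $Z(G) = \ker \pi \cap C_G(H)$ (as $Z(G) \subseteq C_G(H)$) yields the short exact sequence
\[
1 \to Z(G)/Z(G)^0 \to C_G(H)/Z(G)^0 \to \pi(C_G(H)) \to 1,
\]
with $\pi(C_G(H)) \subseteq C_{G^{\mathrm{ad}}}(\overline{H})$. Taking orders gives
\[
\# \pi_0 C_G(H) = \# \pi_0 Z(G) \cdot \# \pi(C_G(H)) \leq \# \pi_0 Z(G) \cdot c^{\mathrm{rank}\, G^{\mathrm{ad}}},
\]
which is the desired bound.

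There is no serious obstacle once Theorem~\ref{thm:liebeck} is in hand; the only substantive point is the identification $C_G(H)^0 = Z(G)^0$, and this is an immediate consequence of the finiteness of $C_{G^{\mathrm{ad}}}(\overline{H})$ guaranteed by that theorem. All remaining steps are formal diagram-chasing with the central isogeny $G \to G^{\mathrm{ad}}$.
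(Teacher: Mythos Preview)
Your proof is correct and follows essentially the same approach as the paper: both pass to $G^{\mathrm{ad}}$ via the exact sequence $1 \to Z(G) \to C_G(H) \to C_{G^{\mathrm{ad}}}(\overline{H})$, identify $C_G(H)^0 = Z(G)^0$, and count. The only cosmetic difference is that the paper invokes $G$-irreducibility directly for $C_G(H)^0 = Z(G)^0$, whereas you deduce it from the finiteness of $C_{G^{\mathrm{ad}}}(\overline{H})$; either justification works.
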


\begin{proof}
Consider the exact sequence
\[ 1 \to Z(G) \to \cent{G}{H} \to \cent{G^{\ad}}{\overline{H}} \] 
where $\overline{H} \subset G^{\ad}$ is the image of $H$ under the natural map $G \to G^{\ad}$. 
Note that $\overline{H}$ is $G^{\ad}$-irreducible, so Theorem \ref{thm:liebeck} implies that 
$\# \cent{G^{\ad}}{\overline{H}} \leq c^{\rank G^{\ad}}$. On the other hand, since $H$ is $G$-irreducible, $\cent{G}{H}^0 = Z(G)^0$, so the above exact sequence implies that $\pi_0 \cent{G}{H}/\pi_0 Z(G)$ injects into $\cent{G^{\ad}}{\overline{H}}$. Therefore, 
$\# \pi_0 \cent{G}{H} \leq \# \pi_0 Z(G) c^{\rank G^{\ad}}$.
\end{proof}

\begin{lem}\label{lem:injection into weyl groups}
Let $H$ be a closed, completely reducible subgroup of $G$. Choose a maximal torus $S$ of $K \colonequals C_G(H)^0$ and let $L=C_G(S)$. 
Denote by $W(G,L)$ (resp. $W(K,S)$) the quotient $N_G(L)/L$ (resp. $N_{K}(S)/S$); the latter can be naturally identified as a subgroup of the former.
Then there is a canonical injection
$$
N_G(H)/N_L(H)\cdot K \hookrightarrow N_{W(G,L)}(W(K,S))/W(K,S).
$$
\end{lem}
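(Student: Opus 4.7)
My plan is to define the injection explicitly by a group-theoretic construction, then verify the necessary properties. Since $H$ is completely reducible in $G$, the group $K = C_G(H)^0$ is reductive, so the maximal torus $S$ of $K$ satisfies $C_K(S) = S$; together with $L = C_G(S)$ this gives $N_K(S) \cap L = S$, which identifies $W(K,S) = N_K(S)/S$ as a subgroup of $W(G,L) = N_G(L)/L$ via the inclusion $N_K(S) \subset N_G(S) \subset N_G(L)$ (using $N_G(S) \subset N_G(C_G(S)) = N_G(L)$).

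To define the map $\varphi \colon N_G(H) \to N_{W(G,L)}(W(K,S))/W(K,S)$, let $g \in N_G(H)$. Then $g$ normalizes $C_G(H)$, hence $K$, so $gSg^{-1}$ is a maximal torus of $K$. By conjugacy of maximal tori in the connected reductive $K$, there exists $k \in K$ such that $kg \in N_G(S) \subset N_G(L)$. I define $\varphi(g)$ to be the class of $[kg] \in W(G,L)$ modulo $W(K,S)$. For this to land in the normalizer $N_{W(G,L)}(W(K,S))$, note that $kg$ normalizes both $K$ and $S$ (and hence $L$), so conjugation by $kg$ on $W(G,L)$ preserves the image of $N_K(S)$. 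Well-definedness in $k$ is immediate: any two valid choices $k_1, k_2$ satisfy $k_2 k_1^{-1} \in N_K(S) \cap K = N_K(S)$, whose class in $W(G,L)$ lies in $W(K,S)$.

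Next I would verify that $\varphi$ is a homomorphism: given $g_1, g_2$ with $k_i g_i \in N_G(S)$, the element $k_3 \colonequals k_1 (g_1 k_2 g_1^{-1})$ lies in $K$ (since $g_1 \in N_G(K)$), and $k_3 (g_1 g_2) = (k_1 g_1)(k_2 g_2) \in N_G(S)$; reducing modulo $L$ then modulo $W(K,S)$ gives $\varphi(g_1)\varphi(g_2)$.

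The key step is the kernel computation, which identifies $\ker \varphi = N_L(H) \cdot K$. If $\varphi(g) = 1$, choose $k \in K$ with $kg \in N_G(S)$ and write $kg = n \ell$ with $n \in N_K(S)$ and $\ell \in L$; then $\ell = (k^{-1}n)^{-1} g$, and since $k^{-1}n \in K \subset C_G(H) \subset N_G(H)$, I obtain $\ell \in L \cap N_G(H) = N_L(H)$, hence $g = (k^{-1}n)\ell \in K \cdot N_L(H)$. Conversely, for $g = c\ell$ with $c \in K$ and $\ell \in N_L(H)$, one has $c^{-1} g = \ell \in L$, so $\varphi(g) = 1$. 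I expect the main subtlety to be keeping track of which subgroups each element lives in (the interplay $K \cap L = S$, $N_K(S) \subset N_G(L)$, $K \subset C_G(H)$), rather than any deep obstacle; once the construction and checks are laid out carefully, the injection follows formally.
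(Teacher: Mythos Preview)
Your proposal is correct and follows essentially the same argument as the paper: define the map by adjusting $g \in N_G(H)$ by an element of $K$ (via conjugacy of maximal tori in $K$) to land in $N_G(S) \subset N_G(L)$, then reduce modulo $W(K,S)$, and finally compute the kernel as $K \cdot N_L(H)$. The only cosmetic difference is that you multiply on the left ($kg$) while the paper multiplies on the right ($nc$); the verifications of well-definedness, the homomorphism property, and the kernel are otherwise identical in substance.
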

\begin{proof}
Observe that $Z_L^0=S$, which implies $N_G(L)=N_G(S)$. So $N_G(L)/L=\nlz{G}{S}/\cent{G}{S}$ contains $\nlz{K}{S}/\cent{K}{S}=N_{K}(S)/S$ as a subgroup. 
Any $n \in N_G(H)$ normalizes $K=C_G(H)^0$, so $S^n$ is another maximal tori of $K$. By the conjugacy of maximal tori, there is an element $c \in K$ such that $(S^n)^c=S$, i.e. $nc \in N_G(S)=N_G(L)$. For any element $w \in N_G(L)$, denote by $\bar w$ its image in $W(G,L)$. Define a map 
\[
\varphi: N_G(H) \to N_{W(G,L)}(W(K,S))/W(K,S)
\]
by $\varphi(n)=\overline{nc}W(K,S)$. 
First note that $n \mapsto \overline{nc} W(K,S)$ is a well-defined map from $N_{G}(H)$ to the quotient set $W(G,L)/W(K,S)$, and that $nc$ normalizes $\nlz{K}{S}$, which implies $\overline{nc}$ normalizes $W(K,S)$. It follows that $\varphi$ defined above is a well-defined map.
We check that $\varphi$ is a group homomorphism. Let $n_1, n_2 \in N_{G}(H)$ with $c_1, c_2 \in K$ such that $n_1c_1, n_2c_2$ both normalize $S$. So $\varphi(n_i)=\overline{n_ic_i}W(K,S)$ for $i=1,2$. 
On the other hand, the product $(n_1c_1)(n_2c_2)$ normalizes $S$ and we have $(n_1c_1)(n_2c_2)=(n_1n_2)(n_2^{-1}c_1n_2)c_2$ with $(n_2^{-1}c_1n_2)c_2 \in K$, so $\varphi(n_1n_2)=\overline{(n_1n_2)(n_2^{-1}c_1n_2)c_2} W(K,S)=\overline{(n_1c_1)(n_2c_2)}W(K,S)=\overline{n_1c_1}W(K,S)\overline{n_2c_2}W(K,S)=\varphi(n_1)\varphi(n_2)$. 
We now compute $\Ker \varphi$. It consists of $n \in \nlz{G}{H}$ for which 
$\overline{nc} \in W(K,S)=\nlz{K}{S}/S=N_{K}(S)/C_{K}(S)=N_{K}(S)/N_{K}(S) \cap C_{G}(S)=N_{K}(S) \cdot L/L$ for some $c \in K$. So $nc \in N_{K}(S) \cdot L$, which implies $n \in K \cdot L$. As $n$ normalizes $H$ and $K$ commutes with $H$, it follows that $n \in K \cdot \nlz{L}{H}$. On the other hand, if $n=b \cdot c \in \nlz{L}{H} \cdot K$, then $\varphi(n)=\overline{nc^{-1}}W(K,S)=\overline{b} W(K,S)=W(K,S)$, where the first equality holds since $S^{b}=S$ ($b \in L=\cent{G}{S}$), and the last equality holds since $b \in \nlz{L}{H} \subset L$ and hence $\overline b=\overline 1 \in W(G,L)$. Thus, $\Ker \varphi = K \cdot \nlz{L}{H} = \nlz{L}{H} \cdot K$ (the last equality holds since $\nlz{L}{H}$ normalizes $K=\cent{G}{H}^0$). 
\end{proof}

\begin{thm} \label{thm:bound cpnt groups}
Let $G$ be a connected reductive group over an algebraically closed field $k$ and let $H$ be a $G$-completely reducible subgroup. 
Then the size of $\pi_0 \cent{G}{H}$ is bounded by 
\[
c_G \colonequals \# W_G \cdot \sup_{L \subset G} \left\{ c^{\rank L^{\ad}} \cdot \# \pi_0 Z(L) \right\}
\]
where $L$ runs over the finitely many conjugacy classes of Levi subgroups of $G$.
\end{thm}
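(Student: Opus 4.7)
The plan is to combine the Weyl-group injection of Lemma~\ref{lem:injection into weyl groups} with the semisimple bound of Corollary~\ref{cor:liebeck} applied on a well-chosen Levi containing $H$.

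First I would set up the Levi. Since $H$ is $G$-completely reducible, $K \colonequals C_G(H)^0$ is reductive, so it admits a maximal torus $S$; let $L \colonequals C_G(S)$, which is a Levi subgroup of $G$ containing $H$ (since $H$ commutes with $K \supset S$). A standard consequence of $G$-complete reducibility (in the spirit of \cite{BMR05} and Serre's work on $G$-cr subgroups) is that $H$ is $L$-irreducible: any proper $R$-parabolic of $L$ containing $H$ would, together with the torus $S$, produce a destabilizing cocharacter of $G$, contradicting $G$-cr-ness. (Alternatively one can note that $L$ is a minimal Levi of $G$ containing $H$, again by $G$-cr.) Granting this, Corollary~\ref{cor:liebeck} applied to the pair $(L,H)$ yields
\[
\#\pi_0 C_L(H) \;\le\; c^{\rank L^{\mathrm{ad}}} \cdot \#\pi_0 Z(L).
\]

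Next I would bound $\#\pi_0 C_G(H) = \#\,C_G(H)/K$ by splitting it into two pieces via Lemma~\ref{lem:injection into weyl groups}. Restricting the injection of that lemma to $C_G(H) \subset N_G(H)$ gives an injection
\[
C_G(H)/\bigl(C_G(H)\cap N_L(H)\cdot K\bigr) \;\hookrightarrow\; N_{W(G,L)}(W(K,S))/W(K,S) \;\subset\; W(G,L),
\]
and an elementary check (if $g = bc$ with $b\in N_L(H)$, $c\in K$ centralizes $H$, then so does $b$) identifies $C_G(H)\cap N_L(H)\cdot K = C_L(H)\cdot K$. Since $W(G,L) = N_G(L)/L$ is a subquotient of $W_G = N_G(T)/T$ for any maximal torus $T\subset L$, the first factor is bounded by $\#W_G$. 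For the second factor, $C_L(H)\cdot K/K \cong C_L(H)/(C_L(H)\cap K)$ is a quotient of $\pi_0 C_L(H)$ because $C_L(H)\cap K \supseteq C_L(H)^0$, so its order is at most $c^{\rank L^{\mathrm{ad}}}\#\pi_0 Z(L)$ by the previous paragraph. Multiplying the two bounds and taking the supremum over Levi conjugacy classes produces the claimed $c_G$.

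The most delicate point is the justification that the particular Levi $L = C_G(S)$ arising from the lemma makes $H$ into an $L$-irreducible subgroup; once this is in hand, everything else is a bookkeeping exercise combining Lemma~\ref{lem:injection into weyl groups} with Corollary~\ref{cor:liebeck}. I would record this as a brief preliminary observation, referencing the relevant properties of $G$-complete reducibility from \cite{BMR05}, and then conclude by assembling the two bounds as above.
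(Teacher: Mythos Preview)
Your proposal is correct and follows essentially the same route as the paper: choose the Levi $L=C_G(S)$ from Lemma~\ref{lem:injection into weyl groups}, use $L$-irreducibility of $H$ to invoke Corollary~\ref{cor:liebeck} for the bound on $\#\pi_0 C_L(H)$, and bound the remaining quotient by the Weyl-group injection of the lemma. The paper's proof is terser (it simply asserts ``$H$ is an irreducible subgroup of $L$'' and records the two-step filtration of $\pi_0 C_G(H)$), while you spell out the identification $C_G(H)\cap N_L(H)\cdot K = C_L(H)\cdot K$ and the reason $W(G,L)$ has order at most $\#W_G$; one small imprecision is that $N_{W(G,L)}(W(K,S))/W(K,S)$ is a \emph{subquotient} of $W(G,L)$ rather than literally a subset, but the order bound you need is unaffected.
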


\begin{proof}
Let $L$ be as in Lemma \ref{lem:injection into weyl groups}. Note that $H$ is an irreducible subgroup of $L$. 
There is a natural surjection from $\pi_0(C_G(H))$ to $C_G(H)/C_L(H)\cdot C_G(H)^0$, the latter injects into the group on the left side of the inclusion in Lemma \ref{lem:injection into weyl groups}, and the kernel is a quotient of $\pi_0 C_L(H)$. The theorem now follows from Corollary \ref{cor:liebeck} and Lemma \ref{lem:injection into weyl groups}. 
\end{proof}

\begin{example} \label{example:g2 bound}
The constant $c_G$ is a very crude bound, which can be made much smaller for specific $G$ by going through the proof of Theorem \ref{thm:bound cpnt groups}. For example, for $G$ the exceptional group of type $G_2$, if $H$ is not $G$-ir, then the minimal Levi containing $H$ (unique up to conjugacy) is either a maximal torus or isomorphic to $\GL_2$, and $\pi_0 \cent{G}{H}$ injects into the group on the left side of the inclusion in Lemma \ref{lem:injection into weyl groups}, so $\#\pi_0 \cent{G}{H} \leq W_G=12$. Now if $H$ is $G$-ir, then $\pi_0 \cent{G}{H} \leq 8.5^2=72.25$ by Theorem \ref{thm:liebeck} and \cite[Lemma 2.5]{liebeck}. Thus, the bound $c_G$ can be improved to $72$ in this case. %
\end{example}

For the remaining of this section, we will generalize Theorem \ref{thm:bound cpnt groups} to non-connected reductive groups in the case that $H$ is solvable. In what follows, we make no attempt in optimizing the bounds for the component groups. 

\begin{lemma} \label{lem:cpnt group isogeny}
Let $f \colon G' \to G$ be a central isogeny of connected reductive groups over an algebraically closed field $k$, and let $\lambda'$ and $\lambda$ be compatible automorphisms of $G'$ and $G$, respectively. Then 
$\# \pi_0 \cent{G}{\lambda} \leq \# \pi_0 \cent{G'}{\lambda'} \cdot \# \ker f$.
\end{lemma}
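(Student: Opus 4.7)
The plan is to factor the analysis through the closed subgroup $\tilde H \colonequals f(\cent{G'}{\lambda'}) \subseteq \cent{G}{\lambda}$. The compatibility $f \circ \lambda' = \lambda \circ f$ implies that $f$ carries $\cent{G'}{\lambda'}$ into $\cent{G}{\lambda}$, so $\tilde H$ is a well-defined closed subgroup (the image of a homomorphism of algebraic groups being closed). For any closed inclusion $K \subseteq H$ of finite type group schemes over $k$, an elementary coset count using $K^0 \subseteq H^0$ yields $\# \pi_0 H \leq \# \pi_0 K \cdot [H(k) : K(k)]$; applied to $\tilde H \subseteq \cent{G}{\lambda}$, together with the fact that $\cent{G'}{\lambda'} \twoheadrightarrow \tilde H$ is surjective on component groups, this reduces the problem to establishing $[\cent{G}{\lambda}(k) : \tilde H(k)] \leq \# \ker f$.

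For the index bound I would use a Lang-style cocycle argument. Given $y \in \cent{G}{\lambda}(k)$, pick a lift $y' \in G'(k)$, which exists since $f$ is surjective on $k$-points. Because
\[
f\bigl(\lambda'(y')\, y'^{-1}\bigr) = \lambda\bigl(f(y')\bigr)\,f(y')^{-1} = \lambda(y)\,y^{-1} = 1,
\]
the element $\delta(y') \colonequals \lambda'(y')y'^{-1}$ lies in $\ker f$. The key point is that $f$ is a \emph{central} isogeny: replacing $y'$ by $y'z$ for $z \in \ker f$ modifies $\delta$ by $\lambda'(z) z^{-1}$, where centrality of $z$ in $G'$ is used to move $\lambda'(z) z^{-1}$ past $y'^{-1}$. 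Setting $B \colonequals \{\lambda'(z)z^{-1} : z \in \ker f\} \subseteq \ker f$, one obtains a well-defined map $\alpha : \cent{G}{\lambda}(k) \to \ker f / B$ which, again by centrality of $\ker f$, is a group homomorphism with kernel exactly $\tilde H(k) = f(\cent{G'}{\lambda'}(k))$. Hence $[\cent{G}{\lambda}(k) : \tilde H(k)] \leq \#(\ker f / B) \leq \# \ker f$, giving the desired inequality.

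The main obstacle is conceptual rather than computational: both the well-definedness and the multiplicativity of $\alpha$ rely essentially on centrality of $\ker f$ in $G'$, so the argument breaks down without the central isogeny hypothesis. A minor bookkeeping issue is that in bad characteristic the fixed-point schemes $\cent{G'}{\lambda'}$ and $\cent{G}{\lambda}$ need not be smooth, but $\pi_0$ is invariant under passage to the underlying reduced subscheme and all index computations take place on $k$-points, so this causes no real difficulty.
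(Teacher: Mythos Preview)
Your argument is correct and rests on the same key observation as the paper's proof: for any lift $y'$ of $y\in\cent{G}{\lambda}(k)$ the element $\lambda'(y')y'^{-1}$ lies in $\ker f$, and centrality of $\ker f$ makes this into a homomorphism. The only difference is organizational: the paper works on the $G'$-side with the preimage $f^{-1}(\cent{G}{\lambda})$ and the two exact sequences
\[
1 \to \cent{G'}{\lambda'} \to f^{-1}(\cent{G}{\lambda}) \xrightarrow{\,x\mapsto\lambda'(x)x^{-1}\,} \ker f,
\qquad
1 \to \ker f \to f^{-1}(\cent{G}{\lambda}) \to \cent{G}{\lambda} \to 1,
\]
whereas you work on the $G$-side with the image $\tilde H=f(\cent{G'}{\lambda'})$ and the quotient map $\alpha:\cent{G}{\lambda}(k)\to(\ker f)/B$. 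These are dual packagings of the same cocycle, and neither gains anything over the other.
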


\begin{proof}
Note that there is an exact sequence 
\[
1 \to \cent{G'}{\lambda'} \to f^{-1}(\cent{G}{\lambda}) \to \ker f
\]
where the rightmost map is $x \mapsto \lambda'(x) x^{-1}$. The component group of $f^{-1}(\cent{G}{\lambda})$ is thus an extension of a subgroup of $(\ker f)_{\mathrm{red}}$ by a quotient of the component group of $\cent{G'}{\lambda'}$. 

Moreover, there is an obvious exact sequence
\[
1 \to \ker f \to f^{-1}(\cent{G}{\lambda}) \to \cent{G}{\lambda} \to 1
\]
Thus the component group of $\cent{G}{\lambda}$ is a quotient of the component group of $f^{-1}(\cent{G}{\lambda})$. The lemma follows.  
\end{proof}

\begin{lemma} \label{lem:autom of tori}
Let $T$ be a split torus and let $\lambda$ be an automorphism of $T$ of order $n$. Then $\# \pi_0 \cent{T}{\lambda}$ divides $n^{\dim T}$.
\end{lemma}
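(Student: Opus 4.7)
The plan is to translate the statement into a question about a single integer matrix acting on a lattice, via the character-group dictionary for diagonalizable groups, and then use an explicit averaging-operator computation.

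Let $M = X^*(T) \cong \mathbb{Z}^r$ with $r = \dim T$, and let $A \in \mathrm{Aut}(M)$ be the automorphism induced by $\lambda$; since $\mathrm{Aut}(T) = \mathrm{Aut}(M)$ for a split torus, $A$ also has order $n$. As $T$ is commutative, $\cent{T}{\lambda}$ equals the kernel of the homomorphism $\lambda - 1 \colon T \to T$, $t \mapsto \lambda(t)t^{-1}$, whose effect on characters is $m \mapsto Am - m = (A-I)m$. Under the equivalence between diagonalizable groups and finitely generated abelian groups, this identifies $\cent{T}{\lambda}$ with the diagonalizable group scheme $D(M/(A-I)M)$. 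The identity component corresponds to the free quotient $M/(A-I)M$ modulo its torsion, so $\#\pi_0 \cent{T}{\lambda}$ divides $|(M/(A-I)M)_{\mathrm{tors}}|$ (with equality when $\mathrm{char}(k) \nmid n$; the inequality is all we need).

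The remaining task is purely $\mathbb{Z}$-linear: bound the torsion of the cokernel of $B := A - I$ on $M$. The key tool is the averaging operator
\[
S = I + A + A^2 + \cdots + A^{n-1},
\]
which satisfies $SB = BS = 0$ because $A^n = I$. I will show two things. First, if $x \in M$ is such that $mx \in BM$ for some nonzero integer $m$ (i.e.\ $x$ represents a torsion class in $\mathrm{coker}(B)$), then applying $S$ to $mx = By$ gives $mSx = 0$, and since $M$ is torsion-free, $Sx = 0$. Second, if $Sx = 0$ then $nx \in BM$: taking $z = -\sum_{k=0}^{n-1}(k+1) A^k x \in M$ and expanding $Bz = \sum_{k}(k+1)(A^{k+1} - A^k)x$ telescopically (using $A^n x = x$) yields $Bz = -nx + Sx = -nx$.

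Combining the two statements, the torsion subgroup of $M/BM$ is annihilated by $n$. Since $M/BM$ is generated by $r = \dim T$ elements, so is its torsion subgroup, and a finitely generated abelian group generated by at most $r$ elements and killed by $n$ has order dividing $n^r$. This gives the required divisibility. The only step requiring care is the explicit telescoping identity $Bz = -nx + Sx$; this is a short but error-prone manipulation of indices, and I regard it as the one genuine calculation in the argument.
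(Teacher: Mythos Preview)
Your proof is correct. The paper's own proof simply cites \cite[Lemma 1.2(1)]{dm18} without further argument, so your self-contained treatment via the character lattice and the averaging operator $S = I + A + \cdots + A^{n-1}$ is more informative. The telescoping identity $Bz = -nx + Sx$ checks out, and the passage from ``torsion killed by $n$'' to ``order divides $n^{\dim T}$'' via the bound on the number of generators is clean. One minor stylistic point: your parenthetical about equality when $\chara k \nmid n$ is accurate but unnecessary for the statement, so you could drop it.
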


\begin{proof}
This follows immediately from \cite[Lemma 1.2(1)]{dm18}.
\end{proof}

\begin{lemma} \label{lem:lambda bound}
Let $G$ be a connected reductive group over an algebraically closed field $k$. Let $\lambda$ be a semisimple automorphism of $G$ of order $n$. Then $\# \pi_0 \cent{G}{\lambda} \leq 4^{\rank \sD(G)} \cdot n^{\dim Z(G)}$. 
\end{lemma}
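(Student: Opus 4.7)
The plan is to reduce to two tractable cases—a torus and a simply connected semisimple group—by passing through two central isogenies and invoking Lemma \ref{lem:autom of tori} and Steinberg's connectedness theorem for semisimple automorphisms.

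First I will consider the standard central isogeny $f \colon Z(G)^0 \times \sD(G) \to G$ with finite kernel $K = Z(G)^0 \cap \sD(G) \subset Z(\sD(G))$. Since $\lambda$ preserves both $Z(G)^0$ and $\sD(G)$, it lifts to $\lambda' = (\lambda|_{Z(G)^0}, \lambda|_{\sD(G)})$, and using $\cent{A \times B}{(\alpha,\beta)} = \cent{A}{\alpha} \times \cent{B}{\beta}$, Lemma \ref{lem:cpnt group isogeny} yields
\[
\# \pi_0 \cent{G}{\lambda} \le \# K \cdot \# \pi_0 \cent{Z(G)^0}{\lambda} \cdot \# \pi_0 \cent{\sD(G)}{\lambda}.
\]
Lemma \ref{lem:autom of tori} applied to the split torus $Z(G)^0$ (equipped with the automorphism $\lambda|_{Z(G)^0}$ of order dividing $n$) bounds the middle factor by $n^{\dim Z(G)}$.

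Next, to handle $\sD(G)$ I pass to the simply connected cover $\pi \colon \sD(G)^{\mathrm{sc}} \to \sD(G)$ with kernel $\pi_1(\sD(G)) \subset Z(\sD(G)^{\mathrm{sc}})$. The restriction $\lambda|_{\sD(G)}$ lifts uniquely to a semisimple automorphism $\tilde\lambda$ of $\sD(G)^{\mathrm{sc}}$ (automorphisms of a semisimple group lift uniquely to the simply connected cover because such groups are perfect). Steinberg's connectedness theorem for semisimple automorphisms of simply connected semisimple groups then asserts that $\cent{\sD(G)^{\mathrm{sc}}}{\tilde\lambda}$ is connected, so a second invocation of Lemma \ref{lem:cpnt group isogeny} gives $\# \pi_0 \cent{\sD(G)}{\lambda} \le \# \pi_1(\sD(G))$. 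Combining with the first step,
\[
\# \pi_0 \cent{G}{\lambda} \le n^{\dim Z(G)} \cdot \# K \cdot \# \pi_1(\sD(G)).
\]

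To finish, I must merge the two isogeny loss factors $\#K$ and $\#\pi_1(\sD(G))$ into the single bound $4^{\rank \sD(G)}$. The central short exact sequence $1 \to \pi_1(\sD(G)) \to Z(\sD(G)^{\mathrm{sc}}) \to Z(\sD(G)) \to 1$ together with the inclusion $K \subseteq Z(\sD(G))$ immediately yields $\# K \cdot \# \pi_1(\sD(G)) \le \# Z(\sD(G)^{\mathrm{sc}})$, so it remains to show $\# Z(H) \le 4^{\rank H}$ for any simply connected semisimple $H$. Since $H$ is a product of simple almost-simple factors and $\rank$ and $\# Z$ are both multiplicative over such a decomposition, this reduces to a direct case-check: $\# Z = n+1 \le 4^n$ in type $A_n$, and $\# Z \le 4 \le 4^{\rank}$ in all other simple types. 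The main obstacle is precisely this merging: the argument succeeds only because $K$ lives inside $Z(\sD(G))$, so it combines cleanly with $\pi_1(\sD(G))$ to fit inside $Z(\sD(G)^{\mathrm{sc}})$ without residual defect, and the uniform constant $4$ then emerges from the classification of centers.
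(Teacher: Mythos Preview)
Your proof is correct and follows essentially the same route as the paper's: both arguments pass through the central isogeny $Z(G)^0 \times \sD(G) \to G$, invoke Lemma~\ref{lem:cpnt group isogeny}, handle the torus factor via Lemma~\ref{lem:autom of tori}, and treat the semisimple factor via Steinberg's theorem. The only organizational difference is that the paper quotes the general form of Steinberg's result (for semisimple $H$, $\pi_0 C_H(\lambda)$ embeds in $\pi_1(H)$, and $\#\pi_1(H)\le 2^{\rank H}$) and then separately bounds the isogeny loss $\#Z(\sD(G))\le 2^{\rank \sD(G)}$, whereas you invoke only the simply-connected case of Steinberg, apply the isogeny lemma a second time along $\sD(G)^{\mathrm{sc}}\to \sD(G)$, and then merge the two loss factors into $\#Z(\sD(G)^{\mathrm{sc}})$; the final bounds coincide.
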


\begin{proof}
By a theorem of Steinberg, if $G$ is semisimple, then $\pi_0 \cent{G}{\lambda}$ can be identified with a subgroup of $\pi_1(G)$. Moreover, $\# \pi_1(G) \leq 2^{\rank G}$ (realized by $G=(\PGL_2)^{\rank G}$). 
Now we suppose that $G = T \times H$ for a torus $T$ and a semisimple group $H$. Then $\lambda$ induces acts on $T$ and $H$. By the above, $\# \pi_0 \cent{H}{\lambda} \leq 2^{\rank H}=2^{\rank \sD(G)}$. By Lemma \ref{lem:autom of tori}, $\# \pi_0 \cent{T}{\lambda} \leq n^{\dim T} = n^{\dim Z(G)}$. So $\# \pi_0 \cent{G}{\lambda} \leq 2^{\rank \sD(G)} \cdot n^{\dim Z(G)}$.

For the general case, note the canonical isogeny $Z(G)^0 \times \sD(G) \to G$, whose kernel is contained in $Z(\sD(G))$. By Lemma \ref{lem:cpnt group isogeny} and the above, it follows that 
\[ \# \pi_0 \cent{G}{\lambda} \leq 2^{\rank \sD(G)} \cdot n^{\dim Z(G)} \cdot \#Z(\sD(G)). \]
Finally, note that if $H$ is semisimple, then $\#Z(H) \leq \#Z(H^{\mathrm{sc}})=\#\pi_1 H^{\mathrm{ad}} \leq 2^{\rank H}$. So $\#Z(\sD(G)) \leq 2^{\rank \sD(G)}$. The lemma follows. 
\end{proof}

\begin{lemma} \label{lem:F bound}
Let $H$ be a (possibly nonconnected) reductive group over an algebraically closed field $k$. Let $F$ be a solvable finite group with prime-to-$p$ order acting on $H$. Then the size of $\pi_0 \cent{H}{F}$ is bounded by a constant depending only on $\rank H^0$, $\#\pi_0 H$ and $\# F$. 
\end{lemma}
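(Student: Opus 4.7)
The plan is to prove the bound by induction on $\#F$, reducing at each inductive step to the case of a single semisimple automorphism so that Lemma~\ref{lem:lambda bound} becomes applicable.

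The first step is to reduce to the case that $H$ is connected. Since $H^0$ is characteristic in $H$, the finite group $F$ acts on $H^0$, and the inclusion $\cent{H^0}{F} \subset \cent{H}{F}$ sits in an exact sequence
\[
1 \to \cent{H^0}{F} \to \cent{H}{F} \to (\pi_0 H)^F
\]
(where the last map need not be surjective). Taking component groups yields
\[
\#\pi_0 \cent{H}{F} \leq \#\pi_0 \cent{H^0}{F} \cdot \#\pi_0 H,
\]
which absorbs the factor $\#\pi_0 H$ and reduces us to bounding $\#\pi_0 \cent{H^0}{F}$ in terms of $\rank H^0$ and $\#F$ alone.

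Next I would induct on $\#F$. Solvability provides a normal subgroup $N \triangleleft F$ with $F/N$ cyclic of prime order $q$ (the base case $F = 1$ being trivial). Set $K = \cent{H^0}{N}$. Corollary~\ref{corollary:centralizer-of-weakly-reductive}, applicable because $\#N$ is prime to $p$, guarantees that $K$ is weakly reductive, and $\rank K^0 \leq \rank H^0$ since any maximal torus of $K^0$ is a torus in $H^0$. The inductive hypothesis gives a bound on $\#\pi_0 K$ depending only on $\rank H^0$ and $\#N$. Now $F/N$ acts on $K$, and a generator $\sigma$ acts on the connected reductive $K^0$ as an automorphism of order dividing $q$, hence semisimply. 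Applying the reduction of the previous paragraph to $K$ with the action of $F/N$, together with Lemma~\ref{lem:lambda bound}, gives
\[
\#\pi_0 \cent{K^0}{\sigma} \leq 4^{\rank \sD(K^0)} \cdot q^{\dim Z(K^0)} \leq (4q)^{\rank H^0},
\]
so that
\[
\#\pi_0 \cent{H^0}{F} = \#\pi_0 \cent{K}{F/N} \leq (4q)^{\rank H^0} \cdot \#\pi_0 K.
\]
Combining with the inductive bound produces an estimate polynomial in $\#F$ and exponential in $\rank H^0$.

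The only obstacle is bookkeeping: one must verify that every centralizer appearing in the induction is weakly reductive with controlled rank, so that Lemma~\ref{lem:lambda bound} continues to apply at each step. Corollary~\ref{corollary:centralizer-of-weakly-reductive} provides exactly this closure property, and the hypothesis that $\#F$ is prime to $p$ is indispensable both for preserving reductivity under centralizers and for the semisimplicity of the generators appearing at each inductive step.
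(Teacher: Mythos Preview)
Your proof is correct and follows essentially the same strategy as the paper: reduce to the connected case via the exact sequence $1 \to \cent{H^0}{F} \to \cent{H}{F} \to \pi_0 H$, then peel off cyclic prime-to-$p$ quotients of the solvable $F$ and apply Lemma~\ref{lem:lambda bound} at each stage. The paper unrolls this into a full composition series and writes the bound as a product, whereas you phrase it as an induction on $\#F$; the content is the same, and your explicit invocation of Corollary~\ref{corollary:centralizer-of-weakly-reductive} to keep the intermediate centralizers weakly reductive is a point the paper leaves implicit.
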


\begin{proof}
This will follow from the preceding lemma and induction. 
First note the exact sequence
\[
1 \to \cent{H^0}{F} \to \cent{H}{F} \to \pi_0(H)
\]
and that $\cent{H^0}{F}^0=\cent{H}{F}^0$. It follows that 
$\# \pi_0 \cent{H}{F} \leq \# \pi_0 H \cdot \# \pi_0 \cent{H^0}{F}$.

Since $F$ is solvable, there is a composition series
\[
F=F_0 \supset F_1 \supset \cdots \supset F_n=\{1\}
\]
such that $F_{i+1}$ is normal in $F_i$ and $F_i/F_{i+1}$ is cyclic. 
Since $F_{i+1}$ is normal in $F_i$, $F_i$ acts on $\cent{H}{F_{i+1}}$, and in fact
$\cent{H}{F_i}=\cent{\cent{H}{F_{i+1}}}{F_i}=\cent{\cent{H}{F_{i+1}}}{F_i/F_{i+1}}$. 
So 
\[
\# \pi_0 \cent{H}{F_i} \leq \# \pi_0 \cent{H}{F_{i+1}} \cdot \# \pi_0 \cent{\cent{H}{F_{i+1}}^0}{F_i/F_{i+1}}
\]
and hence
\[
\# \pi_0 \cent{H}{F} \leq \prod_{0 \leq i < n} \# \pi_0 \cent{\cent{H}{F_{i+1}}^0}{F_i/F_{i+1}} \cdot \# \pi_0 H.
\]
We now conclude by applying Lemma \ref{lem:lambda bound} to the group $\cent{\cent{H}{F_{i+1}}^0}{F_i/F_{i+1}}$.
\end{proof}

\begin{thm} \label{thm:bound nonconnected}
Let $G$ be a (possibly nonconnected) reductive group over an algebraically closed field $k$. Let $\Ld \subset G(k)$ be a solvable finite group with prime-to-$p$ order. Then the size of $\pi_0 \cent{G}{\Ld}$ is bounded by a constant depending only on $c_{G^0}$, $\rank G^0$ and $\# \pi_0 G$. In the special case when $G$ is connected, $\# \pi_0 \cent{G}{\Ld} \leq c_G$ by Theorem \ref{thm:bound cpnt groups}.
\end{thm}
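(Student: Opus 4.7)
The plan is to reduce the disconnected case to the connected case, handled by Theorem \ref{thm:bound cpnt groups}, and to absorb the disconnectedness of $G$ using the ``action'' version provided by Lemma \ref{lem:F bound}. First, from the identification $\cent{G^0}{\Ld} = \cent{G}{\Ld} \cap G^0$ one obtains an exact sequence of abstract groups
\[
1 \to \cent{G^0}{\Ld} \to \cent{G}{\Ld} \to \pi_0(G),
\]
so that $\# \pi_0 \cent{G}{\Ld} \leq \# \pi_0(G) \cdot \# \pi_0 \cent{G^0}{\Ld}$. It therefore suffices to bound $\# \pi_0 \cent{G^0}{\Ld}$.

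To handle $\cent{G^0}{\Ld}$, set $\Ld_0 \colonequals \Ld \cap G^0(k)$, a normal subgroup of $\Ld$. The quotient $F \colonequals \Ld/\Ld_0$ embeds into $\pi_0(G)$, so $F$ is a solvable finite group of order dividing $\# \pi_0(G)$, and in particular of order prime to $p$. Since $\Ld_0$ centralizes $H \colonequals \cent{G^0}{\Ld_0}$, conjugation by $\Ld$ on $H$ factors through an action of $F$ on $H$ by automorphisms, and one has the identification $\cent{G^0}{\Ld} = \cent{H}{F}$: indeed, an element $h \in H$ commutes with every $\lambda \in \Ld$ iff it is fixed by this $F$-action, since it already commutes with $\Ld_0$ by virtue of lying in $H$.

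Now $\Ld_0 \subset G^0(k)$ is a finite subgroup of order prime to $p$, and so is $G^0$-completely reducible by a standard linear-reductivity argument (cf.\ \cite[Corollary 3.42]{BMR05}). Theorem \ref{thm:bound cpnt groups} then yields $\# \pi_0 H \leq c_{G^0}$, while clearly $\rank H^0 \leq \rank G^0$. Applying Lemma \ref{lem:F bound} to the action of $F$ on $H$ bounds $\# \pi_0 \cent{H}{F}$ in terms of $\rank H^0$, $\# \pi_0 H$, and $\# F$; each of these is in turn bounded by a quantity depending only on $c_{G^0}$, $\rank G^0$, and $\# \pi_0(G)$. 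Combining with the inequality from the exact sequence above yields the theorem.

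The main subtlety is that the induced $F$-action on $H$ is by automorphisms that need not be inner, so one cannot simply iterate Theorem \ref{thm:bound cpnt groups} for a chain of centralizers inside $G^0$; this is precisely the scenario that Lemma \ref{lem:F bound} is built to accommodate, via a reduction along a composition series for the solvable group $F$ and an appeal to the automorphism bound in Lemma \ref{lem:lambda bound} at each cyclic step.
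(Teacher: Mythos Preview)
Your proof is correct and follows essentially the same route as the paper: reduce to $\cent{G^0}{\Ld}$ via the exact sequence, split $\Ld$ along $\Ld_0 = \Ld \cap G^0(k)$, bound $\#\pi_0\cent{G^0}{\Ld_0}$ by $c_{G^0}$ via Theorem~\ref{thm:bound cpnt groups}, and finish with Lemma~\ref{lem:F bound} applied to the $F = \Ld/\Ld_0$-action on $H = \cent{G^0}{\Ld_0}$. One small slip: you deduce that $F$ has prime-to-$p$ order from $\#F \mid \#\pi_0(G)$, but the theorem does not assume $\#\pi_0(G)$ is prime to $p$; the correct reason is simply that $F$ is a quotient of $\Ld$, which has prime-to-$p$ order by hypothesis.
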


\begin{proof}
By the first part of the proof of Lemma \ref{lem:F bound}, we have
$\# \pi_0 \cent{G}{\Ld} \leq \# \pi_0 \cent{G^0}{\Ld} \cdot  \# \pi_0 G$. 
Let $\Ld'$ be $\Ld \cap G^0(k)$, so that $\Ld/\Ld' \into G/G^0$.
Note that $\cent{G^0}{\Ld}=\cent{\cent{G^0}{\Ld'}}{\Ld/\Ld'}$. By Theorem \ref{thm:bound cpnt groups} (recall $\Ld$ has prime-to-$p$ order by assumption), the size of $\pi_0 \cent{G^0}{\Ld'}$ is at most $c_{G^0}$. Applying Lemma \ref{lem:F bound} with $H=\cent{G^0}{\Ld'}$ and $F=\Ld/\Ld'$ yields a bound for $\# \pi_0 \cent{G^0}{\Ld}$ in terms of 
$c_{G^0}$, $\rank G^0$ and $\# \pi_0 G$.
\end{proof}

\section{A curious application}\label{section:appendix-b}

This section is not relevant to the main aims of this paper, but it follows from the methods developed here, so we will give a (terse) proof.  

\begin{theorem}
Let $G$ be a connected reductive group over a field $k$, and let $\Lambda \subset G(k)$ be a finite subgroup of order $n$, prime to $\chara k$. The only prime numbers dividing the order of $\pi_0 C_G(\Lambda)$ also divide $n$.
\end{theorem}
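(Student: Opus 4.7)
The plan is to reduce to characteristic $0$ via a lifting argument that exploits the weakly reductive machinery of this paper, then prove the characteristic $0$ case via a commutator calculation involving a central isogenous cover whose derived subgroup is simply connected.

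First, I would reduce to the case that $k$ is algebraically closed, as $\pi_0$ is insensitive to separable base change. Suppose $\chara k = p > 0$; let $A$ be a Cohen ring with residue field $k$ and fraction field $K$ of characteristic $0$. By \cite[Exp.\ XXV]{sga3}, the connected reductive group $G$ over $k$ lifts uniquely to a connected reductive $A$-group scheme $G_A$. Since $n = |\Lambda|$ is invertible in $A$ and $G_A$ is $A$-smooth, the inclusion $\Lambda \hookrightarrow G(k)$ lifts (uniquely up to $G_A(A)$-conjugacy) to a closed embedding of the constant finite \'etale $A$-group scheme $\Lambda_A$ into $G_A$: the obstructions lie in cohomology groups $\mathrm{H}^i(\Lambda, \Lie G_k)$ that are simultaneously $n$-torsion and $p$-primary, hence vanish. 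Applying Corollary~\ref{cor:double-centralizer} to $\Lambda_A$ acting on $G_A$, the centralizer $\fC := C_{G_A}(\Lambda_A)$ is weakly reductive over $A$, so its component group is finite \'etale over $A$ of order invertible on $A$; in particular $|\pi_0 \fC_k| = |\pi_0 \fC_K|$, reducing us to the case $\chara k = 0$.

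Now assume $k$ is algebraically closed of characteristic $0$. Choose a central isogeny $\pi \colon \tilde G \to G$ with $\sD(\tilde G)$ simply connected (for instance $\tilde G = Z(G)^0 \times \sD(G)_{\mathrm{sc}}$), so that $\tilde Z := \ker \pi$ is a finite central subgroup of $\tilde G$; set $\tilde \Lambda := \pi^{-1}(\Lambda)$. For $\tilde g \in \pi^{-1}(C_G(\Lambda))$ and any lift $\tilde \lambda$ of $\lambda \in \Lambda$, the commutator $[\tilde g, \tilde \lambda]$ lies in $\tilde Z$, and by centrality of $\tilde Z$ depends only on $\lambda$. Using the commutator identity $[gh, k] = g[h,k]g^{-1}[g,k]$ together with centrality of $\tilde Z$, one checks directly that $\lambda \mapsto [\tilde g, \tilde \lambda]$ is a group homomorphism $\Lambda \to \tilde Z$ and that
\[
\phi \colon \pi^{-1}(C_G(\Lambda)) \longrightarrow \Hom(\Lambda, \tilde Z) = \Hom(\Lambda^{\ab}, \tilde Z)
\]
is itself a group homomorphism with kernel $C_{\tilde G}(\tilde \Lambda)$. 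Since $\Lambda^{\ab}$ has exponent dividing $n$, the image of $\phi$ is $n$-torsion, so its order has only prime divisors of $n$.

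The hard part will be to show that $C_{\tilde G}(\tilde \Lambda)$ is connected. Granting this, $\pi_0 \pi^{-1}(C_G(\Lambda))$ is identified with the image of $\phi$, hence has order with only prime divisors of $n$; the natural surjection $\pi_0 \pi^{-1}(C_G(\Lambda)) \twoheadrightarrow \pi_0 C_G(\Lambda)$ induced by $\pi$ has kernel a subquotient of the finite central $\tilde Z$, so $|\pi_0 C_G(\Lambda)|$ divides $|\pi_0 \pi^{-1}(C_G(\Lambda))|$ and the conclusion follows. Connectedness of $C_{\tilde G}(\tilde \Lambda)$ is immediate by iterated application of Steinberg's theorem on centralizers of semisimple elements in groups with simply connected derived subgroup when $\tilde \Lambda$ is abelian, but even when $\Lambda$ itself is abelian the lifts of commuting elements to $\tilde G$ need not commute, so $\tilde \Lambda$ can be non-abelian. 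Handling this non-abelian case will require a more refined argument, such as an induction on a composition series of $\tilde \Lambda$ in which cyclic composition factors are handled by Steinberg and the remaining factors by separate techniques.
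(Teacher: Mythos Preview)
Your reduction to characteristic $0$ is correct and parallels the paper's opening move. The fatal gap is in characteristic $0$: the connectedness of $C_{\tilde G}(\tilde\Lambda)$ on which your argument rests is \emph{false}. For $m$ odd, take $G=\PGL_m$ and $\Lambda\cong(\bZ/m)^2$ the image of the group generated by the shift matrix $S$ and the clock matrix $C=\mathrm{diag}(1,\zeta,\dots,\zeta^{m-1})$. Then $\tilde G=\SL_m$, $\tilde Z=\mu_m$, and $\tilde\Lambda=\pi^{-1}(\Lambda)=\langle S,C\rangle$ is the Heisenberg group of order $m^3$; it acts irreducibly on $k^m$, so $C_{\SL_m}(\tilde\Lambda)=\mu_m$ has $m>1$ components. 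Your proposed induction on a composition series cannot repair this: after centralizing one cyclic factor you land in a torus, and the fixed points of a nontrivial finite-order automorphism of a torus are almost never connected (this is exactly what happens here, since conjugation by $C$ acts on the torus $C_{\SL_m}(S)$ by a nontrivial outer automorphism). Weakening to ``the primes of $|\pi_0 C_{\tilde G}(\tilde\Lambda)|$ divide $n$'' does not help either, since this is at least as hard as the original statement and you have no independent argument for it under the simply-connected-derived hypothesis.

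The paper's proof runs in the opposite direction in characteristic $0$, and this is precisely the ``curious'' aspect. After spreading out to a number field $k$ with $G$ split, fix a prime $p\nmid n$ and a place $v\mid p$. The argument of \cite[Lemma~A.8]{Griess-Ryba} arranges (after a finite extension and conjugation) that $\Lambda\subset\sG(\sO_v)$ for the split $\bZ$-model $\sG$ of $G$. Now Corollary~\ref{corollary:centralizer-of-weakly-reductive} over the mixed-characteristic DVR $\sO_v$ shows that $C_{\sG}(\Lambda)$ is weakly reductive, so by definition its component group has order prime to $p$. Since $p\nmid n$ was arbitrary, the theorem follows. The characteristic-$0$ statement is thus proved by passing to characteristic $p$ for each $p\nmid n$, using the same integral machinery you invoked in your first reduction but now in the reverse direction.
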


\begin{proof}
We may extend $k$ to assume that $G$ is split, and if $k$ is of positive characteristic we may then lift $\Lambda$ to characteristic $0$. Using Corollary~\ref{cor:double-centralizer}, it therefore suffices to assume that $\chara k = 0$. By spreading out and specializing, we may and do assume that $k$ is a number field. Let $p$ be a prime number not dividing $n$, and let $v$ be a place of $k$ dividing $p$. The argument of \cite[Lemma A.8]{Griess-Ryba} shows that after passing to a finite extension of $k$ and conjugating, we may assume $\Lambda \subset \sG(\sO_v)$, where $\sG$ is the split model of $G$ over $\bZ$ and $\sO_v$ is the ring of integers of the completion of $k$ at $v$. But now Corollary~\ref{corollary:centralizer-of-weakly-reductive} shows that $C_{\sG}(\Lambda)$ is weakly reductive, so in particular $p$ does not divide the order of $\pi_0 C_G(\Lambda)$, as desired.
 \end{proof}

 \section{A Proof of Corollary~\ref{corollary:lifting}} \label{appendix-corollary}

\begin{proof}
It is enough to check the third bulleted point in \cite[Theorem A]{fkp21}. By Theorem \ref{thm:intromain}, for $v \neq p$, $\rhobar|_{\gma{\QQ_v}}$ has a $p$-adic lift. On the other hand, by \cite[Theorem C]{lin_g2}, $\rhobar_p \colonequals \rhobar|_{\gma{\QQ_p}}$ has a crystalline lift $\rho_p \colon \gma{\QQ_p} \to G(\Zpbar)$ for $p>3$. We claim that the Hodge--Tate cocharacter of the lift can be chosen to be regular. If $\rhobar_p$ is irreducible, this follows from \cite[Theorem 2]{lin2020}. Otherwise, $\rhobar_p$ factors through a maximal parabolic subgroup $P$ of $G$ with Levi factor $L \cong \GL_2$, and there is a corresponding representation $\bar r_p \colon \gma{\QQ_p} \to L(\Fpbar)$. By \cite[\S 7.2.1-7.2.2]{lin_g2}, $r_p$ has a crystalline lift $r^o$ with regular Hodge--Tate cocharacter such that $\phi^{\Lie}(r^o)$ has Hodge--Tate weights slightly less than $\underline{0}$ (in the terminology of loc. cit.). Now the second half of \cite[Theorem C]{lin_g2} implies that $\rho_p$ can be chosen such that it factors through $P$ and its associated $L$-valued representation lies on the same irreducible component of the spectrum of the crystalline lifting ring that $r^o$ does; in particular, its Hodge--Tate cocharacter is the same as that of $r^o$. Thus \cite[Theorem A]{fkp21} gives the desired lift of $\rhobar$.
\end{proof}

\begin{remark}
The lower bound for $p$ in Corollary~\ref{corollary:lifting} has to do with the global lifting theorem \cite[Theorem A]{fkp21}, the local lifting theorem in the $\ell=p$ case \cite[Theorem C]{lin_g2}, and Theorem \ref{thm:intromain}. The bound for \cite[Theorem A]{fkp21} can be made explicit, see \cite[Remark 6.17]{fkp21}. The bound for \cite[Theorem C]{lin_g2} is 3, and the bound for Theorem \ref{thm:intromain} in the $G_2$ case is 72 (Remark \ref{rmk:bound for G}).  
\end{remark}

\bibliographystyle{halpha-abbrv}
\bibliography{isotypic}

\end{document}